\providecommand{\U}[1]{\protect\rule{.1in}{.1in}}
\providecommand{\U}[1]{\protect\rule{.1in}{.1in}}
\providecommand{\U}[1]{\protect\rule{.1in}{.1in}}
\newtheorem{theorem}{Theorem}[section]
\newtheorem{corollary}[theorem]{Corollary}
\newtheorem{lemma}[theorem]{Lemma}
\newtheorem{proposition}[theorem]{Proposition}
\theoremstyle{definition}
\newtheorem{remark}[theorem]{Remark}
\begin{document}
\title{Finite axiomatizability for profinite groups}
\author{Andre Nies, Dan Segal and Katrin Tent}
\thanks{The first author was 
partially supported by the Marsden fund of New Zealand, 
13-UOA-184 and 19-UOA-346.}
\begin{abstract}
A group is finitely axiomatizable (FA) in a class $\mathcal{C}$ if it can be
determined up to isomorphism within $\mathcal{C}$ by a sentence in the
first-order language of group theory. We show that profinite groups of various
kinds are FA in the class of profinite groups, or in the class of pro-$p$
groups for some prime~$p$. Both algebraic and model-theoretic methods are
developed for the purpose. Reasons why certain groups cannot be FA are also discussed.

\end{abstract}
\maketitle

\setcounter{tocdepth}{1}

\tableofcontents

\section{Introduction}

Some properties of a group can be expressed by a sentence in the first-order
language $L_{\mathrm{gp}}$ of group theory, and some cannot. If the group is
assumed to be \emph{finite}, a lot more can be said about it in first-order
language than in the general case. We mention examples of these phenomena below.

The strongest property of a group $G$ is that of `being isomorphic to $G$'. If
this can be expressed by a first-order sentence, $G$ is said to be
\emph{finitely axiomatizable}, henceforth abbreviated to \emph{FA.} It is
obvious that every finite group is FA: if $\left\vert G\right\vert =n$, the
fact that $G$ has exactly $n$ elements and that they satisfy the
multiplication table of $G$ is clearly a first-order property. An infinite
group cannot be FA by the L\"{o}wenheim-Skolem Theorem (\cite{TZ}, Thm 2.3.1);
to make the question interesting we have to limit the universe of groups under
consideration. For example, the first author in \cite{NSG} called a finitely
generated, infinite group $G$ \emph{QFA} (for quasi-finitely axiomatizable) if
some first order sentence determines it up to isomorphism within the class of
finitely generated groups (this is \emph{not} the notion of `quasi-finite
axiomatizability' used in model theory, cf. \cite{P}, Chapt. 3).

He showed that several well-known groups, such as the restricted wreath
product $C_{p}\wr\mathbb{Z}$, have this property (here $C_{p}$ denotes the
cyclic group of order $p$). The QFA nilpotent groups are completely
characterized by Oger and Sabbagh in \cite{OS}. Further results were obtained
by Lasserre \cite{L}. Nies~\cite{NDG} contains a survey up to 2007.

In the present paper, we address the question of relative finite
axiomatizability in the universe of \emph{profinite groups}. These (unless
finite) are necessarily uncountable, so cannot be finitely generated as
groups; but from some points of view they behave rather like finite groups.
For example, Jarden and Lubotzky show in \cite{JL} that if $G$ is a
(topologically) finitely generated profinite group, then the elementary theory
of $G$ characterizes $G$ up to isomorphism among all profinite groups (cf.
\cite{SW}, Thm. 4.2.3); in this case one says that $G$ is
\emph{quasi-axiomatizable}. This is very different from the situation in
abstract groups: for example, a celebrated theorem of Sela \cite{S} (see also
\cite{KM}) shows that all finitely generated non-abelian free groups have the
same elementary theory.

The elementary theory of $G$ consists of \emph{all} the sentences satisfied by
$G$. We consider the question: which profinite groups can be characterized by
a single sentence? To make this more precise, let us say that a group $G$ is
\emph{FA (wrt} $L$\emph{) in} $\mathcal{C}$ if $\mathcal{C}$ is a class of
groups containing $G$, $L$ is a language, and there is a sentence $\sigma_{G}$
of $L$ such that for any group $H$ in $\mathcal{C}$,%
\[
H\models\sigma_{G}\text{ if and only if }H\cong G.
\]
For instance, QFA means: FA (wrt $L_{\mathrm{gp}}$) in the class of all f.g.
groups. When $\mathcal{C}$ is a class of profinite groups, isomorphisms are
required to be \emph{topological}. \ Usually, we will write `FA' to mean `FA
in the class of all profinite groups.'

\subsection{Classes of groups and their theories}

It is often the case that a natural class of (abstract) groups cannot be
axiomatized in the first-order language $L_{\mathrm{gp}}$ of group theory.
This holds for the class of simple groups (see \cite{JSW}), the $2$-generated
groups, the finitely generated groups, and classes such as nilpotent or
soluble groups, none of which is closed under the formation of ultraproducts.

Since finite groups are FA, \emph{every} class $\mathcal{C}$ of finite groups
can be axiomatized within the finite groups: a finite group $H$ is in
$\mathcal{C}$ if and only if $H\models\lnot\sigma_{G}$ for every finite group
$G\notin\mathcal{C}$ (cf. \cite{JSW}, \S 1). Whether such a class can be
\emph{finitely} axiomatized within the finite groups is usually a much subtler
question. For example, a theorem of Felgner shows that this holds for the
class of non-abelian finite simple groups (see \cite{JSW}, Theorem 5.1), and
Wilson \cite{WFS} shows that the same is true for the class of finite soluble
groups. On the other hand, Cornulier and Wilson show in \cite{CW} that
nilpotency cannot be characterized by a first-order sentence in the class of
finite groups.

The main object of study in Nies \cite{NSG} was the first-order separation of
classes of groups $\mathcal{C}\subset\mathcal{D}$. Even if the classes are not
axiomatizable, can we distinguish them using first-order logic, by showing
that some sentence $\phi$ holds in all groups of $\mathcal{C}$ but fails in
some group in $\mathcal{D}$? If this holds, one says that $\mathcal{C}$ and
$\mathcal{D}$ are \emph{first-order separated}. One way to establish this is
to find a witness for separation: a group $G$ not in $\mathcal{C}$ that is FA
in~$\mathcal{D}$. Then one takes $\phi$ to be the negation of a sentence
describing~$G$ within~$\mathcal{D}$.

Some of our results serve to provide first-order separations of interesting
classes of profinite groups:

\begin{itemize}
\item the finite rank profinite groups are first-order separated from the
(topologically) finitely generated profinite groups by Prop.\ \ref{wr}

\item similarly for pro-$p$ groups, also by Prop.\ \ref{wr}

\item the f.g.\ profinite groups are first-order separated from the class of
all profinite groups by Cor.\ \ref{sep}.
\end{itemize}

\subsection{Obstructions to finite axiomatizability}

We know of two obstructions to being FA for a profinite group: the centre may
`stick out too much', or the group may involve too many primes. The first is
exemplified by the following result of Oger and Sabbagh, which generalizes
work of Wanda Szmielew (see \cite{HMT}, Thm A.2.7) for infinite abelian
groups; here $\mathrm{Z}(G)$ denotes the centre of $G$ and $\Delta
(G)/G^{\prime}$ the torsion subgroup of $G/G^{\prime}$ where $G^{\prime}$ is
the derived group:

\begin{theorem}
\label{neg1}\emph{(\cite{OS}, Theorem 2).} Let $G$ be a\ group such that
$\mathrm{Z}(G)\nsubseteq\Delta(G)$. If $\phi$ is a sentence such that
$G\models\phi$, then $G\times C_{p}\models\phi$ for almost all primes $p$.
\end{theorem}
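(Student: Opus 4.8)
The plan is to prove the following uniform statement, which at once yields the theorem: for every $q\in\mathbb N$ there is a finite set $S_q$ of primes such that whenever $p\notin S_q$ the groups $G$ and $G\times C_p$ satisfy exactly the same sentences of quantifier rank at most $q$ in which every occurring group word has length at most $q$. Indeed, given a sentence $\phi$ with $G\models\phi$, choosing $q$ large enough to accommodate both the quantifier rank and the word lengths of $\phi$ then forces $G\times C_p\models\phi$ for every $p\notin S_q$, i.e.\ for almost all primes. This equivalence will be obtained by exhibiting a winning strategy for Duplicator in the $q$-round Ehrenfeucht--Fra\"iss\'e game between $G$ and $G\times C_p$, played, as is appropriate for a language with function symbols, with the winning condition that the final position preserve and reflect all group-word equations of length at most some $L=L(q)$.

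The hypothesis $\mathrm Z(G)\nsubseteq\Delta(G)$ is used solely to fix an element $z\in\mathrm Z(G)$ whose image in $G/G'$ has infinite order; then $z$ has infinite order, $\langle z\rangle\cap G'=1$, and, since $z$ is central, every group word $w(x_1,\dots,x_k)$ satisfies
\[
w(g_1z^{m_1},\dots,g_kz^{m_k})=w(g_1,\dots,g_k)\cdot z^{\sum_s\sigma_s(w)m_s}
\qquad(g_s\in G,\ m_s\in\mathbb Z),
\]
where $\sigma_s(w)$ is the exponent sum of $x_s$ in $w$. This identity lets Duplicator \emph{simulate} the cyclic factor $C_p=\langle c\rangle$ of $G\times C_p$ inside $G$ by powers of $z$: in a position pairing $a_1,\dots,a_k\in G$ with $b_1,\dots,b_k\in G\times C_p$, and writing $b_s=(g_s,c^{i_s})$, Duplicator keeps $a_s=g_sz^{m_s}$ with $m_s\equiv i_s\pmod p$. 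A move by Spoiler playing an arbitrary $a\in G$ is answered by $(a,1)$, so the new $g$ equals $a$ and the new $i,m$ are $0$; a move in $G\times C_p$ fixes the new $g$-coordinate and leaves Duplicator only to pin down $m$ in the residue class of $i$. By the displayed identity, whether a word equation of length $\le L$ holds on the $G$-side or on the $G\times C_p$-side is decided by which such equations already hold among the $g_s$ in $G$ (more precisely, by the values $w'(g)^{-1}w(g)$ as elements of $G$) together with a comparison of the integer $\sum_s\sigma_s(w)m_s$ with the residue $\sum_s\sigma_s(w)i_s \bmod p$.

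The heart of the argument — and the main obstacle — is that the integers $m_s$ can be chosen, round by round, so that this correspondence is a partial isomorphism for word length $\le L$ and so that all remaining rounds can still be met. Unwinding the identity, this rests on two points. First, whenever a word $u$ of length $\le L$, with exponent sums $\sigma_s(u)=d_s$, satisfies $u(g_1,\dots,g_k)=1$ in $G$, one needs $\sum_s d_sm_s=0$ exactly when $\sum_s d_si_s\equiv0\pmod p$; since $m_s\equiv i_s\pmod p$ one direction is automatic, and for the other Duplicator must choose the vector $(m_s)$ in a prescribed coset of $p\mathbb Z^k$ and orthogonal to the lattice spanned by the relevant $d$-vectors — which is possible precisely because that lattice already annihilates $(i_s)$ modulo $p$, an identity that survives reduction mod $p$ once $p$ avoids a certain determinant bounded in terms of $q$ alone. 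Second, no such combination $\sum_s d_sm_s$ may accidentally equal an integer $d$ for which the short word $u$ happens to take the value $z^{-d}$ on the $g_s$; only finitely many such coincidences can arise, so Duplicator avoids them by choosing each new $m_s$ generically in its (infinite) coset, using the familiar Ehrenfeucht--Fra\"iss\'e device of keeping the $m_s$ spread out by a gauge growing with the number of rounds left, so that a generic choice now cannot be spoiled later. Collecting the finitely many bad determinants and coincidences possible over $q$ rounds at word length $\le L$ — all bounded as functions of $q$ — produces the exceptional set $S_q$. The technical core is to verify that this entire invariant really is preserved under both kinds of Spoiler move, and in particular that Duplicator is not trapped when Spoiler conceals a power of $z$ inside the $G$-coordinate of a move made in $G\times C_p$.
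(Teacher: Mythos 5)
The paper does not actually prove this statement; it is quoted from Oger and Sabbagh \cite{OS}, so your argument can only be judged on its own correctness rather than against the authors' method. Your overall plan (Ehrenfeucht--Fra\"iss\'e games with bounded word length, simulating the factor $C_p=\langle c\rangle$ by powers of a central element $z$ of infinite order modulo $G'$) is a reasonable starting point, and the identity $w(g_1z^{m_1},\dots,g_kz^{m_k})=w(\bar g)\,z^{\sum_s\sigma_s(w)m_s}$ is correct. But the invariant you give Duplicator --- the congruences $m_s\equiv i_s\pmod p$, orthogonality of $(m_s)$ to the exponent vectors of the short relations \emph{currently holding} among the $g_s$, and otherwise generic (``spread out'') choices --- is genuinely insufficient, and the gap sits exactly at what you yourself call the technical core.

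The problem is that a later move of Spoiler can witness a relation between already-played elements that is invisible in their quantifier-free type, thereby imposing a linear condition on $m$-values to which Duplicator is already committed. Concretely, take $G=F\times\langle z\rangle$ with $F$ free on $\{x,y\}$, so that $\mathrm Z(G)=\langle z\rangle\nsubseteq\Delta(G)=F'$. Let Spoiler play $b_1=((x,1),c)$ and $b_2=((x^y,1),c)$ in $G\times C_p$. Since $x$ and $x^y$ generate a free subgroup of rank $2$, there are no short relations between $g_1=x$ and $g_2=x^y$, so your orthogonality condition is vacuous and the genericity device instructs Duplicator to take $m_1\neq m_2$ (both $\equiv 1 \pmod p$), giving $a_1=(x,z^{m_1})$, $a_2=(x^y,z^{m_2})$. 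Now Spoiler plays $b_3=((y,1),1)$, so that $b_1^{b_3}=b_2$ holds in $G\times C_p$. In $G$ the conjugacy class of $a_1$ is $\{(x^f,z^{m_1}):f\in F\}$, which misses $a_2$ precisely because $m_1\neq m_2$; Duplicator has no legal reply and loses in three rounds using a word of length four. So correctness here forces $m_1=m_2$, the opposite of what genericity dictates, and in general the $m_s$ must be chosen compatibly with all relations among the $g_s$ witnessed by elements \emph{not yet played} (conjugacy, divisibility, and so on) --- that is, with the bounded-quantifier-rank type of the tuple rather than its atomic type. Formulating and propagating that stronger invariant is the real content of the theorem (already in the abelian case, where it amounts to Szmielew's analysis), and it is missing from your argument.
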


\noindent If for example $G$ is a finitely generated profinite group, then
$G\times C_{p}\ncong G$ for every prime $p$, so $G$ cannot be FA.

The second obstruction comes from a different direction. The Feferman-Vaught
Theorem controls first-order properties of Cartesian products, and in
\S \ref{negsec} we note its consequence

\begin{proposition}
\bigskip\label{products}Let $\left(  G_{i}\right)  _{i\in I}$ be an infinite
family of groups. If $\phi$ is a sentence such that $\prod_{i\in I}%
G_{i}\models\phi$, then there exist $q\neq r\in I$ such that%
\[
G_{r}\times\prod_{i\in I\smallsetminus\{q\}}G_{i}\models\phi.
\]

\end{proposition}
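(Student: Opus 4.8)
The plan is to quote the Feferman--Vaught theorem in its usual form for (unrestricted) direct products. That theorem assigns to the sentence $\phi$ a finite list of sentences $\psi_1,\dots,\psi_n$ of $L_{\mathrm{gp}}$ together with a ``combining'' sentence $\Theta$ in the (monadic) language of the bare index set equipped with $n$ unary predicates, such that for \emph{every} family of groups $(G_i)_{i\in I}$ one has $\prod_{i\in I}G_i\models\phi$ if and only if the marked index structure $\mathcal I=(I;A_1,\dots,A_n)$ satisfies $\Theta$, where $A_j=\{\,i\in I:G_i\models\psi_j\,\}$. The only feature of $\mathcal I$ that matters here is its isomorphism type as a set-with-unary-predicates: if a bijection of index sets carries one marked structure onto another, then the two satisfy the same sentences, in particular the same $\Theta$. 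This step uses nothing special about groups; it is Feferman--Vaught for direct products of arbitrary structures in a fixed language, so the profinite setting plays no role beyond the fact that the relevant products live in the ambient class.

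Next I would colour each index $i\in I$ by the subset $c(i)=\{\,j\le n:G_i\models\psi_j\,\}\subseteq\{1,\dots,n\}$. There are at most $2^{n}$ colours and $I$ is infinite, so by the pigeonhole principle there are distinct $q\neq r\in I$ with $c(q)=c(r)$; that is, $G_q$ and $G_r$ satisfy exactly the same sentences among $\psi_1,\dots,\psi_n$. These will be the $q$ and $r$ in the conclusion.

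Finally I would compare $P=\prod_{i\in I}G_i$ with $P'=G_r\times\prod_{i\in I\smallsetminus\{q\}}G_i$. Index $P'$ by $\{\ast\}\sqcup(I\smallsetminus\{q\})$, with the factor at $\ast$ taken to be $G_r$, and let $f$ be the bijection onto $I$ given by $f(\ast)=q$ and $f(i)=i$ for $i\in I\smallsetminus\{q\}$. For each $i\neq q$ the factor of $P'$ over $i$ is $G_i$, while the factor over $\ast$ is $G_r$; since $c(r)=c(q)$, the predicate $A_\ell$ contains $\ast$'s image $q$ exactly when $G_r\models\psi_\ell$. Hence $f$ maps the marked index structure of $P'$ isomorphically onto $\mathcal I$. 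As $\prod_{i\in I}G_i\models\phi$, we have $\mathcal I\models\Theta$, so the index structure of $P'$ also satisfies $\Theta$, and Feferman--Vaught gives $P'\models\phi$, which is the assertion.

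I expect the only real work to be stating precisely the version of the Feferman--Vaught theorem invoked and citing it cleanly --- in particular, making explicit that the truth value of a sentence in a direct product depends on the factors only through which of the finitely many $\psi_j$'s each factor satisfies, together with the isomorphism type of the resulting marked index set. Once that is pinned down, the combinatorial core (a pigeonhole argument followed by a bijection of index sets) is entirely routine, and no counting ``up to a threshold'' is needed because we arrange the index structures to be genuinely isomorphic rather than merely elementarily equivalent.
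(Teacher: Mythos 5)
Your proposal is correct and follows essentially the same route as the paper: invoke Feferman--Vaught to reduce the truth of $\phi$ in the product to finitely many sentences $\psi_1,\dots,\psi_n$ evaluated factorwise (the paper phrases this via the Boolean algebra $\mathcal{P}(I)$ with distinguished subsets $X_j$, you via a marked index set, which is the same data), then apply pigeonhole on the at most $2^n$ colourings to find $q\neq r$ with identical $\psi$-types, and observe that replacing $G_q$ by $G_r$ leaves the index structure unchanged up to isomorphism. No gaps; the argument matches the paper's.
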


We will deduce

\begin{proposition}
\label{negprod}\emph{(i)} If the pronilpotent group $G$ is FA in the class of
profinite, or pronilpotent, groups, then the set of primes $p$ such that $G$
has a nontrivial Sylow pro-$p$ subgroup is finite.

\emph{(ii)} Let $\mathfrak{G}$ be a linear algebraic group of positive
dimension defined over $\mathbb{Q}$.
If $\mathfrak{G}(\mathbb{Z}_{\pi})$ is FA in the class of profinite groups
then the set of primes $\pi$ is finite.
\end{proposition}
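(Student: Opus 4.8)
\emph{Part (i).} A pronilpotent profinite group $G$ is the direct product $G=\prod_{p}G_{p}$ of its Sylow pro-$p$ subgroups. Suppose $\sigma_{G}$ characterizes $G$ within the class $\mathcal{C}$ (of profinite, resp.\ of pronilpotent, groups), and suppose for contradiction that $\pi=\{p:G_{p}\neq 1\}$ is infinite. Applying Proposition~\ref{products} to the infinite family $(G_{p})_{p\in\pi}$ yields distinct primes $q,r\in\pi$ with
\[
H:=G_{r}\times\prod_{p\in\pi\smallsetminus\{q\}}G_{p}\ \models\ \sigma_{G}.
\]
Since $H$ is a direct product of pro-$p$ groups (for various primes $p$), it is pronilpotent, so $H\in\mathcal{C}$ and hence $H\cong G$; but the Sylow pro-$q$ subgroup of $H$ is trivial (no factor of $H$ is a pro-$q$ group, as $q\neq r$ and $q\notin\pi\smallsetminus\{q\}$), whereas that of $G$ is $G_{q}\neq 1$ — a contradiction. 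The doubled factor $G_{r}$ plays no role: all that is used is that the $q$-part is lost.

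\emph{Part (ii).} Fix a $\mathbb{Z}[1/N]$-model of $\mathfrak{G}$, so that $\mathfrak{G}(\mathbb{Z}_{p})$ is defined for $p\nmid N$ and $\mathfrak{G}(\mathbb{Z}_{\pi})=\prod_{p\in\pi}\mathfrak{G}(\mathbb{Z}_{p})$; for general $\pi$ one collects the finitely many primes $p\mid N$ into a single, topologically finitely generated, factor, which can be dealt with separately. Assume $\pi$ infinite, and let $\sigma$ characterize $\mathfrak{G}(\mathbb{Z}_{\pi})$ among profinite groups. By Proposition~\ref{products} applied to $(\mathfrak{G}(\mathbb{Z}_{p}))_{p\in\pi}$ there are $q\neq r$ with $\mathfrak{G}(\mathbb{Z}_{r})\times\prod_{p\in\pi\smallsetminus\{q\}}\mathfrak{G}(\mathbb{Z}_{p})\models\sigma$, hence
\[
\mathfrak{G}(\mathbb{Z}_{q})\times\mathfrak{G}(\mathbb{Z}_{\pi\smallsetminus\{q\}})\ \cong\ \mathfrak{G}(\mathbb{Z}_{r})\times\mathfrak{G}(\mathbb{Z}_{\pi\smallsetminus\{q\}}).
\]
It remains to refute this (for $q\nmid N$, say) by exhibiting an isomorphism invariant of profinite groups that detects the factor $\mathfrak{G}(\mathbb{Z}_{q})$ while being blind to all the factors $\mathfrak{G}(\mathbb{Z}_{\ell})$, $\ell\neq q$.

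If $\mathfrak{G}$ is \emph{not virtually soluble}, I would use finite quotients: for $q\nmid N$ the map $\mathfrak{G}(\mathbb{Z}_{q})\twoheadrightarrow\mathfrak{G}(\mathbb{F}_{q})$ has a quotient $T_{q}$ that is a finite simple group of Lie type in characteristic $q$ — a simple factor of the adjoint semisimple part of $\mathfrak{G}(\mathbb{F}_{q})$ (for the finitely many small $q$ one uses instead a high congruence quotient $\mathfrak{G}(\mathbb{Z}/q^{k})$, whose normal $q$-subgroup has rank $\dim\mathfrak{G}>\mathrm{rk}\,\mathfrak{G}$, larger than the rank of any Sylow pro-$q$ subgroup of $\mathfrak{G}(\mathbb{F}_{\ell})$ with $\ell\neq q$). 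Since $\mathfrak{G}(\mathbb{Z}_{\ell})$ is virtually pro-$\ell$, its continuous composition factors lie among $C_{\ell}$ and the composition factors of $\mathfrak{G}(\mathbb{F}_{\ell})$, so $T_{q}$ is not a quotient of $\mathfrak{G}(\mathbb{Z}_{\ell})$ for any $\ell\neq q$. A continuous surjection from a direct product onto a finite non-abelian simple group factors through one factor; hence the number $N_{T_{q}}(K)$ of open normal subgroups of $K$ with quotient $\cong T_{q}$ is an isomorphism invariant, positive for $K=\mathfrak{G}(\mathbb{Z}_{\pi})$ and zero for $K=\mathfrak{G}(\mathbb{Z}_{r})\times\mathfrak{G}(\mathbb{Z}_{\pi\smallsetminus\{q\}})$ — contradiction. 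If $\mathfrak{G}$ is \emph{virtually soluble} of positive dimension, $\mathfrak{G}(\mathbb{Z}_{p})$ has no such quotients, and instead I would use its largest normal pronilpotent subgroup $F(\mathfrak{G}(\mathbb{Z}_{p}))$: it is closed and characteristic, its Sylow pro-$p$ part is infinite (containing a uniform pro-$p$ group of positive dimension — from the unipotent radical, or from a maximal torus when $\mathfrak{G}$ is a torus), and its Sylow pro-$\ell$ parts for $\ell\neq p$ are finite. One checks, as in Part~(i), that $F\left(\prod_{p}\mathfrak{G}(\mathbb{Z}_{p})\right)=\prod_{p}F(\mathfrak{G}(\mathbb{Z}_{p}))$; then the torsion-free $\mathbb{Z}_{q}$-rank $\rho_{q}(K)$ of the Sylow pro-$q$ subgroup of $F(K)$ modulo the closure of its torsion is an isomorphism invariant, the finite $q$-groups coming from the other factors are annihilated in this quotient, and $\rho_{q}(\mathfrak{G}(\mathbb{Z}_{\pi}))\geq 1>0=\rho_{q}(\mathfrak{G}(\mathbb{Z}_{r})\times\mathfrak{G}(\mathbb{Z}_{\pi\smallsetminus\{q\}}))$.

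The logical content is carried entirely by Proposition~\ref{products}; the substance, and what I expect to be the main obstacle, lies in the group theory — in verifying that the chosen invariant is genuinely local to $q$. The two delicate points are (a) in the non-soluble case, controlling composition factors across different residue characteristics, which ultimately rests on the finiteness of the list of coincidences among finite simple groups of Lie type in distinct characteristics; and (b) in the soluble case, checking that the Fitting subgroup of the infinite product is computed factor by factor and that the finite $q$-groups contributed by the other factors really vanish in the ``mod torsion'' quotient. The choice of $N$, the treatment of the collected block of bad primes, and the attendant bookkeeping are routine.
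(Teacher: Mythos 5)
Part (i) of your proposal is correct and is essentially the argument in the paper: after invoking Proposition~\ref{products}, the paper distinguishes $H$ from $G$ by noting $H=H^{q}$ while $G\neq G^{q}$, which is just your observation that $H$ has lost its Sylow pro-$q$ subgroup.

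For part (ii), your reduction via Proposition~\ref{products} to refuting
\[
\mathfrak{G}(\mathbb{Z}_{r})\times\mathfrak{G}(\mathbb{Z}_{\pi\smallsetminus\{q\}})\;\cong\;\mathfrak{G}(\mathbb{Z}_{\pi})
\]
matches the paper, but the refutation itself --- which is the entire mathematical content of the statement --- is not carried out. You offer two case-dependent strategies and yourself label their key steps ``the main obstacle'' and ``delicate points''. Concretely: in the non-soluble case you need $T_{q}$ not to occur as a composition factor of $\mathfrak{G}(\mathbb{F}_{\ell})$ for any $\ell\in\pi\smallsetminus\{q\}$, which rests on CFSG-level facts about coincidences of simple groups of Lie type across characteristics, and the prime $q$ is handed to you by the pigeonhole --- you cannot simply assume it is large; your fallback for small $q$ (comparing $\dim\mathfrak{G}$ with cross-characteristic Sylow ranks of $\mathfrak{G}(\mathbb{F}_{\ell})$) is asserted rather than proved, and it is not even clear how that rank comparison detects a direct factor of the infinite product. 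In the soluble case, the existence and product-compatibility of the pronilpotent radical and the behaviour of the ``modulo closure of torsion'' quotient are plausible but unverified (note, e.g., that an infinite product of finite $q$-groups is not torsion, so you really do need the closure, and you must check the $p=q$ factor survives it). As written this is an outline with acknowledged holes, not a proof.

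The paper avoids all of this with a single uniform, elementary argument that you may want to compare with. Compose the isomorphism with the coordinate projection to get a continuous surjection $\mu:H\rightarrow\mathfrak{G}(\mathbb{Z}_{q})$, where $H=\mathfrak{G}(\mathbb{Z}_{r})\times\prod_{\ell\in\pi\smallsetminus\{q\}}\mathfrak{G}(\mathbb{Z}_{\ell})$. Each factor $\mathfrak{G}(\mathbb{Z}_{\ell})$ of $H$ contains its principal congruence subgroup $\mathfrak{G}^{1}(\mathbb{Z}_{\ell})$, an open pro-$\ell$ subgroup with $\ell\neq q$; its image under $\mu$ is pro-$\ell$ and therefore meets the open torsion-free pro-$q$ subgroup $\mathfrak{G}^{1}(\mathbb{Z}_{q})$ trivially, so $\mathfrak{G}(\mathbb{Z}_{\ell})\mu$ is finite and normal, hence contained in the maximal finite normal subgroup $K$ of $\mathfrak{G}(\mathbb{Z}_{q})$. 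Since the factors topologically generate $H$, this forces $\mathfrak{G}(\mathbb{Z}_{q})=K$ to be finite, contradicting $\dim\mathfrak{G}>0$. No case division, no classification input, no Fitting subgroups.
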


\noindent Here $\mathbb{Z}_{\pi}=\prod_{p\in\pi}\mathbb{Z}_{p}.$

\medskip

Our main results tend to suggest that for a wide range of profinite groups
these are the only obstructions. For example, the converse of Proposition
\ref{negprod}(ii) holds when $\mathfrak{G}$ is a simple Chevalley group; for
$\mathfrak{G=}$ $\mathrm{PSL}_{n}$ this is a consequence of Theorem
\ref{slsl}, for other cases see \cite{ST}, Theorem 1.5 (and Remark(ii),
\S \ref{profringsec} below).

However, there are two caveats.

\emph{One}: it is obvious that two groups that are isomorphic (as abstract
groups) must satisfy the same first-order sentences; it is possible for
non-isomorphic profinite groups to be isomorphic as abstract groups (cf.
\cite{K1}), and such groups cannot be FA as profinite groups. In general,
there is a strict hierarchy of implications for a profinite group $G$:

\begin{itemize}
\item $G$ \emph{is FA} $\ \Longrightarrow~G$ \emph{is quasi-axiomatizable}
$\Longrightarrow$ $G$ \emph{is `algebraically rigid',}
\end{itemize}

\noindent the third condition meaning:\ \emph{any profinite group abstractly
isomorphic to} $G$ \emph{is topologically isomorphic to} $G$.

The problem does not arise for groups that are `strongly complete': this means
that \emph{every subgroup of finite index is open}. Every group homomorphism
from such a group to any profinite group is continuous; in fact these groups
are also quasi-axiomatizable (see \cite{H}). Every finitely generated
profinite group is strongly complete (see Theorem \ref{NSSC} below). Most of
the profinite groups we consider in this paper are finitely generated (as
topological groups), but not all (see Cor. \ref{sep}).

\emph{Two: } There are only countably many sentences, but uncountably many
groups, even among those that avoid the above obstructions. We exhibit in
\S \ref{negsec} a family of such pro-$p$ groups parametrized by the $p$-adic integers.

There are various ways around this problem. One may restrict attention to the
groups that have a \emph{strictly finite presentation}: a profinite (or
pro-$p)$ group $G$ has this property if it has a finite presentation as a
profinite (or pro-$p)$ group in which the relators are finite group words;
equivalently, if $G$ is the completion of a finitely presented abstract group.
In \S \ref{present} we define a more general concept called $L$%
\emph{-presentation}, which allows for groups like $C_{p}\widehat{\wr
}\,\mathbb{Z}_{p}$, the pro-$p$ completion of the aforementioned $C_{p}%
\wr\mathbb{Z}$: this is not strictly finitely presentable, but it is finitely
presented within the class of metabelian pro-$p$ groups (cf. \cite{Ha} for
abstract metabelian groups); Proposition \ref{wr} shows that it is FA in the
class of all profinite groups. An $L$-presentation is like a finite
presentation in which the usual relations may be replaced by any sentence in
the language $L$.

Another way is to enlarge the first-order language: given a finite set of
primes~$\pi$, we take $L_{\pi}$ to be the language $L_{\mathrm{gp}}$ augmented
with extra unary function symbols~$P_{\lambda},$ one for each $\lambda
\in\mathbb{Z}_{\pi}=\prod_{p\in\pi}\mathbb{Z}_{p}$; for a group element $g$,
$P_{\lambda}(g)$ is interpreted as the profinite power $g^{\lambda}$. We shall
see that many pro-$p$ groups are indeed FA (wrt $L_{\{p\}}$) within the class
of pro-$p$ groups.

\subsection{Bi-interpretation}

We shall explore two different ways of showing that profinite groups are FA.
The first is a model-theoretic procedure known as \emph{bi-interpretation},
first used to show that certain groups are finitely axiomatizable by Khelif
\cite{Kh}. This is defined e.g. in \cite{P}, Def. 3.1
; further applications of bi-intepretation are described in \cite{NDG}, \S 7.7
and in \cite{AKS}, \S 2.

If (a group) $A$ is interpreted in (a ring) $B$ and $B$ is interpreted in $A$,
we have an `avatar' $\widetilde{A}$ of $A$ in some $B^{(n)}$, and an avatar
$\widetilde{B}$ of $B$ in some $A^{(m)}$. Composing these procedures produces
another avatar $\widetilde{\widetilde{A}}$ of $A$ in $A^{(mn)}$, and an
isomorphism from $A$ to $\widetilde{\widetilde{A}}$. Similarly, one obtains an
isomorphism from $B$ to $\widetilde{\widetilde{B}}\subseteq B^{(nm)}$. If
these two isomorphisms are definable (in the language of groups, respectively
rings), then $A$ and $B$ are said to be \emph{bi-interpretable}.

When dealing with profinite groups and profinite rings, the definition has to
be tweaked to take account of the topology. We postpone the precise
definitions to \S \textbf{\ref{bisecgr}}, where the following result is established:

\begin{itemize}
\item \emph{Let }$R$\emph{\ be a profinite ring and }$G$ \emph{\ a profinite
group. If } $G$\emph{\ is `topologically bi-interpretable' with }%
$R$\emph{\ then }$G$ \emph{is FA in profinite groups iff} $R$\emph{\ is FA in
profinite rings, assuming algebraic rigidity where appropriate.}
\end{itemize}

As an illustration of the method, we prove

\begin{theorem}
\label{sl2rt}Let $R$ be a complete, unramified regular local ring with finite
residue field $\kappa$. Then each of the profinite groups $\mathrm{Af}_{1}%
(R)$, $\mathrm{SL}_{2}(R)$ is FA in the class of profinite groups, assuming in
the second case that $\mathrm{char}(\kappa)$ is odd.
\end{theorem}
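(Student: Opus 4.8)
The plan is to deploy the bi-interpretation principle highlighted in the excerpt: to show $\mathrm{SL}_2(R)$ (resp. $\mathrm{Af}_1(R)$) is FA in profinite groups, it suffices to establish (a) that the group is topologically bi-interpretable with the ring $R$, (b) that $R$ is FA in the class of profinite rings, and (c) algebraic rigidity of the group — which, since $R$ is complete local Noetherian with finite residue field, makes $\mathrm{SL}_2(R)$ and $\mathrm{Af}_1(R)$ finitely generated profinite groups, hence strongly complete by Theorem \ref{NSSC}, so algebraic rigidity is automatic. Step (b) should follow from the structure theory of complete unramified regular local rings: by Cohen's structure theorem such an $R$ is a power series ring $\Lambda[[t_1,\dots,t_d]]$ over a complete DVR $\Lambda$ which is either $\mathcal{O}$ (a finite unramified extension of $\mathbb{Z}_p$) or $\kappa[[t_0]]$; one interprets the residue field $\kappa$, recovers the uniformizer, and then recognizes the power-series variables by a first-order scheme. (If this is not proved from scratch here, it is the natural ring-theoretic input to cite.)

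The heart of the argument is step (a): constructing the topological bi-interpretation between $\mathrm{SL}_2(R)$ and $R$. For $\mathrm{Af}_1(R)$, the affine group of the line, this is the cleanest case — $\mathrm{Af}_1(R)$ is the semidirect product $R \rtimes R^\times$, the translation subgroup is the derived subgroup (or an easily-defined normal subgroup), and the additive group $R^+$ together with the multiplicative action of $R^\times$ on it lets one reconstruct the full ring multiplication: $x \cdot y$ is read off from how the unit-part acts, after handling non-units by a definable trick (e.g. writing elements as differences or using the fact that $1+\mathfrak{m}$ consists of units). First I would carry out this $\mathrm{Af}_1$ interpretation carefully, checking that all the defining formulas can be taken uniformly and that the comparison isomorphisms $R \to \widetilde{\widetilde{R}}$ and $G \to \widetilde{\widetilde{G}}$ are definable, and that everything respects the profinite topology (open subgroups pull back to open subgroups). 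Then I would pass to $\mathrm{SL}_2(R)$: here one uses the classical Chevalley-group machinery — the root subgroups $U_{12} = \{\,1 + r e_{12} : r \in R\,\}$ and $U_{21}$ are definable (e.g. as centralizers of suitable elements or via commutator relations), each is isomorphic to $R^+$, the diagonal torus acts on them by multiplication, and the Steinberg commutator relations recover the ring multiplication. The restriction $\mathrm{char}(\kappa)$ odd enters precisely at the point where one needs the torus — or at least the element $\mathrm{diag}(-1,-1)$ versus $\mathrm{diag}(1,1)$, i.e. $2 \in R^\times$ — to separate root subgroups and to make the relevant definitions go through (in characteristic $2$ the group $\mathrm{SL}_2$ degenerates and the standard interpretation of the torus fails).

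The main obstacle I anticipate is the topological bookkeeping in the bi-interpretation, rather than any single hard algebraic fact. Over an abstract field the interpretation of $\mathrm{SL}_2$ in its base is classical, but over a profinite ring $R$ one must ensure: that the interpreting formulas are the \emph{same} formulas that work for every such $R$ (so that a single sentence $\sigma_G$ results); that non-units of $R$ do not obstruct the recovery of multiplication (this is where completeness and the local structure are used — $R = \mathfrak{m} \cup R^\times$ with $\mathfrak{m}$ topologically nilpotent, and $R^\times$ is definable); and that the bi-interpretation is \emph{topological} in the sense of \S\ref{bisecgr}, meaning the interpreted copy of $R$ inside a Cartesian power of $G$ carries the correct profinite topology and the comparison maps are homeomorphisms. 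Once these are in place, combining (a)–(c) with the bi-interpretation principle yields sentences $\sigma_{\mathrm{Af}_1(R)}$ and $\sigma_{\mathrm{SL}_2(R)}$ characterizing the groups among all profinite groups, completing the proof.
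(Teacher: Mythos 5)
Your overall strategy is exactly the paper's: prove $R$ is FA via Cohen's structure theorem (Theorem \ref{regloc}), build a topological bi-interpretation of each group with $R$ (using the root/translation subgroup to carry $(R,+)$ and the torus action to recover multiplication), and invoke Theorem \ref{biint}(i). Your account of where $\mathrm{char}(\kappa)$ odd enters for $\mathrm{SL}_2$ is close to the mark: the paper needs every element of $R$ to be $\xi^2$ or $\xi^2-\eta^2$ with $\xi,\eta\in R^\ast$, because the torus acts on root subgroups by \emph{squares} of units, and multiplication is then recovered from the squaring map via $2xy=(x+y)^2-x^2-y^2$, which requires $2$ invertible.

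However, there is one genuine gap: your step (c). You assert that $\mathrm{Af}_1(R)$ is a finitely generated profinite group, hence strongly complete, hence algebraically rigid. This is false in general, and the paper makes a point of it: $\mathrm{Af}_1(R)$ is finitely generated if and only if $1+\mathfrak{m}$ is a finitely generated pro-$p$ group, which among the rings of Theorem \ref{cohenthm} happens only for $R=\mathfrak{o}_q$. In particular $\mathrm{Af}_1(\mathbb{F}_p[[t]])$ is \emph{not} finitely generated (this is precisely what makes Corollary \ref{sep} work), so Theorem \ref{NSSC} does not apply and strong completeness fails. The paper must therefore prove algebraic rigidity of $\mathrm{Af}_1(R)$ by a direct argument (Proposition \ref{aff}): given an abstract isomorphism $\theta$ onto a profinite group and an open normal subgroup $\widetilde N$ of the target, one shows $\widetilde N\theta^{-1}\cap U=u(B)$ with $B$ an additive subgroup of finite index invariant under multiplication by $R^\ast$, hence an ideal, hence open by nilpotence of the Jacobson radical in the finite quotients (Lemma \ref{niljac}); one then bootstraps openness from $U$ to $H$ via centralizers of the finite quotients $U/U(n)$. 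Without some such argument your proof of the $\mathrm{Af}_1$ case is incomplete, since FA in the class of profinite groups genuinely requires ruling out a profinite group that is abstractly but not topologically isomorphic to $\mathrm{Af}_1(R)$. (For $\mathrm{SL}_2(R)$ your claim is fine: that group is finitely generated, as the paper verifies in Proposition \ref{linsc} using the Lubotzky--Shalev result on $R$-perfect groups.)
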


\noindent Here $\mathrm{Af}_{1}(R)=(R,+)\rtimes R^{\ast}$ denotes the
$1$-dimensional affine group over $R$. The theorem combines Theorems
\ref{regloc}, \ref{affine} and \ref{sl2_biint copy(1)}, proved below. This
result is extended in \cite{ST} to Chevalley groups of rank at least $2$ over
a more general class of rings.

Although we do not pursue this aspect, it may be of interest to mention that
the proof of Theorem \ref{sl2rt} can be adapted to show that the respective
groups are uniformly bi-intepretable with the corresponding rings, i.e. the
defining formulae are independent of the ring.

\medskip In Theorem~\ref{sl2rt} the rings in question are the following:

\begin{itemize}
\item power series rings in finitely many variables over a finite field

\item power series rings in finitely many variables over an unramified
$p$-adic ring $\mathbb{Z}_{p}[\zeta]$ ($\zeta$ a $(p^{f}-1)$th root of unity).
\end{itemize}

While the groups $\mathrm{SL}_{2}(R)$ (for these rings $R$) are finitely
generated as profinite groups (see Prop.\ \ref{linsc} below), the groups
$\mathrm{Af}_{1}(R)$ are \emph{not}, in most cases (see the remark~following
the proof of Prop. \ref{aff}); this shows that a profinite group can be very
far from finitely presented and still be FA. It also establishes

\begin{corollary}
\label{sep}The classes of f.g. profinite groups and all profinite groups are
first-order separable, with witness group $\mathrm{Af}_{1}(\mathbb{F}%
_{p}[[t]])$.
\end{corollary}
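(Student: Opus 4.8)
The plan is to apply the witness-for-separation recipe described in the introduction, with witness group $G=\mathrm{Af}_{1}(\mathbb{F}_{p}[[t]])$. First I would record that $R=\mathbb{F}_{p}[[t]]$ is a power series ring in one variable over the finite field $\mathbb{F}_{p}$, hence a complete, unramified regular local ring with finite residue field $\kappa=\mathbb{F}_{p}$; so $R$ satisfies the hypotheses of Theorem~\ref{sl2rt}, and $\mathrm{Af}_{1}(R)=(R,+)\rtimes R^{\ast}$ is a profinite group. Note that the parenthetical restriction $\mathrm{char}(\kappa)$ odd in Theorem~\ref{sl2rt} concerns only $\mathrm{SL}_{2}$, so it does not apply here and $G$ is FA in the class $\mathcal{D}$ of all profinite groups for every prime~$p$.

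Next I would invoke the remark following the proof of Proposition~\ref{aff}, which shows that $\mathrm{Af}_{1}(\mathbb{F}_{p}[[t]])$ is \emph{not} finitely generated as a topological group; thus $G$ lies in $\mathcal{D}$ but not in the class $\mathcal{C}$ of (topologically) finitely generated profinite groups. Let $\sigma_{G}\in L_{\mathrm{gp}}$ be a sentence witnessing that $G$ is FA in $\mathcal{D}$, so that for every profinite group $H$ one has $H\models\sigma_{G}$ if and only if $H$ is topologically isomorphic to $G$. Put $\phi:=\lnot\sigma_{G}$.

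Finally I would check that $\phi$ first-order separates $\mathcal{C}$ from $\mathcal{D}$. Since $G\in\mathcal{D}$ and $G\models\sigma_{G}$, the sentence $\phi$ fails in $G$. Conversely, if $H$ is any finitely generated profinite group, then $H$ is not topologically isomorphic to $G$ (as $G$ is not finitely generated), hence $H\not\models\sigma_{G}$, i.e. $H\models\phi$; so $\phi$ holds in every member of $\mathcal{C}$. This is precisely the definition of first-order separation, with $G=\mathrm{Af}_{1}(\mathbb{F}_{p}[[t]])$ as witness group.

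There is no real obstacle beyond the cited inputs: all the content has been pushed into Theorem~\ref{sl2rt} (that $\mathrm{Af}_{1}(R)$ is FA in profinite groups) and into the failure of finite generation for $\mathrm{Af}_{1}(\mathbb{F}_{p}[[t]])$ established near Proposition~\ref{aff}. The only minor point requiring care is that for profinite groups the relevant notion of $\cong$ in the definition of FA is topological isomorphism, and that "finitely generated" throughout is meant topologically, so that $\mathcal{C}$ is genuinely a subclass of $\mathcal{D}$ and the argument above is consistent.
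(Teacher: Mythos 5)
Your proof is correct and follows exactly the route the paper intends: combine Theorem~\ref{sl2rt} (via Theorems~\ref{regloc} and~\ref{affine}) with the failure of finite generation recorded in the remark after Proposition~\ref{aff}, and then apply the witness-for-separation recipe from the introduction by taking $\phi=\lnot\sigma_{G}$. The paper leaves this assembly implicit, and your write-up supplies precisely the missing glue (including the correct observation that the oddness hypothesis in Theorem~\ref{sl2rt} concerns only $\mathrm{SL}_{2}$).
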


\subsection{$p$-adic analytic groups, and more}

The other approach to establishing that certain groups are FA is purely
group-theoretic; as such, it is limited to groups that are `not very big', in
a sense about to be clarified. A pro-$p$ group is an inverse limit of finite
$p$-groups, where by convention $p$ always denotes a prime. We observed above
that `involving too many primes' can be an obstruction to being FA. In fact
all our positive results concern groups that are virtually pro-$p$ (that is,
pro-$p$ up to finite index), or finite products of such groups.

The pro-$p$ groups in question are compact $p$-\emph{adic analytic groups}.
This much-studied class of groups can alternatively be characterized as the
\emph{virtually pro-}$p$ \emph{groups of} \emph{finite rank}; the profinite
group $G$ has finite \emph{rank} $r$ if every closed subgroup can be generated
by $r$ elements (`generated' will always mean: `generated topologically'). For
all this, see the book \cite{DDMS}, in particular Chapter 8.

The possibility of showing that (some of) these groups are FA rests on the
fact that they have a finite \emph{dimension}: this can be used rather like
the order of a finite group, to control when a group has no proper quotients
of the same `size'.

Let $\pi=\{p_{1},\ldots,p_{k}\}$ be a finite set of primes. A $\mathcal{C}%
_{\pi}$ \emph{group }is one of the form $G_{1}\times\cdots\times G_{k}$ where
$G_{i}$ is a pro-$p_{i}$ group for each $i$. A $\mathcal{C}_{\pi}$ group of
finite rank need not be strictly finitely presented, but it always has an
$L_{\pi}$ presentation (see Subection \ref{present}).

The first main result about $\mathcal{C}_{\pi}$ applies in particular to all
$p$-adic analytic pro-$p$ groups, but limits the universe:

\begin{theorem}
\label{frank}Every $\mathcal{C}_{\pi}$ group of finite rank is FA (wrt
$L_{\pi}$) in the class $\mathcal{C}_{\pi}$; if it has an $L_{\mathrm{gp}}%
$-presentation (e.g. if it is strictly finitely presented) then it is FA (wrt
$L_{\mathrm{gp}}$) in the class $\mathcal{C}_{\pi}$.
\end{theorem}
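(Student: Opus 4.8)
The plan is to encode the isomorphism type of a $\mathcal{C}_\pi$ group $G$ of finite rank $r$ by a single $L_\pi$-sentence that pins down a finite ``skeleton'' of $G$ together with enough quantified data to force any model to be topologically isomorphic to $G$. First I would exploit the structure theory of $p$-adic analytic groups from \cite{DDMS}: a $\mathcal{C}_\pi$ group of finite rank has an open normal \emph{uniform} (or at least powerful, finitely generated) pro-$p_i$ subgroup in each factor, hence $G$ has an open normal subgroup $U$ which is a product of uniform pro-$p_i$ groups, and $U$ has a well-defined dimension $d=\dim G$. The key point, as flagged in the introduction, is that dimension behaves like the order of a finite group: a proper quotient of $U$ by a nontrivial closed normal subgroup has strictly smaller dimension, and more generally a surjection $G\twoheadrightarrow H$ of $\mathcal{C}_\pi$ groups of finite rank with $\dim H=\dim G$ and $[H:\text{(its Frattini-type subgroup)}]$ matching must be an isomorphism. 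So the sentence must (a) fix the finite quotient $Q=G/U$ and the action data, and (b) say ``$U$ has dimension exactly $d$ and is generated in the appropriate powerful way by $r$ elements'', so that any $\mathcal{C}_\pi$ model $H\models\sigma_G$ admits a surjection onto $G$ which by the dimension bound is injective.

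The steps I would carry out, in order: (1) Reduce to the pro-$p$ case factor by factor — since $\pi$ is finite and fixed, the direct-product decomposition $G=\prod G_i$ is recoverable (each $G_i$ is the set of $p_i$-power-torsion-closure elements, expressible because in $\mathcal{C}_\pi$ the Sylow structure is rigid), so it suffices to axiomatize each $G_i$ within pro-$p_i$ groups and conjoin. (2) For a pro-$p$ group $G$ of finite rank, pick an open characteristic uniform subgroup $U=U(G)$; by finite rank there is a first-order (in $L_{\{p\}}$, using the power operators $P_\lambda$) description of ``$U$ is uniform of dimension $d$ generated by $x_1,\dots,x_r$'' — uniformity is a finite condition on commutators and $p$-th powers, and the $P_\lambda$ operators let us say every element of $U$ is uniquely $x_1^{\lambda_1}\cdots x_d^{\lambda_d}$. (3) Encode the finite group $G/U$ by its multiplication table, the conjugation action of coset representatives on the generators $x_i$ of $U$ (again using $P_\lambda$ to express the images as $\mathbb{Z}_p$-words in the $x_j$), and the values in $U$ of the relators of $G/U$ lifted to $G$. (4) Verify that these data constitute a \emph{presentation} of $G$ in $\mathcal{C}_\pi$: any $\mathcal{C}_\pi$ group $H$ satisfying $\sigma_G$ contains the corresponding configuration, yielding a continuous surjection $G\to H$; since $H$ also has rank $\le r$ and the sentence forces $\dim$ (of the uniform part) to be $\ge d$ while the surjection forces $\le d$, the kernel has dimension $0$, hence is trivial, so $H\cong G$. (5) For the last clause, observe that if $G$ has an $L_{\mathrm{gp}}$-presentation then all the ``$P_\lambda$-word'' data above can be replaced by ordinary group words, so $\sigma_G$ can be taken in $L_{\mathrm{gp}}$; strict finite presentability is the special case where the relators are group words to begin with.

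The main obstacle I expect is step (2)/(4): making precise, in the weak language $L_\pi$ (no reference to the topology, no variable ranging over $\mathbb{Z}_p$ except through the fixed operators $P_\lambda$ for \emph{individual} $\lambda$), the statement that the definable subgroup $U$ is \emph{exactly} $d$-dimensional — i.e. that it has ``no room'' for a nontrivial kernel. One cannot quantify over $\lambda\in\mathbb{Z}_p$, so ``$\{x_1^{\lambda_1}\cdots x_d^{\lambda_d}\}$ is all of $U$ and these are distinct'' has to be phrased via: $U$ is generated by $x_1,\dots,x_d$; the $x_i$ commute modulo $U^p[U,U]$ which coincides with $\Phi(U)$; $[U:\Phi(U)]=p^d$; and $U$ is torsion-free (powerful + torsion-free gives uniform, and uniform of Frattini-rank $d$ has dimension $d$). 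The torsion-freeness and the powerful condition are both finitely expressible, and the rank bound ``$\le r$'' built into $\mathcal{C}_\pi$-membership does the rest; checking that this package genuinely forces injectivity of $G\to H$ — rather than merely bounding dimensions — is where the argument needs care, and is the technical heart of the proof. I also need the auxiliary fact (provable from \cite{DDMS}, Ch. 8) that the uniform subgroup and its dimension can be chosen \emph{characteristically}, so that the sentence is about a canonical object and not an arbitrary choice, which is what makes the presentation-lifting in step (4) go through.
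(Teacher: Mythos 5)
Your overall strategy --- obtain an epimorphism $G\twoheadrightarrow H$ from presentation data attached to a definable open uniform part, then kill the kernel by matching dimension and index --- is essentially the paper's (Theorem \ref{newT1}, via Lemma \ref{sfpres}, Lemma \ref{epi} and Corollary \ref{dims}). But your step (1) contains a genuine gap. You reduce to one prime at a time by asserting that the Sylow decomposition $G=\prod G_i$ is first-order recoverable (``each $G_i$ is the set of $p_i$-power-torsion-closure elements''). The paper explicitly warns (Remark at the end of \S\ref{powsec}) that the primary components of a $\mathcal{C}_\pi$ group are \emph{not} in general definable in $L_{\mathrm{gp}}$ (deduced from \cite{TZ}, Thm.\ 3.3.5), and that this is precisely why the argument cannot be reduced to the single-prime case. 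In $L_\pi$ the reduction can be repaired: the idempotents $e_i\in\mathbb{Z}_\pi$ give function symbols $P_{e_i}$ whose images are the Sylow subgroups. But then your step (5) fails for the second clause of the theorem: the occurrences of $P_{e_i}$ needed to cut out the components are not part of the given $L_{\mathrm{gp}}$-presentation and cannot be replaced by group words, so you do not get an $L_{\mathrm{gp}}$-sentence. The paper avoids the reduction altogether: it works with the single verbal subgroup $G^q=\mu_{f,q}(G)$ (definable without parameters once the power word $x^{q}$ has finite width), imposes that this subgroup is \emph{semi-}powerful, torsion-free, of prescribed index $m$ in $G$ and of index $p_1^{d_1}\cdots p_k^{d_k}$ over its Frattini subgroup, and runs the dimension count on the whole semi-uniform subgroup at once via $\mathrm{Dim}$.

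Two smaller points. First, ``the kernel has dimension $0$, hence is trivial'' is only valid for the part of the kernel inside the uniform subgroup (a closed normal subgroup of dimension $0$ is merely finite, by Corollary \ref{finnorm}); triviality of the rest requires matching $\left\vert G:U\right\vert=\left\vert H:U(H)\right\vert$, which the paper enforces with $\mathrm{ind}^{\ast}(\mu_{f,q};m)$. You do fix the finite quotient in step (3), so this is recoverable, but the deduction should be spelled out: $\ker\theta\cap G^{q}=1$ first, then $\ker\theta\le G^{q}$ from the index equality. Second, the definability of your characteristic open subgroup and of ``generated by $x_1,\dots,x_d$'' rests on the finite width of power words and commutator words (Theorem \ref{NSSC}, Proposition \ref{powpi}, Proposition \ref{defgen}); this is a nontrivial input that your sketch assumes silently.
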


This will be the key to several theorems showing that groups in certain
limited classes of $\mathcal{C}_{\pi}$ groups are FA among all profinite
groups. The first of these is a profinite analogue of \cite{OS}, Theorem 10.

\begin{theorem}
\label{nilp} Let $G$ be a nilpotent $\mathcal{C}_{\pi}$ group, and suppose
that $G$ has an $L_{\mathrm{gp}}$-presentation. Then $G$ is FA in the class of
all profinite groups if and only if ${\mathrm{Z}(G)\subseteq\Delta(G)}$.
\end{theorem}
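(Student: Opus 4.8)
The plan is to prove both directions separately, with the nontrivial content lying in the ``if'' direction. The ``only if'' direction is immediate from the obstruction results already recorded: if $\mathrm{Z}(G)\nsubseteq\Delta(G)$, then by Theorem~\ref{neg1} any sentence $\phi$ with $G\models\phi$ also holds in $G\times C_p$ for almost all primes $p$; since $G$ is a $\mathcal{C}_\pi$ group and $p\notin\pi$ for all but finitely many $p$, the group $G\times C_p$ is a profinite group not isomorphic to $G$ (it has a nontrivial Sylow pro-$p$ subgroup for a prime outside $\pi$), so no such $\phi$ can axiomatize $G$ among profinite groups.

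For the ``if'' direction, assume $\mathrm{Z}(G)\subseteq\Delta(G)$ and that $G$ has an $L_{\mathrm{gp}}$-presentation. By Theorem~\ref{frank}, $G$ is already FA (wrt $L_{\mathrm{gp}}$) within the class $\mathcal{C}_\pi$; say the sentence $\sigma$ does this job. The task is to upgrade $\sigma$ to a sentence $\sigma^{*}$ that pins $G$ down among \emph{all} profinite groups. First I would arrange that $\sigma^{*}$ forces any model $H$ to be pronilpotent: nilpotency of class $c$ for some fixed $c$ (the class of $G$) is expressible by a single $L_{\mathrm{gp}}$-sentence, and a pronilpotent profinite group is the direct product of its Sylow pro-$p$ subgroups. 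So any profinite $H\models(\text{nilpotent of class }c)$ is automatically a (possibly infinite) product $\prod_p H_p$ over all primes. The remaining work is to use first-order conditions to (a) kill the Sylow pro-$p$ subgroups for $p\notin\pi$, reducing $H$ to a $\mathcal{C}_\pi$ group, and then (b) invoke $\sigma$. For step (a) the key device is the hypothesis $\mathrm{Z}(G)\subseteq\Delta(G)$: this says the centre of $G$ is torsion, hence (being a closed subgroup of a $\mathcal{C}_\pi$ group of, a priori, possibly infinite rank — but here it is finitely generated since $G$ has an $L_{\mathrm{gp}}$-presentation, so the centre is finitely generated and torsion, hence finite) the centre $\mathrm{Z}(G)$ is a \emph{finite} group, a product of $p$-groups for $p\in\pi$ only. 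I would express by a sentence that the centre has a specified finite order $m$, with $m$ divisible only by primes in $\pi$; in a pronilpotent product $\prod_p H_p$ the centre is $\prod_p \mathrm{Z}(H_p)$, so a nontrivial $H_p$ with $p\notin\pi$ would either be abelian (contributing a nontrivial central $p$-part, contradicting $p\nmid m$) or contribute in some controlled way. To rule out nonabelian $H_p$ for $p \notin \pi$ one uses that a nontrivial pro-$p$ group has nontrivial centre only if... — more carefully, the clean statement is: $p$-groups always have nontrivial centre, so any nontrivial $H_p$ forces $p\mid|\mathrm{Z}(H)|$, and fixing $|\mathrm{Z}(H)|=m$ with $p\nmid m$ for $p\notin\pi$ forces $H_p=1$ for all $p\notin\pi$. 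Thus $H$ is a $\mathcal{C}_\pi$ group.

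Having forced $H\in\mathcal{C}_\pi$, I set $\sigma^{*}:=(\text{nilpotent class }\le c)\wedge(|\mathrm{Z}|=m)\wedge\sigma$, where $\sigma$ is the $\mathcal{C}_\pi$-axiomatizing sentence from Theorem~\ref{frank}. Then $H\models\sigma^{*}$ implies $H\in\mathcal{C}_\pi$ and $H\models\sigma$, whence $H\cong G$; conversely $G\models\sigma^{*}$ since $G$ is nilpotent of class $\le c$, has central order $m$, and models $\sigma$. The main obstacle I anticipate is the bookkeeping in step (a): making sure the first-order condition on the centre genuinely controls \emph{all} the extra Sylow subgroups simultaneously, and confirming that $\mathrm{Z}(G)$ really is finite under the standing hypotheses (this needs that an $L_{\mathrm{gp}}$-presented $\mathcal{C}_\pi$ group is finitely generated as a profinite group, so a torsion closed central subgroup is finite). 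A secondary subtlety is that $\sigma$ from Theorem~\ref{frank} is only guaranteed to separate $G$ \emph{within} $\mathcal{C}_\pi$, so it is essential that the conjuncts preceding it already trap any model inside $\mathcal{C}_\pi$ before $\sigma$ is consulted; I would double-check there is no circularity there. The parallel statement for the class of pronilpotent groups is then immediate, since every model of $\sigma^{*}$ is in particular pronilpotent and the same argument applies verbatim.
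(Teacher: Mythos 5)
Your ``only if'' direction is correct and is exactly the paper's argument. The ``if'' direction, however, contains a genuine gap at its central step: you assert that $\mathrm{Z}(G)\subseteq\Delta(G)$ ``says the centre of $G$ is torsion'', hence (by finite generation) finite. This misreads the hypothesis. $\Delta(G)$ is not the torsion subgroup of $G$; it is the preimage in $G$ of the torsion subgroup of $G/G'$, so it contains all of $G'$. The condition $\mathrm{Z}(G)\subseteq\Delta(G)$ only says that $\mathrm{Z}(G)/(\mathrm{Z}(G)\cap G')$ is periodic (the O-S condition), and it is perfectly compatible with $\mathrm{Z}(G)$ being infinite: for $G=\mathrm{UT}_3(\mathbb{Z}_p)$ — the motivating example for this whole theorem — one has $\mathrm{Z}(G)=G'\cong\mathbb{Z}_p$, so the hypothesis holds but the centre is torsion-free and infinite. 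Consequently there is no finite $m$ with $|\mathrm{Z}(H)|=m$ to write into your sentence $\sigma^{*}$, and your mechanism for killing the Sylow pro-$p$ subgroups with $p\notin\pi$ collapses precisely on the groups the theorem is chiefly about.

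The paper gets around this with two weaker, but correct, first-order constraints. First, a sentence $\psi_\pi$ (Lemma~\ref{prop}) built from the second centre: for $x$ in a Sylow pro-$q$ subgroup with $q\notin\pi$ one has $x=x^{p\lambda}$ for $p\in\pi$, which forces $[Z_2(H),x]\subseteq[Z_2(H),x^p]$; the sentence $\psi_\pi$ then forces $[Z_2(H),x]=1$, and in a nilpotent group this drives the whole Sylow pro-$q$ subgroup into the centre, so $H/\mathrm{Z}(H)$ is pro-$\pi$. Second, the finiteness that \emph{is} available — $\mathrm{Z}(G)/(\mathrm{Z}(G)\cap G')$ is periodic of finite rank, hence finite of exponent $q$ — yields a sentence $\theta_q$ asserting $\mathrm{Z}(H)^q\leq H'$ (using bounded commutator width from Lemma~\ref{dg}). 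Together these give that $H/H'$ is pro-$\pi$, and nilpotency then forces $H$ itself to be pro-$\pi$, i.e.\ $H\in\mathcal{C}_\pi$, at which point $\sigma_G$ from Theorem~\ref{frank} applies. Your overall architecture (trap $H$ in $\mathcal{C}_\pi$ first, then invoke the $\mathcal{C}_\pi$-axiomatization) is the right one and matches the paper; the missing ingredient is a correct first-order substitute for your false ``finite centre'' claim, which is exactly what $\psi_\pi$ and $\theta_q$ supply.
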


\noindent(The hypothesis implies that $G$ is f.g.; as a product of finitely
many nilpotent pro-$p$ groups, $G$ then has finite rank.)

Note that by Proposition \ref{negprod}(i), both results would fail if $\pi$
were an infinite set of primes.

\medskip

In \S \ref{pronilpsec} we establish some results intermediate between the last
two, characterizing those $\mathcal{C}_{\pi}$ groups of finite rank that are
FA in the class of all \emph{pronilpotent} groups.

The final main result shows how these methods may be applied to $p$-adic
analytic groups that are far from nilpotent.

\begin{theorem}
\label{slsl}Let $n\geq2$ and let $p$ be an odd prime such that $p\nmid n$.
Then each of the groups%
\[
\mathrm{SL}_{n}^{1}(\mathbb{Z}_{p}),~\mathrm{SL}_{n}(\mathbb{Z}_{p}%
),~\mathrm{PSL}_{n}(\mathbb{Z}_{p})
\]
is FA in the class of profinite groups.
\end{theorem}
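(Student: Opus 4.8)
The plan is to exhibit each of the three groups as topologically bi-interpretable with the profinite ring $\mathbb{Z}_p$, and then invoke the bi-interpretation principle quoted in \S\ref{bisecgr}: since $\mathbb{Z}_p$ is FA in the class of profinite rings (indeed it is a complete unramified regular local ring with residue field $\mathbb{F}_p$, so Theorem \ref{regloc} applies), each group will be FA in the class of profinite groups — \emph{provided} we can discharge the algebraic-rigidity proviso. In fact there is a shortcut available that bypasses bi-interpretation in part: the groups $\mathrm{SL}_n^1(\mathbb{Z}_p)$ and $\mathrm{SL}_n(\mathbb{Z}_p)$ are finitely generated profinite groups (see Prop.\ \ref{linsc}), and $\mathrm{PSL}_n(\mathbb{Z}_p)$ is a quotient of the latter by a finite central subgroup, hence also f.g.; by Theorem \ref{NSSC} all three are strongly complete, so algebraic rigidity is automatic and every abstract isomorphism between them and another profinite group is continuous. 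This reduces the whole theorem to producing the bi-interpretations.

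First I would handle $G = \mathrm{SL}_n^1(\mathbb{Z}_p)$, the first congruence subgroup, which is a uniform pro-$p$ group of dimension $n^2-1$ and is the most hands-on case: its Lie algebra $\mathfrak{sl}_n(\mathbb{Z}_p)$ is recovered from $G$ by the standard Lazard/Mal'cev machinery (the group operations give $\mathbb{Z}_p$-module structure via $p$-th power maps, and the commutator gives the Lie bracket), and inside $\mathfrak{sl}_n(\mathbb{Z}_p)$ one isolates $\mathbb{Z}_p$ as, say, the additive group generated by a diagonal traceless element $h = e_{11}-e_{22}$ together with its ring structure read off from the bracket with the root elements — this is the kind of computation carried out for $\mathrm{SL}_2$ in Theorem \ref{sl2_biint copy(1)}, and the odd-characteristic and $p \nmid n$ hypotheses are exactly what is needed to keep the relevant structure constants ($2$, and the normalization of the Killing form / trace form) invertible. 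Conversely $G$ is interpreted in $\mathbb{Z}_p$ as the definable set of $n\times n$ matrices of determinant $1$ congruent to the identity mod $p$, with matrix multiplication; one then checks that the two round-trip maps are definable, which for this case is essentially bookkeeping since everything is polynomial in the matrix entries.

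Second, to pass from $\mathrm{SL}_n^1(\mathbb{Z}_p)$ to $\mathrm{SL}_n(\mathbb{Z}_p)$ and then $\mathrm{PSL}_n(\mathbb{Z}_p)$, I would note that $\mathrm{SL}_n^1(\mathbb{Z}_p)$ is an open normal subgroup of $\mathrm{SL}_n(\mathbb{Z}_p)$ of finite index, definable as (an open subgroup inside) the set of $p$-th powers or via a suitable finite-index verbal/centralizer description, and the finite quotient $\mathrm{SL}_n(\mathbb{Z}_p)/\mathrm{SL}_n^1(\mathbb{Z}_p) \cong \mathrm{SL}_n(\mathbb{F}_p)$ together with the conjugation action is then interpretable; conversely $\mathrm{SL}_n(\mathbb{Z}_p) \leq \mathrm{GL}_n(\mathbb{Z}_p)$ is again a polynomially-defined subset of $\mathbb{Z}_p$-matrices. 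The centre of $\mathrm{SL}_n(\mathbb{Z}_p)$ is the finite group $\mu_n(\mathbb{Z}_p)$ of scalar matrices and is first-order definable, so $\mathrm{PSL}_n(\mathbb{Z}_p)$ is interpreted in $\mathrm{SL}_n(\mathbb{Z}_p)$ as a quotient by a definable finite central subgroup, and $\mathrm{SL}_n(\mathbb{Z}_p)$ is recovered from $\mathrm{PSL}_n(\mathbb{Z}_p)$ as a definable central extension (its universal-type cover is detected group-theoretically, using $p\nmid n$ to control the relevant Schur multiplier / to see that the extension is the correct one). I expect the main obstacle to be precisely this last recovery — showing that $\mathrm{SL}_n(\mathbb{Z}_p)$, and hence $\mathbb{Z}_p$, is \emph{definable} inside $\mathrm{PSL}_n(\mathbb{Z}_p)$ rather than merely quasi-definable, i.e. pinning down the correct finite central extension by a formula — together with the attendant verification that all the interpreting formulae respect the profinite topology (openness of the interpreted subgroups, continuity of the coordinate maps), which is where the "topologically bi-interpretable" refinement of \S\ref{bisecgr} does its work. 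The congruence-subgroup case $\mathrm{SL}_n^1(\mathbb{Z}_p)$ carries no such difficulty and should be written first as the template.
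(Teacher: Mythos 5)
Your route is genuinely different from the paper's. The paper does \emph{not} prove Theorem \ref{slsl} by bi-interpretation with $\mathbb{Z}_p$: for $G=\mathrm{SL}_n^1(\mathbb{Z}_p)$ it works group-theoretically, using the decomposition $G=V\cdot H\cdot U$ into definable unitriangular and diagonal pieces, applying Theorem \ref{newT2} to the nilpotent groups $U,V$ (which satisfy the O-S condition when $n\geq 3$) to force any model of the axioms to be pro-$p$, and then invoking Theorem \ref{newT1} together with an explicit $L_{\mathrm{gp}}$-presentation of $G$; the passage to $\mathrm{SL}_n(\mathbb{Z}_p)$ and $\mathrm{PSL}_n(\mathbb{Z}_p)$ is via the finite-extension criterion of Theorem \ref{FinExt}(b), using $\mathrm{Z}(G)=1$ and the fact that, since $p\nmid n$, the centre of $\mathrm{SL}_n(\mathbb{Z}_p)$ meets $G$ trivially, so \emph{both} $\mathrm{SL}_n$ and $\mathrm{PSL}_n$ are finite extensions of (a copy of) $G$. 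The paper itself remarks that a bi-interpretability proof for $\mathrm{SL}_n$ and $\mathrm{PSL}_n$, $n\geq 3$, is carried out in \cite{ST}, so your overall strategy is viable; what the paper's route buys is a template that avoids ring interpretation almost entirely and may transfer to groups where bi-interpretability is unknown.

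That said, two steps of your sketch have real gaps as written. First, the Lazard/Mal'cev correspondence does not hand you a \emph{definable} interpretation of $\mathfrak{sl}_n(\mathbb{Z}_p)$ in $G$: the additive and bracket operations on the associated Lie algebra of a uniform group are defined by $p$-adic limits such as $x+y=\lim_k(x^{p^k}y^{p^k})^{p^{-k}}$, which are not first-order in $L_{\mathrm{gp}}$. The working interpretation of $\mathbb{Z}_p$ in these groups instead goes through a root subgroup $U_{12}$, conjugation by the torus, and the squaring trick of Remark \ref{2-remark}, exactly as in the proof of Theorem \ref{sl2_biint copy(1)}; you would need to redo that computation, not cite Lazard. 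Second, you cannot recover $\mathrm{SL}_n(\mathbb{Z}_p)$ as a ``definable central extension'' inside $\mathrm{PSL}_n(\mathbb{Z}_p)$ --- a group is not in general interpretable in its quotient by a finite central subgroup, and no first-order formula in the quotient pins down the extension class. The correct fix is either to interpret $\mathbb{Z}_p$ directly in $\mathrm{PSL}_n(\mathbb{Z}_p)$ (a unipotent root subgroup injects into the quotient, since the centre consists of semisimple elements), or to do what the paper does and present $\mathrm{PSL}_n(\mathbb{Z}_p)$ as a finite extension of the centreless group $\mathrm{SL}_n^1(\mathbb{Z}_p)$ and apply Theorem \ref{FinExt}(b). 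Your reduction of algebraic rigidity to strong completeness via Proposition \ref{linsc} and Theorem \ref{NSSC} is correct and matches the paper.
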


\noindent Here $\mathrm{SL}_{n}^{1}(\mathbb{Z}_{p})$ denotes the principal
congruence subgroup modulo $p$ in $\mathrm{SL}_{n}(\mathbb{Z}_{p})$.

The proof for $\mathrm{SL}_{n}^{1}(\mathbb{Z}_{p})$ uses both Theorem
\ref{frank} and Theorem \ref{nilp}, which can be applied to the upper
unitriangular group (when $n\geq3$). The extension to $\mathrm{SL}%
_{n}(\mathbb{Z}_{p})$ depends on Theorem \ref{FinExt}, proved in
\S \ref{finite_ext}, which establishes some sufficient conditions for a finite
extension of an FA group to be FA.

A different proof for $\mathrm{SL}_{n}(\mathbb{Z}_{p})$ and $\mathrm{PSL}%
_{n}(\mathbb{Z}_{p})$ with $n\geq3$, via bi-interpretability, appears in
\cite{ST}.

\subsection{Some problems}

The Oger-Sabbagh theorem characterizing the nilpotent (abstract) groups that
are QFA has been extended to polycyclic groups by Lasserre \cite{L}: such a
group $G$ is QFA iff ${\mathrm{Z}(H)\subseteq\Delta(H)}$ for each subgroup $H$
of finite index. The analogous class of pro-$p$ groups is the soluble pro-$p$
groups of finite rank, suggesting

\medskip\noindent\textbf{Problem 1.} Let $G$ be a soluble pro-$p$ group of
finite rank. Show that the following are equivalent:

\begin{description}
\item[a] $G$ \emph{is FA in the class of profinite groups}

\item[b] ${\mathrm{Z}(H)\subseteq\Delta(H)}$ \emph{for each open subgroup} $H$
\emph{of} $G$.
\end{description}

\medskip We do not know whether the hypothesis of finite rank in
Theorem~\ref{frank} is necessary, even if $\pi$ only contains a single prime.

\medskip\noindent\textbf{Problem 2.} (a) Is every finitely generated pro-$p$
group $G$ FA wrt $L_{\{p\}}$ in $\mathcal{C}_{p}$? \break(b) If $G$ is
strictly finitely presented as a pro-$p$ group, is $G$ FA (wrt $L_{\mathrm{gp}%
}$) in the class $\mathcal{C}_{p}$?

\medskip The answer is probably `no', to both. A possible candidate for a
counterexample is a non-abelian free pro-$p$ group $\widehat{\left(
F_{n}\right)  }_{p},$ $n\geq2$. As far as we know, even the following is open:

\medskip\noindent\textbf{Problem 3. }Is the group $\widehat{\left(
F_{n}\right)  }_{p}$ (where $n\geq2$) FA in the class of profinite groups?

\subsection{Organization of the paper}

The next section introduces notation and presents some general results about
definability in profinite groups. Section \ref{finite_ext} is devoted to
showing that under certain conditions, a finite extension of an FA group is
again FA; this is useful in situations like that of Theorem \ref{slsl}, which
deal with groups that are virtually pro-$p$ but not actually pro-$p$. Section
\ref{bisecgr} deals with bi-interpretability and applications. The material
about $\mathcal{C}_{\pi}$-groups occupies Sections \ref{frcpisec} and
\ref{slnsec}. Some negative results are collected together in
Section~\ref{negsec}. The short Section~\ref{s:listform} consists of a list of
first-order formulas for lookup.

\bigskip We thank the painstaking referee for some corrections and helpful suggestions.

\section{Definable subgroups}

\label{defsub}

For a group $G$ and a formula $\kappa(x)$ (possibly with parameters
$\overline{g}$ from $G$), we write%
\[
\kappa(G)=\kappa(\overline{g};G):=\{x\in G\mid G\models\kappa(\overline
{g},x)\}.
\]
(The notation will also be used, \emph{mutatis mutandis}, for rings.) A
subgroup is \emph{definable }if it is of this form; unless otherwise stated,
$\kappa$ is supposed to be a\ formula of $L_{\mathrm{gp}}$. Note that
$\kappa(G)$ is a subgroup iff $G\models\mathrm{s}(\kappa)$ where%
\[
\mathrm{s}(\kappa)\equiv\exists x.\kappa(x)\wedge\forall x,y.\left(
\kappa(x)\wedge\kappa(y)\rightarrow\kappa(x^{-1}y)\right)  ,
\]
and $\kappa(G)$ is a normal subgroup iff $G\models\mathrm{s}_{\vartriangleleft
}(\kappa)$ where%
\[
\mathrm{s}_{\vartriangleleft}(\kappa)\equiv\mathrm{s}(\kappa)\wedge\forall
x,y.\left(  \kappa(x)\rightarrow\kappa(y^{-1}xy)\right)  .
\]

We will say that a subgroup $H$ is \emph{definably closed} if $H=$ $\kappa(G)
$ for a formula $\kappa$ such that in any profinite group $M$, the subset
$\kappa(M)$ is necessarily closed.

Suppose that $H=\kappa(G)$ is a definable subgroup of $G$. By the usual
relativization process, for any formula $\varphi(y_{1},\ldots,y_{k})$ there is
a `restriction' formula $\mathrm{res}(\kappa,\varphi)(y_{1},\ldots,y_{k})$
such that for each $k$-tuple $\overline{b}\in H^{(k)}$ we have%
\[
G\models\mathrm{res}(\kappa,\varphi)(\overline{b})~\Longleftrightarrow
~H\models\varphi(\overline{b}).
\]
(Note that $\mathrm{res}(\kappa,\varphi)$ is obtained from $\varphi$ by
relativizing the quantifiers of $\varphi$, i.e. replacing any expression
$\forall z\psi(z)$ by $\forall z.(\kappa(z)\longrightarrow\psi(z)),$ and any
expression $\exists z\psi(z)$ by $\exists z.(\kappa(z)\wedge\psi(z))$.
Clearly, if $\varphi$ is quantifier-free, then $\mathrm{res}(\kappa,\varphi)$
is just $\varphi$. )

Similarly, if $N=\kappa(G)$ is a definable \emph{normal} subgroup, there is a
`lifted' formula $\mathrm{lift}(\kappa,\varphi)$ such that%
\[
G\models\mathrm{lift}(\kappa,\varphi)(\overline{b})~\Longleftrightarrow
~G/N\models\varphi(\widetilde{b_{1}},\ldots,\widetilde{b_{k}}),
\]
where $\tilde{b}$ denotes the image of $b$ modulo $N$. To obtain
$\mathrm{lift}(\kappa,\varphi)$ we replace each atomic formula $x=y$ in
$\varphi$ with $\kappa(x^{-1}y)$.

Suppose that $\kappa(G)$ is a definable subgroup, and let $n\in\mathbb{N}$.
Then
\begin{align*}
\left\vert G:\kappa(G)\right\vert  &  \leq n\Longleftrightarrow G\models
\mathrm{ind}(\kappa;n),\\
\left\vert G:\kappa(G)\right\vert  &  =n\Longleftrightarrow G\models
\mathrm{ind}(\kappa;n)\wedge\lnot\mathrm{ind}(\kappa;n-1):=\mathrm{ind}^{\ast
}(\kappa;n),
\end{align*}
where%
\[
\mathrm{ind}(\kappa;n)\equiv\exists u_{1},\ldots,u_{n}.\forall x.\bigvee
_{j}\kappa(x^{-1}u_{j}).
\]
We define the frequently used formula%
\[
\mathrm{com}(x,y):=(xy=yx)
\]

For a profinite group $G$ and $X\subseteq G,$ the closure of $X$ is denoted
$\overline{X}.$ (This is not to be confused with $\overline{x}$, which stands
for a tuple $(x_{1} , \ldots, x_{n})$ .)

We write $X\leq_{c}G,$ resp. $X\leq_{o}G,$ for `$X$ is a closed, resp. open,
subgroup of $G$'.

For any group $G$ (abstract or profinite) and $Y\subseteq G,$ the subgroup
generated (algebraically) by $Y$ is denoted $\left\langle Y\right\rangle $.
For $q\in\mathbb{N}$, $G^{\{q\}}=\{g^{q}\mid g\in G\}$ is the set of $q$-th
powers and $G^{q}=\left\langle G^{\{q\}}\right\rangle .$

The derived group of $G$ is $G^{\prime}=\left\langle [x,y]\mid x,y\in
G\right\rangle $. Note that%
\[
G^{\prime}G^{q}=G^{\prime}G^{\{q\}}.
\]

The key fact that makes f.g.\ profinite groups accessible to first-order logic
is the \emph{definability of open subgroups.}\ We shall use the following
without special mention:

\begin{theorem}
\label{NSSC}\emph{(Nikolov and Segal)} Let $G$ be a f.g.\ profinite group.

\emph{(i) }Every subgroup of finite index in $G$ is both open and definably
closed (with parameters).

\emph{(ii) }Each term\emph{\ }$\gamma_{n}(G)$ of the lower central series of
$G$ is closed and definable (without parameters).

\emph{(iii) }Every group homomorphism from\emph{\ }$G$ to a profinite group is continuous.
\end{theorem}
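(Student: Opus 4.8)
The plan is to derive all three assertions from the \emph{Nikolov--Segal theorem on finite groups}, which I take as the essential input: for every $d$ there is a number $f=f(d)$ such that in any $d$-generated finite group $Q$, every element of $Q'$ is a product of $f$ commutators, and, more generally, whenever $N\trianglelefteq Q$ is generated as a normal subgroup by $d$ elements, every element of $[N,Q]$ is a product of $f$ commutators $[n,y]$ with $n\in N$, $y\in Q$; a similar uniform bound holds for the power maps $x\mapsto x^{m}$. Fix once and for all a generating set of $G$ of size $d$, so that every continuous finite quotient of $G$ is $d$-generated.

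For (ii), consider $\gamma_n(G)$. Induction on $n$, using $\gamma_1(G)=G$ and $\gamma_{n}(G)=[\gamma_{n-1}(G),G]$ together with the fact that $\gamma_{n-1}$ is generated as a normal subgroup by the boundedly many left-normed commutators of weight $n-1$ in the fixed generators, produces a parameter-free formula $\mathrm{w}_n(x)$ of $L_{\mathrm{gp}}$ --- a suitably nested existential statement of the shape ``$x$ is a product of $f(n,d)$ commutators $[h_i,y_i]$, each $h_i$ satisfying $\mathrm{w}_{n-1}$'' --- such that $\mathrm{w}_n(Q)=\gamma_n(Q)$ in every continuous finite quotient $Q$ of $G$. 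Put $W=\mathrm{w}_n(G)$. Then $W\subseteq\gamma_n(G)\subseteq\overline{\gamma_n(G)}$, while $W$ is compact, being the continuous image of a power of $G$, and its image in each finite quotient $G/U$ is all of $\gamma_n(G/U)=\overline{\gamma_n(G)}\big/\bigl(\overline{\gamma_n(G)}\cap U\bigr)$. Since the subgroups $\overline{\gamma_n(G)}\cap U$, for $U\trianglelefteq_{o}G$, form a base of neighbourhoods of $1$ in $\overline{\gamma_n(G)}$, the compact set $W$ is dense in $\overline{\gamma_n(G)}$, hence equal to it. Thus $\gamma_n(G)=\overline{\gamma_n(G)}$ is closed and is defined, without parameters, by $\mathrm{w}_n$.

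For (i), let $H$ have finite index in $G$. Its core $H_G=\bigcap_{g\in G}H^{g}$ is normal of finite index, and $H$ is a finite union of cosets of $H_G$; so it suffices to show that a normal subgroup $N\trianglelefteq G$ of finite index is open, and then that it is definably closed with parameters. Passing to the closure of $N$ (which has finite index, is therefore open if the claim is known, and is again finitely generated) we may assume $N$ is dense and must show $N=G$; equivalently, $G$ has no discontinuous homomorphism onto a nontrivial finite group. Choosing such an $N$ of minimal index and then quotienting, one reduces to the case where $G/N$ is characteristically simple --- either elementary abelian, or a direct power of a nonabelian finite simple group. In the abelian case $N\supseteq G'$ and, because $G/\overline{G'}$ is a finitely generated profinite abelian group, all of whose finite-index subgroups are open by the structure theory of such groups, openness propagates back to $N$, contradicting density and properness. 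In the nonabelian case one invokes the bounded-commutator-width statement above (and its refinement for the quasisimple layers of the socle of $G/N$) to show that $N$ must contain an open subgroup --- again a contradiction. Hence every finite-index subgroup is open. For definable closedness: if $N\trianglelefteq_{o}G$ with $[G:N]=m$, then $G^{m}\le N$; by the uniform bound for the $m$-th power map, $G^{m}$ is closed and is defined without parameters by an existential formula, and $G/G^{m}$, a finitely generated profinite group of finite exponent $m$, is finite (Zelmanov), so $G^{m}$ is open. Then $N$, and in turn an arbitrary finite-index $H$, is a finite union of cosets of $G^{m}$ (resp.\ of $(H_G)^{[G:H_G]}$), hence definable with parameters and closed.

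Finally, (iii) follows from (i): if $\phi\colon G\to L$ is a homomorphism into a profinite group and $V\leq_{o}L$, then $[L:V]<\infty$, so $\phi^{-1}(V)$ has finite index in $G$ and is therefore open; as the open subgroups of $L$ form a base of neighbourhoods of $1$, $\phi$ is continuous at $1$, hence continuous. The main obstacle in this programme is precisely the finite-group input: proving the bounded width of the commutator and power words in $d$-generated finite groups is the hard theorem of Nikolov and Segal, and the treatment of the nonabelian composition factors relies on the classification of finite simple groups and on estimates for products of conjugacy classes in groups of Lie type.
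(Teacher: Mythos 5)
Your proof is correct and takes essentially the same route as the paper: both rest on the Nikolov--Segal finite-width theorems for lower-central and power words (finite width forces the verbal subgroup to be a compact, hence closed and definable, subset) together with Zelmanov's solution of the Restricted Burnside Problem to make $G^{q}$ open, with (iii) then immediate from (i). Note only that your detour in (i) through dense normal subgroups and characteristically simple quotients is superfluous (and its abelian case glosses over the need for closedness of $G'G^{\{p\}}$): your own closing argument --- $G^{m}$ is closed of finite index, hence open, and any finite-index subgroup is a finite union of its cosets --- already yields openness, and is precisely the paper's argument.
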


\begin{proof}
The definability of subgroups in a profinite group is related to the topology
of the group through the concept of verbal width. A word $w$ has \emph{width}
$f$ in a group $G$ if every product of $w$-values or their inverses is equal
to such a product of length $f$. The verbal subgroup $w(G)$ generated by all
$w$-values is closed in $G$ if and only if $w$ has finite width (\cite{SW},
Prop.\ 4.1.2); in this case it is definable, by the formula $\kappa_{w,f}(x)$
which expresses that%
\begin{equation}
x\in G_{w}\cdot\ldots\cdot G_{w}\text{ \ \ (}f\text{ factors)} \label{vsub}%
\end{equation}
where $w=w(x_{1},\ldots,x_{k})$ has width $f$ and $G_{w}=\{w(\overline
{g})^{\pm1}~\mid~\overline{g}\in G^{(k)}\}$. This formula defines a closed
subset in every profinite group, since the verbal mapping $G^{(k)}\rightarrow
G$ defined by $w$ is continuous, hence has compact image.

In a finitely generated profinite group, each lower-central word and all power
words have finite width (\cite{NS1}, \cite{NS2}, \cite{NS}). (ii) follows at once.

For (i), suppose $H$ is a subgroup of finite index in $G$. Then $H\geq
G^{q}=\left\langle g^{q}\mid g\in G\right\rangle $ for some $q$, and $G^{q}$
is definably closed by the preceding remarks, because the word $x^{q}$ has
finite width. If $G^{q}\leq N\vartriangleleft_{o}G$ then $G/N$ is a finite
$d=\mathrm{d}(G)$-generator group of exponent dividing $q$, hence has order
bounded by a finite number $\beta(d,q)$ (by the positive solution of the
Restricted Burnside Problem \cite{Z}, \cite{Z2}). As $G^{q}$ is the
intersection of all such $N$ it follows that $G^{q}$ is open. Now (i) follows
by the lemma below.

(iii) is an easy consequence of the fact that every subgroup of finite index open.
\end{proof}

\begin{lemma}
\label{findef}Suppose $N$ is a definable subgroup in a group $G$. If $N\leq
H\leq G$ and $\left\vert H:N\right\vert $ is finite then $H$ is definable, by
a formula with parameters. If $N$ is definably closed then so is $H$. If
$G=N\left\langle X\right\rangle $ for some subset $X,$ we may choose the
parameters in $X$.
\end{lemma}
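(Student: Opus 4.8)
The plan is to write $H$ as a finite union of cosets of $N$ and then turn membership in each coset into a first-order condition. First I would pick coset representatives: since $|H:N|$ is finite, choose $t_1,\ldots,t_m\in H$ with $H=\bigcup_{j=1}^m Nt_j$, and take these $t_j$ as parameters. If we are told $G=N\langle X\rangle$, then each element of $G$, in particular each $t_j$, is a product of an element of $N$ with a word in $X^{\pm1}$; multiplying the $N$-part into the chosen representative set (which only changes $t_j$ within its coset) lets me assume each $t_j$ is a word in $X^{\pm1}$, so the parameters may be taken in $X$. Now if $N=\kappa(G)$, I would define
\[
\lambda(x)\equiv\bigvee_{j=1}^m\kappa(xt_j^{-1}),
\]
a formula with parameters $t_1,\ldots,t_m$. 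Then $\lambda(G)=\bigcup_j Nt_j=H$, so $H$ is definable.

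Next, for the closure claim: if $N$ is definably closed, then by definition the set $\kappa(M)$ is closed in every profinite group $M$. For any profinite group $M$ and any choice of parameters, $\lambda(M)$ is a finite union of translates $\kappa(M)\cdot s_j$ of the closed set $\kappa(M)$; translation is a homeomorphism and a finite union of closed sets is closed, so $\lambda(M)$ is closed. Hence $H=\lambda(G)$ is witnessed by a formula that defines a closed set in every profinite group, i.e.\ $H$ is definably closed. (Strictly, one should note that $\lambda$ as written uses parameters, which is already allowed by the statement; the point is only that closedness of the defined set is automatic once $\kappa$ has that property, since the coset translates are built in uniformly.)

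There is essentially no hard step here — the lemma is a routine relativization/coset argument. The only thing requiring a little care is the last sentence of the statement, about choosing the parameters inside $X$: one must check that replacing a coset representative $t_j$ by $n_j^{-1}t_j$ for some $n_j\in N$ does not change the coset $Nt_j$, and that after doing this for all $j$ the formula $\lambda$ still defines $H$ — which is clear since $\bigcup_j N t_j$ is unchanged. So the mild obstacle, if any, is just bookkeeping: making sure the same parameter list works simultaneously for the definability, the definable-closedness, and the "parameters in $X$" assertions, and observing that the first two are genuinely uniform in the ambient profinite group while the third is a statement purely about $G$.
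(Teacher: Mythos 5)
Your proposal is correct and follows essentially the same route as the paper's proof: write $H$ as a finite union of cosets $Nt_j$, define it by $\bigvee_j\kappa(xt_j^{-1})$, observe that a finite union of translates of a closed set is closed, and replace each representative by a word in $X$ using $G=N\langle X\rangle$. The extra bookkeeping you flag is harmless and correctly handled.
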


\begin{proof}
Say $N=\kappa(G)$ and $H=Ng_{1}\cup\ldots\cup Ng_{n}$. Then $H$ is defined by
$\bigvee_{i=1}^{n}\kappa(xg_{i}^{-1})$. The second claim is clear since the
union of finitely many translates of a closed set is closed. For the final
claim, we may replace each $g_{i}$ by a suitable word on $X$.
\end{proof}

\medskip

\textbf{Remark.} If every subgroup of finite index in $G$ is open, then every
subgroup of finite index contains a definable open subgroup, whether or not
$G$ is f.g.: this follows from \cite{WS}, Theorem 2 in a similar way to the
proof of (ii) above; it is implicit in the proof of \cite{H}, Theorem 3.11.

\medskip

The special case of these results where $G$ is a pro-$p$ group is much easier,
and suffices for most of our applications; see e.g. \cite{DDMS}, Chapter 1,
ex. 19 and \cite{SW}, \S 4.3.

Note that subgroups like $w(G)$ when $w$ is a word of finite width are
definable as in (\ref{vsub}) without parameters.

When proving that a certain group $G$ is FA in some class $\mathcal{C}$, we
often establish a stronger property, namely: for some finite (usually
generating) tuple $\overline{g}$ in $G$, there is a formula $\sigma_{G}$ such
that for a group $H$ in $\mathcal{C}$ and a tuple $\overline{h}$ in $H$,
$H\models\sigma_{G}(\overline{h})$ if and only if there is an isomorphism from
$G$ to $H$ mapping $\overline{g}$ to $\overline{h}$, a situation denoted by
$(G,\overline{g})\cong(H,\overline{h})$. In this case we say that
$(G,\overline{g})$ is FA in $\mathcal{C}$. Of course, this implies that $G$ is
FA in $\mathcal{C}$: indeed, for $H$ in $\mathcal{C}$, we have $H\cong G$ if
and only if $H\models\exists\overline{x}.\sigma_{G}(\overline{x})$.

\section{Finite extensions\label{finite_ext}}

If a group $G$ is FA, one would expect that (definable) subgroups of finite
index in $G$ and finite extension groups of $G$ should inherit this property.
In this section we establish the latter under some natural hypotheses.

Fix a class $\mathcal{C}$ of profinite groups, and assume that $\mathcal{C}$
is closed under taking open subgroups. $L\supseteq L_{\mathrm{gp}}$ is a
language. By `FA' we mean FA (wrt $L$) in $\mathcal{C}$.

Given a group $N$ and elements $h_{1},\ldots,h_{s}\in N$, we say that an
element $g$ of $N$ is $\overline{h}$\emph{-definable} in $N$ if there is a
formula $\phi_{g}$ such that for $c\in N$,
\begin{equation}
N\models\phi_{g}(\overline{h},c)\Longleftrightarrow c=g. \label{defelt}%
\end{equation}
This holds in particular if $g\in\left\langle h_{1},\ldots,h_{s}\right\rangle
$.

\medskip

\textbf{Remarks.} (i) if $\theta:N\rightarrow M$ is an isomorphism and
(\ref{defelt}) holds, then $g\theta$ is the unique element $b$ of $M$ such
that $N\models\phi_{g}(\overline{h}\theta,b).$

(ii) If $(N,\overline{h)}$ is FA and $g$ is $\overline{h}$-definable in $N$
then $(N,(\overline{h},g))$ is FA.

(iii) If $g$ is $\overline{h}$-definable in $N$ and $N=\kappa(G)$ is a
definable subgroup of $G,$ then $g$ is $\overline{h}$-definable in $G,$ by the
formula
\[
\kappa(y)\wedge\mathrm{res}(\kappa,\phi_{g}).
\]

\begin{theorem}
\label{FinExt}Let
\[
N=\overline{\left\langle h_{1},\ldots,h_{s}\right\rangle }\vartriangleleft
_{o}G=\overline{\left\langle g_{1},\ldots,g_{r}\right\rangle }\in\mathcal{C},
\]
and assume that $(N,\overline{h})$ is FA. Then $G$ is FA provided one of the
following holds:

\emph{(a) }$N\cap\left\langle g_{1},\ldots,g_{r}\right\rangle =\left\langle
h_{1},\ldots,h_{s}\right\rangle $, in which case $(G,\overline{g})$ is FA; or

\emph{(b) }$\mathrm{Z}(N)=1$, $\{h_{1},\ldots,h_{s}\}\subseteq\left\langle
g_{1},\ldots,g_{r}\right\rangle $, and $h_{i}^{g_{j}}$ is $\overline{h}%
$-definable in $N$ for each $i$ and $j$.
\end{theorem}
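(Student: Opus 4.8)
The plan is to produce, for the given generating tuples, a single formula $\sigma_{G}(\overline{x})$ in $r$ variables such that for $H\in\mathcal{C}$ and a tuple $\overline{x}$ in $H$ one has $H\models\sigma_{G}(\overline{x})$ if and only if there is a topological isomorphism $G\to H$ carrying $\overline{g}$ to $\overline{x}$; then $G$ is FA via $\exists\overline{x}\,\sigma_{G}(\overline{x})$, and in case (a) this will in fact show that $(G,\overline{g})$ is FA. Since $N\vartriangleleft_{o}G=\overline{\langle\overline{g}\rangle}$ and $G$ is finitely generated, $N$ contains an open, definably closed (parameter-free) subgroup $G^{q}$, and $N$ is then definably closed in $G$ by a formula $\mu(\overline{g};z)$ whose parameters lie among $\overline{g}$ (Theorem~\ref{NSSC}, Lemma~\ref{findef}). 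Put $m=|G:N|$, fix a transversal $\delta_{1}=1,\delta_{2},\dots,\delta_{m}$ of $N$ in $G$ with $\delta_{j}\in\langle\overline{g}\rangle$, say $\delta_{j}=u_{j}(\overline{g})$, and recall that $\{h_{1},\dots,h_{s}\}\subseteq\langle\overline{g}\rangle$, so we may write $h_{i}=w_{i}(\overline{g})$.

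Writing $b_{i}:=w_{i}(\overline{x})$, $\delta_{j}':=u_{j}(\overline{x})$ and $M:=\mu(\overline{x};z)$, the formula $\sigma_{G}(\overline{x})$ will assert: (1) $M$ is a normal subgroup of $H$ with $\overline{b}\subseteq M$ and $|H:M|=m$, and $H=\bigcup_{j}M\delta_{j}'$ (so the $m$ cosets $M\delta_{j}'$ are exactly the cosets of $M$); (2) $\mathrm{res}(\mu,\sigma_{N})(\overline{b})$, where $\sigma_{N}$ is a formula witnessing that $(N,\overline{h})$ is FA in $\mathcal{C}$ — legitimate because $M$, being definably closed of finite index, is open, hence lies in $\mathcal{C}$; and (3) a finite list of $L$-relations among $\overline{x},\overline{b},\overline{\delta}'$ transcribing the identities holding in $G$ among $\overline{g},\overline{h},\overline{\delta}$ that encode the extension: the multiplication $\delta_{j}\delta_{k}=\lambda_{jk}\delta_{l(j,k)}$ with $\lambda_{jk}\in N$, the conjugations $h_{i}^{g_{j}}\in N$, and the decompositions $g_{j}=n_{j}\delta_{k(j)}$ with $n_{j}\in N$. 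In case (a), $N\cap\langle\overline{g}\rangle=\langle\overline{h}\rangle$ forces $\lambda_{jk}$, $n_{j}$ and the conjugates $h_{i}^{g_{j}}$ into $\langle\overline{h}\rangle$, so they are explicit words in $\overline{h}$ and (3) simply asserts the corresponding identities with $\overline{h}\mapsto\overline{b}$, $\overline{\delta}\mapsto\overline{\delta}'$, $\overline{g}\mapsto\overline{x}$. In case (b) these elements of $N$ need not lie in $\langle\overline{h}\rangle$, but each is $\overline{h}$-definable in $N$: for $h_{i}^{g_{j}}$ this is the hypothesis, while $\lambda_{jk}$ and $n_{j}$ are pinned down using $\mathrm{Z}(N)=1$, which makes $N\to\mathrm{Inn}(N)$ injective, so that an element of $N$ is determined by the conjugation action it induces; clause (3) then asserts the $\mathrm{res}(\mu,\cdot)$-versions of these definitions with the appropriate elements of $H$ (such as $\delta_{j}'\delta_{k}'(\delta_{l(j,k)}')^{-1}$ and $x_{j}(\delta_{k(j)}')^{-1}$) plugged in. For the easy direction one checks $G\models\sigma_{G}(\overline{g})$ directly: in $G$ we have $\mu(\overline{g};G)=N$, clauses (1) and (3) hold by construction, and $\mathrm{res}(\mu,\sigma_{N})(\overline{h})$ holds because $N\models\sigma_{N}(\overline{h})$; hence any $H$ topologically isomorphic to $G$ satisfies $\exists\overline{x}\,\sigma_{G}(\overline{x})$.

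For the converse, suppose $H\models\sigma_{G}(\overline{x})$. Clause (2) yields a topological isomorphism $\theta\colon N\to M$ with $h_{i}\mapsto b_{i}$; since $\langle\overline{h}\rangle$ is dense in $N$ and $\theta$ is continuous, $M=\overline{\langle\overline{b}\rangle}$. Using $G=\bigcup_{j}N\delta_{j}$ (disjoint) one defines $\Theta\colon G\to H$ by $\Theta(n\delta_{j})=\theta(n)\delta_{j}'$; by clause (1) the cosets $M\delta_{j}'$ are distinct, so $\Theta$ is a well-defined bijection, and it is continuous because $\Theta|_{N}=\theta$ and $N$ is open, so everything reduces to showing that $\Theta$ is a homomorphism. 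Here is the point, and the main obstacle: $\langle\overline{g}\rangle$ — the dense abstract subgroup onto which the generators are really transplanted — is in general finitely generated but \emph{not} finitely presented, so its relations cannot be listed in a sentence. They need not be: the relations among $\overline{h}$ are absorbed into the black box $\sigma_{N}$ (any word in $\overline{h}$ trivial in $N$ is trivial in $M$, since $\theta$ is a homomorphism), and the remaining, finite, amount of data — how the transversal $\{\delta_{j}\}$ multiplies and acts on $N$, and how each $g_{j}$ decomposes over it — is exactly what clause (3) records. Concretely, one first shows that conjugation by $x_{j}$ on $M$ coincides with $\theta\circ(\text{conjugation by }g_{j})\circ\theta^{-1}$ — the two continuous maps agree on the dense subgroup $\langle\overline{b}\rangle$ of $M$ by clause (3) and normality of $M$ — whence $\theta$ intertwines the conjugation actions of $\langle\overline{g}\rangle$ on $N$ and of $\langle\overline{x}\rangle$ on $M$. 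In case (b), because $\lambda_{jk},n_{j}\in N$ and $\mathrm{Z}(N)=1$, this intertwining together with clause (3) forces the identities $\delta_{j}'\delta_{k}'=\theta(\lambda_{jk})\delta_{l(j,k)}'$ and $x_{j}=\theta(n_{j})\delta_{k(j)}'$ in $H$; in case (a) the same identities hold because both sides are explicit words in $\overline{b}$ and $\overline{\delta}'$. These are precisely the identities needed to check $\Theta(n_{1}\delta_{j}\cdot n_{2}\delta_{k})=\Theta(n_{1}\delta_{j})\Theta(n_{2}\delta_{k})$ and $\Theta(g_{j})=x_{j}$, so $\Theta$ is a topological isomorphism carrying $\overline{g}$ to $\overline{x}$, as required. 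The bulk of the work — and the place where the center-free hypothesis in (b) does its job, by replacing the missing explicit words for $\lambda_{jk}$, $n_{j}$ and the conjugates $h_i^{g_j}$ with definitions via the faithful conjugation action on $N$ — is contained in this last verification.
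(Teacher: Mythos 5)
Your proposal is correct and, for case (a), follows essentially the same route as the paper: express that $\mu(\overline{x};H)$ is a closed normal subgroup of index $m$ isomorphic to $(N,\overline{h})$ via the relativized $\sigma_N$, record the finite extension data (coset multiplication, conjugation of the $h_i$, decomposition of the $g_j$) as explicit words in $\overline{h}$ — which hypothesis (a) makes possible since all these elements land in $N\cap\langle\overline{g}\rangle=\langle\overline{h}\rangle$ — and then define $\Theta$ coset by coset, using density of $\langle\overline{b}\rangle$ in $M$ and continuity of inner automorphisms to propagate the conjugation relations from generators to all of $N$. Where you genuinely diverge is case (b): the paper fixes only the multiplication table of $G/N$ and the outer action, and then quotes the classical uniqueness theorem for extensions of a centre-free kernel with prescribed map to $\mathrm{Out}(N)$ (Gruenberg); you instead argue directly that the intertwining of conjugation actions, together with faithfulness of $N\to\mathrm{Inn}(N)$, forces $\delta_j'\delta_k'(\delta_{l(j,k)}')^{-1}=\theta(\lambda_{jk})$ and $x_j=\theta(n_j)\delta_{k(j)}'$, after which the homomorphism check is the same computation as in case (a). Your version is more self-contained and makes visible exactly where $\mathrm{Z}(N)=1$ enters; the paper's is shorter at the cost of an external citation.

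One small correction: your claim that $\lambda_{jk}$ and $n_j$ are $\overline{h}$-definable \emph{in} $N$ does not follow from the hypotheses — pinning them down by their conjugation action requires referring to conjugation by words in $\overline{g}$, i.e.\ to elements outside $N$, and the hypothesis only gives $\overline{h}$-definability of the single-step conjugates $h_i^{g_j}$, which does not obviously iterate. But this over-claim is harmless for your argument: clause (3) only needs to assert \emph{membership} of $\delta_j'\delta_k'(\delta_{l(j,k)}')^{-1}$ and $x_j(\delta_{k(j)}')^{-1}$ in $M$ (plainly first-order), since your verification then identifies these elements with $\theta(\lambda_{jk})$ and $\theta(n_j)$ via the intertwining and $\mathrm{Z}(M)=1$ rather than via any formula. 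With that adjustment the proof is complete.
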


\begin{proof}
Say $\left\vert G:N\right\vert =m$. By Theorem \ref{NSSC} there is a formula
$\kappa$ such that $N=\kappa(G;\overline{g})$, and such that $\kappa$ always
defines a closed subset in any profinite group. Thus $G$ satisfies%
\[
\Phi_{1}(\overline{g}):=\mathrm{s}_{\vartriangleleft}(\kappa(\overline
{g}))\wedge\mathrm{ind}^{\ast}(\kappa(\overline{g}),m),
\]
which asserts that $\kappa(G;\overline{g})$ is a closed normal subgroup of
index $m$ (and is therefore open).

By hypothesis, there is a formula $\psi$, where $N\models\psi(\overline{h})$,
such that if $k_{1},\ldots,k_{s}\in M\in\mathcal{C}$ and $M\models
\psi(\overline{k})$ then there is an isomorphism $N\rightarrow M$ sending
$\overline{h}$ to $\overline{k}$. For each $i$ there is a word $w_{i}$ such
that $h_{i}=w_{i}(\overline{g})$; then $G$ satisfies%
\begin{equation}
\Phi_{2}(\overline{g}):=\bigwedge_{i=1}^{s}\kappa(\overline{g},w_{i}%
)\wedge\mathrm{res}(\kappa(\overline{g}),\psi(w_{1},\ldots,w_{s})),
\label{defsubn}%
\end{equation}
where for aesthetic reasons $w_{i}$ is written in place of $w_{i}(\overline
{g}),$ a convention we keep throughout this proof.

Since $\mathcal{C}$ is closed under taking open subgroups, $\Phi_{1}%
(\overline{g})$ implies that $\kappa(G;\overline{g})\in\mathcal{C}$, and then
$\Phi_{2}(\overline{g})$ ensures that $\kappa(G;\overline{g})\cong$ $N$. We
set%
\[
\Phi:=\Phi_{1}\wedge\Phi_{2}.
\]

To fix the isomorphism type of $G,$ we need also to specify the conjugation
action of $G$ on $N$, the quotient $G/N,$ and the extension class. These are
done in the following manner. To begin with, note that $G=N\left\langle
g_{1},\ldots,g_{r}\right\rangle $ because $N$ is open; hence there exists a
transversal $\{t_{i}(\overline{g})\mid i=1,\ldots,m\}$ to the cosets of $N$ in
$G$, where each $t_{i}$ is a word. There is a formula $\tau(\overline{g})$
(depending on $\kappa$) which asserts that $G=%
{\textstyle\bigcup\nolimits_{i=1}^{m}}
Nt_{i}(\overline{g})$.

Now we deal separately with cases (a) and (b).\medskip

\emph{Case (a)}: For each $i$ and $j$ we have $h_{i}^{g_{j}}=v_{ij}%
(\overline{h})$ for some word $v_{ij}$. Thus $G$ satisfies%
\[
\mathrm{conj}(\overline{g}):=\bigwedge_{i,j}\left[  g_{j}^{-1}w_{i}%
g_{j}=v_{ij}(w_{1},\ldots,w_{s})\right]  .
\]

For each $i$ there exist $i^{\ast}$ and a word $u_{i}$ such that $g_{i}%
=u_{i}(\overline{h})t_{i^{\ast}}(\overline{g})$. Then $G$ satisfies%
\begin{align*}
\rho(\overline{g})  &  :=\bigwedge_{i=1}^{r}\left[  g_{i}=u_{i}(w_{1}%
,\ldots,w_{s})t_{i^{\ast}}(\overline{g})\right] \\
\mathrm{extn}(\overline{g})  &  :=\bigwedge_{i,j}\left[  t_{i}(\overline
{g})t_{j}(\overline{g})=c_{ij}(w_{1},\ldots,w_{s})t_{s(i,j)}(\overline
{g})\right]
\end{align*}
for suitable words $c_{ij}$; here $(i,j)\longmapsto s(i,j)$ describes the
multiplication table of $G/N,$ and $(i,j)\longmapsto c_{ij}(\overline{h})$
represents the 2-cocycle defining the extension of $N$ by $G/N$; this takes
values in $\left\langle h_{1},\ldots,h_{s}\right\rangle $ because of
hypothesis (a).

Now suppose that $y_{1},\ldots,y_{r}\in H\in\mathcal{C}$ and that%
\begin{equation}
H\models\Phi(\overline{y})\wedge\tau(\overline{y})\wedge\rho(\overline
{y})\wedge\mathrm{conj}(\overline{y})\wedge\mathrm{extn}(\overline{y}).
\label{H-def}%
\end{equation}
Put $M=\kappa(H;\overline{y})$ and set $k_{i}=w_{i}(\overline{y})$ for
$i=1,\ldots,s$.

The fact that $H\models\Phi(\overline{y})$ implies that each $k_{i}\in M$ and
that the map sending $\overline{h}$ to $\overline{k}$ extends to an
isomorphism $\theta_{1}:N\rightarrow M$. Define $\theta:G\rightarrow H$ by%
\[
(at_{i}(\overline{g}))\theta=a\theta_{1}\cdot t_{i}(\overline{y})~\ \ (a\in
N,~1\leq i\leq m).
\]
Then $\tau(\overline{y})$ ensures that $\theta$ is a bijection, and using
$\mathrm{conj}(\overline{y})$ and $\mathrm{extn}(\overline{y})$ one verifies
that $\theta$ is a homomorphism; the key point is that $\mathrm{conj}%
(\overline{g})$ determines the conjugation action of each $g_{i}$ on $N$
because the $h_{j}$ generate $N$ topologically and inner automorphisms are
continuous, and similarly $\mathrm{conj}(\overline{y})$ determines the action
of each $y_{i}$ on $m$. This implies that for $b\in N$ and each $j$,%
\[
t_{j}(\overline{y})^{-1}\cdot b\theta_{1}\cdot t_{j}(\overline{y})=\left(
t_{j}(\overline{g})^{-1}\cdot b\cdot t_{j}(\overline{g})\right)  \theta_{1}.
\]

Finally, $\rho(\overline{y})$ implies that $g_{i}\theta=y_{i}$ for each $i$.

Thus (\ref{H-def}) implies that there is an isomorphism $G\rightarrow H$
sending $\overline{g}$ to $\overline{y}$.\medskip

\emph{Case (b)}: Assume now that $\mathrm{Z}(N)=1.$ Given a group $N$ with
trivial centre, a group $Q$, and a homomorphism $\gamma:Q\rightarrow
\mathrm{Out}(N)$, there is (up to equivalence) at most one extension group $G$
of $N$ by $Q$ such that conjugation in $G$ induces the mapping $\gamma
:Q\rightarrow\mathrm{Out}(N)$ (\cite{G}, \S 5.4, Theorem 2, Remark 1). So in
this case, it suffices to fix $N,~G/N$ and the action.

We fix the multiplication table of $G/N$ with%
\[
\mathrm{quot}(\overline{g}):=\bigwedge_{i,j}\kappa(\overline{g},t_{i}%
t_{j}t_{s(i,j)}{}^{-1})
\]
(writing $t_{i}$ in place of $t_{i}(\overline{g})$ throughout). We redefine
$\rho$ as follows:%
\[
\rho(\overline{g}):=\bigwedge_{i=1}^{r}\kappa\left(  \overline{g},t_{i^{\ast}%
}g_{i}^{-1}\right)  .
\]
where $i^{\ast}$ is defined above. To fix the action, we now set%
\[
\mathrm{conj}(\overline{g}):=\mathrm{res}\left(  \kappa(\overline
{g}),\bigwedge_{i,j}\phi_{v(i,j)}(w_{1},\ldots,w_{s},g_{j}^{-1}w_{i}%
g_{j})\right)
\]
where $v(i,j)=h_{i}^{g_{j}}$, and $\phi_{v(i,j)}$ defines $h_{i}^{g_{j}}$ in
$N$ in terms of $\overline{h}$.

Now suppose that $y_{1},\ldots,y_{r}\in H\in\mathcal{C}$ and that
\[
H\models\Phi(\overline{y})\wedge\tau(\overline{y})\wedge\rho(\overline
{y})\wedge\mathrm{quot}(\overline{y})\wedge\mathrm{conj}(\overline{y}).
\]
Put $M=\kappa(H;\overline{y})$ and set $k_{i}=w_{i}(\overline{y})$ for
$i=1,\ldots,s$. As before we have an isomorphism $\theta_{1}:N\rightarrow M$
sending $\overline{h}$ to $\overline{k}$. The map sending $t_{i}(\overline
{g})$ to $t_{i}(\overline{y})$ for each $i$ induces an isomorphism $\theta
_{2}:G/N\rightarrow H/M.$ Thus we have a diagram of group extensions:%
\[%
\begin{array}
[c]{ccccccccc}%
1 & \rightarrow & N & \rightarrow & G & \rightarrow & G/N & \rightarrow & 1\\
&  & \downarrow &  &  &  & \downarrow &  & \\
1 & \rightarrow & N & \overset{\alpha}{\rightarrow} & H & \overset{\beta
}{\rightarrow} & G/N & \rightarrow & 1
\end{array}
\]
where $\alpha:N\rightarrow M\hookrightarrow H$ and $\beta:H\twoheadrightarrow
H/M\rightarrow G/N$ are induced respectively by $\theta_{1}:N\rightarrow M$
and $\theta_{2}^{-1}:H/M\rightarrow G/N$, and the vertical arrows represent
identity maps. Now $\rho(\overline{g})$ and $\rho(\overline{y})$ ensure that
$(Ng_{i})\theta_{2}=My_{i}$ for each $i$. Then using $\mathrm{conj}%
(\overline{g})$ and $\mathrm{conj}(\overline{y})$ together with Remark (i), we
can verify that the two mappings $G/N\rightarrow\mathrm{Out}(N)$ induced by
the top extension and the bottom extension are identical. Hence there exists a
homomorphism $\theta:G\rightarrow H$ making the diagram commute, and then
$\theta$ must be an isomorphism since the end maps are bijective.\medskip

Finally, because $G$ is finitely generated, Theorem \ref{NSSC}\ (iii) ensures
that any group isomorphism $G\rightarrow H$ is a topological isomorphism.
\end{proof}

\bigskip

\textbf{Remark.} \ This argument gives the same result for a class
$\mathcal{C}$ of abstract groups, if we add the hypothesis that $N$ (has
finite index and) is definable in $G$.

\section{Bi-interpretation\label{bisecgr}}

Our definition of bi-interpretation will not be the most general one; instead
we introduce a few ad hoc definitions tailored to the purpose at hand. In
particular, the ring is supposed to be \emph{definable} as a \emph{closed}
subgroup of the group (rather than just `interpretable' as a subgroup of a
power of the group), and the group is supposed to be \emph{definable} as a
\emph{closed} subset of some power of the ring.

The purpose is to show that for certain groups built out of rings, finite
axiomatizability of the ring is equivalent to that of the group. Because we
are interested in profinite objects, some topological tweaks are added. The
precise result is Theorem \ref{biint} below; applications are given in
\S \ref{examplesubsec}.

\subsection{Interpreting rings in groups}

All rings are commutative, with identity. $L_{\mathrm{rg}}$~is the first-order
language of rings. A ring is \emph{profinite} if it is an inverse limit of
finite rings. We also need a slightly weaker version: the ring $R$ is
\emph{additively profinite} if its additive group $(R,+)$ is profinite as a group.

A profinite ring $R$ is \emph{FA} (resp. \emph{strongly FA}) if there is a
formula $\sigma$ of $L_{\mathrm{rg}}$ such that (i) $R\models\sigma$ and (ii)
if $S$ is a profinite (resp. additively profinite) ring and $S\models\sigma$
then $S$ is topologically isomorphic to $R$.

Let $R$ be an additively profinite ring. We say that $R$ is
\emph{topologically interpreted} in a profinite group $G$ if there are
formulae $\tau,\mu$ and a tuple of parameters $\overline{g}$ in $G$ with the
following property:

\begin{itemize}
\item for every profinite group $H$ and tuple $\overline{h}$ from $H$, the set
$\tau(\overline{h};H)$ is a closed subgroup of $H$;

\item $\tau(\overline{g};G)$ becomes a ring $\widehat{\tau}(G)=\widehat{\tau
}(\overline{g};G)$ with ring addition given by the group operation, and ring
multiplication defined by $\mathrm{res}(\mu(\overline{g}),\tau(\overline{g}%
))$, in the sense that for $x,y,z\in\tau(\overline{g};G),$%
\[
x\cdot y=z\Longleftrightarrow G\models\mu(\overline{g},x,y,z).
\]

\item $\widehat{\tau}(\overline{g};G)$ is topologically isomorphic to $R.$
\end{itemize}

(Here $\widehat{\tau}$ stands for $(\tau,\mu),$ and we will write
$\widehat{\tau}(G)$ for $\widehat{\tau}(\overline{g};G)$ when there is no risk
of confusion.)

\medskip

In this situation, there is a formula $\rho$ (depending on $\tau$ and $\mu$)
such that (i)~$G\models\rho(\overline{g})$ and (ii) for any profinite group
$H$ and tuple $\overline{h}$ from $H$, if $H\models\rho(\overline{h})$ then
the subgroup $\tau(\overline{h};H)$ is a ring $S:=\widehat{\tau}(\overline
{h};H)$ with operations defined as above. (The formula~$\rho$ expresses the
statements that $\mu$ defines a binary operation on $S$ and that the axioms
for a commutative ring with identity are satisfied). This ring $S$ will be
additively profinite, because $\tau(\overline{h};H)$ is a profinite group.

We call such an interpretation\emph{\ strongly topological} if it has the
following additional property: for any profinite group $H$ and tuple
$\overline{h}$ from $H$, if $H\models\rho(\overline{h})$ then the ring
$S=\widehat{\tau}(\overline{h};H)$ is actually a profinite ring: that is, the
multiplication map from $S\times S$ to $S$ is continuous.

For each formula $\phi$ of $L_{\mathrm{rg}}$ there is a formula $\phi^{\ast}$
of $L_{\mathrm{gp}}$ such that%
\[
\widehat{\tau}(\overline{h};H)\models\phi\Longleftrightarrow H\models
\phi^{\ast}(\overline{h}),
\]
obtained in the obvious way by translating each atomic $L_{\mathrm{rg}}$
subformula of $\phi$ into an equivalent $L_{\mathrm{gp}}$ formula.\medskip

\begin{lemma}
\label{psithing}Let $R$ be an FA profinite ring $R$. Suppose that $R$ is
topologically interpreted in a profinite group $G$, and assume further
\emph{either }that $R$ is strongly FA, \emph{or} that the interpretation is
strongly topological. Then there is an $L_{\mathrm{gp}}$ formula
$\psi(\overline{y})$ such that $G\models\psi(\overline{g})$ (where
$\overline{g}$ is as above), and for each profinite group $H$ and tuple
$\overline{h}$, if $H\models\psi(\overline{h})$ then $\widehat{\tau}%
(\overline{h};H)$ is a ring topologically isomorphic to $R$.
\end{lemma}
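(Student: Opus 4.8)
The plan is to translate the first-order axiomatizability of the ring $R$ into a single $L_{\mathrm{gp}}$ sentence about the interpreting group. By hypothesis there is a formula $\sigma$ of $L_{\mathrm{rg}}$ witnessing that $R$ is FA (resp.\ strongly FA); let $\sigma^{\ast}(\overline{y})$ be the $L_{\mathrm{gp}}$ translation provided by the displayed equivalence $\widehat{\tau}(\overline{h};H)\models\phi\Longleftrightarrow H\models\phi^{\ast}(\overline{h})$. The natural candidate is
\[
\psi(\overline{y}):=\rho(\overline{y})\wedge\sigma^{\ast}(\overline{y}),
\]
where $\rho$ is the formula (depending on $\tau,\mu$) described just before the lemma, asserting that $\tau(\overline{y};H)$ is a subgroup on which $\mu$ defines a commutative-ring multiplication with identity.

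First I would check that $G\models\psi(\overline{g})$: this is immediate, since $G\models\rho(\overline{g})$ by construction of $\rho$, and $\widehat{\tau}(\overline{g};G)\cong R\models\sigma$, so $G\models\sigma^{\ast}(\overline{g})$. Next, take any profinite group $H$ and tuple $\overline{h}$ with $H\models\psi(\overline{h})$. From $H\models\rho(\overline{h})$ we get that $S:=\widehat{\tau}(\overline{h};H)$ is an additively profinite ring with the prescribed operations; from $H\models\sigma^{\ast}(\overline{h})$ and the translation equivalence we get $S\models\sigma$. Now the two branches of the hypothesis come in. If the interpretation is strongly topological, then $\rho(\overline{h})$ already forces $S$ to be a genuine profinite ring, so $\sigma$ (witnessing ordinary FA of $R$) yields $S\cong R$ topologically. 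If instead $R$ is strongly FA, then $\sigma$ by definition determines $R$ up to topological isomorphism among all additively profinite rings, and $S$ is such a ring, so again $S\cong R$. Either way $\widehat{\tau}(\overline{h};H)$ is topologically isomorphic to $R$, as required.

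The only delicate point is bookkeeping rather than mathematics: one must make sure that $\rho$ is exactly the formula specified in the paragraph preceding the lemma (so that $H\models\rho(\overline{h})$ really does guarantee $\tau(\overline{h};H)$ is a ring with the operations built from $\tau,\mu$, and — in the strongly topological case — a profinite ring), and that $\sigma^{\ast}$ is the faithful $L_{\mathrm{gp}}$-translation of $\sigma$ along the interpretation, so that satisfaction transfers correctly in both directions. I expect no real obstacle here; the lemma is essentially a formalization of ``an interpretation carries an axiomatization of the target to an axiomatization of the corresponding definable substructure,'' with the two alternative hypotheses precisely arranged so that whichever one holds closes the gap between ``additively profinite ring satisfying $\sigma$'' and ``profinite ring isomorphic to $R$.''
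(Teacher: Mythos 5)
Your proposal is correct and coincides with the paper's own (one-line) proof: the paper also sets $\psi(\overline{y})=\rho(\overline{y})\wedge\sigma_{R}^{\ast}(\overline{y})$ and leaves the verification implicit. Your spelled-out case analysis of the two hypotheses is exactly the intended justification.
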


Indeed, it suffices to set $\psi(\overline{y})=\rho(\overline{y})\wedge
\sigma_{R}^{\ast}(\overline{y})$.

\medskip

\begin{remark}
\label{2-remark}The ring $R$ has the property `$2$ is not a zero divisor' if
and only if $\widehat{\tau}(G)$ satisfies a certain formula $\phi(\overline
{g})$. In this case, we make the convention that $\rho$ implies~$\phi^{\ast}$.
If $H$ as above now satisfies $\rho(\overline{h})$ then $2$ is not a zero
divisor in the ring $S=\widehat{\tau}(H)$, and then in $S$ the identity%
\begin{equation}
2xy=(x+y)^{2}-x^{2}-y^{2} \label{square-trick}%
\end{equation}
determines $xy$. Since addition is continuous, by definition of the topology
on $S$, to establish continuity of multiplication it will suffice to show that
the map $x\longmapsto x^{2}$ on $S$ is continuous.
\end{remark}

Thus if $2$ is not a zero divisor in $R$, for the interpretation be
topological it suffices to have: whenever $H$ as above satisfies
$\rho(\overline{h})$, the squaring map from $S=\widehat{\tau}(H)$ to $S$ is continuous.

\subsection{Interpreting groups in rings\label{bisec}}

Let $G$ be a profinite group. We say that $G$ is \emph{topologically}
\emph{interpreted} in a profinite ring $R$ if, for some $d$, there are
$L_{\mathrm{rg}}$ formulae $\alpha_{1},~\alpha_{2}$ such that

\begin{itemize}
\item for every profinite ring $T$, the subset $\alpha_{1}(T^{(d)})$ is closed
in $T^{(d)}$;

\item $\alpha_{1}(R^{(d)})$ is a group $\widehat{\alpha}(R)$, with operation
defined by%
\[
\overline{a}\cdot\overline{b}=\overline{c}\Longleftrightarrow R\models
\alpha_{2}(\overline{a},\overline{b},\overline{c});
\]

\item $G$ is topologically isomorphic to $\widehat{\alpha}(R)$ (with the
subspace topology induced by $\overline{\alpha}(R)\subseteq R^{(d)}$).
\end{itemize}

As in the preceding subsection, there is a formula $\alpha_{3}$ (depending on
$\alpha_{1},~\alpha_{2}$) such that (i) $R\models\alpha_{3}$ and (ii) for any
profinite ring $T$, if $T\models\alpha_{3}$ then $\alpha_{1}(T^{(d)})$ is a
group $\widehat{\alpha}(T)$ with the operation defined as above.

\medskip

The interpretation is \emph{strongly topological} if, in addition, for every
profinite ring $T,$ the group operation defined by $\alpha_{2}$ on
$\widehat{\alpha}(T)$ is continuous; in this case, $\widehat{\alpha}(T)$ will
be a profinite group, as the topology it inherits from $T^{(d)}$ is compact
and totally disconnected.

For example, if $\mathfrak{G}\leq\mathrm{SL}_{n}$ is an algebraic group
defined over $\mathbb{Z}$, then $\mathfrak{G}(R)$ is interpreted in $R$ for
any ring $R;$ here $d=n^{2}$, $\alpha_{1}$ expresses the defining equations of
$\mathfrak{G}$, and $\alpha_{2}$ is the formula for matrix multiplication
(which is continuous when $R$ is a topological ring).

Now let $G$ be a profinite group and $R$ a profinite ring. We say that $G$ and
$R$ are \emph{topologically bi-interpretable }in the following circumstances:

\begin{enumerate}
\item $R$ is topologically interpreted in $G$ by $\tau$ and $G$ is
topologically interpreted in $R$ by $\overline{\alpha}$, as above;

\item identifying $G$ with $\widehat{\alpha}(R)\subseteq R^{(d)}$ and $R$ with
$\widehat{\tau}(G)\subseteq G$ gives two mappings%
\begin{align*}
\theta:G  &  =\widehat{\alpha}(R)\rightarrow\widehat{\alpha}(\widehat{\tau
}(G))\hookrightarrow G^{(d)},\\
\theta^{\prime}:R  &  =\widehat{\tau}(G)\rightarrow\widehat{\tau
}(\widehat{\alpha}(R))\hookrightarrow R^{(d)};
\end{align*}
then $\theta$ is $L_{\mathrm{gp}}$-definable and $\theta^{\prime}$ is
$L_{\mathrm{rg}}$-definable.
\end{enumerate}

Note that in this situation, $\theta$ is a group isomorphism from $G$ to
$\widehat{\alpha}(\widehat{\tau}(G)),$ and $\theta^{\prime}$ is a ring
isomorphism from $R$ to $\widehat{\tau}(\widehat{\alpha}(R))$.

While first-order language may suffice to determine the algebraic structure of
a group, it cannot say anything about the topology. Recall that the profinite
group $G$ is \emph{algebraically rigid} if every profinite group abstractly
isomorphic to $G$ is topologically isomorphic to $G$. This holds in particular
if $G$ is strongly complete (i.e. every subgroup of finite index is open), but
the conditions are not equivalent; in \S \ref{examplesubsec} we will exhibit
groups that are FA, and therefore algebraically rigid, but not strongly complete.

Analogously, a profinite ring is said to be algebraically rigid if its
topology is uniquely determined by its algebraic structure.

\begin{theorem}
\label{biint} Let $G$ be a profinite group, $R$ a profinite ring, and suppose
that $G$ and $R$ are topologically bi-interpretable.

\emph{(i)} Suppose that $R$ is FA. Assume that $G$ is algebraically rigid, and
that the interpretation of $R$ in $G$ is strongly topological or that $R$ is
strongly FA. Then $G$ is FA in profinite groups.

\emph{(ii) }Suppose that $G$ is FA in profinite groups. Assume that $R$ is
algebraically rigid, and that the interpretation of $G$ in $R$ is strongly
topological. Then $R$ is FA.
\end{theorem}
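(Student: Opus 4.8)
The plan is to prove both parts by chasing the two interpretation schemes through each other and exploiting the definability of the "round-trip" isomorphisms $\theta,\theta'$. I will treat (i) first. Suppose $R$ is FA, witnessed by the $L_{\mathrm{rg}}$-sentence $\sigma_R$. By Lemma \ref{psithing} (which applies since either the interpretation of $R$ in $G$ is strongly topological or $R$ is strongly FA), there is an $L_{\mathrm{gp}}$-formula $\psi(\overline{y})$ with $G\models\psi(\overline{g})$ such that whenever $H$ is profinite and $H\models\psi(\overline{h})$, the interpreted ring $\widehat{\tau}(\overline{h};H)$ is topologically isomorphic to $R$. The candidate sentence for $G$ will be
\[
\sigma_G \;\equiv\; \exists\,\overline{y}\,\Big(\psi(\overline{y})\wedge \delta_\theta(\overline{y})\Big),
\]
where $\delta_\theta$ is the $L_{\mathrm{gp}}$-formula asserting (a) that $\widehat{\alpha}(\widehat{\tau}(H))$ is defined using parameters $\overline{y}$ — i.e., the $\alpha_1,\alpha_2$-conditions, translated via the starring operation $\phi\mapsto\phi^{\ast}$ from the $\widehat{\tau}(H)$-structure — and (b) that the definable map $\theta$ from $H$ to $\widehat{\alpha}(\widehat{\tau}(H))\subseteq H^{(d)}$ is a bijective group homomorphism. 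Here I use clause (2) of bi-interpretability: $\theta$ is $L_{\mathrm{gp}}$-definable, so "$\theta$ is a bijective homomorphism onto $\widehat\alpha(\widehat\tau(H))$" is first-order expressible, and $G\models\delta_\theta(\overline g)$ by hypothesis.

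Now suppose $H$ is a profinite group with $H\models\sigma_G$, say $H\models\psi(\overline{h})\wedge\delta_\theta(\overline{h})$. From $\psi(\overline h)$ we get a topological ring isomorphism $\widehat{\tau}(\overline h;H)\xrightarrow{\ \sim\ } R$. From $\delta_\theta(\overline h)$ we get a group isomorphism (of abstract groups) $\theta:H\to\widehat{\alpha}(\widehat{\tau}(\overline h;H))$. But $\widehat\alpha$ is built from the $L_{\mathrm{rg}}$-formulas $\alpha_1,\alpha_2$, which transport isomorphically along the ring isomorphism $\widehat{\tau}(\overline h;H)\cong R$; hence $\widehat{\alpha}(\widehat{\tau}(\overline h;H))\cong\widehat{\alpha}(R)\cong G$ as abstract groups (the last isomorphism is part of hypothesis (1), and since $\alpha_1$ cuts out a closed subset and $\alpha_2$ defines a continuous operation when the ring is profinite, this is even a topological isomorphism when $\widehat\tau(\overline h;H)$ is a profinite ring — which it is here). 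Composing, $H$ is abstractly isomorphic to $G$. Since $G$ is algebraically rigid, $H$ is topologically isomorphic to $G$, as required. Part (ii) is the mirror image: starting from the $L_{\mathrm{gp}}$-sentence $\sigma_G$ for $G$, one forms $\sigma_R\equiv\exists\,\overline{x}\,(\alpha_3\wedge\sigma_G^{\ast}(\overline{x})\wedge\delta_{\theta'}(\overline{x}))$, where $\delta_{\theta'}$ is the $L_{\mathrm{rg}}$-formula saying the definable map $\theta'$ is a bijective ring homomorphism onto $\widehat\tau(\widehat\alpha(T))$; here one uses that $G$ is topologically interpreted in $R$ by $\alpha_1,\alpha_2$ with $\alpha_1$ closed, and that $\sigma_G^{\ast}$ (the starred translate of $\sigma_G$ into $L_{\mathrm{rg}}$ via the interpretation of $G$ in $R$) pins down $\widehat\alpha(T)\cong G$ as a profinite group provided the interpretation of $G$ in $R$ is strongly topological — which is exactly the hypothesis in (ii). Then the same chase, using $\delta_{\theta'}$ to get $T$ abstractly isomorphic to $R$, and algebraic rigidity of $R$ to upgrade to a topological isomorphism, finishes the proof.

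The main obstacle, and the place requiring care rather than cleverness, is bookkeeping the topology at each stage: one must check that the interpreted structures $\widehat\tau(\overline h;H)$ and $\widehat\alpha(\ldots)$ are genuinely \emph{profinite} (not merely abstract rings/groups carrying a subspace topology), so that "topologically isomorphic" statements compose. The closedness clauses ("$\tau(\overline h;H)$ closed in $H$", "$\alpha_1(T^{(d)})$ closed in $T^{(d)}$") guarantee compactness and total disconnectedness, and the strong-topological hypotheses (or strong FA-ness, via Lemma \ref{psithing}) guarantee continuity of the interpreted operations; the one subtlety is that the \emph{definable} isomorphisms $\theta,\theta'$ are a priori only abstract isomorphisms of the underlying groups/rings, so the final passage to a topological isomorphism genuinely needs the algebraic-rigidity hypothesis and cannot be avoided. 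I expect no difficulty in the first-order expressibility of $\delta_\theta$, $\delta_{\theta'}$, since $\theta,\theta'$ are definable by assumption and "definable map is a bijective homomorphism" is a first-order schema; nor in the translations $\phi\mapsto\phi^{\ast}$ and $\sigma\mapsto\sigma^{\ast}$, which are the standard relativizations already set up in \S\ref{defsub} and \S\ref{bisec}.
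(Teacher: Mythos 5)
Your proposal is correct and follows essentially the same route as the paper's proof: invoke Lemma \ref{psithing} for the formula $\psi$, add a formula expressing that the definable map $\theta$ is an isomorphism onto $\widehat{\alpha}(\widehat{\tau}(\cdot))$, transport $\widehat{\alpha}$ along the resulting ring isomorphism to obtain an abstract isomorphism $G\to H$, and finish with algebraic rigidity; part (ii) is the mirror image in both treatments. The only cosmetic difference is that the paper composes $\theta\widetilde{\theta}^{-1}$ explicitly rather than using $\widehat{\alpha}(R)\cong G$ directly, which changes nothing.
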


\begin{proof}
(i) Let $\psi(\overline{y})$ be the formula provided by Lemma \ref{psithing}.
Given that $\theta$ is definable, the statement that $\theta$ is an
isomorphism from $G$ onto $\widehat{\alpha}(\widehat{\tau}(G))$ can be
expressed by a certain $L_{\mathrm{gp}}$ formula $\Theta(\overline{g})$,
depending in a straightforward way on $\overline{\alpha}$ and $\overline{\tau
}$. Then $G\models\Sigma_{G}(\overline{g})$ where
\[
\Sigma_{G}(\overline{y})\equiv\psi(\overline{y})\wedge\Theta(\overline{y}).
\]

Now let $H$ be a profinite group and suppose that $H\models\Sigma
_{G}(\overline{h})$ for some tuple $\overline{h}$ in $H$. As $H\models
\psi(\overline{h})$, the ring $S=\widehat{\tau}(\overline{h};H)$ is
topologically isomorphic to $R$. In particular, $S$ is a profinite ring and
$S\models\alpha_{3}$, so $\widehat{\alpha}(S)$ is a group with operation
defined by $\alpha_{2}$. As $H\models\Theta(\overline{h})$, the formula
defining $\theta$ establishes a group isomorphism
\[
\widetilde{\theta}:H\rightarrow\widehat{\alpha}(S)\cong\widehat{\alpha
}(R)\cong\widehat{\alpha}(\widehat{\tau}(\overline{g};G)).
\]
Then $\theta\widetilde{\theta}^{-1}$ is a group isomorphism $G\rightarrow H$.
As $H$ is a profinite group and $G$ is algebraically rigid, the groups are
topologically isomorphic.

Thus $\exists\overline{y}.\Sigma_{G}(\overline{y})$ determines $G$ as a
profinite group.

(ii) This is similar, swapping the roles of $G$ and $R$.
\end{proof}

\bigskip

\subsection{Some profinite rings\label{profringsec}}

Familiar examples of profinite rings are the complete local rings with finite
residue field: if $R$ is one of these, with (finitely generated) maximal ideal
$\mathfrak{m}$ and finite residue field $R/\mathfrak{m}\cong\mathbb{F}_{q}$,
then $R$ is the inverse limit of the finite rings $R/\mathfrak{m}^{n}$
($n\in\mathbb{N}$). We will keep this notation throughout this section, and
set $p=\mathrm{char}(R/\mathfrak{m})$, $q=p^{f}$.

The fundamental structure theorem of I. S. Cohen describes most of these rings
quite explicitly. $R$ is said to be \emph{regular} if $\mathfrak{m}$ can be
generated by $d$ elements where $d=\dim R$ is the Krull dimension of $R$. Also
$R$ is said to be \emph{unramified} if either $pR=0$ or $p\cdot1_{R}%
\notin\mathfrak{m}^{2}$. (For background on regular local rings, see e.g.
\cite{E}, \S 10.3. The `unramified' condition serves to avoid complications in
the case of unequal characteristic.)

A basic example is the complete discrete valuation ring%
\[
\mathfrak{o}_{q}=\mathbb{Z}_{p}[\zeta_{q-1}]
\]
where $\zeta_{q-1}$ is a primitive $(q-1)$th root of unity; this is the ring
of integers in the unique unramified extension of degree $f$ over
$\mathbb{Q}_{p}$. According to \cite{ISC}, Th. 11, Cor.~2 this is the only
complete local domain $R$ of characteristic $0$ with maximal ideal $pR$ and
residue field $\mathbb{F}_{q}$.

\begin{theorem}
\emph{\label{cohenthm}(\cite{ISC}, Theorem 15) \ }Let $R$ be a regular,
unramified complete local ring with residue field $\mathbb{F}_{q}$ and
dimension $d\geq1$. Then one of the following holds:

\emph{(a)} $R\cong\mathbb{F}_{q}[[t_{1},\ldots,t_{d}]]$

\emph{(b)} $R\cong\mathfrak{o}_{q}[[t_{1},\ldots,t_{d-1}]]$.
\end{theorem}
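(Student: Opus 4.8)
The plan is to follow Cohen's original argument, treating the equal-characteristic and mixed-characteristic cases separately according to whether $p\cdot 1_R = 0$ or not. In both cases the strategy is: (1) produce a \emph{coefficient ring} $A\subseteq R$ — that is, a complete local subring mapping onto the residue field $\mathbb{F}_q$ with maximal ideal generated by $p$ (or zero in the equal-characteristic case); (2) choose a regular system of parameters $t_1,\ldots,t_e$ for $R$ (where $e=d$ in case (a) and $e=d-1$ in case (b), because in case (b) the element $p$ is part of a regular system of parameters since $R$ is unramified, i.e.\ $p\notin\mathfrak{m}^2$); (3) show that the induced map from the formal power series ring $A[[t_1,\ldots,t_e]]$ to $R$ is an isomorphism, using completeness and a graded/associated-graded comparison.

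First I would handle the construction of the coefficient field (equal characteristic) or coefficient ring (unequal characteristic). When $pR=0$, the residue field $\mathbb{F}_q$ is perfect, so by the usual Hensel/Teichm\"uller argument one lifts $\mathbb{F}_q$ isomorphically into $R$: the $(q-1)$th roots of unity in $R$ exist by Hensel's lemma (the polynomial $X^{q-1}-1$ is separable mod $\mathfrak{m}$), giving a field $A\cong\mathbb{F}_q$ inside $R$ with $A\to R/\mathfrak{m}$ an isomorphism. When $p\cdot 1_R\neq 0$, one invokes the existence of a complete unramified DVR $\mathfrak{o}_q=\mathbb{Z}_p[\zeta_{q-1}]$ with residue field $\mathbb{F}_q$ (the uniqueness statement cited from \cite{ISC}, Th.~11, Cor.~2); the key point is that $\mathfrak{o}_q$ is \emph{formally smooth} / has the lifting property over $\mathbb{Z}_p$ with respect to the residue map, so the composite $\mathbb{Z}_p\to R\to R/\mathfrak{m}^n$ lifts compatibly to $\mathfrak{o}_q\to R/\mathfrak{m}^n$ for all $n$, and passing to the inverse limit embeds $\mathfrak{o}_q$ into $R$ as a coefficient ring $A$.

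Next, with $A$ in hand, pick $t_1,\ldots,t_e\in\mathfrak{m}$ mapping to a regular system of parameters of $R/(p)$ (respectively of $R$ itself when $pR=0$); regularity and unramifiedness guarantee $e=d-\dim(A)$, i.e.\ $e=d$ in case (a) and $e=d-1$ in case (b), and that together with $p$ (in case (b)) these generate $\mathfrak{m}$. Define $\varphi\colon A[[t_1,\ldots,t_e]]\to R$ by $t_i\mapsto t_i$. Surjectivity follows since $A$ surjects onto $R/\mathfrak{m}$ and the $t_i$ (with $p$) generate $\mathfrak{m}$, so $\varphi$ is surjective modulo $\mathfrak{m}^n$ for every $n$, hence surjective by completeness. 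Injectivity is the part I expect to be the main obstacle: one compares the associated graded rings. The source $A[[t_1,\ldots,t_e]]$ has associated graded ring $\kappa[T_1,\ldots,T_e]$ (with $\kappa=\mathbb{F}_q$, and in case (b) one also grades by powers of $p$, getting $\mathbb{F}_q[T_1,\ldots,T_{d-1},T_0]$), a polynomial ring; since $R$ is regular of dimension $d$, its associated graded ring $\mathrm{gr}_\mathfrak{m}(R)$ is also a polynomial ring $\mathbb{F}_q[X_1,\ldots,X_d]$, and $\mathrm{gr}(\varphi)$ is a surjection of polynomial rings in $d$ variables over the same field, hence (by dimension/Hilbert-series comparison) an isomorphism. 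Then a standard filtered-ring argument — $\varphi$ is filtered, $\mathrm{gr}(\varphi)$ is an isomorphism, and both rings are complete and separated in their filtrations — upgrades this to $\varphi$ being an isomorphism. The one subtlety to get right is keeping the two filtrations (by $\mathfrak{m}$-adic degree, and in the mixed case by $p$-adic valuation) compatible so that the graded comparison genuinely sees a polynomial ring on the source side; this is exactly where the hypothesis \emph{unramified} ($p\notin\mathfrak{m}^2$) is used, ensuring $p$ contributes an honest independent variable to the associated graded ring rather than collapsing into the $t_i$.
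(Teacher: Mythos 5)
The paper does not prove this statement at all: it is quoted verbatim as Theorem 15 of Cohen's 1946 paper \cite{ISC}, so there is no in-paper argument to compare yours against. Your outline is a correct reconstruction of the standard proof (coefficient ring via Teichm\"uller/Hensel lifting, regular system of parameters, surjectivity by completeness, injectivity by comparing associated graded rings, both of which are polynomial rings in $d$ variables over $\mathbb{F}_q$ since $R$ is regular). Two small points worth making explicit if you were to write it out: (1) regularity implies $R$ is a domain, which is what guarantees in the mixed-characteristic case that the lifted map $\mathfrak{o}_q\to R$ is injective (the only proper ideals of $\mathfrak{o}_q$ are powers of $(p)$, and no power of $p$ dies in a domain of characteristic $0$); (2) in the equal-characteristic case the set of roots of $X^{q}-X$ in $R$ is closed under addition precisely because $\mathrm{char}(R)=p$, which is why the Teichm\"uller lifts form a field rather than merely a multiplicative section.
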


The point is that $R$ is determined up to isomorphism by its characteristic
and the parameters $d,~q$; it is then hardly surprising that such a ring is
FA. (The same very likely holds also in the ramified case, when $R$ is an
`Eisenstein extension' of the ring specified in (b); we shall not go into this
here, but it can probably be covered by suitably extending the arguments below.)

\begin{theorem}
\label{regloc}Let $R$ be a regular, unramified complete local ring with finite
residue field$.$ Then $R$ is FA.
\end{theorem}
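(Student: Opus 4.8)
The plan is to prove that a regular, unramified complete local ring $R$ with finite residue field $\mathbb{F}_q$ (where $q=p^f$) is FA by exhibiting a single $L_{\mathrm{rg}}$-sentence that pins down its isomorphism type. By Cohen's structure theorem (Theorem~\ref{cohenthm}), $R$ is determined up to isomorphism by three pieces of data: its characteristic ($p$ or $0$), its dimension $d=\dim R$, and the residue field size $q$; so it suffices to find formulas expressing each of these, together with a formula expressing ``$R$ is a regular unramified complete local ring'' in a way that forces any model to again be of Cohen type. I would first record the sentence $\chi_p$ saying $p\cdot 1=0$ versus $\chi_0$ saying $p$ is not a zero divisor, which separates cases (a) and (b).

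The main work is to write down a sentence that captures ``complete local with maximal ideal generated by $\leq d$ elements and residue field of size $q$'' and is satisfied \emph{only} by rings of the appropriate Cohen form. First I would express local-ness and the residue field: the set of non-units $\mathfrak{m}$ is definable ($x$ is a unit iff $\exists y\,(xy=1)$), and ``$\mathfrak{m}$ is an ideal with exactly $q$ cosets'' is a first-order condition of the type $\mathrm{ind}^{\ast}$. Then I would say $\mathfrak{m}$ is generated by elements $t_1,\dots,t_d$ using the formula $\forall x\,(x\in\mathfrak{m}\rightarrow \exists r_1,\dots,r_d\, x=\sum r_i t_i)$ — but one has to be careful, since in a \emph{profinite} ring such a statement about the algebraic ideal membership is what is available in first-order logic; the point (as in the ``strongly complete'' philosophy and Theorem~\ref{NSSC}) is that for these rings the relevant ideals are closed/definable and finitely generated, so this is legitimate. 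To rule out spurious models one needs to know that any (additively profinite, or profinite) local ring satisfying these conditions is itself complete with finitely generated maximal ideal and hence falls under Cohen's theorem; here I would invoke that an inverse limit of the finite quotients $R/\mathfrak{m}^n$ recovers $R$, and that a first-order sentence controlling each $R/\mathfrak{m}^n$ (via $\mathfrak{m}^n$ being definable as a verbal-type subset) forces the quotient tower to match.

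I expect the genuine obstacle to be the passage from ``a ring $S$ in the class satisfies all the finitary consequences of being Cohen-type with these parameters'' to ``$S\cong R$'' — i.e.\ showing the single sentence is not merely necessary but sufficient. The clean route is: (i) show the sentence forces $S/\mathfrak{m}_S^{n}\cong R/\mathfrak{m}^{n}$ for every $n$ by an inductive argument on $n$, using that each $R/\mathfrak{m}^n$ is a \emph{finite} ring and hence FA, and that the sentence encodes enough of its structure (generators of $\mathfrak{m}$, the relations among them mod $\mathfrak{m}^n$, the Teichmüller-type representatives of $\mathbb{F}_q$ in characteristic $0$); (ii) conclude $S=\varprojlim S/\mathfrak{m}_S^n\cong\varprojlim R/\mathfrak{m}^n=R$ as topological rings, using algebraic rigidity of $R$ (which holds since $R$ is the completion of a Noetherian ring, so its topology is intrinsic via powers of the finitely generated maximal ideal). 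The delicate points are ensuring $\mathfrak{m}_S$ is genuinely the closure of the algebraically-generated ideal (so that $\bigcap_n \mathfrak{m}_S^n=0$ and the quotient tower is well-behaved), and handling the unequal-characteristic case, where one must also express that the $(q-1)$th roots of unity exist and form, together with $0$, a set of coset representatives for $\mathfrak{m}$ — encoding the Witt-vector/Teichmüller structure of $\mathfrak{o}_q$ in first-order terms. Once these are in place, setting $\sigma_R=\chi\wedge(\text{Cohen-type sentence with parameters }d,q)$ completes the proof.
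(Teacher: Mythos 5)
Your high-level strategy is the right one --- use Cohen's structure theorem to reduce everything to characteristic, dimension and residue field size, and then argue that any profinite model of the sentence is again a ring of Cohen type --- but two essential steps are missing, and as written the sentence you propose would admit spurious models. First, ``$\mathfrak{m}$ is generated by $d$ elements, $S$ is a local domain with residue field of size $q$'' does \emph{not} force $S$ to be regular of dimension $d$: it only bounds the embedding dimension. For instance $\mathbb{F}_{q}[[x,y]]/(y^{2}-x^{3})$ is a complete local domain whose maximal ideal requires two generators and whose residue field is $\mathbb{F}_{q}$, yet it is one-dimensional and not regular, so it satisfies your conditions with $d=2$ without being isomorphic to $\mathbb{F}_{q}[[x,y]]$. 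The paper closes this hole with a formula $\rho(a_{1},\ldots,a_{d})$ asserting that $a_{1},\ldots,a_{d}$ is a regular sequence (each $a_{i}$ a non-zero-divisor modulo $\sum_{j<i}a_{j}S$); this gives $\dim S\geq d$, and combined with Krull's bound $\dim S\leq d$ it forces $S$ to be regular of dimension exactly $d$. Without some such condition, Cohen's theorem simply does not apply to the model.

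Second, your ``clean route'' (i) --- forcing $S/\mathfrak{m}_{S}^{n}\cong R/\mathfrak{m}^{n}$ for \emph{every} $n$ by using that each finite quotient is FA --- cannot be carried out by a single sentence: one sentence cannot contain the infinitely many FA-sentences of all the quotients, and you give no mechanism by which finitely many axioms propagate up the whole tower. The mechanism that does this is precisely Cohen's theorem, and to invoke it you must first show that $S$ is $\mathfrak{m}_{S}$-adically \emph{complete}. This is where profiniteness of $S$ enters and where the real work lies: one shows $I=\sum a_{i}S$ is compact (continuity of multiplication), hence closed, hence open (finite index), likewise each $I^{n}$; and since the Jacobson radical of each finite quotient $S/J_{\alpha}$ is nilpotent, the powers $I^{n}$ are cofinal among open ideals, so the profinite topology is the $I$-adic one and $S$ is complete. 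You flag this as a ``delicate point'' but supply no argument, and your appeal to algebraic rigidity of $R$ addresses a different (and easier) issue. Finally, your worry about encoding Teichm\"uller representatives in mixed characteristic is unnecessary: once regularity and completeness are in hand, unramifiedness is the single first-order condition $p\cdot 1\notin\mathfrak{m}^{2}$, and Cohen's theorem does the rest.
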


\begin{proof}
Until further notice $S$ denotes an arbitrary ring. Each of the statements
`$S$ is an integral domain', `$\mathrm{char}S=0$', `$\mathrm{char}S=p$' is
easily expressible by a sentence of $L_{\mathrm{rg}}$. There are formulae
$\mu,~\varphi_{q}$ and $\rho$ such that

\begin{itemize}
\item $S\models\mu(a_{1},\ldots,a_{d})$ if and only if $S\setminus\sum_{1}%
^{d}a_{i}S$ consists of units;

\item $S\models\varphi_{q}(a_{1},\ldots,a_{d})$ if and only if $\left\vert
S/\sum_{1}^{d}a_{i}S\right\vert =q$;

\item $S\models\rho(a_{1},\ldots,a_{d})$ if and only if for each $i$, the
element $a_{i}$ is not a zero divisor modulo $\sum_{j=1}^{i-1}a_{j}S$ (the
zero ideal when $i=1$).
\end{itemize}

Put%
\[
\Sigma_{q}(\overline{x}):=\mu(\overline{x})\wedge\varphi_{q}(\overline
{x})\wedge\rho(\overline{x})\wedge\forall y,z.(yz=0\longrightarrow(y=0\vee
z=0)).
\]
Now suppose that $S$ satisfies $\Sigma_{q}(a_{1},\ldots,a_{d})$, and set
$I=\sum_{1}^{d}a_{i}S.$ Then $S$ is a local domain with maximal ideal $I$ and
residue field $S/I\cong\mathbb{F}_{q}$. The sentence $\rho(a_{1},\ldots
,a_{d})$ implies that $\dim S$ is at least $d$, and hence that $\dim S=d$
(\cite{ISC}, Theorem 14), so $S$ is regular.

Now we separate cases.\smallskip

\emph{Case (a)}: $R\cong\mathbb{F}_{q}[[t_{1},\ldots,t_{d}]]$. Then $R$
satisfies
\[
\Sigma_{q,p}(t_{1},\ldots,t_{d})\equiv\Sigma_{q}(t_{1},\ldots,t_{d}%
)\wedge\forall y.(py=0).
\]
Suppose that $S$ is a profinite ring and that $S\models\Sigma_{q,p}%
(s_{1},\ldots,s_{d})$ for some $s_{1},\ldots,s_{d}\in S$. Put $I=\sum_{1}%
^{d}s_{i}S$. Then $S$ is a regular, unramified local domain of dimension $d$
and characteristic $p$, with maximal ideal $I$ and residue field
$\mathbb{F}_{q}$.\smallskip

\emph{Case (b)}: $R\cong\mathfrak{o}_{q}[[t_{1},\ldots,t_{d-1}]].$ Note that
$1_{R}$ is a definable element, by the formula $\forall y.(xy=y)$. Now
$\mathfrak{m}=pR+\sum_{1}^{d-1}t_{i}R$. The fact that $p1_{R}\notin%
\mathfrak{m}^{2}$ is expressible by%
\[
\tau_{p}(t_{1},\ldots,t_{d-1}):=\forall z_{ij},y_{i},x.\left(  \sum_{i,j}%
t_{i}t_{j}z_{ij}+\sum_{i}pt_{i}y_{i}+p^{2}x\neq p1_{R}\right)  .
\]
Thus $R$ satisfies%
\[
\Sigma_{q,0}(t_{1},\ldots,t_{d-1})\equiv\Sigma_{q}(p1_{R},t_{1},\ldots
,t_{d-1})\wedge\exists y.(py\neq0)\wedge\tau_{p}(t_{1},\ldots,t_{d-1}).
\]
Suppose now that $S$ is profinite ring and that $S\models\Sigma_{q,0}%
(s_{1},\ldots,s_{d-1})$ for some $s_{1},\ldots,s_{d-1}\in S$. Put
$I=pS+\sum_{1}^{d-1}s_{i}S$. Then again, $S$ is a regular, unramified local
domain of dimension $d$, with maximal ideal $I$ and residue field
$\mathbb{F}_{q}$, and $S$ has characteristic $0$.\smallskip

\emph{Conclusion. } Since ring multiplication is continuous, $I$ is compact
and therefore closed in $S$; as it has finite index, $I$ is open. The same
argument shows that $I^{n}$ is open for each $n$ (each $I^{n-1}/I^{n}$ is
finite because it is finitely generated as a module for $S/I$). Now $S$ is the
inverse limit of finite rings $S/J_{\alpha}$, where $\{J_{a}\}$ is a family of
open ideals that form a base for the neighbourhoods of $0$. For each $\alpha$
the ring $S/J_{\alpha}$ is finite with Jacobson radical $I/J_{\alpha}$, so for
some $n$ we have $I^{n}\subseteq J_{\alpha}$. Hence the system $\{I^{n}\mid
n\in\mathbb{N\}}$ is also a base for the neighbourhoods of $0$. Thus the given
profinite topology is the $I$-adic topology, and as $S$ is complete for the
former it is complete as a local ring.

Now Theorem \ref{regloc} shows that $S\cong\mathbb{F}_{q}[[t_{1},\ldots
,t_{d}]]$ (Case a) or $S\cong\mathfrak{o}_{q}[[t_{1},\ldots,t_{d-1}]]$ (Case b).
\end{proof}

\medskip

\textbf{Remarks.} (i) If $R$ is one-dimensional and of characteristic $0,$
i.e. $R\cong\mathfrak{o}_{q}$ for some $q$, then in fact $R$ is \emph{strongly
FA. }Indeed, in Case (b) above we merely need to assume that $S$ is additively
profinite, for $S$ satisfies $\Sigma_{q,0}$ with $d=1,$ which asserts that
$I=pS.$ As multiplication by $p$ is continuous on the profinite additive group
$(S,+)$, the given argument shows that the powers of $I$ define the topology,
which again implies that $S$ is complete.

(ii) We can deduce: \emph{If} $\pi$ \emph{is a finite set of primes then}
$\mathbb{Z}_{\pi}$ \emph{is strongly FA}. Taking a set of pairwise orthogonal
idempotents $e_{1},\ldots,e_{k}$ as parameters, we find that $R$ is determined
by a first-order statement which asserts:

\begin{itemize}
\item $R=e_{1}R\times\ldots\times e_{k}R$ and $e_{i}R\cong\mathbb{Z}_{p_{i}}$
for $i=1,\ldots,k$.
\end{itemize}

(iii) We used the well-known fact that in a finite (more generally, Artinian)
ring, the Jacobson radical is nilpotent. It is worth stating an immediate consequence.

\begin{lemma}
\label{niljac} Let $R$ be a complete local ring with finite residue field.
Then every ideal of finite index in $R$ is open.
\end{lemma}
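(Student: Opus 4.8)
The plan is to read this off from the nilpotency of the Jacobson radical of a finite ring, exactly as flagged in Remark~(iii) just above. Write $\mathfrak{m}$ for the maximal ideal of $R$, and recall that by hypothesis $R$ is the inverse limit of the finite rings $R/\mathfrak{m}^{n}$, so that the powers $\mathfrak{m}^{n}$ form a base of open neighbourhoods of $0$ in $R$; in particular each $\mathfrak{m}^{n}$ is an open ideal. Now let $I$ be an ideal of finite index in $R$. If $I=R$ there is nothing to prove, so we may assume $I$ is proper; since $R$ is local this forces $I\subseteq\mathfrak{m}$, and then $R/I$ is a nonzero finite local ring whose maximal ideal $\mathfrak{m}/I$ is its Jacobson radical.

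Next I would apply the well-known fact that in a finite (indeed, Artinian) ring the Jacobson radical is nilpotent: hence $(\mathfrak{m}/I)^{n}=0$ for some $n$, which is to say $\mathfrak{m}^{n}\subseteq I$. Since $I$ is a subgroup of the additive group $(R,+)$ that contains the open subgroup $\mathfrak{m}^{n}$, it is a union of cosets of $\mathfrak{m}^{n}$, and is therefore open. (Alternatively one could quote the relevant step in the proof of Theorem~\ref{regloc}, where the same reasoning shows that each power $I^{n}$ there is open.)

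There is essentially no obstacle here: the only point one must be sure of is that the profinite topology on $R$ is the $\mathfrak{m}$-adic topology, so that $\mathfrak{m}^{n}$ is open --- and this is built into the description of complete local rings with finite residue field used throughout this section.
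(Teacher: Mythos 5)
Your proof is correct and is exactly the argument the paper intends: the lemma is stated as an immediate consequence of the nilpotency of the Jacobson radical of a finite ring, and your reduction (a proper finite-index ideal $I$ lies in $\mathfrak{m}$, the radical $\mathfrak{m}/I$ of the finite ring $R/I$ is nilpotent, hence $\mathfrak{m}^{n}\subseteq I$, and $I$ is then a finite union of cosets of the open subgroup $\mathfrak{m}^{n}$) is precisely the reasoning the paper uses in the Conclusion step of the proof of Theorem~\ref{regloc}. No gaps.
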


\noindent In other words, rings of this kind are `strongly complete'. In
particular, they are algebraically rigid.

\subsection{Some worked examples\label{examplesubsec}}

Now we are ready to prove Theorem \ref{sl2rt}: \medskip

\noindent\emph{if} $R$ \emph{is a complete, unramified regular local ring with
finite residue field then each of the profinite groups} $\mathrm{Af}_{1}(R)$,
$\mathrm{SL}_{2}(R)$ \emph{is FA in the class of profinite groups, assuming in
the second case that the residue characteristic is odd.} \medskip

This will follow from Theorem \ref{regloc} once we show that the hypotheses of
Theorem \ref{biint}(i) are satisfied.

Let $R$ be a complete local domain, with maximal ideal $\mathfrak{m}$ and
finite residue field $R/\mathfrak{m}$. Then $R$ is a profinite ring, a base
for the neighbourhoods of $0$ being the powers of $\mathfrak{m}$. In
particular, $R^{\ast}=R\setminus\mathfrak{m}$ is an open subset, being the
union of finitely many additive cosets of $\mathfrak{m}$.

The semi-direct product $(R,+)\rtimes R^{\ast}$ can be identified with the
affine group%
\[
\mathrm{Af}_{1}(R)=\left(
\begin{array}
[c]{cc}%
1 & R\\
0 & R^{\ast}%
\end{array}
\right)  <\mathrm{GL}_{2}(R).
\]

\begin{theorem}
\label{affine}If $R$ is FA then the group $\mathrm{Af}_{1}(R)$ is FA in
profinite groups.
\end{theorem}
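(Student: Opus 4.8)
The plan is to establish a topological bi-interpretation between $\mathrm{Af}_1(R)$ and $R$, and then apply Theorem \ref{biint}(i). Since Theorem \ref{regloc} already tells us that $R$ is FA, the remaining work is entirely about exhibiting formulas for the two interpretations, checking they satisfy the topological/continuity constraints, and verifying algebraic rigidity of the group.

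\textbf{Interpreting $R$ in $G = \mathrm{Af}_1(R)$.} I would take as parameters a generating pair for the relevant pieces — e.g. an element $t$ generating a dense subgroup of the translation part $(R,+)$ together with a unit generating (topologically) a complement, or more robustly the element $g_0 = \left(\begin{smallmatrix} 1 & 1 \\ 0 & 1\end{smallmatrix}\right)$ and one further element recording a generator of $R^\ast$. The translation subgroup $N = (R,+)$ is the derived subgroup $G'$ (or is definable as the set of elements $x$ with $[x,g_0]=1$ among the non-central elements — one must be careful, but the centre is trivial here and $N$ is visibly characteristic), so $N$ is definable and, being closed, definably closed; this gives $\tau$. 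The additive structure of $R$ is just the group operation on $N$. For multiplication: conjugation of $n \in N$ by $u \in R^\ast$ realizes multiplication of the corresponding ring elements by a unit, and since every element of $R$ is a difference/sum of units (the units are a union of finitely many additive cosets of $\mathfrak m$, hence $R = R^\ast - R^\ast$ at least after passing to the obvious finite sum), one can define the full multiplication $x\cdot y = z$ from the conjugation action by quantifying over representations of $x$ as a combination of units acting on $1_R$. Here $1_R$ is itself a definable element (it is a generator of $N$ modulo $\mathfrak m \cdot N$, or one uses the parameter $t$). This yields $\mu$, and because conjugation is continuous and addition is continuous, the multiplication map on any $S = \widehat\tau(H)$ arising this way is continuous — so the interpretation is \emph{strongly topological}, which is exactly the hypothesis we need.

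\textbf{Interpreting $G$ in $R$.} This direction is easy and standard: $\mathrm{Af}_1(R) \leq \mathrm{GL}_2(R) \leq \mathrm{SL}_4(R)$ (or just work with pairs $(a,b) \in R \times R^\ast$ directly with $d = 2$), with $\alpha_1$ cutting out the pairs whose second coordinate is a unit and $\alpha_2$ giving the semidirect-product multiplication $(a,b)(a',b') = (a + b a', b b')$, which is a polynomial map, hence continuous; so this interpretation is strongly topological as well. The composite maps $\theta$ on $G$ and $\theta'$ on $R$ are then visibly given by polynomial/group-word expressions in the parameters, hence $L_{\mathrm{gp}}$- resp. $L_{\mathrm{rg}}$-definable, completing the verification that $G$ and $R$ are topologically bi-interpretable.

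\textbf{The main obstacle} is algebraic rigidity of $\mathrm{Af}_1(R)$: Theorem \ref{biint}(i) requires it, and $\mathrm{Af}_1(R)$ is typically \emph{not} finitely generated (indeed Corollary \ref{sep} trades on exactly this), so we cannot invoke strong completeness via Theorem \ref{NSSC}. I would instead argue rigidity directly: if $\phi\colon \mathrm{Af}_1(R) \to P$ is an abstract isomorphism onto a profinite group $P$, then $N = (R,+)$, being the (abstractly) characteristic closed subgroup $G'$ with trivial centralizer behaviour, maps to a normal subgroup of $P$, and the bi-interpretation machinery forces $P$ to carry a ring $S \cong R$ with $P = \mathrm{Af}_1(S)$; since $R$ is strongly complete as a ring by Lemma \ref{niljac} (hence algebraically rigid), the topology on $S$, and thence on $N$ and on $P = (S,+)\rtimes S^\ast$, is determined, giving $P \cong \mathrm{Af}_1(R)$ topologically. (Alternatively one notes $N \cong (R,+)$ is itself algebraically rigid as an abelian profinite group of the relevant type, and the unit group and action are recovered from it.) Once rigidity is in hand, Theorem \ref{biint}(i) applies verbatim — using the strongly-topological interpretation of $R$ in $G$ — and yields that $\mathrm{Af}_1(R)$ is FA in the class of profinite groups.
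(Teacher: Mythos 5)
Your proposal follows essentially the same route as the paper: a topological bi-interpretation of $\mathrm{Af}_{1}(R)$ with $R$ in which the translation subgroup is realized as a centralizer, ring multiplication is recovered from conjugation by the abelian diagonal subgroup using the fact that every element of $R$ is a unit or a unit minus $1$, algebraic rigidity is proved directly via Lemma \ref{niljac} (rather than via strong completeness, which fails here), and Theorem \ref{biint}(i) is then invoked. The paper's Proposition \ref{aff} supplies the rigidity argument you sketch (continuity first on $U$ through the ideal structure of $R$, then on $H$ as centralizers of the action on $U/U(n)$), and the strong topologicality of the interpretation is verified there by a congruence computation modulo open normal subgroups using the encoded abelianness of $H$ — the two points you correctly flag as carrying the real work.
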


\begin{proof}
We will verify the hypotheses of Theorem \ref{biint}(i). Write $G=\mathrm{Af}%
_{1}(R).$ We have a visible topological interpretation of $G$ as a subset of
$M_{2}(R)=R^{(4)}$.

Define the following elements of $G,$ where $1=1_{R}$ and $\lambda\in
R,~\xi\in R^{\ast}:$%
\begin{equation}
u(\lambda)=\left(
\begin{array}
[c]{cc}%
1 & \lambda\\
0 & 1
\end{array}
\right)  ,~~h(\xi)=\left(
\begin{array}
[c]{cc}%
1 & 0\\
0 & \xi
\end{array}
\right)  ,\nonumber
\end{equation}
and fix the parameters $u:=u(1),$ $h:=h(r)$ for some $r\in R^{\ast},$ $r\neq
1$. Then $G=H\cdot U$ where
\begin{align*}
U  &  :=u(R)=\mathrm{C}_{G}(u)\\
H  &  :=h(R^{\ast})=\mathrm{C}_{G}(h)
\end{align*}
are both definable subgroups. For technical reasons, we want to encode the
fact that $H$ is abelian and normalizes $U$; to this end, we define%
\[
\kappa(u,h,x)\equiv\forall y.\left(  [\mathrm{com}(y,h)\rightarrow
\mathrm{com}(y,x)]\wedge\lbrack\mathrm{com}(y,u)\longleftrightarrow
\mathrm{com}(y^{x},u)]\right)  ;
\]
and note that $H=\kappa(u,h;G)$.

We will frequently use the identity%
\[
u(\lambda)^{h(\xi)}=u(\xi\lambda).
\]
All formulae are supposed to involve the parameters $u,~h$, which we will
sometimes omit for brevity.\medskip

\noindent\textbf{Claim 1.} \emph{The ring} $R$ \emph{has a strongly
topological interpretation in }$G$\emph{, given by} $u:R\rightarrow
U=\mathrm{com}(u;G)$.\medskip

Certainly $\mathrm{com}(u,-)$ defines a closed subgroup in any profinite
group, as it defines a centralizer. The map $u:R\rightarrow U$ is a
topological isomorphism from $(R,+)$ to $U$. It becomes a topological ring
isomorphism if one defines%
\[
u(\alpha)\cdot u(\beta)=u(\alpha\beta).
\]
We need to provide an $L_{\mathrm{gp}}$ formula $\mu$ such that for
$x,~y,~z\in U$,%
\[
x\cdot y=z\Longleftrightarrow G\models\mu(u,h;x,y,z).
\]
Let $v=u(\beta)\in U.$ If $\beta\in R^{\ast}$ then $v=u^{h(\beta)}$, while if
$\beta\in\mathfrak{m}$ then $\beta+1\in R^{\ast}$ and $v=[u,h(\beta+1)]$. Thus
$v_{1}\cdot v_{2}=v_{3}$ if and only if there exist $x,y\in H$ such that one
of the following holds:%
\begin{align*}
v_{1}  &  =u^{x},~v_{2}=u^{y},~v_{3}=u^{xy},~\text{or}\\
v_{1}  &  =u^{x},~v_{2}=[u,y],~v_{3}=[u^{x},y],\text{ or}\\
v_{1}  &  =[u,x],~v_{2}=u^{y},~v_{3}=[u^{y},x],\text{ or}\\
v_{1}  &  =[u,x],~v_{2}=[u,y],~v_{3}=[[u,y],x].
\end{align*}
This can be expressed by a first-order formula since $H$ is definable.

To say that the interpretation is strongly topological means the following: if
a profinite group $\widetilde{G}$ satisfies the appropriate sentence $\rho$
with parameters $\widetilde{u},~\widetilde{h}$, which in particular implies
that $\mu(\widetilde{u},\widetilde{h};~-~)$ defines a binary operation $\cdot$
on $\widetilde{U}=\mathrm{com}(\widetilde{u};\widetilde{G}) $, this operation
is continuous.

We will write $u$ in place of $\widetilde{u}$ for aesthetic reasons.

Let $N$ be an open normal subgroup of $\widetilde{G}$. If $u^{x}\equiv
u^{x^{\prime}}~(\operatorname{mod}N)$ and $u^{y}\equiv u^{y^{\prime}%
}~(\operatorname{mod}N)$ with $x,x^{\prime},y,y^{\prime}\in\widetilde{H},$
then%
\[
u^{xy}\equiv u^{x^{\prime}y}=u^{yx^{\prime}}\equiv u^{y^{\prime}x^{\prime}%
}=u^{x^{\prime}y^{\prime}}~(\operatorname{mod}N)
\]
since $\widetilde{H}$ is abelian. Similar congruences hold if $u^{x}$ is
replaced by $[u,x]$ or $u^{y}$ is replaced by $[u,y].$ Thus in all cases we
see that if $v_{i}\equiv v_{i}^{\prime}~(\operatorname{mod}N)~$\ for $i=1,2$
and $v_{1}\cdot v_{2}=v_{3}$, $v_{1}^{\prime}\cdot v_{2}^{\prime}%
=v_{3}^{\prime},$ then $v_{3}\equiv v_{3}^{\prime}~(\operatorname{mod}N)$.
Thus the operation $\cdot$ is continuous as required.

\medskip

\noindent\textbf{Claim 2. }\emph{The map} $\theta$ \emph{sending }$g=\left(
\begin{array}
[c]{cc}%
1 & \lambda\\
0 & \xi
\end{array}
\right)  \in G$ to $\left(  u(1),u(\lambda),u(0),u(\xi)\right)  \in G^{(4)}$
is definable.

We have $g=\widetilde{h}(g)\widetilde{u}(g)$ where $\widetilde{u}%
(g)=u(\lambda)$ and $\widetilde{h}(g)=h(\xi).$ Also%
\begin{align*}
\{\widetilde{u}(g)\}  &  =Hg\cap U\\
\{\widetilde{h}(g)\}  &  =gU\cap H.
\end{align*}
As $H$ and $U$ are definable subsets of $G$, the mappings $\widetilde{u}%
:G\rightarrow G$ and $\widetilde{h}:G\rightarrow G$ are both definable. Hence
so is $\theta$, because $u(1)$ is the parameter $u$, $u(0)=1_{G},$
$u(\lambda)=\widetilde{u}(g)$ and%
\[
u(\xi)=u^{h(\xi)}=u^{\widetilde{h}(g)}.
\]

\medskip\noindent\textbf{Claim 2}bis.\emph{The map sending} $\lambda\in R$ to
$(1,\lambda,0,1)\in R^{(4)}$ \emph{is definable. }Obviously.

\medskip

\noindent\textbf{Claim 3. }$G$ \emph{is algebraically rigid}. This follows
from the stronger result Proposition \ref{aff}, below, and completes the proof
of Theorem \ref{affine}.
\end{proof}

\begin{proposition}
\label{aff} Let $R$ be a complete local domain with finite residue field. Then
every group isomorphism from $\mathrm{Af}_{1}(R)$ to a profinite group is
continuous (and therefore a topological isomorphism).
\end{proposition}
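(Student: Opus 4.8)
The plan is to show that any abstract group isomorphism $\phi:\mathrm{Af}_1(R)\to H$ onto a profinite group $H$ is continuous, and for this it suffices (since a continuous bijective homomorphism of profinite groups is a topological isomorphism) to prove that $\phi$ is continuous on the two definable pieces $U=u(R)$ and $H_0=h(R^\ast)$, and that the induced topologies match up. Write $G=\mathrm{Af}_1(R)$, $\mathfrak{m}$ for the maximal ideal of $R$, and note $G=U\rtimes H_0$ where $U\cong(R,+)$ is a finitely generated $\mathbb{Z}_p$-module (hence strongly complete as an abelian profinite group) and $H_0\cong R^\ast$. The first step is to identify $U$ algebraically inside $G$: I claim $U$ is the unique maximal abelian normal subgroup such that $G/U$ is abelian, or more precisely that $U=\mathrm{C}_G(\text{a suitable element})$ in a way that is preserved by $\phi$; concretely $U$ equals the set of commutators $[G,G]$ together with... — the cleanest route is to observe $U=G'$ when $R^\ast$ acts on $R$ with $R=\mathfrak{m}R+\mathbb{Z}\cdot R$... hmm, one must be slightly careful, but in any case the derived subgroup $G'=[U, H_0]$ is $(\xi-1)\lambda$ as $\xi$ ranges over units, $\lambda$ over $R$; since $R$ is local with $R^\ast=R\setminus\mathfrak m$ generating $R$ additively, $G'=U$. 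Thus $\phi(U)=H'$ is a closed (hence open, if $U$ has finite index — it does) subgroup of $H$, and $\phi|_U:(R,+)\to H'$ is an abstract isomorphism of profinite abelian groups, hence continuous because $(R,+)$ is strongly complete.

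The second step handles $H_0\cong R^\ast$. Here $\phi(H_0)$ is some subgroup of $H$ mapping isomorphically onto $H/H'\cong H_0$, so $\phi(H_0)$ is closed iff $H'$ together with $\phi(H_0)$'s image in $H/H'$ is — more simply, $H/H'$ is profinite and $\phi$ induces an abstract isomorphism $R^\ast\to H/\phi(U)$; so it suffices to know $R^\ast$ is strongly complete. This is exactly Lemma \ref{niljac}'s neighbourhood: $R^\ast=1+\mathfrak m$ times the finite group $\mathbb F_q^\ast$ (Teichm\"uller), and $1+\mathfrak m$ is, for a complete unramified regular local ring, an abelian (for $p$ odd) or at worst $\mathbb{Z}_p$-analytic finitely generated profinite group, hence strongly complete; even in general $1+\mathfrak m^n$ form a neighbourhood basis of open finite-index subgroups, so $R^\ast$ is strongly complete. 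Therefore $\phi$ restricted to $H_0$ is continuous as well. Combining, $\phi$ is continuous on $U$ and on a complementary closed subgroup $H_0$ with $G=UH_0$; since multiplication $U\times H_0\to G$ is a homeomorphism, $\phi$ is continuous on all of $G$, and being a continuous bijection between profinite groups it is a topological isomorphism.

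The main obstacle, and the step deserving the most care, is the passage from ``$\phi$ is continuous on the closed subgroups $U$ and $H_0$'' to ``$\phi$ is continuous on $G$'': a priori $\phi(U)$ and $\phi(H_0)$ are closed subgroups of $H$, but one must check that the product decomposition $H=\phi(U)\phi(H_0)$ is again a \emph{topological} semidirect-product decomposition, i.e. that $\phi(U)$ is closed (equivalently open, as it has finite index) and normal in $H$. Normality is automatic since $U$ is characteristic in $G$ (it is $G'$, or the unique such subgroup). Closedness follows because a subgroup of finite index in the profinite group $H$ that contains the open subgroup $\phi(U)=H'$... wait, we need $H'$ closed: $H'=\phi(G')=\phi(U)$ and $\phi(U)$ is the isomorphic image of the compact group $(R,+)$ under a map we've just shown continuous, hence compact, hence closed. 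So $\phi(U)$ is open in $H$, and then $H$ is the internal semidirect product of the open normal subgroup $\phi(U)$ by the finite-index closed subgroup $\phi(H_0)$ (closed because $\phi|_{H_0}$ is continuous with compact domain). With both factors closed and the multiplication map a homeomorphism $\phi(U)\times\phi(H_0)\to H$, continuity of $\phi$ on each factor gives continuity of $\phi$ globally. Finally I would remark that this argument in fact shows $G$ is \emph{strongly complete}, which is the phrasing needed for Claim 3 in the proof of Theorem \ref{affine}.
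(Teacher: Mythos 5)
Your reduction of global continuity to continuity on the two factors $U=u(R)$ and $H_0=h(R^\ast)$ is sound and matches the shape of the paper's argument, but both factor-wise continuity claims rest on assertions that are false for most of the rings in question, and this is exactly where the real content of the proposition lies. You claim that $(R,+)$ is a finitely generated $\mathbb{Z}_p$-module, hence strongly complete, so that $\phi|_U$ is automatically continuous. For $R=\mathbb{F}_q[[t_1,\ldots,t_d]]$ (or $\mathbb{Z}_p[[t]]$, etc.) the additive group of $R$ is an infinite elementary abelian pro-$p$ group (resp.\ an infinitely generated pro-$p$ group); it has uncountably many non-open subgroups of finite index and admits discontinuous abstract automorphisms, so nothing intrinsic to $U$ forces $\phi|_U$ to be continuous. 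The same objection applies to $H_0$: for $R=\mathbb{F}_q[[t]]$ the group $1+\mathfrak{m}$ is not finitely generated as a pro-$p$ group and is not strongly complete. Indeed the remark following the proposition states that $\mathrm{Af}_1(R)$ is strongly complete only when $R\cong\mathfrak{o}_q$, so your closing claim that the argument proves strong completeness of $G$ cannot be right. There are smaller slips as well: $U$ and $H_0$ have infinite index in $G$ (so ``closed, hence open'' and ``finite-index'' fail at several points), and $G'$ is $u(\mathfrak{m})\neq U$ when the residue field is $\mathbb{F}_2$.

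The idea you are missing is that continuity on $U$ must be extracted from the conjugation action of $H_0$ on $U$, not from the structure of $U$ alone. If $\widetilde N$ is an open normal subgroup of the target and $N$ is its preimage, then $N\cap U=u(B)$ for an additive subgroup $B$ of finite index in $R$; normality of $N$ gives $u(B)^{h(r)}=u(Br)=u(B)$ for every $r\in R^\ast$, and since $R=R^\ast\cup(R^\ast-1)$ this forces $B$ to be an ideal, which is then open by Lemma \ref{niljac} (a finite-index ideal of a complete local ring with finite residue field contains a power of $\mathfrak{m}$). That yields continuity of $\phi|_U$, hence that each $u(\mathfrak{m}^n)$ has open image; continuity on $H_0$ is then obtained by identifying $h(1+\mathfrak{m}^n)$ as the centralizer of $U/u(\mathfrak{m}^n)$ --- again via the action, not via any completeness property of $R^\ast$. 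Only after these two steps does your final assembly of the product decomposition go through.
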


\begin{proof}
$G=\mathrm{Af}_{1}(R)=U\rtimes H$ where $U=u(R)$ and $H=h(R^{\ast})$ (notation
as above). A base for the neigbourhoods of $1$ in $G$ is the family of
subgroups%
\begin{align*}
G(n)  &  :=H(n)U(n),~~n\geq1,~\text{where}\\
U(n)  &  =u(\mathfrak{m}^{n}),~~H(n)=h(1+\mathfrak{m}^{n}).
\end{align*}

Let $\theta:G\rightarrow\widetilde{G}$ be a group isomorphism, where
$\widetilde{G}$ is a profinite group. Set $\widetilde{U}=U\theta$ and
$\widetilde{H}=H\theta$. As $R$ is an integral domain, $H=\mathrm{C}_{G}(H),$
and so $\widetilde{H}=\mathrm{C}_{\widetilde{G}}(\widetilde{H})$ is closed in
$\widetilde{G}$. Similarly, $\widetilde{U}=\mathrm{C}_{\widetilde{G}%
}(\widetilde{U})$ is closed. We will show that $\theta^{-1}$ is continuous
(which, for an isomorphism of profinite groups, is equivalent to being a homeomorphism).

Suppose that $\widetilde{N}\vartriangleleft_{o}\widetilde{G}$. Then
$N:=\widetilde{N}\theta^{-1}$ is a normal subgroup of finite index in $G$, so
$N\cap U=u(B)$ for some additive subgroup $B$ of finite index in $R$. If $r\in
R^{\ast}$ then
\[
u(Br)=u(B)^{h(r)}=u(B)
\]
so $B=Br$, and as $R=R^{\ast}\cup(R^{\ast}-1)$ it follows that $B$ is an ideal
of $R$; therefore $B\supseteq\mathfrak{m}^{n}$ for some $n$, by Lemma
\ref{niljac}. Thus $U(n)\subseteq\widetilde{N}\theta^{-1}$.

It follows that $\theta_{\left\vert U\right.  }:U\rightarrow\widetilde{U}$ is
a continuous isomorphism, and consequently a homeomorphism. This in turn
implies that each $\widetilde{U}(n):=U(n)\theta$ is open in $\widetilde{U}$.

Now for each $n$,%
\[
H(n)\theta=\mathrm{C}_{H}(U/U(n))\theta=\mathrm{C}_{\widetilde{H}%
}(\widetilde{U}/\widetilde{U}(n))
\]
is open in $\widetilde{H}$ since $\widetilde{U}(n)$ is open in $\widetilde{U}
$ (here $\mathrm{C}_{H}(U/U(n))$ denotes the kernel of the conjugation action
of $H$ on the factor $U/U(n)$ ). Thus%
\[
G(n)\theta=H(n)\theta.U(n)\theta
\]
is closed in $\widetilde{G},$ hence open as it has finite index. It follows
that $\theta^{-1}$ is continuous.
\end{proof}

\medskip

\textbf{Remark. } It is not usually the case that $G=\mathrm{Af}_{1}(R)$ is
strongly complete. In fact, $G$ is strongly complete if and only if its open
subgroup $G(1)$ (the principal congruence subgroup $\mathrm{mod}$
$\mathfrak{m}$) is, and as $G(1)$ is a pro-$p$ group this holds if and only if
$G(1)$ is finitely generated (e.g. by \cite{WS}, Theorem 2: a pro-$p$ group
that is not f.g.\ maps onto an infinite elementary abelian $p$-group, and so
has uncountably many subgroups of index $p$). This in turn holds if and only
if the multiplicative subgroup $T:=1+\mathfrak{m}$ of $R^{\ast}$ is finitely
generated as a pro-$p$ group. Now $T$ is finitely generated iff $T/T^{p}$ is
finite; of the rings listed in Theorem \ref{cohenthm}, only the ones denoted
$\mathfrak{o}_{q}$ have this property.

\bigskip

For the next example, let $R$ be as above, and \textbf{assume that}
$q=\left\vert R/\mathfrak{m}\right\vert $ \textbf{is odd}. Every element of
$R/\mathfrak{m}$ is then a difference of two squares, and as $R$ is complete
and $q$ is odd it follows that every element of $R$ is of the form $x^{2}$ or
$x^{2}-y^{2}$ with $x,$ $y\in R^{\ast}$. Fix $r\in R$ such that
$r+\mathfrak{m}$ is a generator for $(R/\mathfrak{m)}^{\ast}$ and $r^{4}\neq1$.

\begin{theorem}
\label{sl2_biint copy(1)}If $R$ is FA then the group $\mathrm{SL}_{2}(R)$ is
FA in profinite groups.
\end{theorem}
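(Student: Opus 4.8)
The plan is to verify the hypotheses of Theorem~\ref{biint}(i) for $G=\mathrm{SL}_2(R)$ and the ring $R$, just as was done for $\mathrm{Af}_1(R)$ in Theorem~\ref{affine}. Since $R$ is FA by assumption (indeed $R$ is one of the rings of Theorem~\ref{cohenthm}, by Theorem~\ref{regloc}), it suffices to (a) produce a strongly topological interpretation of $R$ in $G$, (b) exhibit the reverse interpretation of $G$ in $R$ together with the two definable composition maps making $G$ and $R$ topologically bi-interpretable, and (c) check that $G$ is algebraically rigid. Point (c) is immediate here, in contrast to the affine case: $\mathrm{SL}_2(R)$ is finitely generated as a profinite group (Prop.~\ref{linsc}), hence strongly complete by Theorem~\ref{NSSC}, hence algebraically rigid.

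For (a) I would use the root subgroups. Let $u^+(\lambda)$, $h(\xi)=\mathrm{diag}(\xi,\xi^{-1})$, $u^-(\mu)$ be the standard one-parameter subgroups, and take as parameters $u=u^+(1)$ and $h=h(r)$ with $r$ as fixed before the statement. Since $R$ is a domain, $H:=h(R^\ast)$ is the centralizer in $G$ of the regular semisimple element $h$, hence a definable closed subgroup; and $U^+:=u^+(R)$ is definable, e.g. as the (unique, since $2\in R^\ast$) subgroup of index $2$ in $\mathrm{C}_G(u)=U^+\times\{\pm I\}$, equivalently as the set of squares there. Identify $(R,+)$ with $U^+$ via $u^+$. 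The torus scales $U^+$ by squares, $h(\xi)u^+(\lambda)h(\xi)^{-1}=u^+(\xi^2\lambda)$, so conjugation realizes multiplication by a square of a unit; because the residue characteristic is odd, every element of $R$ is $x^2$ or $x^2-y^2$ with $x,y\in R^\ast$ (as observed just before the statement), and multiplication is biadditive, so ring multiplication on $U^+$ is expressed by a first-order formula $\mu(u,h;-)$ involving a finite disjunction over torus elements, exactly as in Claim~1 of the proof of Theorem~\ref{affine}. This interpretation is strongly topological: in any profinite group satisfying the resulting sentence $\rho$ with parameters, the operation defined by $\mu$ is continuous because conjugation by a fixed element and the group operations are continuous and the avatar of $H$ is abelian; alternatively, since $2$ is a unit (hence not a zero divisor) one may invoke Remark~\ref{2-remark} and check continuity of the squaring map.

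For (b), $G=\mathrm{SL}_2(R)$ has the visible interpretation in $R$ inside $M_2(R)=R^{(4)}$, with matrix multiplication (continuous), so this interpretation is strongly topological, and the composite $\theta':R\to R^{(4)}$ is given by the obvious formula. The heart of the matter — and the step I expect to be the main obstacle — is showing that the composite $\theta:G\to\widehat\alpha(\widehat\tau(G))\hookrightarrow G^{(4)}$ is $L_{\mathrm{gp}}$-definable; concretely, one must recover the four matrix entries of an arbitrary $g\in\mathrm{SL}_2(R)$ as elements of the avatar ring $\widehat\tau(G)=U^+$. The natural route is the Bruhat decomposition $G=U^+HU^-\ \sqcup\ U^+Hw U^-$: on the big cell $g=u^+(\lambda)h(\xi)u^-(\mu)$ the factors $u^+(\lambda)$ and $u^-(\mu)$ are recovered as the unique elements of $U^+$, resp. $U^-$, lying in $gB^-$, resp. $B^+g$, where $B^\pm$ are the definable Borel subgroups, and then $h(\xi)=u^+(\lambda)^{-1}gu^-(\mu)^{-1}$, with a parallel analysis on the other cell and a separate treatment of its boundary. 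The subtlety is that conjugation by $h(\xi)$ exposes only $\xi^2$ on $U^+$, so the naive reading recovers the entries of $g$ only up to the global sign coming from the central element $-I$; converting the torus datum $h(\xi)$ into the element $u^+(\xi)\in U^+$ — equivalently, pinning down this sign — is where the odd-characteristic hypothesis must be exploited a second time, and is the real work. Once $\theta$ and $\theta'$ are shown definable, Theorem~\ref{biint}(i) together with Theorem~\ref{regloc} yields that $\mathrm{SL}_2(R)$ is FA in the class of profinite groups.
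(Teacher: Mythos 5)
Your overall route is the same as the paper's: verify the hypotheses of Theorem~\ref{biint}(i), interpret $R$ in $G$ on the root subgroup $U=u(R)$ using that every element of $R$ is $x^2$ or $x^2-y^2$ with $x,y\in R^\ast$ together with the polarization identity of Remark~\ref{2-remark}, interpret $G$ in $R$ as matrices in $R^{(4)}$, recover the matrix entries of $g$ via a big-cell decomposition plus translation by $w$, and get algebraic rigidity from finite generation (Prop.~\ref{linsc}) and strong completeness. Two cosmetic differences: the paper defines $U$ inside $\mathrm{C}_G(u)=\pm U$ by the formula $\exists x,y\in H.(s=u^x\vee s=u^xu^{-y})$ rather than as the set of squares (both work), and its two cells are simply $G_1=\{g:g_{11}\in R^\ast\}=VHU$ and $G_2=G\setminus G_1$ with $G_2w\subseteq G_1$ --- both pieces are clopen, so there is no ``boundary'' to treat separately.

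The one point where your proposal has a genuine hole is exactly the step you flag as ``the real work'' and then do not carry out: converting the torus datum $h(\xi)$ into the ring elements $u(\xi^{-1}),u(\xi)\in U$, given that conjugation by $h(\xi)$ only exposes $\xi^2$. The paper resolves this with the definable map $\beta:H\rightarrow U\times U$, $h(\lambda)\mapsto(u(\lambda^{-1}),u(\lambda))$, given by
\[
\exists t\in H.\left(\left(x=t^{2}\wedge y_{1}^{t}=u\wedge y_{2}=u^{t}\right)\vee\left(x=t^{2}h\wedge y_{1}^{th}=u^{\prime}\wedge y_{2}=u^{\prime t}\right)\right),
\]
where $h=h(r)$ and $u^{\prime}=u(r)$ are among the fixed parameters. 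The point is that, because the residue field has odd order, $R$ is complete, and $r+\mathfrak{m}$ generates $(R/\mathfrak{m})^{\ast}$, every unit $\lambda$ is either $\mu^{2}$ or $\mu^{2}r$ for some $\mu\in R^{\ast}$; in the first case $h(\lambda)=h(\mu)^{2}$ and $u(\lambda)=u^{h(\mu)}$, in the second $h(\lambda)=h(\mu)^{2}h(r)$ and $u(\lambda)=(u^{\prime})^{h(\mu)}$, and the ``inverse'' entry $u(\lambda^{-1})$ is pinned down by the conditions $y_{1}^{t}=u$, resp.\ $y_{1}^{th}=u^{\prime}$. So the extra parameter $u^{\prime}=u(r)$ (not just $u$ and $h$) is what makes this work; with only $u$ and $h$ as parameters the non-square units would be out of reach. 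Once $\beta$ is in hand, the entries of $g\in G_1=VHU$ are read off from the factorization $g=\widetilde{v}(g)\widetilde{h}(g)\widetilde{u}(g)$ exactly as you describe, and your argument goes through.
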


\begin{proof}
Put $G=\mathrm{SL}_{2}(R).$ We will show that all hypotheses of Theorem
\ref{biint} are satisfied, with a strongly topological interpretation of $R$
in $G$.

Define the following elements of $G,$ where $1=1_{R}$ and $\lambda\in R:$%
\begin{align}
u(\lambda)  &  =\left(
\begin{array}
[c]{cc}%
1 & \lambda\\
0 & 1
\end{array}
\right)  ,~~v(\lambda)=\left(
\begin{array}
[c]{cc}%
1 & 0\\
-\lambda & 1
\end{array}
\right)  ,\nonumber\\
h(\lambda)  &  =\left(
\begin{array}
[c]{cc}%
\lambda^{-1} & 0\\
0 & \lambda
\end{array}
\right)  ~\ (\lambda\in R^{\ast}),\label{names}\\
w  &  =\left(
\begin{array}
[c]{cc}%
0 & 1\\
-1 & 0
\end{array}
\right) \nonumber
\end{align}
We take $u:=u(1),$ $u^{\prime}:=u(r),v:=v(1)$, $w$ and $h:=h(r)$ as parameters
and write%
\[
\overline{u}=(u,u^{\prime},v,w,h).
\]
`Definable' will mean definable with these parameters. All formulae below are
supposed to include these parameters, which we mostly omit for brevity. We
will use without special mention the identities%
\begin{align*}
u(\lambda)^{w}  &  =v(\lambda)\\
u(\lambda)^{h(\mu)}  &  =u(\lambda\mu^{2}).
\end{align*}

Write $U=u(R),~V=v(R),~H=h(R^{\ast})$. Write $\pm U=\left\langle
\pm1\right\rangle \cdot U$, etc. Then $\pm U=\mathrm{C}_{G}(u),$ $\pm
V=\mathrm{C}_{G}(v)$ are definable subgroups of $G$.

To show that $U$ is definable, observe first that no element of $-U$ is
conjugate to an element of $U$. On the other hand, we shall see below that
each element of $U$ takes one of the forms $u^{x}$ or $u^{x}u^{-y}$ ($x,y\in
H$). Thus $U=\rho(G)$ where%
\[
\rho(s):=\mathrm{com}(s,u)\wedge\exists x,y.(s=u^{x}\vee s=u^{x}u^{-y}).
\]

Note that $\rho$ will define a closed subset in any profinite group, since
centralizers and conjugacy classes are closed. Using Lemma \ref{subgp} below,
we now adjust $\rho$ to a new formula $\rho^{\ast},$ such that $U=\rho^{\ast
}(G),$ and for any profinite group $A$ the subset $\rho^{\ast}(A)$ is a closed subgroup.

Then $V=U^{w}$ is also definable, as is the subgroup $H=\mathrm{C}_{G}(h)$;
for technical reasons, we want to encode the fact that $H$ is abelian and
normalizes $U$; thus $H=$ $\eta(G)$ where
\[
\eta(\overline{u},x)\equiv\forall y.\left(  \mathrm{com}(h,y)\rightarrow
\mathrm{com}(x,y)\wedge\rho^{\ast}(y)\longleftrightarrow\rho^{\ast}%
(y^{x})\right)  .
\]

\noindent\textbf{Claim 1.} \emph{The map} $u:R\rightarrow U=\rho^{\ast}%
(G)$\emph{ gives a strongly topological interpretation of }$R$ \emph{in }%
$G$.\medskip

We adapt the method used in \cite{KRT}, proof of Theorem 3.2. The map
$u:R\rightarrow U$ is a topological isomorphism from $(R,+)$ to $U$. It
becomes a ring isomorphism if one defines%
\begin{equation}
u(\alpha)\cdot u(\beta)=u(\alpha\beta). \label{thing}%
\end{equation}
Now we need to provide an $L_{\mathrm{gp}}$ formula $\mu$ such that for
$v_{1},~v_{2},~v_{3}\in U$,%
\[
v_{1}\cdot v_{2}=v_{3}\Longleftrightarrow G\models\mu(v_{1},v_{2},v_{3}).
\]

If $\alpha=\xi^{2}$ (resp. $\xi^{2}-\eta^{2}$) with $\xi,\eta\in R^{\ast},$
then $u(\alpha)=u^{x}$ (resp. $u^{x}u^{-y})$ where $x=h(\xi),$ $y=h(\eta)$,
and%
\[
u(\alpha^{2})=u^{x^{2}},\text{ resp. }u^{x^{2}}u^{-2xy}u^{y^{2}}.
\]

Let $\mathrm{sq}(v_{1},v_{2})$ be the formula asserting that there exist $x,y
$ in $H$ such that
\[
\left[  v_{1}=u^{x}\wedge v_{2}=u^{x^{2}}\right]  \vee\left[  v_{1}%
=~u^{-y}u^{x}\wedge v_{2}=u^{x^{2}}u^{-2xy}u^{y^{2}}\right]  ~.
\]
One verifies easily that this holds if and only if $v_{2}=v_{1}\cdot v_{1}$ in
the sense of (\ref{thing}). Now in view of (\ref{square-trick}) we can take
$\mu(v_{1},v_{2},v_{3})$ to assert that there exist $a$ and $b$ such that%
\[
\mathrm{sq}(v_{1},a)\wedge\mathrm{sq}(v_{2},b)\wedge\mathrm{sq}(v_{1}%
v_{2},av_{3}^{2}b).
\]

To complete the proof of {Claim 1}, it remains to establish that the
interpretation is strongly topological. In view of Remark \ref{2-remark}, it
will suffice to show that $\mathrm{sq}(v_{1},v_{2})$ defines a continuous map
\ -- \ not just on $U$ but on any profinite group arising as $\rho^{\ast
}(\overline{u}^{\ast};G^{\ast})$ where $G^{\ast}$ satisfies the appropriate
sentences (which include one asserting that $\mathrm{sq}(v_{1},v_{2})$ does
define a mapping). For simplicity (`by abuse of notation') we keep the
notation attached to $G$, but will not use any special properties of $G$. The
fact that $H=\eta(\overline{u};G)$ is abelian and normalizes $U$ is now
implied by the definition of $\eta.$

If $N$ is an open normal subgroup of $G,$ $u\in U$ and $x,y,x^{\prime
},y^{\prime}\in H$ then $u^{x}\equiv u^{x^{\prime}}$ $(\operatorname{mod}N)$
implies
\[
u^{x^{2}}\equiv u^{x^{\prime}x}=u^{xx^{\prime}}\equiv u^{x^{\prime2}%
}~(\operatorname{mod}N).
\]
\ Similarly, $u^{-y}u^{x}\equiv u^{-y^{\prime}}u^{x^{\prime}}%
~(\operatorname{mod}N)$ implies%
\begin{align*}
u^{x^{2}}u^{-2xy}u^{y^{2}}  &  =(u^{-y}u^{x})^{x}(u^{-x}u^{y})^{y}\\
&  \equiv(u^{-y^{\prime}}u^{x^{\prime}})^{x}(u^{-x^{\prime}}u^{y^{\prime}%
})^{y}=(u^{x}u^{-y})^{x^{\prime}}(u^{-x}u^{y})^{y^{\prime}}\\
&  \equiv(u^{x^{\prime}}u^{-y^{\prime}})^{x^{\prime}}(u^{-x^{\prime}%
}u^{y^{\prime}})^{y^{\prime}}=u^{x^{\prime2}}u^{-2x^{\prime}y^{\prime}%
}u^{y^{\prime2}}.
\end{align*}
Given that $\mathrm{sq}(v_{1},v_{2})$ defines a map, it follows that if
$v_{1}\equiv v_{1}^{\prime}$ $(\operatorname{mod}N)$ and $\mathrm{sq}%
(v_{1},v_{2})$ and $\mathrm{sq}(v_{1}^{\prime},v_{2}^{\prime})$ hold then
$v_{2}\equiv v_{2}^{\prime}$ $(\operatorname{mod}N)$; this map is therefore
continuous as required.

{Claim 1} is now established.

\medskip

\noindent\textbf{Claim 2. }\emph{The map} $\theta$ from $G$ \emph{to}
$G^{(4)}$ \emph{given by}
\[
g=\left(
\begin{array}
[c]{cc}%
a & b\\
c & d
\end{array}
\right)  \longmapsto\left(
\begin{array}
[c]{cc}%
u(a) & u(b)\\
u(c) & u(d)
\end{array}
\right)  \in\mathrm{M}_{2}(U)\subseteq G^{(4)}%
\]
\emph{is definable}.

To begin with, we partition $G$ as $G_{1}\overset{\cdot}{\cup}G_{2}$ where%
\begin{align*}
G_{1}  &  =\{g\in G\mid g_{11}\in R^{\ast}\}\\
G_{2}  &  =\{g\in G\mid g_{11}\in\mathfrak{m}\}.
\end{align*}

If $g\in G_{1}$ then $g=\widetilde{v}(g)\widetilde{h}(g)\widetilde{u}(g)$
where%
\begin{align*}
\widetilde{v}(g)  &  =v(-a^{-1}c)\in V\\
\widetilde{h}(g)  &  =h(a^{-1})\in H\\
\widetilde{u}(g)  &  =u(a^{-1}b)\in U.
\end{align*}
This calculation shows that in fact $G_{1}=VHU$, so $G_{1}$ is definable;
these three functions on $G_{1}$ are definable since%
\begin{align*}
x  &  =\widetilde{v}(g)\Longleftrightarrow x\in V\cap HUg\\
y  &  =\widetilde{u}(g)\Longleftrightarrow y\in U\cap HVg\\
z  &  =\widetilde{h}(g)\Longleftrightarrow z\in H\cap VgU.
\end{align*}
If $g\in G_{2}$ then $gw\in G_{1}$ since $a$ and $b$ cannot both lie in
$\mathfrak{m}$, and then%
\[
g=\widetilde{v}(gw)\widetilde{h}(gw)\widetilde{u}(gw)w^{-1}\text{.}%
\]

Define $\alpha:V\rightarrow U$ by $x\alpha=x^{w}$. Then $v(\lambda
)\alpha=u(\lambda)$.

Define $\beta:H\rightarrow U\times U$ as follows. Let $(x,y_{1},y_{2})\in
H\times U\times U$. Then $x\beta=(y_{1},y_{2})$ if and only if%
\[
\exists t\in H.\left(  \left(  x=t^{2}\wedge y_{1}^{t}=u\wedge y_{2}%
=u^{t}\right)  \vee\left(  x=t^{2}h\wedge y_{1}^{th}=u^{\prime}\wedge
y_{2}=u^{\prime t}\right)  \right)  .
\]
This decodes as $h(\lambda)\beta=(u(\lambda^{-1}),~u(\lambda)).$

Now we construe $\mathrm{M}_{2}(U)$ as a ring by transferring the ring
operations from $R$ to $U$ via Claim 1. As $\theta$ is a group homomorphism,
we find that for $g\in G_{1}$,%
\begin{equation}
g\theta=\left(
\begin{array}
[c]{cc}%
1 & 0\\
-\widetilde{v}(g)\alpha & 1
\end{array}
\right)  \cdot\left(
\begin{array}
[c]{cc}%
\widetilde{h}(g)\beta_{1} & 0\\
0 & \widetilde{h}(g)\beta_{2}%
\end{array}
\right)  \cdot\left(
\begin{array}
[c]{cc}%
1 & \widetilde{u}(g)\\
0 & 1
\end{array}
\right)  ; \label{picture}%
\end{equation}
while if $g\in G_{2}$ then%
\[
g\theta=(gw)\theta\cdot w^{-1}.
\]
As matrix multiplication in $\mathrm{M}_{2}(U)$ is $L_{\mathrm{gp}}$ definable
in $G^{(4)},$ it follows that $\theta$ is definable.

This completes the proof of Claim 2.

\medskip

\noindent\textbf{Claim 2}bis. \emph{The map sending} $\lambda\in R$ to
$(1,\lambda,0,1)\in R^{(4)}$ \emph{is definable. }Obviously.

\medskip

\noindent\textbf{Claim 3}. \ $G$ is algebraically rigid because it is strongly
complete. This is presumably well known, but we include a proof in Proposition
\ref{linsc} below.

\medskip

The theorem follows.
\end{proof}

\begin{lemma}
\label{subgp}Let $G$ be a group and $\rho$ a formula such that $\rho(G)$ is a
subgroup. Then there is a formula $\rho^{\ast}$ such that $\rho^{\ast}%
(G)=\rho(G),$ and for any group $H$ the subset $\rho^{\ast}(H)$ is a subgroup.
If $\rho$ defines a closed subset in every profinite group then so does
$\rho^{\ast}$.
\end{lemma}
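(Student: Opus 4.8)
The plan is to make $\rho^{\ast}$ \emph{self-correcting}: on any group where $\rho$ already cuts out a subgroup, $\rho^{\ast}$ should behave exactly like $\rho$; on every other group it should fall back to defining the trivial subgroup $\{1\}$. One cannot instead take ``the subgroup generated by $\rho(H)$'', since that set is not uniformly first-order definable; but ``$\rho(H)$ \emph{is} a subgroup'' \emph{is} a first-order property, so the correction can be packaged into a single formula via a case distinction.

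Concretely, recall from \S\ref{defsub} the sentence
\[
\mathrm{s}(\rho)\equiv\exists x.\rho(x)\wedge\forall x,y.\bigl(\rho(x)\wedge\rho(y)\rightarrow\rho(x^{-1}y)\bigr),
\]
which holds in a group $H$ precisely when $\rho(H)$ is a (nonempty) subgroup of $H$. I would then set
\[
\rho^{\ast}(x)\ \equiv\ \bigl(\mathrm{s}(\rho)\wedge\rho(x)\bigr)\ \vee\ \bigl(\lnot\mathrm{s}(\rho)\wedge x=1\bigr),
\]
using the identity constant of $L_{\mathrm{gp}}$ (or the formula $\forall y.\,xy=y$) to define the singleton $\{1\}$. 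Since $\mathrm{s}(\rho)$ is a sentence, this disjunction is a well-formed formula in the single free variable $x$.

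The verification splits into two cases. If $H\models\mathrm{s}(\rho)$ then $\rho^{\ast}(H)=\rho(H)$, which is a subgroup by the meaning of $\mathrm{s}(\rho)$; if $H\models\lnot\mathrm{s}(\rho)$ then $\rho^{\ast}(H)=\{1\}$. In both cases $\rho^{\ast}(H)$ is a subgroup of $H$. Since $\rho(G)$ is a subgroup by hypothesis, $G\models\mathrm{s}(\rho)$, whence $\rho^{\ast}(G)=\rho(G)$, as required. For the last assertion, assume $\rho$ defines a closed subset in every profinite group, and let $H$ be profinite: if $H\models\mathrm{s}(\rho)$ then $\rho^{\ast}(H)=\rho(H)$ is closed by hypothesis, while if $H\models\lnot\mathrm{s}(\rho)$ then $\rho^{\ast}(H)=\{1\}$, which is closed because profinite groups are Hausdorff.

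I do not expect a genuine obstacle: the entire content is the observation that ``$\rho(H)$ is a subgroup'' is itself captured by the sentence $\mathrm{s}(\rho)$, which permits casing on it inside the formula. The only points requiring (routine) care are that $\mathrm{s}(\rho)$ has no free variables so the disjunction is legitimate, that $\{1\}$ is definable, and that singletons are closed in profinite (hence Hausdorff) groups.
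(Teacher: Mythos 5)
Your proof is correct, but it takes a different route from the paper's. You case-split globally on the sentence $\mathrm{s}(\rho)$, defaulting to $\{1\}$ whenever $\rho$ fails to cut out a subgroup; the paper instead repairs $\rho$ locally: it first sets $\rho_{1}(x)\equiv\rho(x)\vee(x=1)$ and then takes
\[
\rho^{\ast}(x):=\rho_{1}(x)\wedge\forall y\bigl(\rho_{1}(y)\rightarrow(\rho_{1}(xy)\wedge\rho_{1}(x^{-1}y))\bigr),
\]
i.e.\ the set of elements of $\rho_{1}(H)$ whose translation action preserves $\rho_{1}(H)$. One then checks directly that this is always a subgroup (closure under products and inverses follows by composing the translation conditions), that it equals $\rho(G)$ when $\rho(G)$ is already a subgroup, and that it is closed because it is an intersection of preimages of the closed set $\rho_{1}(H)$ under the continuous maps $x\mapsto xy$ and $x\mapsto x^{-1}y$. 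The two constructions deliver exactly what the lemma asks for, and your verification (including the Hausdorff point for $\{1\}$ and the observation that $\mathrm{s}(\rho)$ is a sentence, so the disjunction is well formed) is complete. The trade-off is minor: your formula is perhaps easier to verify at a glance, while the paper's avoids a global case distinction and, in the degenerate case, still returns a possibly nontrivial subgroup canonically associated with $\rho_{1}(H)$ rather than collapsing to $\{1\}$; neither difference affects the applications in the paper.
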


\begin{proof}
Put $\rho_{1}(x)=\rho(x)\vee(x=1),$ and set%
\[
\rho^{\ast}(x):=\rho_{1}(x)\wedge\forall y\left(  \rho_{1}(y)\longrightarrow
(\rho_{1}(xy)\wedge\rho_{1}(x^{-1}y))\right)  .
\]
One sees easily that this has the required properties.
\end{proof}

\begin{proposition}
\label{linsc} Let $R$ be a complete local domain, with maximal ideal
$\mathfrak{m}$ and finite residue field $R/\mathfrak{m}$ of odd
characteristic. Then the profinite group $\mathrm{SL}_{2}(R)$ is finitely
generated, hence strongly complete.
\end{proposition}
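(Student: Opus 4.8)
The plan is to prove that $\mathrm{SL}_2(R)$ is topologically finitely generated; that it is then strongly complete is immediate from Theorem~\ref{NSSC}. Write $\kappa=R/\mathfrak m$ and $p=\mathrm{char}(\kappa)$ (odd by hypothesis). Since $R=\varprojlim_n R/\mathfrak m^n$ with each $R/\mathfrak m^n$ finite, $\mathrm{SL}_2(R)=\varprojlim_n\mathrm{SL}_2(R/\mathfrak m^n)$ is an inverse limit of finite groups; and if $\mathrm d\big(\mathrm{SL}_2(R/\mathfrak m^n)\big)\le k$ for all $n$ then $\mathrm d(\mathrm{SL}_2(R))\le k$, because the sets of topologically generating $k$-tuples form a descending chain of non-empty closed subsets of the compact space $\mathrm{SL}_2(R)^{(k)}$. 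So it suffices to bound $\mathrm d\big(\mathrm{SL}_2(R/\mathfrak m^n)\big)$ independently of $n$.

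Fix $n$, put $\bar R=R/\mathfrak m^n$, and for $0\le j\le n$ let $G_j$ be the principal congruence subgroup of level $j$ in $\mathrm{SL}_2(\bar R)$ (the kernel of reduction modulo $\mathfrak m^j$), so that $G_0=\mathrm{SL}_2(\bar R)\supseteq G_1\supseteq\cdots\supseteq G_n=1$. Then $G_0/G_1\cong\mathrm{SL}_2(\kappa)$ is a fixed finite group, $G_1$ is a finite $p$-group, and for $1\le j\le n-1$ the map $I+M\mapsto M\bmod\mathfrak m^{j+1}$ identifies $G_j/G_{j+1}$ with the additive group of trace-zero $2\times2$ matrices over $\mathfrak m^j/\mathfrak m^{j+1}$ (all standard; the point is that in the relevant matrix calculations the error terms drop one further level in $\mathfrak m$). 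In particular $|G_1/G_2|=|\mathfrak m/\mathfrak m^2|^3$, a bound not depending on $n$. Since $\mathrm d(G_0)\le\mathrm d(G_1)+\mathrm d(\mathrm{SL}_2(\kappa))$, and, $G_1$ being a $p$-group, $\mathrm d(G_1)=\dim_{\mathbb F_p}\big(G_1/\Phi(G_1)\big)$ for the Frattini subgroup $\Phi(G_1)$, everything reduces to showing $G_2\subseteq\Phi(G_1)$: for then $\mathrm d(G_1)\le\dim_{\mathbb F_p}(G_1/G_2)=3\dim_{\mathbb F_p}(\mathfrak m/\mathfrak m^2)<\infty$, uniformly in $n$.

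The crux is the claim that $G_j\subseteq[G_1,G_1]\,G_{j+1}$ for every $j\ge2$; granting it, iteration together with $G_n=1$ gives $G_2\subseteq[G_1,G_1]\subseteq\Phi(G_1)$. To prove the claim I would use the identities
\[
[u(a),v(b)]\equiv h(1+ab)\pmod{G_{j+1}},\qquad [u(a),h(1+s)]=u\big(a((1+s)^2-1)\big)=u\big(a(2s+s^2)\big),
\]
with $u,v,h$ as in \eqref{names}. For $a\in\mathfrak m$ and $b\in\mathfrak m^{j-1}$ the first identity (whose off-diagonal error entries $a^2b,\ ab^2$ lie in $\mathfrak m^{j+1}$ precisely because $j\ge2$) shows that, modulo $G_{j+1}$, all diagonal elements $\mathrm{diag}(c,-c)$ with $c\in\mathfrak m^j/\mathfrak m^{j+1}$ lie in $[G_1,G_1]G_{j+1}$, since the products $ab$ additively span $\mathfrak m^j$. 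For $a\in\mathfrak m^{j-1}$ and $s\in\mathfrak m$ the second identity gives $[u(a),h(1+s)]\equiv u(2as)\pmod{G_{j+1}}$, and since $2$ is a unit — this is the one place oddness of $p$ is used — the elements $2as$ again span $\mathfrak m^j$, so (using additivity of $u$) the upper-unitriangular part of $G_j/G_{j+1}$ is covered, and the lower-unitriangular part symmetrically via $v$. As these three families generate $G_j/G_{j+1}$, the claim follows.

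Combining these facts, $\mathrm d\big(\mathrm{SL}_2(R/\mathfrak m^n)\big)\le3\dim_{\mathbb F_p}(\mathfrak m/\mathfrak m^2)+\mathrm d(\mathrm{SL}_2(\kappa))$ for all $n$, whence $\mathrm{SL}_2(R)$ is finitely generated and therefore strongly complete by Theorem~\ref{NSSC}. The main obstacle is the congruence-filtration computation of the previous paragraph: one must track the $\mathfrak m$-adic level of every matrix entry so that the error terms in $[u(a),v(b)]$ and $[u(a),h(1+s)]$ fall into $G_{j+1}$, and it is exactly in the two unipotent directions that $p$ odd is indispensable — when $p=2$ the element $u(2as)$ collapses into $G_{j+1}$, leaving only $u(as^2)\in G_{j+1}$, so one cannot reach the level-$j$ unipotent part this way.
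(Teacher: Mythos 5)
Your argument is correct, but it takes a genuinely different route from the paper. The paper disposes of the proposition in a few lines by quoting machinery: the principal congruence subgroup $G(1)=\mathrm{SL}_2^1(R)$ is an $R$-\emph{perfect} group (an $R$-analytic pro-$p$ group whose associated Lie algebra is perfect, \cite{DDMS} Exercise 13.10), Corollary 3.4 of Lubotzky--Shalev \cite{LS} says such groups are finitely generated, hence strongly complete, and $G(1)$ being open the same follows for $\mathrm{SL}_2(R)$. You instead give a self-contained congruence-filtration computation in each finite quotient $\mathrm{SL}_2(R/\mathfrak m^n)$, proving $G_j\subseteq[G_1,G_1]G_{j+1}$ for $j\geq 2$, hence $G_2\subseteq\Phi(G_1)$ and a bound on $\mathrm d$ independent of $n$, and then pass to the limit by compactness. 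In substance you are reproving, by hand and in the special case $n=2$, exactly the Lie-theoretic fact the paper outsources (perfectness of $\mathfrak{sl}_2$ away from characteristic $2$ forces the lower congruence layers into the Frattini subgroup); your identities $[u(a),v(b)]\equiv h(1+ab)$ and $[u(a),h(1+s)]\equiv u(2as)$ are precisely the perfectness relations, and your diagnosis of where $p$ odd is indispensable matches the paper's restriction. What the citation buys is brevity and immediate generality ($\mathrm{SL}_n(R)$ for all $n\geq2$, and $p=2$ once $n\geq3$, as the paper's subsequent remark notes); what your version buys is independence from the $\Lambda$-analytic theory and complete transparency about the error terms and the role of $2$ being a unit. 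Two small points worth making explicit if you write this up: the transition maps $\mathrm{SL}_2(R/\mathfrak m^{n+1})\to\mathrm{SL}_2(R/\mathfrak m^n)$ are surjective (lift a matrix and rescale a row by the inverse of its determinant, a unit congruent to $1$), which is what justifies both the inverse-limit identification and the lifting of generating tuples; and $\mathfrak m/\mathfrak m^2$ is finite because $\mathfrak m$ is finitely generated and the residue field is finite, which is part of the paper's standing hypotheses on complete local rings.
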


\begin{proof}
Let $G(1)$ denote the principal congruence subgroup modulo $\mathfrak{m}$ in
$G=\mathrm{SL}_{2}(R)$. Then $G(1)$ is a so-called $R$\emph{-perfect group}:
that is, an $R$-analytic pro-$p$ group whose associated Lie algebra is perfect
(see \cite{DDMS}, Exercise 13.10 on page 352). Now Corollary 3.4 of \cite{LS}
asserts that such pro-$p$ groups are finitely generated (cf.\ \cite{DDMS}%
\textbf{, }Proposition 13.29(i)), hence strongly complete (\cite{DDMS},
Theorem 1.17).

As $G(1)$ is open in $G$ it follows that $G$ has both properties.
\end{proof}

\medskip

\textbf{Remarks.} (i) The Lubotzky-Shalev results \cite{LS} hold for
$\mathrm{SL}_{n}(R)$, any $n\geq2$ (also when $\mathrm{char}(R/\mathfrak{m}%
)=2$ provided $n\geq3$).

\noindent(ii) The strong completeness can also be inferred directly from the
fact that these groups have the \emph{congruence subgroup property}: every
subgroup of finite index contains a\ principal congruence subgroup modulo
$\mathfrak{m}^{k}$ for some $k$ (this follows from \cite{K}, Satz 2).

\section{Profinite groups of finite rank\label{frcpisec}}

In the context of profinite groups, a `generating set' will always mean a
\emph{topological} generating set. From a first-order point of view, the
nicest profinite groups are the finitely generated pro-$p$ groups, where $p$
is a prime. These have the following special property: a finite generating set
can be recognized in a definable finite quotient (as we shall see below). So
for such groups, being generated by $d$ elements is a first-order property.
This is the key to most of our main results; it does not hold for
f.g.\ profinite groups in general as we point out in Prop.\ \ref{prop:neg gen}%
,
but it does hold for groups in the larger class $\mathcal{C}_{\pi}$ of
\emph{pronilpotent pro-}$\pi$ \emph{groups}, where $\pi$ is any \emph{finite}
set of primes: a group $G$ is in $\mathcal{C}_{\pi}$ iff it is a direct
product%
\[
G=G_{p_{1}}\times\cdots\times G_{p_{k}}%
\]
where $\pi=\{p_{1},\ldots,p_{k}\}$ and $G_{p_{i}}$ is a pro-$p_{i}$ group for
each $i$. To save repetition, we make the convention that $\pi$ \emph{will
always denote a finite set of primes}.

\subsection{Some preliminaries, and an example\label{prelimsec}}

For basic facts about profinite groups, see \cite{DDMS}, Chapter 1 and the
earlier chapters of \cite{WP}. Besides the language $L_{\mathrm{gp}}$ of group
theory, we will consider the language

\begin{itemize}
\item $L_{\pi}:$ the language $L_{\mathrm{gp}}$ augmented by unary function
symbols $P_{\lambda},$ one for each $\lambda\in\mathbb{Z}_{\pi}=\prod_{p\in
\pi}\mathbb{Z}_{p}$; for $g\in G$, $P_{\lambda}(g)$ is interpreted as the
profinite power $g^{\lambda}$.
\end{itemize}

For a profinite group $G,$

\begin{itemize}
\item $\mathrm{d}(G)$ is the minimal size of a (topological) generating set
for $G$

\item $\mathrm{rk}(G)=\sup\left\{  \mathrm{d}(H)\mid H\leq_{c}G\right\}  $
\end{itemize}

This is the \emph{rank} of $G$ (sometimes called \emph{Pr\"{u}fer rank}). The
pro-$p$ groups of finite rank are of particular interest, being just those
that are $p$\emph{-adic analytic}; \ see \cite{DDMS}, Chapter 3 (which
includes several equivalent definitions of rank). On the face of it, having a
particular finite rank is not a first-order property (the definition involves
quantifying over subgroups); the following result shows that the rank, if
finite, can be more or less specified by a first-order sentence:

\begin{proposition}
For each positive integer $r$, there is a sentence $\rho_{r}$ such that for a
pro-$p$ group $G$,%
\[
\mathrm{rk}(G)\leq r\Longrightarrow G\models\rho_{r}~\Longrightarrow
\mathrm{rk}(G)\leq r(2+\log_{2}(r)).
\]

\end{proposition}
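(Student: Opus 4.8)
The plan is to characterize ``rank at most $r$'' for a pro-$p$ group $G$ in terms of a finite quotient, using the standard fact (see \cite{DDMS}, Chapter 3) that $\mathrm{rk}(G) \le r$ is equivalent to the condition that every finite continuous quotient of $G$ has rank at most $r$, and that for finite $p$-groups rank is controlled by the minimal number of generators of subgroups. More precisely, for a finite $p$-group $P$ one has $\mathrm{rk}(P) = \max_{Q \le P} \mathrm{d}(Q)$, and $\mathrm{d}(Q) = \dim_{\mathbb{F}_p} Q/\Phi(Q)$ by Burnside's basis theorem. The key point that makes this first-order expressible is that for a pro-$p$ group $G$, the subgroup $G^p[G,G] = \Phi(G)$ (the Frattini subgroup) is definably closed and of finite index once we know the index is bounded; more usefully, if $\mathrm{rk}(G) \le r$ then $|G : \Phi(G)| \le p^r$, so $\Phi(G)$ is open, and the quotient $G/\Phi_k(G)$ by an iterated Frattini subgroup is a finite $p$-group whose rank equals $\mathrm{rk}(G)$ for $k$ large enough (bounded in terms of $r$).

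Here is how I would carry it out. First, I would fix the finite $p$-group perspective: there is an integer-valued function such that if $P$ is a finite $p$-group with $\mathrm{d}(P) \le d$ and $\mathrm{rk}(P) \le r$, then $P$ has a bounded order; combined with the Frattini series this gives that a pro-$p$ group of rank $\le r$ is an inverse limit of finite $p$-groups each of rank $\le r$ and each of bounded order. Second, I would write down the sentence $\rho_r$ as follows: it asserts that there is a normal subgroup $N$ (definable, e.g. as a suitable iterated verbal subgroup $G^{p^m}\gamma_m(G)$ or an iterated Frattini subgroup, which is definably closed and, under the rank hypothesis, open) of index at most some explicit bound $\beta = \beta(r)$, such that the finite quotient $G/N$ has rank at most $r$ --- the latter being a genuine first-order statement about the finite group $G/N$ since it only quantifies over the (boundedly many) elements of $G/N$, asking that every subset generates a subgroup requiring at most $r$ generators. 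The forward implication $\mathrm{rk}(G) \le r \Rightarrow G \models \rho_r$ is then immediate: take $N$ to be the appropriate term of the Frattini (or lower $p$-central) series, which is open of bounded index, and $G/N$ has rank $\le \mathrm{rk}(G) \le r$.

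For the reverse implication, suppose $G \models \rho_r$, so $G$ has an open normal subgroup $N$ with $G/N$ of rank $\le r$. The subtlety is that knowing \emph{one} finite quotient has rank $\le r$ does not bound the rank of $G$ exactly --- this is why the conclusion only gives $\mathrm{rk}(G) \le r(2 + \log_2 r)$ rather than $\mathrm{rk}(G) \le r$. I would argue: if $G/N$ is generated by $r$ elements and lies in a suitable class, then $G$ is generated by $r$ elements (lifting generators through the Frattini-type quotient), so $\mathrm{d}(G) \le r$; but to bound $\mathrm{rk}(G)$ I need to bound $\mathrm{d}(H)$ for every closed $H \le G$. The tool here is a theorem on pro-$p$ groups (uniform pro-$p$ groups, or the Lazard correspondence --- see \cite{DDMS}, Chapter 3) bounding the rank of a pro-$p$ group in terms of $\mathrm{d}$ of a characteristic open subgroup: a $d$-generator pro-$p$ group whose lower $p$-central quotients up to some bounded level all have rank $\le r$ has overall rank bounded by $r(2 + \log_2 r)$ --- this is essentially the bound relating the dimension of a $p$-adic analytic group to the number of generators, where the $\log_2 r$ factor accounts for the ``depth'' needed before the group becomes uniform.

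The main obstacle I expect is pinning down exactly which definable open subgroup $N$ to use and proving the precise numerical bound $r(2 + \log_2 r)$ in the reverse direction; this is where one has to invoke the structure theory of $p$-adic analytic groups (powerful/uniform pro-$p$ groups) carefully, since the naive bound from lifting generators only controls $\mathrm{d}(G)$, not $\mathrm{rk}(G)$, and the gap between these for pro-$p$ groups is precisely what the $\log_2 r$ correction measures. The first-order formalization itself (quantifying over the bounded finite quotient, expressing ``rank $\le r$'' for a finite group of bounded order) is routine once the algebraic content is in place.
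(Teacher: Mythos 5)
The paper does not actually print a proof of this proposition (it says ``we omit the proof, an application of the techniques described below''), but the intended argument is transparent from \S\ref{powsec} and from the shape of the bound $r(2+\log_{2}r)=r(1+\lambda(r))+O(r)$: the sentence $\rho_{r}$ asserts, with $m=\lambda(r)$ and $q=p^{m}$, that $\mu_{f,q}(G)=G^{q}$ is a subgroup of index at most $p^{rm}$, that it is \emph{powerful} (via $\mathrm{res}(\mu_{f,q},\mathrm{pow})$), and that it is generated by $r$ elements (via $\mathrm{res}(\mu_{f,q},\widetilde{\beta}_{r})$). The forward implication is Theorem \ref{pow}(i) plus Proposition \ref{powpi}; the reverse implication is Theorem \ref{pow}(ii): a f.g.\ powerful $N=G^{q}$ with $\mathrm{d}(N)\leq r$ has $\mathrm{rk}(N)=\mathrm{d}(N)\leq r$, whence $\mathrm{rk}(G)\leq\mathrm{rk}(N)+\log_{p}\left\vert G:N\right\vert\leq r+rm$. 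Your forward direction and the general set-up (definable verbal subgroups of bounded index, first-order statements about the finite quotient) match this.

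The gap is in your reverse implication. Your $\rho_{r}$ constrains only the finite quotient $G/N$: it says $N$ has bounded index and $\mathrm{rk}(G/N)\leq r$. No condition on a single finite quotient can bound $\mathrm{rk}(G)$, because a closed subgroup $H$ with $H\cap N\nsubseteq\Phi(H)$ --- in the extreme case $H\leq N$ --- is invisible in $G/N$; all the quotient tells you about $N$ itself is $\mathrm{d}(N)=\mathrm{rk}(N/\Phi(N))\leq r$, and an $r$-generated pro-$p$ group can have infinite rank (a free pro-$p$ group of rank $2$ already does). The ``theorem'' you invoke to close this gap --- that a pro-$p$ group whose lower $p$-central quotients up to a bounded level all have rank $\leq r$ has rank at most $r(2+\log_{2}r)$ --- is not a theorem of the Lubotzky--Mann/Lazard theory; the actual statement (Theorem \ref{pow}(ii)) requires the hypothesis that the subgroup $N$ is \emph{powerful}, and powerfulness of $N$ (i.e.\ $N^{\prime}\leq N^{p}$) is a condition living entirely below $\Phi(N)$, hence undetectable in $G/\Phi(N)$ or in any of the quotients your sentence inspects. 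The repair is exactly the clause your sentence omits: include in $\rho_{r}$ the first-order assertions, about the definable subgroup $N=\mu_{f,p^{\lambda(r)}}(G)$ itself, that it is powerful and $r$-generated; with those in place the reverse direction goes through as above and yields the stated numerical bound.
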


\noindent We omit the proof, an application of the techniques described below.

\medskip

We fix the notation%
\begin{align*}
q(\pi)  &  =p_{1}\ldots p_{k}\\
q^{\prime}(\pi)  &  =2^{\varepsilon}q(\pi)
\end{align*}
where $\varepsilon=0$ if $2\notin\pi,$ $\varepsilon=1$ if $2\in\pi.$

\bigskip

A sharper version of Theorem \ref{NSSC}(ii) holds in some cases:

\begin{lemma}
\label{dg}Let $G=\overline{\left\langle a_{1},\ldots,a_{d}\right\rangle }$ be
a pronilpotent group. Then%
\begin{equation}
G^{\prime}=[a_{1},G]\ldots\lbrack a_{d},G], \label{1}%
\end{equation}
a closed subgroup of $G$.
\end{lemma}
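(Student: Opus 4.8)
The plan is to identify the set
\[
S:=\{\,[a_1,g_1][a_2,g_2]\cdots[a_d,g_d]\mid g_1,\dots,g_d\in G\,\}
\]
with $G'$. Since $S$ is the image of the compact space $G^{(d)}$ under the continuous map $(g_1,\dots,g_d)\mapsto[a_1,g_1]\cdots[a_d,g_d]$, it is automatically compact, hence closed; and $S\subseteq G'$ is trivial because each $[a_i,g_i]\in G'$ and $G'$ is a subgroup. So everything comes down to $G'\subseteq S$, which I would prove by passing to finite quotients. If $N\vartriangleleft_o G$ then $G/N$ is a finite quotient of a pronilpotent group, hence a finite nilpotent group, topologically (and so algebraically) generated by $b_i:=a_iN$; moreover the image of $S$ in $G/N$ is exactly $\{[b_1,x_1]\cdots[b_d,x_d]\mid x_i\in G/N\}$, because $g_i\mapsto g_iN$ is onto. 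Thus the whole assertion reduces to the following statement about a finite nilpotent group $Q=\langle b_1,\dots,b_d\rangle$: $Q'=\{[b_1,x_1]\cdots[b_d,x_d]\mid x_i\in Q\}$.

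I would prove this finite statement by induction on the nilpotency class $c$ of $Q$, the case $c\le1$ being trivial. For $c\ge2$ set $Z=\gamma_c(Q)\le\mathrm{Z}(Q)\cap Q'$; the group $Q/Z$ has class $\le c-1$, so by the inductive hypothesis every $x\in Q'$ may be written $x=[b_1,g_1]\cdots[b_d,g_d]\,z$ with $z\in Z$. It remains to absorb the central factor $z$. Since $\gamma_c(Q)=[\gamma_{c-1}(Q),Q]$ is central, the commutator map $\gamma_{c-1}(Q)\times Q\to\gamma_c(Q)$ is bimultiplicative in each variable; writing $z$ as a product of commutators $[w_k,h_k]$, expanding each $h_k$ into the generators $b_1,\dots,b_d$ and using bimultiplicativity and centrality to reorder and collect, one gets $z=[u_1,b_1]\cdots[u_d,b_d]$ for suitable $u_i\in\gamma_{c-1}(Q)$. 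Because $[b_i,u_i]\in\gamma_c(Q)$ is central, one has $[u_i,b_i]=[b_i,u_i]^{-1}=[b_i,u_i^{-1}]$, and again because $[b_i,u_i^{-1}]$ is central the key identity $[b_i,g_i]\,[b_i,u_i^{-1}]=[b_i,u_i^{-1}g_i]$ holds. Sliding the central factors $[b_i,u_i^{-1}]$ into position next to $[b_i,g_i]$ then gives $x=[b_1,u_1^{-1}g_1]\cdots[b_d,u_d^{-1}g_d]\in S(Q)$, as required. This absorption step — producing an \emph{exact} decomposition rather than one valid only modulo $\gamma_{c+1}(Q)$ — is the part I expect to need the most care; the linearity of commutators into the central term $\gamma_c(Q)$ is exactly what makes it go through.

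Returning to $G$: by the finite case, for every $N\vartriangleleft_o G$ the images of $S$ and of $G'$ in $G/N$ coincide, i.e. $SN=G'N$. Since $G'=\gamma_2(G)$ is closed (Theorem \ref{NSSC}(ii)), $G'=\bigcap_{N\vartriangleleft_o G}G'N=\bigcap_{N\vartriangleleft_o G}SN=\overline{S}=S$, the last equality because $S$ is closed. Hence $G'=S=[a_1,G]\cdots[a_d,G]$, and being equal to $G'$ it is a (closed) subgroup. Note that pronilpotence enters only to guarantee that every finite quotient $G/N$ is nilpotent, so that the finite case applies; for a general finitely generated profinite group the statement, with $[a_i,G]$ read as the set of commutators, fails (e.g. for a group surjecting onto a finite non-abelian simple group).
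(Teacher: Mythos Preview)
Your proof is correct and follows essentially the same route as the paper: compactness gives $S$ closed, the question reduces to finite nilpotent quotients, and the finite case does the work. The paper simply cites \cite{DDMS}, Lemma~1.23 for the finite nilpotent statement, whereas you supply an explicit induction on the nilpotency class; your absorption argument for the central factor is fine (the bilinearity of the commutator map into $\gamma_c(Q)$ is exactly what is needed), and your closing compactness step is the same as the paper's, except that you invoke closedness of $G'$ where the paper only needs closedness of $S$ (observe $G'\subseteq G'N=SN$ for all $N$, hence $G'\subseteq\bigcap_N SN=\overline{S}=S$, with no appeal to $G'$ being closed).
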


\begin{proof}
The set $X=[a_{1},G]\ldots\lbrack a_{d},G]$ is closed in $G,$ so
$X=\bigcap_{N\vartriangleleft_{o}G}XN$. If $N\vartriangleleft_{o}G$ then $G/N$
is nilpotent and generated by $\{a_{1},\ldots,a_{d}\}$, which implies
$XN/N=G^{\prime}N/N$ (cf. \cite{DDMS} Lemma 1.23), so $G^{\prime}\leq XN$.
Hence $G^{\prime}\subseteq X.$
\end{proof}

It is easy to see that (\ref{1}) can be expressed by a first-order formula. If
in a group~$G$ every product of $d+1$ commutators belongs to the set $X$
defined above, then (by an obvious induction) every product of commutators
belongs to $X$. Hence there is a formula $\alpha$ such that%
\[
G\models\alpha(a_{1},\ldots,a_{d})\Longleftrightarrow G^{\prime}%
=[a_{1},G]\ldots\lbrack a_{d},G].
\]

The \emph{Frattini subgroup} $\Phi(G)$ of a profinite group $G$ is the
intersection of all maximal open subgroups of $G$. It follows from the
definition that for $Y\subseteq G$,%
\begin{equation}
\overline{\left\langle Y\right\rangle }=G\Longleftrightarrow\overline
{\left\langle Y\right\rangle }\Phi(G)=G. \label{2}%
\end{equation}
If $G$ is pronilpotent then every maximal open subgroup is normal of prime
index; it follows that if $G\in\mathcal{C}_{\pi}$ then%
\[
\Phi(G)=\overline{G^{\prime}}G^{q(\pi)}.
\]
If also $G$ is finitely generated then this subgroup has finite index, so it
is \emph{open}$,$ and then in (\ref{2}) we have%
\[
\overline{\left\langle Y\right\rangle }\Phi(G)=\left\langle Y\right\rangle
\Phi(G)=\left\langle Y\right\rangle G^{\prime}G^{q(\pi)}.
\]

Thus if $G=\overline{\left\langle a_{1},\ldots,a_{d}\right\rangle }%
\in\mathcal{C}_{\pi}$ then%
\[
\Phi(G)=\delta(\overline{a};G)
\]

where%
\[
\delta(\overline{a},x)\equiv\exists z,y_{1},\ldots,y_{d}.~x=[a_{1}%
,y_{1}]\ldots\lbrack a_{d},y_{d}]z^{q(\pi)}.
\]

The definability of $\Phi(G)$ means that we can define generating sets in
$L_{\mathrm{gp}}$:

\begin{proposition}
\label{defgen}For each $d\geq1$ there is a formula $\beta_{d}$ such that for
$G\in\mathcal{C}_{\pi}$,
\[
G\models\beta_{d}(a_{1},\ldots,a_{d})~\Longleftrightarrow~G=\overline
{\left\langle a_{1},\ldots,a_{d}\right\rangle }.
\]

\end{proposition}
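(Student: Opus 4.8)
The plan is to assemble $\beta_{d}$ from the formula $\delta$ introduced just before the statement, exploiting the fact that in a $\mathcal{C}_{\pi}$-group topological generation can be detected modulo the Frattini subgroup, together with the identity $\Phi(G)=\overline{G'}G^{q(\pi)}$ recorded above. For each tuple $\overline{e}=(e_{1},\ldots,e_{d})$ with $0\le e_{i}<q(\pi)$ write $w_{\overline{e}}=a_{1}^{e_{1}}\cdots a_{d}^{e_{d}}$, and set
\[
\beta_{d}(\overline{a})\;\equiv\;\forall g\;\bigvee_{0\le e_{1},\ldots,e_{d}<q(\pi)}\delta\bigl(\overline{a},\,w_{\overline{e}}^{-1}g\bigr).
\]
This is a genuine $L_{\mathrm{gp}}$-formula since the disjunction is finite, and semantically it asserts that $G=\bigcup_{\overline{e}}w_{\overline{e}}\cdot\delta(\overline{a};G)$.

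First I would verify the implication $G=\overline{\langle\overline{a}\rangle}\Rightarrow G\models\beta_{d}(\overline{a})$. Here $G$ is $d$-generated, hence finitely generated, so $\Phi(G)=\overline{G'}G^{q(\pi)}$ is open and, as noted before the statement, $\delta(\overline{a};G)=\Phi(G)$. The quotient $G/\Phi(G)$ is abelian of exponent dividing $q(\pi)$ and is generated by the images of the $a_{i}$, so every coset $g\Phi(G)$ equals $w_{\overline{e}}\Phi(G)$ for some admissible $\overline{e}$; hence $w_{\overline{e}}^{-1}g\in\Phi(G)=\delta(\overline{a};G)$ and $\beta_{d}(\overline{a})$ holds.

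For the converse, suppose $G\models\beta_{d}(\overline{a})$ and let $N\vartriangleleft_{o}G$; let $D$ be the image of $\delta(\overline{a};G)$ in the finite nilpotent $\pi$-group $G/N$. Since $D$ consists of products of commutators $[\bar{a}_{i},\bar{g}]$ and $q(\pi)$-th powers, it lies in $(G/N)'(G/N)^{q(\pi)}=\Phi(G/N)$ (the finite-quotient version of the identity $\Phi(G)=\overline{G'}G^{q(\pi)}$); moreover each $\bar{w}_{\overline{e}}$ lies in $\langle\bar{a}_{1},\ldots,\bar{a}_{d}\rangle$. Hence $\beta_{d}(\overline{a})$ forces
\[
G/N=\bigcup_{\overline{e}}\bar{w}_{\overline{e}}D\;\subseteq\;\langle\bar{a}_{1},\ldots,\bar{a}_{d}\rangle\,\Phi(G/N),
\]
so $\langle\bar{a}_{1},\ldots,\bar{a}_{d}\rangle\Phi(G/N)=G/N$, whence $\langle\bar{a}_{1},\ldots,\bar{a}_{d}\rangle=G/N$ by the Frattini property for finite groups. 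As $N\vartriangleleft_{o}G$ was arbitrary, $\overline{\langle\overline{a}\rangle}=G$ (and a posteriori $G$ is $d$-generated).

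The main obstacle is this converse: once $\overline{a}$ fails to generate, $\delta(\overline{a};G)$ need not equal $\Phi(G)$ — indeed it need not even be a subgroup — so one cannot argue inside $G$ directly. The remedy is to descend to the continuous finite quotients $G/N$, where the only input required is the soft fact that commutators and $q(\pi)$-th powers land in the Frattini subgroup of a finite nilpotent $\pi$-group, after which the classical Frattini argument closes the gap. The remaining ingredients — that $\delta$ defines the asserted subset, that $\Phi(G)=\overline{G'}G^{q(\pi)}$, and that the exponent range $0\le e_{i}<q(\pi)$ genuinely exhausts $G/\Phi(G)$ — are routine and were prepared in the remarks preceding the statement.
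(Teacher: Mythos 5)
Your proof is correct and follows essentially the same route as the paper: the formula is the same finite disjunction of instances of $\delta$ over exponent tuples in $\{0,\ldots,q(\pi)-1\}^{d}$, the forward direction uses $\delta(\overline{a};G)=\Phi(G)$ for $d$-generated $G$, and the converse is the Frattini non-generator argument (the paper runs it directly in $G$ via $\delta(\overline{a};G)\subseteq\overline{G'}G^{q(\pi)}=\Phi(G)$, you run it in the finite continuous quotients — an immaterial difference). The only substantive deviation is that the paper's $\beta_{d}$ also conjoins $\alpha(\overline{a})$ (asserting $G'=[a_{1},G]\cdots[a_{d},G]$); this conjunct holds automatically when $\overline{a}$ generates (Lemma \ref{dg}) and is not needed for the converse, so its omission does not affect the stated equivalence.
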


\begin{proof}
Set%
\[
\beta_{d}(u_{1},\ldots,u_{d})\equiv\alpha(u_{1},\ldots,u_{d})\wedge\forall
x.~\bigvee_{s(1),\ldots,s\left(  d\right)  \in S}\delta(\overline{u}%
,x^{-1}u_{1}^{s(1)}\ldots u_{d}^{s(d)})
\]
where $S=\{0,1,\ldots,q-1\}$. We have seen that if $G=\overline{\left\langle
a_{1},\ldots,a_{d}\right\rangle }\in\mathcal{C}_{\pi}$ then $G\models\beta
_{d}(a_{1},\ldots,a_{d})$.

Conversely, if $G\in\mathcal{C}_{\pi}$ and $G\models\beta_{d}(a_{1}%
,\ldots,a_{d})$ then every element of $G$ belongs to
\[
\left\langle a_{1},\ldots,a_{d}\right\rangle [a_{1},G]\ldots\lbrack
a_{d},G]G^{q(\pi)}\subseteq\left\langle a_{1},\ldots,a_{d}\right\rangle
\Phi(G).
\]

\end{proof}

\medskip

Now setting%
\[
\widetilde{\beta}_{d}\equiv\exists y_{1},\ldots,y_{d}.\beta_{d}(y_{1}%
,\ldots,y_{d})
\]

we have%
\begin{align*}
\mathrm{d}(G)\leq d~\Longleftrightarrow~  &  G\models\widetilde{\beta}_{d},\\
\mathrm{d}(G)=d~\Longleftrightarrow~  &  G\models\widetilde{\beta}_{d}%
\wedge\lnot\widetilde{\beta}_{d-1}:=\beta_{d}^{\ast};
\end{align*}
thus for groups in $\mathcal{C}_{\pi}$ being $d$-generated can be expressed by
a first-order sentence.\medskip

We note that the hypothesis that the group $G$ be in $\mathcal{C}_{\pi}$ is necessary:

\begin{proposition}
\label{prop:neg gen} Within profinite groups, being $d$-generated cannot be
expressed by a single first order sentence.
\end{proposition}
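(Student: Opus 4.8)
The plan is to reduce to finite groups and then apply {\L}o\'s's theorem. Since every finite group is profinite, any $L_{\mathrm{gp}}$-sentence $\sigma$ with the property that $G\models\sigma\iff\mathrm d(G)=d$ for all profinite $G$ would, in particular, satisfy $G\models\sigma\iff\mathrm d(G)=d$ for all \emph{finite} $G$ (the reading $\mathrm d(G)\le d$ is handled identically; one needs $d\ge1$, since $\mathrm d(G)=0$ merely asserts $G=1$). So it suffices to produce one sequence of $d$-generated finite groups and one sequence of non-$d$-generated finite groups whose ultraproducts over a common index set are isomorphic as abstract groups: as isomorphic groups satisfy the same $L_{\mathrm{gp}}$-sentences, this contradicts the existence of $\sigma$. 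This sidesteps the obvious worry that an ultraproduct of profinite groups need not be profinite, because finite groups are trivially profinite and we only use abstract isomorphism at the end.

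For each prime $q$ I would take the elementary abelian groups $C_q^{\,d}$ and $C_q^{\,d+1}$: the first is $d$-generated, while $\mathrm d\big(C_q^{\,d+1}\big)=d+1$, so the second is not. Fix a non-principal ultrafilter $\mathcal U$ on the set of all primes and set $V_m=\prod_q C_q^{\,m}\big/\mathcal U$ for $m\in\{d,d+1\}$. By {\L}o\'s's theorem $V_d\models\sigma$ and $V_{d+1}\models\neg\sigma$.

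The heart of the matter is that $V_d$ and $V_{d+1}$ are nonetheless isomorphic. Since ultraproducts commute with finite direct products, $V_m\cong V^{m}$ where $V=\prod_q C_q\big/\mathcal U$. Now $V$ is abelian, torsion-free (for $0\ne x\in V$ and $n\ge1$, the set of primes $q$ with $nx_q=0$ in $C_q$ lies among the finitely many divisors of $n$, hence is not in $\mathcal U$), and divisible (for $q\nmid n$, multiplication by $n$ is a bijection of $C_q$, and cofinitely many $q$ satisfy $q\nmid n$); thus $V$ is a $\mathbb Q$-vector space. It is infinite-dimensional because it is uncountable: $\mathcal U$ is countably incomplete and the factors $C_q$ have unbounded order. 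Hence $\dim_{\mathbb Q}V^{d}=d\cdot\dim_{\mathbb Q}V=\dim_{\mathbb Q}V=(d+1)\cdot\dim_{\mathbb Q}V=\dim_{\mathbb Q}V^{d+1}$, so $V_d\cong V_{d+1}$, which is the desired contradiction.

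I expect the only step needing any care to be the assertion that $V$ is infinite-dimensional over $\mathbb Q$, but this is immediate from $|V|>\aleph_0$; the torsion-freeness and divisibility computations, and the commutation of ultraproducts with finite products, are routine. One could replace $C_q^{\,d}$ and $C_q^{\,d+1}$ by many other families (e.g. any finite $d$-generated groups versus $(d+1)$-generated ones whose ultraproducts collapse), but elementary abelian groups make all of the bookkeeping transparent.
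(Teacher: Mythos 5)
Your argument is correct, and it takes a genuinely different route from the paper's. The paper deduces the proposition in two lines from Theorem \ref{neg1} (the Oger--Sabbagh extension of Szmielew's work): since $\widehat{\mathbb{Z}}^{d}$ is a non-periodic abelian group, any sentence $\phi$ it satisfies is also satisfied by $\widehat{\mathbb{Z}}^{d}\times C_{q}$ for almost all primes $q$, and the latter needs $d+1$ generators; note that both witnesses there are themselves profinite, so the contradiction is obtained entirely inside the class the proposition speaks about. You instead restrict to finite groups, form ultraproducts of $C_q^{\,d}$ and $C_q^{\,d+1}$ over a non-principal ultrafilter on the primes, and observe that both collapse to isomorphic uncountable $\mathbb{Q}$-vector spaces while {\L}o\'s's theorem forces them to disagree on $\sigma$. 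Your handling of the one genuinely delicate point --- that the ultraproducts need not be profinite --- is right: the contradiction you derive is ``abstractly isomorphic structures disagreeing on a sentence,'' which needs no profiniteness, and {\L}o\'s is applied only to the finite (hence profinite) factors. All the supporting computations (torsion-freeness, divisibility, commutation of ultraproducts with finite products, uncountability via countable incompleteness of $\mathcal{U}$ and unboundedness of the $|C_q|$) are standard and correctly executed. What each approach buys: the paper's is essentially free once Theorem \ref{neg1} is available, and it exhibits explicit profinite counterexample pairs; yours is self-contained modulo {\L}o\'s and in fact proves the strictly stronger statement that $d$-generation is not expressible by a single sentence even within the class of \emph{finite} groups, which is closer in spirit to the paper's discussion of first-order separation of classes of finite groups. (Both proofs, like the paper's, tacitly require $d\geq 1$, which you correctly flag.)
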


\begin{proof}
According to Proposition \ref{neg1}, if $\phi$ is a sentence in the language
of groups and a non-periodic abelian group $G$ satisfies $\phi$ then $G\times
C_{q}$ satisfies $\phi$ for almost all primes $q$.

If $\phi$ expresses being $d$-generated, then $\widehat{\mathbb{Z}}^{d}%
\models\phi$. Let $q$ be a prime as above, then also $\widehat{\mathbb{Z}}%
^{d}\times C_{q}\models\phi$. But this group needs $d+1$ generators.
\end{proof}

\medskip

The same argument works for the category of abstract groups, using
$\mathbb{Z}^{d}$ in place of $\widehat{\mathbb{Z}}^{d}$. A slightly more
elaborate argument shows that being finitely generated, for profinite groups,
also cannot be expressed by a single first order sentence $\phi$. One works
with $\widehat{\mathbb{Z}}\times(C_{q})^{\aleph_{0}}$, and uses the fact that
$\phi$ can be expressed as a Boolean combination of Szmielew invariant
sentences: see \cite{HMT}, Thm.\ A.2.7.

\medskip

To conclude this introductory section, we discuss a `small' f.g.\ pro-$p$
group of infinite rank,%

\[
G:=C_{p}\widehat{\wr}\,\mathbb{Z}_{p}=\,\underleftarrow{\lim}\,_{n\rightarrow
\infty}C_{p}\wr C_{p^{n}}.
\]
This group is the semidirect product of the `base group' $M$ by a procyclic
group $T\cong\mathbb{Z}_{p}$; here
\[
M\cong\mathbb{F}_{p}[[T]]
\]
as a $T$-module, where $\mathbb{F}_{p}[[T]]$ is the completed group algebra of
$T$ (see \cite{DDMS}, \S 7.4). Note that $\mathbb{F}_{p}[[T]]$ is a
$1$-dimensional complete local ring with residue field~$\mathbb{F}_{p}$, whose
non-zero closed ideals are just the powers of the maximal ideal, and therefore
have finite index.

\begin{proposition}
\label{wr}The pro-$p$ group $C_{p}\widehat{\wr}\,\mathbb{Z}_{p}$ is FA within
profinite groups.
\end{proposition}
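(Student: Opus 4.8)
The plan is to show that $G = C_p \widehat{\wr}\, \mathbb{Z}_p$ is bi-interpretable (topologically) with the profinite ring $R = \mathbb{F}_p[[T]] = \mathbb{F}_p[[t]]$, and then invoke Theorem~\ref{biint}(i) together with Theorem~\ref{regloc}, which already tells us that $R$ is FA (indeed strongly FA, being one-dimensional of characteristic $p$, by Remark (i) following Theorem~\ref{regloc}). Since $G$ is finitely generated as a pro-$p$ group it is strongly complete by Theorem~\ref{NSSC}, hence algebraically rigid, so the rigidity hypothesis of Theorem~\ref{biint}(i) is automatic. What remains is to exhibit the two interpretations and the definable return isomorphisms.

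First I would interpret $R$ in $G$. Write $G = M \rtimes T$ with $M \cong \mathbb{F}_p[[T]]$ the base group and $T = \overline{\langle t\rangle} \cong \mathbb{Z}_p$ procyclic; fix a generator $g$ of $T$ and a nontrivial element $m_0$ of $M$ as parameters. The subgroup $M$ is definable: it is the set of elements of order dividing $p$ together with $1$ — more precisely $M$ is the unique maximal elementary abelian normal subgroup, or one can characterize it as $\{x : x^p = 1\}$ since $G/M$ is torsion-free. Then $(M,+) \cong (R,+)$ as a topological group via $m_0 \mapsto 1_R$ once we also encode the $T$-action: conjugation by $g$ on $M$ corresponds to multiplication by $t$ (or by $1+t$, depending on normalization — the key point is that $m_0^g$ and $m_0$ generate $M$ as a module and the module structure is pro-cyclic). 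The ring multiplication on $M$ is then recovered from the additive structure plus the conjugation action: every element of $R$ is a limit of polynomials in $t$, and $t^k \cdot m$ is $m$ conjugated $k$ times by $g$, so $a\cdot b$ for $a,b \in M$ is expressible by saying there exist $x,y \in G$ (in the subgroup $\langle g\rangle M$) realizing the relevant conjugations. One must check this multiplication formula defines a continuous (strongly topological) operation on any $\widehat\tau(H)$; this is the kind of congruence-tracking argument carried out in the proof of Theorem~\ref{affine}, Claim 1, and should go through because conjugation is continuous and $M$ is abelian.

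Next I would interpret $G$ back in $R$: the point is that $G \cong \mathbb{F}_p[[t]] \rtimes \mathbb{Z}_p$, and both $\mathbb{F}_p[[t]]$ (as $(R,+)$) and $\mathbb{Z}_p$ are recoverable inside $R$ — the former is $R$ itself, the latter can be obtained as (the additive group of) $\mathbb{Z}_p \cong$ the closure of $\langle 1\rangle$ in $R$, or more robustly one uses that $\mathbb{Z}_p$ acts on $\mathbb{F}_p[[t]]$ via $t \mapsto (1+t)$-substitution, i.e. $\lambda \in \mathbb{Z}_p$ acts by $f(t) \mapsto f((1+t)^\lambda - 1)$, and this is $L_{\mathrm{rg}}$-definable as a map on $R$ since $(1+t)^\lambda$ is a well-defined element of $R$ for each $\lambda \in \mathbb{Z}_p$ and the correspondence $\lambda \leftrightarrow (1+t)^\lambda$ is definable (the procyclic group $1 + tR \subset R^*$ is definable and topologically generated by $1+t$). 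Thus $G$ is interpreted as $R \times (1+tR)$ with the evident twisted multiplication, inside $R^{(2)}$, with a continuous group operation. Finally, the two round-trip isomorphisms $\theta: G \to \widehat\alpha(\widehat\tau(G))$ and $\theta': R \to \widehat\tau(\widehat\alpha(R))$ must be shown definable; these are built from the component identifications $M \leftrightarrow R$ and $T \leftrightarrow 1+tR$ and are definable essentially because all the pieces above are, by the same bookkeeping as in the worked examples.

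The main obstacle, I expect, is not any single hard idea but getting the module/ring dictionary exactly right: one must pin down precisely which topological generator of the $T$-action corresponds to $t$ versus $1+t$, verify that $M$ really is first-order definable in $G$ without extra parameters beyond $g, m_0$ (the characterization of $M$ as $\{x : x^p=1\} \cup\{1\}$ needs $G/M$ torsion-free, which holds), and — the genuinely delicate point — check the \emph{strong topological} condition, namely that in an arbitrary profinite $H$ satisfying the relevant sentences the reconstructed ring multiplication is continuous. As in Theorem~\ref{affine} this reduces to a congruence computation modulo open normal subgroups, using that the ambient conjugation action is continuous and the reconstructed "$M$" is abelian; I would model that argument line-for-line. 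Once all this is in place, Theorem~\ref{biint}(i) delivers that $G$ is FA in the class of profinite groups.
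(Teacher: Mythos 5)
Your strategy (topological bi-interpretation with $R=\mathbb{F}_p[[t]]$, then Theorems \ref{biint}(i) and \ref{regloc}) is not the paper's route, and as written it founders at the first step: defining the ring multiplication of $R$ inside $G$. In $\mathrm{Af}_1(R)$ that definition works because the acting subgroup $H$ is all of $R^{\ast}$ and $R=R^{\ast}\cup(R^{\ast}-1)$, so multiplication by an arbitrary ring element is realized by at most two conjugations. In $G=M\rtimes T$ the acting group is only $T\cong\mathbb{Z}_p$, whose image in $R^{\ast}$ is the thin procyclic subgroup $\overline{\langle 1+t\rangle}$, not all units. A formula of the shape you propose ("there exist $x,y$ realizing the relevant conjugations") can only produce, for a fixed bound $k$, elements of the form $\sum_{i=1}^{k}\epsilon_i(1+t)^{\lambda_i}$ with $\lambda_i\in\mathbb{Z}_p$ and $\epsilon_i\in\{0,\dots,p-1\}$. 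Since $(1+t)^{p^m}=1+t^{p^m}$, the residue of $(1+t)^{\lambda}$ modulo $t^{n+1}$ depends only on $\lambda$ modulo $p^m$ for the least $p^m>n$, so the number of such sums modulo $t^{n+1}$ grows only polynomially in $n$, while $\left\vert R/\mathfrak{m}^{n+1}\right\vert=p^{n+1}$. Hence no bounded-length combination of conjugates exhausts $R$, and the graph of multiplication is not captured by any formula of this kind. A second, independent problem is your interpretation of $G$ in $R$: the group $1+tR$ is \emph{not} procyclic (it is an infinite-rank pro-$p$ group), and the subgroup you actually need, $\overline{\langle 1+t\rangle}$, has no evident first-order definition in $R$.

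The paper avoids interpreting the ring altogether. It writes the pro-$p$ presentation $\langle a,f;\ a^{p}=[w,a]=[u,w]=1\ (u,w\in F')\rangle$ as a formula $\sigma(a,f)$ (using $\beta_2$ to express generation), conjoined with "the centre is trivial". Any pro-$p$ group $H\models\sigma(b,h)$ is then an epimorphic image $G/N$, and the only ring-theoretic input needed is that every nonzero closed ideal of $\mathbb{F}_p[[T]]$ has finite index: if $N\cap M\neq1$ then $M/(N\cap M)$ is finite, which forces $\mathrm{Z}(G/N)$ to have finite index, contradicting $\mathrm{Z}(H)=1$ with $H$ infinite; then $N\leq\mathrm{C}_G(M)\cap N=M\cap N=1$. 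A further formula $\tau(a,f)$, exploiting $[M,x]\subseteq[M,x^{p}]\Rightarrow x^{p}=1$, kills the Sylow pro-$q$ subgroups for $q\neq p$ and so extends the result from pro-$p$ groups to all profinite groups. If you wish to rescue a bi-interpretation proof you would need a genuinely new mechanism for defining multiplication on $M$; the presentation argument sidesteps the issue entirely.
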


\begin{proof}
Let $a$ be a generator of the $T$-module $M,$ and let $f$ be a generator of
$T$. Then $G$ has the pro-$p$ presentation%
\[
\left\langle a,f;~a^{p}=[w,a]=[u,w]=1~(u,w\in F^{\prime})\right\rangle
\]
where $F$ denotes the free group on $\{a,f\}$.

Let $\sigma(a,f)$ be the formula saying for a pro-$p$ group $G$ that

\begin{itemize}
\item the elements $a,f$ generate $G$, i.e. $\beta_{2}(a,f)$ holds;

\item the element $a$ has order $p$ and commutes with every commutator;

\item all commutators commute;

\item the centre of $G$ is trivial.
\end{itemize}

Then $\sigma(a,f)$ holds in $G$.

Suppose to begin with that $H$ is a pro-$p$ group and that $H\models
\sigma(b,h)$ for some $b,$ $h\in H$. Then the map sending $a$ to $b$ and $f$
to $h$ extends to an epimorphism $\theta:G\rightarrow H$. Let $N=\ker\theta$.
We aim to show that $N=1$.

Now $G/N\cong H$ is a non-trivial pro-$p$ group with trivial centre, so it is
infinite (as every non-trivial finite $p$-group has non-trivial centre). Also
$N\cap M$ corresponds to a closed ideal of $\mathbb{F}_{p}[[T]],$ so if $N\cap
M\neq1$ then $M/\left(  N\cap M\right)  $ is finite. As $G/M$ is procyclic
this implies that the centre of $G/N$ has finite index (if $f^{n}$ centralizes
$M/\left(  N\cap M\right)  $ then $[G,N\overline{\left\langle f^{n}%
\right\rangle }]\leq N$ ). This contradicts $\mathrm{Z}(H)=1$, and we conclude
that $N\cap M=1$. But then $N\leq\mathrm{C}_{G}(M)\cap N=M\cap N=1$.

It follows that $H\cong G$. Thus $(G;a,f)$ is FA in pro-$p$ groups.

To deal with the general case of profinite groups, we need a way to identify
the prime $p$. Now we have%
\begin{align*}
G^{\prime}  &  =[M,f]=\{[x,y]~\mid~x,y\in G\}\\
M  &  =G^{\prime}\left\langle a\right\rangle =G^{\prime}\cup G^{\prime}%
a\cup\ldots\cup G^{\prime}a^{p-1}%
\end{align*}
so $M$ is definable by a formula $\mu(a,f)$ say. Since $G$ is pro-$p$,
$\mathrm{C}_{G}(M)=M$ and $M^{p}=1,$ the following holds:%
\[
\lbrack M,x]\subseteq\lbrack M,x^{p}]\Longrightarrow\lbrack
M,x]=1\Longrightarrow x\in M\Longrightarrow x^{p}=1.
\]
So $G$ satisfies a formula $\tau(a,f)$ that expresses
\[
M\lhd G\text{ and }[M,M]=M^{p}=1\text{ and }[M,x]\subseteq\lbrack
M,x^{p}]\Longrightarrow x^{p}=1.
\]

Suppose now that $H$ is a profinite group and that $H\models\sigma
(b,h)\wedge\tau(b,h).$ Let $N=\mu(b,h;H),$ so $N$ is an abelian normal
subgroup of $H,$ of exponent $p$. Suppose $x$ belongs to a Sylow pro-$q$
subgroup of $H$ where $q\neq p$. Then $x=x^{p\lambda}$ for some $\lambda
\in\mathbb{Z}_{q}$, and then $[N,x]=[N,x^{p\lambda}]\subseteq\lbrack N,x^{p}%
]$. As $H\models\tau(b,h)$ this implies that $x^{p}=1,$ and hence that $x=1$.
It follows that $H$ is a pro-$p$ group, and then $H\models\sigma(b,h)$ implies
that $H\cong G$.
\end{proof}

\begin{corollary}
The classes of profinite, respectively pro-$p$, groups of finite rank and of
f.g.\ profinite, respectively pro-$p$, groups are first-order separable, with
witness group $C_{p}\widehat{\wr}\,\mathbb{Z}_{p}$.
\end{corollary}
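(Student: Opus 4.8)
The plan is to apply the ``witness for separation'' recipe sketched in the introduction, taking $G=C_{p}\widehat{\wr}\,\mathbb{Z}_{p}$ as the witness. First I would invoke Proposition~\ref{wr}, which supplies a sentence $\sigma_{G}$ of $L_{\mathrm{gp}}$ such that a profinite group $H$ satisfies $\sigma_{G}$ if and only if $H\cong G$; put $\phi:=\lnot\sigma_{G}$. Since pro-$p$ groups are in particular profinite, this single $\phi$ will do duty for both separations claimed.

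Next I would record the two relevant facts about $G$. On the one hand $G$ is finitely generated --- indeed $2$-generated, by the elements $a,f$ appearing in the proof of Proposition~\ref{wr} --- and it is pro-$p$, so it belongs both to the class of f.g.\ profinite groups and to the class of f.g.\ pro-$p$ groups. On the other hand $G$ has \emph{infinite} rank: its base group $M\cong\mathbb{F}_{p}[[T]]$ is an infinite abelian pro-$p$ group of exponent $p$, which therefore cannot be generated topologically by finitely many elements, so $\mathrm{rk}(G)=\infty$. In particular $G$ is not (topologically) isomorphic to any profinite, or pro-$p$, group of finite rank.

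Finally I would assemble the separation. If $H$ is any profinite group of finite rank then $H\not\cong G$, whence $H\not\models\sigma_{G}$, i.e.\ $H\models\phi$; a fortiori the same holds when $H$ is a pro-$p$ group of finite rank. Conversely $G\models\sigma_{G}$, so $G\not\models\phi$, while $G$ does lie in the class of f.g.\ profinite (respectively pro-$p$) groups. Thus $\phi$ holds throughout the finite-rank class but fails on a member of the f.g.\ class, which is precisely what first-order separability requires in each of the two cases. There is no genuine obstacle here: the substantive content is already packaged in Proposition~\ref{wr}, and the only points to verify are that the witness $G$ is simultaneously finitely generated and of infinite rank, plus the trivial remark that a sentence characterising $G$ among \emph{all} profinite groups in particular characterises it among pro-$p$ groups.
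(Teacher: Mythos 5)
Your argument is correct and is exactly the intended one: the paper leaves this corollary without explicit proof because it follows immediately from Proposition~\ref{wr} via the ``witness for separation'' recipe of the introduction, and you have filled in precisely the right details (that $G=C_{p}\widehat{\wr}\,\mathbb{Z}_{p}$ is $2$-generated and pro-$p$, that its base group $\mathbb{F}_{p}[[T]]$ forces $\mathrm{rk}(G)=\infty$, and that a sentence characterising $G$ among all profinite groups in particular works within the pro-$p$ class). Nothing further is needed.
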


\subsection{Powerful pro-$p$ groups}

\label{powsec}

Next we discuss a special class of pro-$p$ groups, where $p$ always denotes a
prime. Fix%
\[
\varepsilon=0~\text{if }p\neq2,~\varepsilon=1~\text{if }p=2.
\]
A pro-$p$ group $G$ is \emph{powerful }if $G/\overline{G^{p}}$ is abelian
(replace $p$ by $4$ when $p=2$). If $G$ is also finitely generated, then%
\[
\overline{G^{p^{n}}}=G^{p^{n}}=G^{\{p^{n}\}}%
\]
for each $n\geq1$. Thus for a f.g.\ pro-$p$ group $G,$ $G$ is powerful iff%
\[
G\models\forall x,y\exists z.\left(  [x,y]=z^{p}\right)
\]
(replace $p$ by $4$ when $p=2$).\ In this case we have%
\[
\Phi(G)=G^{p}~~\text{(resp. }G^{4}\text{ if }p=2\text{).}%
\]

For all this, see \cite{DDMS}, Chapter 3. A key result of Lazard
\cite{Lz}\ characterizes the compact $p$-adic analytic groups as the f.g.
profinite groups that are virtually powerful (cf.\ \cite{DDMS}, Chapter 8).

The definition of a \emph{uniform} pro-$p$ group is given in \cite{DDMS},
Chapter~4. Rather than repeating it here, we use the simple characterization
(\emph{loc.\ cit.} Theorem 4.5): a pro-$p$ group is uniform iff it is
\emph{f.g., powerful }and\emph{\ torsion-free}.

The following theorem summarizes key facts established in Chapters 3 and 4 of
\cite{DDMS}. We set%
\[
\lambda(r)=\left\lceil \log_{2}r\right\rceil +\varepsilon.
\]

\begin{theorem}
\label{pow}\emph{(i) }Let $G$ be a pro-$p$ group of finite rank $r$. Put
$m=\lambda(r)$. Then $G$ has open normal subgroups $W\geq W_{0},$ with%
\[
W\geq\Phi^{m}G\geq G^{p^{m}},
\]
such that every open normal subgroup of $G$ contained in $W$ is powerful, and
every open normal subgroup of $G$ contained in $W_{0}$ is uniform.

\emph{(ii)} If $G$ is f.g.\ and powerful then $\mathrm{rk}(G)=\mathrm{d}(G)$.

\emph{(iii) }If $G=\overline{\left\langle a_{1},\ldots,a_{d}\right\rangle }$
is powerful then%
\[
G=\left\{  a_{1}^{\mu_{1}}\ldots a_{d}^{\mu_{d}}\mid\mu_{1},\ldots,\mu_{d}%
\in\mathbb{Z}_{p}\right\}  .
\]

\emph{(iv)} If $G$ is f.g.\ and powerful then $G$ has a uniform open normal
subgroup $U$, and
\[
\mathrm{d}(V)=\mathrm{rk}(V)=\mathrm{rk}(U)=\mathrm{d}(U)
\]
for every uniform open subgroup $V$ of $G$.
\end{theorem}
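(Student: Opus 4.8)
The plan is that this is a compilation theorem: I would prove each clause by locating the corresponding statement in \cite{DDMS}, Chapters~3 and~4, supplying where needed a short argument assembled from the powerful/uniform machinery developed there. Clauses (ii)--(iv) are in essence quotations; clause~(i), with its explicit bound $m=\lambda(r)=\lceil\log_2 r\rceil+\varepsilon$, carries the real content.

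For (iii): in a powerful $G=\overline{\langle a_1,\dots,a_d\rangle}$ one has $\Phi(G)=\overline{G^p}$ (resp.\ $\overline{G^4}$), and this subgroup is topologically generated by $a_1^{p},\dots,a_d^{p}$; iterating, the image of $G$ in $G/\Phi^k(G)$ consists of the products $a_1^{\mu_1}\cdots a_d^{\mu_d}$ with $\mu_i\in\mathbb{Z}$, and since $\bigcap_k\Phi^k(G)=1$ one passes to the inverse limit, using compactness of $\mathbb{Z}_p^d$ to produce a limit point $(\mu_1,\dots,\mu_d)$; this is \cite{DDMS}, Theorem~3.6 and the discussion around it. For (ii): with the normal form of (iii) available one shows $\mathrm{d}(H)\le\mathrm{d}(G)$ for every closed $H\le G$ (\cite{DDMS}, Theorem~3.8), whence $\mathrm{rk}(G)=\sup_H\mathrm{d}(H)=\mathrm{d}(G)$, the reverse inequality being trivial. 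For (iv): a finitely generated powerful pro-$p$ group has finite torsion subgroup $\tau$, so for $n$ large $G^{p^n}\cap\tau=1$; then $G^{p^n}$ is torsion-free, finitely generated and powerful (powers of a finitely generated powerful pro-$p$ group remain powerful), hence uniform by the characterisation \cite{DDMS}, Theorem~4.5, and it is characteristic in $G$. If $V$ is any uniform open subgroup of $G$, then $\mathrm{d}(V)=\mathrm{rk}(V)$ and $\mathrm{d}(U)=\mathrm{rk}(U)$ by~(ii), while $\mathrm{rk}(V)$ equals the dimension of $V$ (for uniform groups rank, minimal number of generators and dimension coincide) and the dimension of an open subgroup of $G$ is independent of the subgroup; comparing $V$ and the chosen $U$ yields the stated chain of equalities.

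The substantive point is~(i). Its heart is the ``depth to powerfulness'' estimate of Lubotzky--Mann, in the form: if $G$ is a pro-$p$ group of rank $r$ then $\Phi^{\lambda(r)}(G)$ is already powerful. The bound $\lambda(r)=\lceil\log_2 r\rceil+\varepsilon$ arises from a halving argument: an integer parameter measuring the failure of $\Phi^k(G)$ to be powerful is at most $r$ when $k=0$ and drops by at least a factor $2$ at each Frattini step, so it reaches $0$ once $k\ge\log_2 r$, the summand $\varepsilon$ absorbing the usual nuisance at $p=2$. Granting this, one takes $W$ (resp.\ $W_0$) to be a suitable open normal subgroup lying above $\Phi^m(G)$; the assertion that every open normal subgroup of $G$ contained in $W$ is again powerful is handled using that powers and sufficiently deep Frattini sections of a powerful group stay powerful, together with finite rank, which forces $G$ to be finitely generated so that all these subgroups are open. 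For $W_0$ one additionally notes that the finite torsion of $G$ is killed on passing far enough down the $p$-power series, so the relevant subgroups are torsion-free and hence uniform. The inclusions $W\ge\Phi^m G\ge G^{p^m}$ are immediate from $\Phi(N)\supseteq N^p$ applied iteratively.

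I expect the main obstacle to be precisely the Lubotzky--Mann estimate behind (i), together with pinning down the choice of $W$ that makes the ``every open normal subgroup'' clause valid in the form stated; since both are carried out in \cite{DDMS}, Chapter~3, with the constants in the needed shape, the proof here reduces to citing and stitching together those results.
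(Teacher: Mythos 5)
Your reading is exactly right: the paper gives no proof of this theorem at all, introducing it with the words that it ``summarizes key facts established in Chapters 3 and 4 of \cite{DDMS}'', so the intended argument is precisely the compilation from the Lubotzky--Mann theory of powerful and uniform groups that you describe (the depth-to-powerfulness bound with $\lambda(r)$ for (i), the product decomposition and $\mathrm{rk}=\mathrm{d}$ for powerful groups for (ii)--(iii), and the invariance of $\mathrm{d}$ over uniform open subgroups for (iv)). Your clause-by-clause sketches are consistent with those sources, so the proposal is correct and takes essentially the same route as the paper.
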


The common rank of open uniform subgroups of such a group $G$ is denoted
$\dim(G)$; this is the dimension of $G$ as a $p$-adic analytic group.

\begin{lemma}
\label{add-dim}Let $G$ be a uniform pro-$p$ group and let $N\vartriangleleft
_{c}G$. If $G/N$ is uniform then $N$ is uniform and%
\[
\dim(G)=\dim(N)+\dim(G/N).
\]

\end{lemma}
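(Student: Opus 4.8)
The plan is to show that $N$ is finitely generated, torsion-free and powerful, so that it is uniform by the characterization recalled above (a pro-$p$ group is uniform iff it is finitely generated, powerful and torsion-free), and then to deduce the dimension formula by counting indices of power subgroups. The two easy properties come for free: since $G$ is uniform it has finite rank $\dim G$, so its closed subgroup $N$ has rank at most $\dim G$ and in particular is finitely generated; and $N$, being a subgroup of the torsion-free group $G$, is torsion-free.

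The key point is that, $G/N$ being uniform, it is torsion-free, so any $g\in G$ with $g^{p^{n}}\in N$ already lies in $N$. Since $G$ is finitely generated and powerful, $G^{p^{n}}=G^{\{p^{n}\}}$, and the observation above gives $N\cap G^{p^{n}}=N^{\{p^{n}\}}$ for every $n\ge 1$. Powerfulness of $N$ then follows at once: writing $q=p$ if $p$ is odd and $q=4$ if $p=2$, powerfulness of $G$ gives $[G,G]\subseteq\overline{G^{q}}=G^{\{q\}}$, whence
\[
[N,N]\subseteq [G,G]\cap N\subseteq G^{\{q\}}\cap N=N^{\{q\}}\subseteq\overline{N^{q}},
\]
so $N/\overline{N^{q}}$ is abelian. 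Thus $N$ is uniform, and in particular $N^{p^{n}}=N^{\{p^{n}\}}$, so $N\cap G^{p^{n}}=N^{p^{n}}$ for all $n$.

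For the dimension formula I would use the standard fact that a uniform group $U$ satisfies $|U:U^{p^{n}}|=p^{n\dim U}$, together with the openness of the power subgroups of a finitely generated powerful pro-$p$ group (both from \cite{DDMS}, Chapters 3 and 4), so that all indices below are finite. Factoring $|G:G^{p}|$ through the intermediate subgroup $NG^{p}\supseteq G^{p}$: on one side $NG^{p}/N=(G/N)^{p}$, so $|G:NG^{p}|=|(G/N):(G/N)^{p}|=p^{\dim(G/N)}$; on the other side $|NG^{p}:G^{p}|=|N:N\cap G^{p}|=|N:N^{p}|=p^{\dim N}$. Comparing with $|G:G^{p}|=p^{\dim G}$ gives $\dim G=\dim N+\dim(G/N)$.

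I do not anticipate a real obstacle: beyond the torsion-freeness trick above, the only things requiring care are that $G^{p^{n}}$ is precisely the set of $p^{n}$-th powers and is open (standard for finitely generated powerful pro-$p$ groups), after which the argument is pure bookkeeping.
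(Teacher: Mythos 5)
Your proof is correct, but it takes a genuinely different route from the paper's. The paper disposes of the lemma in three lines by invoking the $\mathbb{Z}_p$-Lie algebra $L(G)$ attached to a uniform group: Proposition 4.31 of \cite{DDMS} states directly that $N$ is uniform, that $L(N)$ is an ideal of $L(G)$, and that $G\to G/N$ induces an epimorphism $L(G)\to L(G/N)$; additivity of rank for free $\mathbb{Z}_p$-modules then gives the dimension formula. You instead argue entirely inside the group: the observation that torsion-freeness of $G/N$ forces $N\cap G^{\{p^n\}}=N^{\{p^n\}}$ (an isolator-type argument) yields powerfulness of $N$ from that of $G$, and the dimension formula follows by factoring the index $\lvert G:G^p\rvert=p^{\dim G}$ through $NG^p$ via the second isomorphism theorem. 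Your argument checks out at every step --- in particular $[N,N]\subseteq G^{\{q\}}\cap N=N^{\{q\}}\subseteq\overline{N^q}$ is exactly what powerfulness of $N$ requires, and the identities $NG^p/N=(G/N)^p$ and $\lvert NG^p:G^p\rvert=\lvert N:N^p\rvert$ are justified because $G^p=G^{\{p\}}$ and $N^p=N^{\{p\}}$ for these finitely generated powerful groups. What the paper's approach buys is brevity and the extra structural information (the ideal/quotient correspondence of Lie lattices), which is in the same spirit as the surrounding material; what yours buys is self-containedness, needing only the basic facts $\overline{G^{p^n}}=G^{\{p^n\}}$ and $\lvert U:U^p\rvert=p^{\dim U}$ rather than the full Lie correspondence of \cite{DDMS}, Chapter 4.
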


\begin{proof}
As explained in \cite{DDMS}, Chapter 4, $G$ has the structure of a
$\mathbb{Z}_{p}$-Lie algebra $L(G)$, additively isomorphic to $\mathbb{Z}%
_{p}^{\dim(G)} $. Proposition 4.31 of \cite{DDMS} says that $N$ is uniform,
$L(N)$ is an ideal of $L(G)$, and the quotient mapping $G\rightarrow G/N$
induces an epimorphism $L(G)\rightarrow L(G/N)$. The claim follows from the
additivity of dimension for free $\mathbb{Z}_{p}$-modules.
\end{proof}

\begin{corollary}
\label{dims}Let $G$ be a pro-$p$ group of finite rank and let
$N\vartriangleleft_{c}G$. Then $\dim(G)=\dim(N)+\dim(G/N)$.
\end{corollary}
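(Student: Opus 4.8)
\medskip

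The plan is to reduce to Lemma~\ref{add-dim} by passing to an open uniform normal subgroup that is compatible with $N$. First I would note that $N$ and $G/N$ again have finite rank --- for $N$ because $\mathrm{rk}(N)\le\mathrm{rk}(G)$ since $N\le_c G$, and for $G/N$ because every closed subgroup of $G/N$ is the image of a closed subgroup of $G$, so $\mathrm{rk}(G/N)\le\mathrm{rk}(G)$ --- so that $\dim(N)$ and $\dim(G/N)$ are defined. I would also record the fact (immediate from Theorem~\ref{pow}(i),(iv), applied inside an intersection of two open uniform subgroups; cf.\ \cite{DDMS}) that any two open uniform subgroups of a pro-$p$ group of finite rank have the same rank. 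Hence $\dim$ of such a group is the rank of \emph{any} one of its open uniform subgroups; in particular $\dim$ is unchanged on passing to an open uniform subgroup, and $\dim(W)=\mathrm{rk}(W)$ whenever $W$ is itself uniform.

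Next I would construct an open normal subgroup $W\vartriangleleft_o G$ such that $W$, $W\cap N$ and the image $WN/N$ are all uniform. Let $\pi\colon G\to G/N$ be the quotient map. Applying Theorem~\ref{pow}(i) to the finite-rank groups $G$, $N$ and $G/N$ yields open normal subgroups $W_0^{G}\vartriangleleft_o G$, $W_0^{N}\vartriangleleft_o N$, $W_0^{G/N}\vartriangleleft_o G/N$ below which every open normal subgroup of the respective ambient group is uniform. Since the subgroups $N\cap V$ with $V\vartriangleleft_o G$ form a base of neighbourhoods of $1$ in $N$, there is $V\vartriangleleft_o G$ with $N\cap V\subseteq W_0^{N}$. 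Put
\[
W=W_0^{G}\cap\pi^{-1}\!\big(W_0^{G/N}\big)\cap V ,
\]
an intersection of open normal subgroups of $G$, so $W\vartriangleleft_o G$. Then $W\subseteq W_0^{G}$ is uniform; $W\cap N$ is an open normal subgroup of $N$ with $W\cap N\subseteq N\cap V\subseteq W_0^{N}$, so it is uniform; and $WN$ is open (being a subgroup containing the open subgroup $W$) and normal in $G$, while $\pi(W)=WN/N\subseteq W_0^{G/N}$, so the open normal subgroup $WN/N$ of $G/N$ is uniform.

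Now $W\cap N\vartriangleleft_c W$, and the natural map gives a topological isomorphism $W/(W\cap N)\cong WN/N$, which is uniform. So Lemma~\ref{add-dim}, applied to the pair $\big(W,\,W\cap N\big)$, gives
\[
\dim(W)=\dim(W\cap N)+\dim\!\big(W/(W\cap N)\big).
\]
Since $W$ is open uniform in $G$, $W\cap N$ is open uniform in $N$, and $WN/N$ is open uniform in $G/N$, the observation of the first paragraph rewrites this as $\dim(G)=\dim(N)+\dim(G/N)$, which is the claim.

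The only point requiring genuine care is the simultaneous choice of $W$ --- arranging one open normal subgroup of $G$ to lie below $W_0^{G}$, to meet $N$ inside $W_0^{N}$, and to map into $W_0^{G/N}$ --- together with the bookkeeping that $\dim$ may be computed from any open uniform subgroup; after that, Lemma~\ref{add-dim} does all the work.
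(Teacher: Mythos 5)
Your proof is correct, and it reaches Lemma~\ref{add-dim} by a route that differs from the paper's in its key technical step. The paper fixes a single uniform open normal subgroup $H\vartriangleleft_o G$, observes that $H/(H\cap N)$ is powerful (a quotient of a powerful group), and then invokes the structure theorem for f.g.\ powerful pro-$p$ groups (\cite{DDMS}, Theorem 4.20) to find a finite normal subgroup $M/(H\cap N)$ with $H/M$ uniform; it then applies Lemma~\ref{add-dim} to the pair $(H,M)$, so the normal subgroup is \emph{enlarged} by a finite torsion piece. You instead \emph{shrink} the ambient subgroup: by applying Theorem~\ref{pow}(i) separately to $G$, $N$ and $G/N$ and intersecting the resulting constraints, you produce one $W\vartriangleleft_o G$ for which $W$, $W\cap N$ and $WN/N$ are simultaneously uniform, and then apply Lemma~\ref{add-dim} directly to $(W,\,W\cap N)$. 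Your version avoids the torsion-quotient theorem entirely at the cost of a little more topological bookkeeping (the base-of-neighbourhoods argument for $N\cap V\subseteq W_0^N$, the compactness argument identifying $W/(W\cap N)$ with $WN/N$, and the check that $\dim$ is insensitive to passing to open uniform subgroups); the paper's version is shorter but leans on a deeper fact about powerful groups and leaves the final dimension-matching steps implicit. Both are valid; you might note that your opening remark on well-definedness of $\dim$ across different open uniform subgroups is a point the paper also glosses over but genuinely needs.
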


\begin{proof}
Let $H$ be a uniform open normal subgroup of $G$. Then $H/(H\cap N)$ is
powerful, hence has a finite normal subgroup $M/N$ such that $H/M$ is uniform
by~\cite{DDMS}, Theorem 4.20. The claim follows on replacing $G$ by $H$ and
$N$ by~$M$.
\end{proof}

These results can be applied to $\mathcal{C}_{\pi}$ groups of finite rank. Let
$G\in\mathcal{C}_{\pi}$. Then $G=G_{1}\times\cdots\times G_{k}$ where each
$G_{t}$ is a pro-$p_{t}$ group, the Sylow pro-$p_{t}$ subgroup of~$G$. If $H$
is a closed subgroup of $G$ then $H=H_{1}\times\cdots\times H_{k}$ where
$H_{t}=H\cap G_{t}$, notation we keep for the remainder of this subsection.

If $G$ has finite rank, we define
\[
\mathrm{Dim}(G)=\dim G_{1}+\cdots+\dim G_{k}.
\]

If $p_{t}\nmid m$ then every element of $H_{t}$ is an $m$th power in $H$; thus
if $q=p_{1}^{e_{1}}\ldots p_{k}^{e_{k}}$ then
\begin{align*}
H^{\{q\}}  &  =H_{1}^{\{p_{1}^{e_{1}}\}}\times\cdots\times H_{k}%
^{\{p_{k}^{e_{k}}\}}\\
H^{q}  &  =H_{1}^{p_{1}^{e_{1}}}\times\cdots\times H_{k}^{p_{k}^{e_{k}}}.
\end{align*}
We call $H$ \emph{semi-powerful }if each $H_{t}$ is a powerful pro-$p_{t}$
group. If $H\in\mathcal{C}_{\pi}$ is finitely generated, then $H$ is
semi-powerful if and only if $H/H^{q^{\prime}(\pi)}$ is abelian. This holds
iff%
\[
H\models\mathrm{pow}\equiv\forall x,y\exists z.\left(  [x,y]=z^{q^{\prime}%
(\pi)}\right)  .
\]

$H$ is \emph{semi-uniform} if each $H_{t}$ is uniform. In this case, the
\emph{dimension} of $H$ is the $k$-tuple%
\[
\dim H=(\dim H_{1},\ldots,\dim H_{k}).
\]

\begin{lemma}
\label{epi}Let $H$ and $K$ be semi-uniform $\mathcal{C}_{\pi}$ groups, and
$\theta:H\rightarrow K$ an epimorphism. If $\dim H=\dim K$ then $\theta$ is an isomorphism.
\end{lemma}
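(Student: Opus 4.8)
The plan is to reduce the statement, component by component, to the torsion-free pro-$p$ case, where an epimorphism between uniform groups of equal dimension is forced to be an isomorphism by a dimension count on the associated $\mathbb{Z}_p$-Lie algebras. Concretely, write $H=H_1\times\cdots\times H_k$ and $K=K_1\times\cdots\times K_k$ as the canonical Sylow decompositions, where $H_t$, $K_t$ are uniform pro-$p_t$ groups. First I would check that $\theta$ respects this decomposition: since $H_t$ is the unique Sylow pro-$p_t$ subgroup of $H$ (being pronilpotent, the Sylow subgroups are normal and unique), and likewise $K_t$ in $K$, the image $H_t\theta$ is a pro-$p_t$ subgroup of $K$, hence $H_t\theta\subseteq K_t$; surjectivity of $\theta$ then forces $H_t\theta=K_t$ for each $t$. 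So $\theta$ restricts to epimorphisms $\theta_t:H_t\twoheadrightarrow K_t$, and it suffices to prove each $\theta_t$ is an isomorphism.

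Thus I am reduced to the following: if $H_t$, $K_t$ are uniform pro-$p_t$ groups with $\dim H_t=\dim K_t$ and $\theta_t:H_t\twoheadrightarrow K_t$ is an epimorphism, then $\theta_t$ is injective. Let $N_t=\ker\theta_t\vartriangleleft_c H_t$. Then $H_t/N_t\cong K_t$ is uniform, so by Lemma~\ref{add-dim} (applied with $G=H_t$, $N=N_t$) the subgroup $N_t$ is uniform and
\[
\dim H_t=\dim N_t+\dim(H_t/N_t)=\dim N_t+\dim K_t.
\]
Since $\dim H_t=\dim K_t$ by hypothesis, this gives $\dim N_t=0$. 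But a uniform pro-$p_t$ group of dimension $0$ is trivial (it is additively isomorphic to $\mathbb{Z}_{p_t}^{0}=\{0\}$), so $N_t=1$ and $\theta_t$ is an isomorphism. Assembling over $t=1,\dots,k$, $\theta$ itself is an isomorphism.

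There is essentially no hard part here: the content is already packaged in Lemma~\ref{add-dim} (which in turn rests on Proposition~4.31 of \cite{DDMS}), and the only things to verify are the bookkeeping facts that $\theta$ preserves the Sylow decomposition and that equal $\mathrm{Dim}$ together with componentwise equal dimensions is the same data. The one point worth a sentence of care is why $\dim H=\dim K$ as $k$-tuples (as opposed to merely $\mathrm{Dim}(H)=\mathrm{Dim}(K)$) is what is being used: since $\theta_t:H_t\twoheadrightarrow K_t$ forces $\dim H_t\geq\dim K_t$ for each $t$ by the displayed additivity, equality of the tuples is exactly the extra input that makes each $N_t$ trivial — if one only knew equality of the sums $\mathrm{Dim}$, the per-component inequalities would still force equality componentwise, so in fact either formulation suffices, but stating it with the tuple makes the argument immediate.
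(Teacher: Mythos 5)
Your proposal is correct and follows exactly the paper's argument: reduce to Sylow components, then apply Lemma~\ref{add-dim} to conclude that the kernel is a uniform group of dimension $0$, hence trivial. You have simply spelled out the bookkeeping (that $\theta$ preserves the Sylow decomposition) that the paper leaves implicit.
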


\begin{proof}
Restricting to Sylow subgroups, we may suppose that $H$ and $K$ are uniform
pro-$p$ groups of the same dimension. Then Lemma \ref{add-dim} shows that
$\ker\theta$ is a uniform group of dimension $0$, i.e. the trivial group.
\end{proof}

\begin{corollary}
\label{finnorm}Let $G\in\mathcal{C}_{\pi}$ have finite rank. If
$N\vartriangleleft_{c}G$ and $\mathrm{Dim}(G/N)=\mathrm{Dim}(G)$ then $N$ is finite.
\end{corollary}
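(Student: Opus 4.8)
The plan is to reduce the assertion to the individual Sylow factors and then invoke Corollary~\ref{dims}. I would write $G=G_{1}\times\cdots\times G_{k}$ as the product of its Sylow pro-$p_{t}$ subgroups, and correspondingly $N=N_{1}\times\cdots\times N_{k}$ with $N_{t}=N\cap G_{t}$, as in the notation fixed above. Since $N$ is normal in $G$, each $N_{t}$ is normal in $G_{t}$, and $G/N\cong\prod_{t}(G_{t}/N_{t})$, so that $\mathrm{Dim}(G/N)=\sum_{t}\dim(G_{t}/N_{t})$. Each $G_{t}$ is a pro-$p_{t}$ group of finite rank, being a direct factor of $G$, and $N_{t}\vartriangleleft_{c}G_{t}$; moreover $N\in\mathcal{C}_{\pi}$ has finite rank, so $\mathrm{Dim}(N)=\sum_{t}\dim N_{t}$ is defined.

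Next I would apply Corollary~\ref{dims} prime by prime, obtaining $\dim G_{t}=\dim N_{t}+\dim(G_{t}/N_{t})$ for each $t$. Summing over $t$ yields
\[
\mathrm{Dim}(G)=\mathrm{Dim}(N)+\mathrm{Dim}(G/N).
\]
The hypothesis $\mathrm{Dim}(G/N)=\mathrm{Dim}(G)$ then forces $\mathrm{Dim}(N)=0$, i.e. $\dim N_{t}=0$ for every $t$.

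Finally I would recall that a pro-$p$ group of finite rank and dimension $0$ is finite: by Theorem~\ref{pow} it has an open uniform subgroup $V$ with $\mathrm{d}(V)=\mathrm{rk}(V)=\dim V=0$, hence $V=1$ and the group itself is finite. Applying this to each $N_{t}$ shows that $N=\prod_{t}N_{t}$ is finite, as required.

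This is essentially bookkeeping once Corollary~\ref{dims} is in hand, so there is no substantial obstacle. The only points needing a word of care are that each $N_{t}$ inherits finite rank from $G_{t}$ (immediate, since it is a closed subgroup) and the elementary fact, just recalled, that a finite-rank pro-$p$ group of dimension $0$ is finite.
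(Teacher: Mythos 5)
Your proof is correct and is exactly the argument the paper intends: the paper's proof of Corollary~\ref{finnorm} is the one-line remark that it ``follows likewise from Corollary~\ref{dims}'', meaning precisely the componentwise application of the additivity of dimension that you carry out, followed by the observation that a finite-rank pro-$p$ group of dimension $0$ is finite. You have simply supplied the routine details the authors omitted.
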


\begin{proof}
This follows likewise from Corollary \ref{dims}.
\end{proof}

\medskip

For $q,~f\in\mathbb{N}$ set%
\[
\mu_{f,q}(x)\equiv\exists y_{1},\ldots,y_{f}.(x=y_{1}^{q}\ldots y_{f}^{q}).
\]
As before, we see that the word $x^{q}$ has width $f$ in a group $H,$ that is,%
\[
H^{q}=(H^{\{q\}})^{\ast f}:=\left\{  h_{1}^{q}\ldots h_{f}^{q}\mid h_{i}\in
H\right\}  ,
\]
if and only if $H$ satisfies%
\[
\mathrm{m}_{f,q}\equiv\forall x.\left(  \mu_{f+1,q}(x)\rightarrow\mu
_{f,q}(x)\right)  .
\]
Of course, this holds iff $H\models\mathrm{s}(\mu_{f,q})$; we can use either formulation.

\begin{proposition}
\label{powpi}Let $G$ be a f.g.\ profinite group and let $q\in\mathbb{N}$.
There exists $f\in\mathbb{N}$ such that $G\models\mathrm{m}_{f,q}$ and%
\[
G^{q}=(G^{\{q\}})^{\ast f}=\mu_{f,q}(G)
\]
is a definable open normal subgroup of $G$. Both $f$ and $\left\vert
G:G^{q}\right\vert $ can be bounded in terms of $q$ and $\mathrm{d}(G)$.
\end{proposition}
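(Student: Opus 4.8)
The plan is to combine the finite-width results already cited in the proof of Theorem~\ref{NSSC} with the Restricted Burnside Problem, exactly as in that proof but now tracking the quantitative bounds. First I would invoke the Nikolov--Segal theorem on power words: in a $d$-generated profinite group the word $x^q$ has finite width, say $f=f(d,q)$; moreover their proof gives a bound on $f$ depending only on $d$ and $q$, not on $G$. This immediately gives $G\models\mathrm{m}_{f,q}$ and hence $G^q=(G^{\{q\}})^{\ast f}=\mu_{f,q}(G)$, and since $x^q$ has finite width this verbal subgroup is closed, hence (being closed and of finite index, once we know the index is finite) open; definability follows from the formula $\mu_{f,q}$, which by construction defines a closed subset of every profinite group.

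Next I would bound $|G:G^q|$. As in the proof of Theorem~\ref{NSSC}(i), if $G^q\le N\vartriangleleft_o G$ then $G/N$ is a $d$-generator finite group of exponent dividing $q$, so $|G/N|\le\beta(d,q)$ where $\beta$ is the Restricted Burnside function; since $G^q$ is the intersection of all such $N$ and is itself open, taking $N=G^q$ shows $|G:G^q|\le\beta(d,q)$. This is where one sees that $G^q$ has finite index, which retroactively justifies that the closed subgroup $G^q$ is open. One should be a little careful about the order of the argument: closedness of $G^q$ comes from finite width, finiteness of the index comes from Burnside applied to any open normal $N\supseteq G^q$ (such $N$ exist because $G$ is profinite and $G^q$ is closed of potentially infinite index --- but in fact, the Burnside bound already shows every finite continuous quotient of $G/\overline{G^q}$ has bounded order, so $G/\overline{G^q}$ is finite, i.e. $\overline{G^q}=G^q$ is open). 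So the clean logical route is: $G^q$ closed $\Rightarrow$ $G/G^q$ profinite $\Rightarrow$ every finite quotient has order $\le\beta(d,q)$ $\Rightarrow$ $G/G^q$ finite of order $\le\beta(d,q)$.

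The main obstacle --- really the only nontrivial input --- is the finiteness and, more importantly, the \emph{uniform} bound on the width $f$ of $x^q$ in terms of $q$ and $\mathrm{d}(G)$ alone. Finiteness of the width for each fixed $G$ is the hard theorem of Nikolov--Segal (\cite{NS1},\cite{NS2},\cite{NS}) already cited; what one needs in addition is that their bound is uniform over $d$-generated profinite groups. This is in fact how their results are stated (the bound depends only on $d$ and the word), but if one wanted to be self-contained one could alternatively derive the uniform bound by a compactness/ultraproduct argument: if no uniform $f$ worked, an ultraproduct of $d$-generated finite groups would be a $d$-generated profinite group in which $x^q$ has infinite width, contradicting Nikolov--Segal. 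Everything else --- closedness from continuity of the verbal map and compactness, openness from finite index, definability from the explicit formula $\mu_{f,q}$, normality since $G^q$ is verbal, and the Burnside bound on $|G:G^q|$ --- is routine and already appears in the proof of Theorem~\ref{NSSC}.
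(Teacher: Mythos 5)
Your argument is correct and is essentially the paper's: Proposition~\ref{powpi} is proved there simply by pointing back to the proof of Theorem~\ref{NSSC}, which runs exactly as you describe --- the uniform finite width $f(d,q)$ of the word $x^q$ (Nikolov--Segal) gives $G\models\mathrm{m}_{f,q}$ together with closedness and definability of $G^{q}=\mu_{f,q}(G)$, and the Restricted Burnside bound $\beta(d,q)$ on the finite continuous quotients of $G/G^{q}$ gives openness and the bound on the index. One small caveat on your optional aside: a non-principal ultraproduct of $d$-generated finite groups is not a profinite group (nor literally $d$-generated), so Nikolov--Segal cannot be applied to it to extract the uniform bound; if one does not simply quote their uniform statement for $d$-generator finite groups, the clean route is to observe that the width of $x^{q}$ in the free profinite group of rank $d$ bounds its width in every continuous quotient.
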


\noindent This is part of Theorem \ref{NSSC}; the second claim was not made
explicit in the statement but is included in the proof.

\medskip

If $H$ is semi-uniform of dimension $(d_{1},\ldots,d_{k})$, we have
\begin{align*}
\Phi(H)  &  =H^{q(\pi)}=H^{\{q(\pi)\}}=\mu_{1,q(\pi)}(H),\\
\left\vert H:\Phi(H)\right\vert  &  =p_{1}^{d_{1}}\ldots p_{k}^{d_{k}}.
\end{align*}
Thus for semi-uniform $H$, the dimension is determined by%
\[
H\models\partial_{d_{1},\ldots,d_{k}}\equiv\mathrm{ind}(\mu_{1,q(\pi)}%
;p_{1}^{d_{1}}\ldots p_{k}^{d_{k}}).
\]

\textbf{Remark.} The primary components of a $\mathcal{C}_{\pi}$ group are not
in general definable; this can be deduced from \cite{TZ}, Theorem 3.3.5. If
they were, our results about $\mathcal{C}_{\pi}$ could be reduced to the
slightly less messy case where $\pi$ consists of a single prime; it is easy to
see that a direct product of finitely many definable subgroups is FA if each
of the factors is FA.

For f.g. groups with trivial centre such an approach is feasible, as each
primary component is then a centralizer, and so definable (with parameters). A
slightly more elaborate argument, using \cite{NS}, Theorem 1.6, shows that the
same holds for f.g. groups with finite abelianization.

\subsection{Presentations}

\label{present}

In the context of profinite groups, a `finite presentation' may involve
relators that are `profinite words', i.e. limits of a convergent sequence of
group words. For present purposes we need to consider concepts of finite
presentation that are both more and less restrictive.

Let $\mathcal{C}$ be a class of groups and $L\supseteq L_{\mathrm{gp}}$ a
language. For a group $G\in\mathcal{C}$ and a formula $\psi(x_{1},\ldots
,x_{r})$ of $L$, we say that $\psi$ is an $L$\emph{-presentation} of $G$ in
$\mathcal{C}$ if $G$ has a generating set $\{g_{1},\ldots,g_{r}\}$ such that

\begin{itemize}
\item[(i)] $G\models\psi(g_{1},\ldots,g_{r})$, and

\item[(ii)] if $h_{1},\ldots,h_{r}\in H\in\mathcal{C}$ and $H\models\psi
(h_{1},\ldots,h_{r})$ then there is an epimorphism $\theta:G\rightarrow H$
with $g_{i}\theta=h_{i}$ for each $i$.
\end{itemize}

In this case, we say that $\psi$ is an $L$-presentation on $\{g_{1}%
,\ldots,g_{r}\}$.

The concept of $L$-presentation generalizes the familiar idea of a finite
presentation in group theory. We mention two particular cases.

\begin{proposition}
\label{L-pres}A group $G\in\mathcal{C}_{\pi}$ has an $L$-presentation in
$\mathcal{C}_{\pi}$ in each of the following cases:

\medskip

\noindent\emph{(i)} $L=L_{\mathrm{gp}}$, and $G$ is \emph{strictly f.p.} in
$\mathcal{C}_{\pi}$; that is, $G$ has a finite presentation as a
$\mathcal{C}_{\pi}$-group in which the relators are finite group words, or
equivalently, $G$ is the $\mathcal{C}_{\pi}$-completion of a finitely
presented (abstract) group.

\medskip

\noindent\emph{(ii) }$L=L_{\pi}$, and $G$ has finite rank.
\end{proposition}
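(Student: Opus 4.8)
The plan is to build the required formula $\psi$ in each case as a first-order transcription of a genuine (topological) presentation of $G$, using Proposition~\ref{defgen} to express that a given tuple generates.

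\emph{Case (i).} Here $G$ is the $\mathcal{C}_\pi$-completion of an abstract finitely presented group $\langle x_1,\dots,x_r\mid w_1,\dots,w_m\rangle$ with the $w_j$ ordinary group words, so $G=F/\overline{\langle\langle w_1,\dots,w_m\rangle\rangle}$, where $F$ is the free pro-$\mathcal{C}_\pi$ group of rank $r$ and $\overline{\langle\langle\,\cdot\,\rangle\rangle}$ denotes closed normal closure; let $g_1,\dots,g_r$ be the images of $x_1,\dots,x_r$. I would take $\psi(\overline{x})\equiv\beta_r(\overline{x})\wedge\bigwedge_{j=1}^{m}\bigl(w_j(\overline{x})=1\bigr)$, with $\beta_r$ as in Proposition~\ref{defgen}. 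Clearly $G\models\psi(\overline{g})$. Conversely, if $\overline{h}\in H\in\mathcal{C}_\pi$ and $H\models\psi(\overline{h})$, then $\beta_r(\overline{h})$ forces $H=\overline{\langle\overline{h}\rangle}$, so by the universal property of $F$ there is a continuous epimorphism $\phi\colon F\to H$ with $x_i\mapsto h_i$; since each $w_j(\overline{h})=1$, the closed normal subgroup $\ker\phi$ contains all $w_j$, hence contains $\overline{\langle\langle w_1,\dots,w_m\rangle\rangle}$, so $\phi$ factors through $G$ with $g_i\mapsto h_i$, as required.

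\emph{Case (ii).} Now $L=L_\pi$ and $G\in\mathcal{C}_\pi$ has finite rank. First I would reduce to the pro-$p$ case: if $e_1,\dots,e_k\in\mathbb{Z}_\pi$ are the primitive idempotents, then the Sylow subgroup $G_t$ equals $P_{e_t}(G)=\{x\mid P_{e_t}(x)=x\}$, which is $L_\pi$-definable, so an $L_\pi$-presentation of $G$ can be assembled from $L_\pi$-presentations of the $G_t$ by relativising each to the definable subgroup $P_{e_t}(-)$ and adjoining sentences saying the $P_{e_t}$ form a complete orthogonal family of idempotents. So assume $\pi=\{p\}$. By Theorem~\ref{pow}(i) fix $N$ so large that $U:=\Phi^N(G)$ is uniform; put $d=\dim U$ and $n=|G:U|$. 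Choose a generating tuple of $G$ consisting of a basis $a_1,\dots,a_d$ of $U$ together with a transversal $t_1=1,t_2,\dots,t_n$ to $U$, and record the structure constants $[a_i,a_j]=\prod_l a_l^{c_{ijl}}$, the conjugation data $a_l^{t_j}=\prod_i a_i^{d_{jli}}$, and the extension data $t_it_j=\bigl(\prod_l a_l^{c'_{ijl}}\bigr)t_{s(i,j)}$, all exponents lying in $\mathbb{Z}_p$. Let $\kappa_U(x)$ be a parameter-free formula defining $\Phi^N$ in every pro-$p$ group of bounded rank and generator number (available by the bounded verbal widths in Theorem~\ref{NSSC} and Proposition~\ref{powpi}), and include the sentence $\rho_r$ (from the unnamed Proposition above) to stay in that range. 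The presentation formula $\psi(\overline{a},\overline{t})$ is then the conjunction of: $\beta_{d+n}(\overline{a},\overline{t})\wedge\rho_r$; the assertions $\bigwedge_l\kappa_U(a_l)$, $\mathrm{res}(\kappa_U,\beta_d^{\ast})$, $\mathrm{res}(\kappa_U,\mathrm{pow})$, $\mathrm{res}\bigl(\kappa_U,\forall x(x^p=1\to x=1)\bigr)$ and $\mathrm{ind}^{\ast}(\kappa_U;n)$ (so that $\kappa_U(H)$ is a uniform open normal subgroup of dimension $d$ and index $n$ generated by $\overline{a}$); the commutator relations $\bigwedge_{i,j}\bigl[[a_i,a_j]=P_{c_{ij1}}(a_1)\cdots P_{c_{ijd}}(a_d)\bigr]$; the conjugation relations $\bigwedge_{j,l}\bigl[t_j^{-1}a_lt_j=P_{d_{jl1}}(a_1)\cdots P_{d_{jld}}(a_d)\bigr]$; and the extension relations $\bigwedge_{i,j}\bigl[t_it_j=P_{c'_{ij1}}(a_1)\cdots P_{c'_{ijd}}(a_d)\,t_{s(i,j)}\bigr]$, together with $t_1=1$ and $\bigwedge_{i\ne j}\lnot\kappa_U(t_it_j^{-1})$. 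One checks $G\models\psi(\overline{a},\overline{t})$.

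Now suppose $H\in\mathcal{C}_p$ and $H\models\psi(\overline{b},\overline{s})$. Then $V:=\kappa_U(H)=\Phi^N(H)$ is a uniform open normal subgroup of $H$ of dimension $d$ and index $n$, generated by the basis $\overline{b}$, with $[b_i,b_j]=\prod_l b_l^{c_{ijl}}$. The key step is to show that $a_l\mapsto b_l$ extends to a (surjective) homomorphism $U\to V$; for this I would invoke the Lazard correspondence (\cite{DDMS}, Chs.\ 3--4): the associated $\mathbb{Z}_p$-Lie algebras $L(U),L(V)$ are free $\mathbb{Z}_p$-modules of rank $d$ on the bases $\log a_i$, resp.\ $\log b_i$, and their Lie structure constants in these bases are obtained from the group data $(c_{ijl})$ by one and the same universal, $p$-adically integral transformation coming from the Baker--Campbell--Hausdorff series, so $L(U)\cong L(V)$ via $\log a_i\mapsto\log b_i$ and, exponentiating, $U\cong V$ via $a_i\mapsto b_i$. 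Given this, the standard extension-theoretic gluing (exactly as in the proof of Theorem~\ref{FinExt}, using that the conjugation relations determine the action of each $s_j$ on $V$ since inner automorphisms are continuous and $\overline{b}$ generates $V$) yields a homomorphism $G\to H$ with $\overline{a}\mapsto\overline{b}$, $\overline{t}\mapsto\overline{s}$, onto because $\beta_{d+n}(\overline{b},\overline{s})$ says these elements generate $H$; re-assembling the Sylow pieces over the idempotents gives the epimorphism in the general $\mathcal{C}_\pi$ case. The main obstacle is precisely the displayed fact that a uniform pro-$p$ group is determined, up to isomorphism and hence a fortiori as an epimorphic image, by its dimension and its commutator structure constants relative to a basis; the rest is bookkeeping parallel to Section~\ref{finite_ext}. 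It is also worth noting that $L_\pi$ is genuinely needed in~(ii): the constants $c_{ijl},d_{jli},c'_{ijl}$ are in general irrational $p$-adic integers, so these relations cannot be written in $L_{\mathrm{gp}}$.
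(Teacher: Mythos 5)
Your case (i) is essentially the paper's argument: the same formula $\beta_r(\overline{x})\wedge\bigwedge_j\bigl(w_j(\overline{x})=1\bigr)$, with the epimorphism supplied by the universal property of the free pro-$\mathcal{C}_\pi$ group. Case (ii) takes a genuinely different and much heavier route. The paper first shows (Lemma \ref{sfpres}) that $G$ itself has a $\mathbb{Z}_\pi$-finite presentation as a $\mathcal{C}_\pi$-group -- relators for a semi-uniform open normal subgroup taken from \cite{DDMS}, Prop.\ 4.32, plus Schreier and conjugation relators -- and then merely conjoins $\beta_m$ with ``these $\pi$-words vanish''; the epimorphism again comes for free from the universal property of a presentation. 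You instead write a formula strong enough to pin down the isomorphism type of $G$ (a definable uniform open normal subgroup of prescribed index, dimension and structure constants, plus quotient and extension data) and extract the required epimorphism as an isomorphism. That is admissible under the definition of $L$-presentation, but it imports extra machinery the paper deliberately postpones to Theorem \ref{newT1}: parameter-free definability of $\Phi^N$ across all pro-$p$ groups of bounded rank (hence the unproved sentence $\rho_r$), the index and dimension bookkeeping, and a gluing argument in the style of Theorem \ref{FinExt}. (Minor point: you need $\mathrm{res}(\kappa_U,\beta_d(\overline{a}))$, asserting that $\overline{a}$ generates $\kappa_U(H)$, not just the sentence $\beta_d^{\ast}$.)

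The genuine gap is in your key step. You assert that the Lie structure constants of $L(U)$ are obtained from the group structure constants $(c_{ijl})$ by a single universal, $p$-adically integral transformation, so that matching the $(c_{ijl})$ forces $L(U)\cong L(V)$ and hence $U\cong V$. But the bracket on $L(U)$ is $(x,y)=\lim_n[x^{p^n},y^{p^n}]^{p^{-2n}}$: computing $(a_i,a_j)$ requires the commutators of arbitrarily deep powers of the basis elements, i.e.\ the whole group law, not just the finitely many elements $[a_i,a_j]$. The ``transformation'' you invoke is therefore only well defined once one already knows that the $(c_{ijl})$ determine the group -- which is exactly the point at issue, so the argument as written is circular. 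The statement that fills the hole is \cite{DDMS}, Proposition 4.32: a uniform pro-$p$ group is presented, as a pro-$p$ group, on a basis by the relations $[a_i,a_j]=\prod_l a_l^{c_{ijl}}$ with $c_{ijl}\in p\mathbb{Z}_p$. Citing that instead, you get an epimorphism $U\to V$ directly from the presentation (upgradable to an isomorphism by Lemma \ref{epi}), and the rest of your construction goes through; this is precisely the result on which the paper's Lemma \ref{sfpres} rests. Your closing remark that the exponents are in general irrational, so that $L_\pi$ is genuinely needed in (ii), is correct and agrees with the paper.
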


\begin{proof}
(i) We have an epimorphism $\phi:F\rightarrow G$ where $F$ is the free
$\mathcal{C}_{\pi}$-group on a finite generating set $X=\{x_{1},\ldots
,x_{r}\}$ and $\ker\phi$ is the closed normal subgroup of $F$ generated by a
finite set $R$ of ordinary group words on $X$. Set%
\[
\psi(\overline{x}):=\beta_{r}(\overline{x})\wedge\bigwedge_{w\in R}%
w(\overline{x})=1
\]
(recall that $G\in\mathcal{C}_{\pi}$ satisfies $\beta_{r}(\overline{a})$ iff
$\{a_{1},\ldots,a_{r}\}$ generates $G$).

Now put $g_{i}=x_{i}\phi$ for each $i$. Then $G\models\psi(\overline{g})$.
Suppose that $h_{1},\ldots,h_{r}\in H\in\mathcal{C}_{\pi}$ and $H\models
\psi(\overline{h})$. Then $h_{1},\ldots,h_{r}$ generate $H$, so the
homomorphism $\mu:F\rightarrow H$ sending $\overline{x}$ to $\overline{h}$ is
onto. Also for each $w\in R$ we have $w(\overline{x})\mu=w(\overline{h})=1 $,
so $\ker\phi\leq\ker\mu$. It follows that $\mu$ factors through an epimorphism
$\theta:G\rightarrow H$ with $g_{i}\theta=h_{i}$ for each $i$. Thus $\psi$ is
an $L_{\mathrm{gp}}$ presentation for $G$ in $\mathcal{C}_{\pi}$.
\end{proof}

\bigskip

Before proving (ii) we need yet another definition:

\begin{itemize}
\item Let $G\in\mathcal{C}_{\pi}$. Then $\left\langle X;R\right\rangle $ is a
$\mathbb{Z}_{\pi}$\emph{-finite} $\mathcal{C}_{\pi}$ presentation for $G$ if
$G\cong F/N$ where $F=F(X)$ is the free $\mathcal{C}_{\pi}$-group on a finite
generating set $X$ and $N$ is the closed normal subgroup of $F$ generated by a
finite set $R$ of elements of the form%
\begin{equation}
w_{1}^{\mu_{1}}\ldots w_{n}^{\mu_{n}} \label{piword}%
\end{equation}
where each $w_{i}$ is a group word on $X$ and $\mu_{1},\ldots,\mu_{n}%
\in\mathbb{Z}_{\pi}$.
\end{itemize}

Note that the free $\mathcal{C}_{\pi}$-group on a set $X$ is the direct
product of its Sylow pro-$p_{t}$ subgroups, which themselves are free
pro-$p_{t}$ groups on the $p_{t}$-components of $X$.


Expressions like (\ref{piword}) will be called $\pi$\emph{-words. }For a
subset\emph{\ }$Y$ of $G$, one says that $\left\langle X;R\right\rangle $ is a
\emph{presentation on} $Y$ if the implied epimorphism $F(X)\rightarrow G$ maps
$X$ to~$Y$.

\begin{lemma}
\label{sfpres}Let $G=\overline{\left\langle Y\right\rangle }\in\mathcal{C}%
_{\pi}$ have finite rank, where $Y$ is finite. Then $G$ has a $\mathbb{Z}%
_{\pi}$-finite $\mathcal{C}_{\pi}$ presentation on $Y$.
\end{lemma}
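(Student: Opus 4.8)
The plan is to produce a finite presentation by first choosing a convenient finite $\mathcal{C}_{\pi}$-presentation of $G$ at all (using that $G$ has finite rank, hence is topologically finitely presented), and then replacing its possibly infinite relator set by a finite one, paying for this with the use of $\pi$-words rather than ordinary group words. First I would reduce to the case $\pi=\{p\}$: writing $G=G_1\times\cdots\times G_k$ with $G_t$ a pro-$p_t$ group, a $\mathbb{Z}_\pi$-finite presentation of $G$ on $Y$ can be assembled from $\mathbb{Z}_{p_t}$-finite presentations of the $G_t$ on the $p_t$-components of $Y$ (the generators of distinct factors commute, which is itself expressible by finitely many $\pi$-words, indeed by ordinary commutator relations), so it suffices to treat a single pro-$p$ group $G$ of finite rank with a finite generating set $Y=\{y_1,\ldots,y_r\}$.

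Next I would set up the ambient free object: let $F=F(X)$ be the free pro-$p$ group on $X=\{x_1,\ldots,x_r\}$ and $\phi\colon F\to G$ the epimorphism with $x_i\mapsto y_i$, with $N=\ker\phi$. The key structural input is that a pro-$p$ group of finite rank is Noetherian: every closed subgroup is finitely generated as a topological group, and more to the point $N\vartriangleleft_c F$ is finitely generated \emph{as a closed normal subgroup} (this follows from Theorem~\ref{pow}: pass to an open uniform normal subgroup $V\le_o G$, its preimage $\widetilde V\le_o F$ is again of finite rank, $N\le\widetilde V$, and $N$ as a closed subgroup of the finite-rank group $\widetilde V$ is topologically finitely generated; one then adds finitely many coset representatives to get normal generation in $F$). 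So $N$ is the closed normal closure of finitely many elements $n_1,\ldots,n_m\in N\subseteq F$. Each $n_j$ is an element of the free pro-$p$ group $F$, hence a limit of ordinary group words $w_{j,1},w_{j,2},\ldots$ on $X$ with $w_{j,\ell}\to n_j$.

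The crux — and the step I expect to be the main obstacle — is to replace each profinite relator $n_j$ by a genuine $\pi$-word. Here I would use the finite-rank hypothesis once more, now in the strong form that for a fixed $q$, powers of bounded width generate open subgroups with index bounded in terms of $\mathrm{d}$ and $q$ (Proposition~\ref{powpi}), together with the fact that $F$ of finite rank has a base of open normal subgroups of the form $F^{p^n}\gamma_c(F)$-type terms whose successive quotients are $p$-adic analytic of bounded dimension. Concretely: since $N$ is closed and finitely normally generated, for each $j$ one can find a single ordinary word $v_j=v_j(\overline{x})$ and an exponent tuple realizing $n_j$ as $v_j(\overline x)^{\mu_j}$ times a correction lying deep in $N$; iterating, one expresses $n_j$ exactly as a $\pi$-word $\prod_i w_{i}^{\mu_i}$ by a telescoping argument — at stage $n$ one has matched $n_j$ modulo $F^{p^n}$ and the difference, being in $N\cap F^{p^n}$, is itself a $p^n$-th power in the relevant section, absorbed by bumping the exponents in $\mathbb{Z}_p$ — using completeness of $F$ and convergence in the pro-$p$ topology to pass to the limit. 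Having done this, set
\[
\psi(\overline{x}) := \beta_r(\overline{x}) \wedge \bigwedge_{j=1}^{m} \Bigl(\textstyle\prod_i w_{i}(\overline x)^{\mu_i^{(j)}} = 1\Bigr),
\]
let $R$ be the corresponding finite set of $\pi$-words, and argue exactly as in the proof of Proposition~\ref{L-pres}(i): if $H\in\mathcal{C}_\pi$ with $H\models\psi(\overline h)$, then $\overline h$ generates $H$ (by $\beta_r$), the map $x_i\mapsto h_i$ extends to an epimorphism $\mu\colon F\to H$, each relator in $R$ maps to $1$ so $N=\langle\langle n_1,\ldots,n_m\rangle\rangle\le\ker\mu$ (using that $\pi$-powers are preserved by continuous homomorphisms), and $\mu$ factors through $G$. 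Hence $\langle X;R\rangle$ is the desired $\mathbb{Z}_\pi$-finite $\mathcal{C}_\pi$-presentation on $Y$.
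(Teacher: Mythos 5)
Your reduction to a single prime and your closing argument (mimicking Proposition \ref{L-pres}(i)) are fine, but the step you yourself flag as the main obstacle --- converting the profinite normal generators $n_j$ of $N=\ker\phi$ into genuine $\pi$-words --- is a real gap, and the telescoping argument you sketch does not close it. At stage $n$ you have matched $n_j$ modulo some open subgroup and are left with a correction lying in $N\cap F^{p^n}$; to remain a \emph{finite} product of $\mathbb{Z}_p$-powers you must absorb that correction into the exponents of the finitely many words already chosen, and nothing forces the correction to be a power of one of those words. That absorption is exactly the canonical-form property of powerful groups (Theorem \ref{pow}(iii)), which the free pro-$p$ group $F$ emphatically lacks: it is not the case that every element of a non-abelian free pro-$p$ group is a finite product of $\mathbb{Z}_p$-powers of finite words (for each fixed length the set of such products is a closed set with empty interior, as one sees by counting in finite nilpotent quotients of large class, so Baire category rules out covering $F$ by countably many of them). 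Hence there is no reason your chosen $n_j$ admit $\pi$-word expressions at all, and the assertion that the kernel has a normal generating set consisting of $\pi$-words is essentially the content of the lemma rather than something that can be extracted by approximation. A secondary error: the preimage $\widetilde V$ in $F$ of an open uniform subgroup of $G$ is an open subgroup of a free pro-$p$ group, hence itself free pro-$p$ and of \emph{infinite} Pr\"ufer rank once $F$ is non-abelian, so you cannot conclude that $N$ is topologically finitely generated that way. (That $N$ is finitely generated as a closed normal subgroup is nonetheless true, since $p$-adic analytic pro-$p$ groups have finite $H^2(\,\cdot\,,\mathbb{F}_p)$; but this does not help with the $\pi$-word issue.)

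The paper's proof avoids the free group entirely and works inside $G$: it takes a semi-uniform open normal subgroup $V$, invokes the explicit presentation of a uniform group on a generating set (\cite{DDMS}, Prop.\ 4.32), whose relators express each commutator of generators in canonical form as a product of $\mathbb{Z}_p$-powers of a basis --- these are $\pi$-words precisely because of Theorem \ref{pow}(iii) --- then writes the generators of $V$ as Schreier words in $Y$ and adds relators recording the conjugation action of $Y$ on $V$, again using canonical forms in $V$. The standard presentation-of-an-extension argument then assembles the $\mathbb{Z}_\pi$-finite presentation on $Y$. Any repair of your approach would have to import this canonical-form mechanism from the powerful/uniform structure theory; it cannot be replaced by a limiting process in the free pro-$p$ group.
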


\begin{proof}
Suppose $U$ is a uniform pro-$p$ group with generating set $X=\{x_{1}%
,\ldots,x_{d}\}$. For simplicity, assume that $\{x_{1},\ldots,x_{r}\}$ is a
basis for $U$, where $r=\dim U\leq d$. Then $U$ has a pro-$p$ presentation on
$X$ with relators%
\begin{align}
\lbrack x_{i},x_{j}]x_{1}^{\lambda_{1}(i,j)}\ldots x_{r}^{\lambda_{r}%
(i,j)},~~1  &  \leq i<j\leq r,\label{unifpres}\\
x_{k}x_{1}^{\mu_{1}(k)}\ldots x_{r}^{\mu_{r}(k)},~~r  &  <k\leq d,
\label{unifpres2}%
\end{align}
with $\lambda_{l}(i,j)\in p\mathbb{Z}_{p}$ and $\mu_{l}(k)\in\mathbb{Z}_{p}$;
Prop.\ 4.32 of \cite{DDMS} gives the presentation (\ref{unifpres}), and
(\ref{unifpres2}) expresses the redundant generators using using Theorem
\ref{pow}(iii).

We rewrite (\ref{unifpres2}) (also adding some trivial relators) as%
\begin{equation}
x_{1}^{\mu_{1}(k)}\ldots x_{d}^{\mu_{d}(k)},~~1\leq k\leq d,
\label{uniffpres3}%
\end{equation}
by setting $\mu_{l}(k)=\delta_{kl}$ for $k,l>r$ and $\mu_{l}(k)=0$ for $k<r$
and all $l$. Next, we add redundant relators of the form (\ref{unifpres}) for
all $j>r$ and all $i<j,$ again using Theorem \ref{pow}(iii). Setting
$\lambda_{l}(i,j)=0$ whenever $l>r,$ we obtain a presentation for $U$ on $X$
with relators (\ref{uniffpres3}) together with%
\begin{equation}
\lbrack x_{i},x_{j}]x_{1}^{\lambda_{1}(i,j)}\ldots x_{d}^{\lambda_{d}%
(i,j)},~~1\leq i<j\leq d. \label{unifpres4}%
\end{equation}

Now consider a semi-uniform $\mathcal{C}_{\pi}$ group $V=U_{1}\times
\cdots\times U_{k}$ where each $U_{t}$ is a uniform pro-$p_{t}$ group of
dimension $d_{t}\leq d$. A generating set $X=\{x_{1},\ldots,x_{d}\}$ for $V$
projects to a generating set $X_{t}=\{x_{1}^{(t)},\ldots,x_{d}^{(t)}\}$ for
$U_{t}$; then $U_{t}$ has a pro-$p_{t}$ presentation on $X_{t}$ like
(\ref{uniffpres3})$\cup$(\ref{unifpres4}), with exponents $\lambda_{l}%
^{(t)}(i,j),~\mu_{l}^{(t)}(k)\in\mathbb{Z}_{p_{t}}$.

Let $\lambda_{l}(i,j),~\mu_{l}(k)\in\mathbb{Z}_{\pi}$ have $\mathbb{Z}_{p_{t}%
}$-components $\lambda_{l}^{(t)}(i,j)$,~$\mu_{l}^{(t)}(k)$ respectively for
each $t$. Then (\ref{uniffpres3})$\cup$(\ref{unifpres4}) gives a presentation
for $V$ on $X$.

Finally, we have $G=G_{1}\times\cdots\times G_{k}=\overline{\left\langle
y_{1},\ldots,y_{m}\right\rangle }$ where each $G_{t}$ is a pro-$p_{t}$ group
of finite rank and $Y=\{y_{1},\ldots,y_{m}\}$. Let $V=U_{1}\times\cdots\times
U_{k}$ be a semi-uniform open normal subgroup of $G$. Using the Schreier
process we obtain a finite generating set $X=\{x_{1},\ldots,x_{d}\}$ for $V,$
each element of $X$ being equal to a finite word on $\overline{y}$, say
$x_{i}=w_{i}(\overline{y})$. Substitute $w_{i}(\overline{y})$ for $x_{i}$ in
(\ref{uniffpres3})$\cup$(\ref{unifpres4}) to obtain a set of relators $R$ on
$\overline{y}$. Note that $R$ consists of $\pi$-words.

By Theorem \ref{pow}(iii), each element of $V$ is a finite product of
$\mathbb{Z}_{\pi}$-powers of elements of $X$. The conjugation action of $G$ on
$V$ is determined by specifying, for $j=1,\ldots,m$ and for each $x_{i}\in X,$%
\[
y_{j}^{-1}x_{i}y_{j}=W_{ij}(X)
\]
where each $W_{ij}(X)$ is a finite product of $\mathbb{Z}_{\pi}$- powers (for
clarity, we keep $w$ for finite group words and write $W$ for $\pi$-words).

Let%
\[
S:=\left\{  y_{j}^{-1}w_{i}^{-1}y_{j}.W_{ij}(w_{1},\ldots,w_{d})\mid
j=1,\ldots,m,~i=1,\ldots,d\right\}  .
\]

A standard argument (see for example \cite{PG} Chapter 8, Lemma 10) now shows
that $\left\langle Y;R\cup S\right\rangle $ is a $\mathcal{C}_{\pi}$
presentation for $G$.
\end{proof}

\bigskip

Now we can give the\medskip

\textbf{Proof} of Proposition \ref{L-pres} (ii). Let $G=\overline{\left\langle
Y\right\rangle }$ be as in the preceding lemma. Set%
\[
\rho(\overline{y}):=\bigwedge_{w\in R\cup S}w(\overline{y})=1
\]
where $R$ and $S$ are given above. As these are finite sets of $\pi$-words,
$\rho$ is a formula of $L_{\pi}$. Now put%
\[
\psi(\overline{y}):=\beta_{m}(\overline{y})\wedge\rho(\overline{y}).
\]
If $h_{1},\ldots,h_{m}\in H\in\mathcal{C}_{\pi}$ and $H\models\psi
(\overline{h})$ then $h_{1},\ldots,h_{m}$ generate $H$ and satisfy the
relations $R\cup S=1$; as $\left\langle Y;R\cup S\right\rangle $ is a
$\mathcal{C}_{\pi}$ presentation for $G$ it follows that the map sending
$\overline{h}$ to $\overline{y}$ extends to an epimorphism from $G$ to $H$.
Thus $\psi$ is an $L_{\pi}$ presentation for $G$ in $\mathcal{C}_{\pi}$.

\subsection{Finite axiomatizability in $\mathcal{C}_{\pi}$}

\label{proofT1} \ 

\noindent Until further notice, $L$ stands for one of $L_{\mathrm{gp}}$,
$L_{\pi}$. Theorem \ref{frank} is included in

\begin{theorem}
\label{newT1}Suppose that $G\in\mathcal{C}_{\pi}$ has finite rank, and that
$G$ has an $L$ presentation on the generating tuple $(a_{1},\ldots,a_{r})$.
Then $(G,\overline{a})$ is FA in $\mathcal{C}_{\pi}$.
\end{theorem}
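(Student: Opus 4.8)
The plan is to take the formula $\sigma_{G}(\overline{x}) := \psi(\overline{x}) \wedge \Xi(\overline{x})$, where $\psi$ is the given $L$-presentation on $\overline{a}$ and $\Xi$ is a conjunction of $L_{\mathrm{gp}}$-formulas phrased entirely in terms of a single definable power-word subgroup; the job of $\Xi$ is to force the canonical epimorphism $\theta\colon G\to H$ supplied by $\psi$ to be an isomorphism. Write $G=G_{1}\times\dots\times G_{k}$ with $G_{t}$ the Sylow pro-$p_{t}$ subgroup. Since each $G_{t}$ is a pro-$p_{t}$ group of finite rank, for $m$ large enough $G_{t}^{p_{t}^{m}}$ is uniform (a standard consequence of Theorem~\ref{pow}(i)); fix such an $m$ and put $q:=q(\pi)^{m}$, so that $G^{q}=G_{1}^{p_{1}^{m}}\times\dots\times G_{k}^{p_{k}^{m}}$ is a semi-uniform open characteristic subgroup of $G$ with $\dim G_{t}^{q}=\dim G_{t}=:d_{t}$; set $n:=|G:G^{q}|$. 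By Proposition~\ref{powpi} there is an $f$ with $G^{q}=(G^{\{q\}})^{\ast f}=\mu_{f,q}(G)$, and because verbal width cannot increase in a quotient, $\mu_{f,q}(H)=H^{q}$ for every finitely generated $\mathcal{C}_{\pi}$ group $H$ that is a quotient of $G$. Recall also that semi-uniformity is first-order among finitely generated $\mathcal{C}_{\pi}$ groups (by a formula $\mathrm{semiunif}$, namely $\mathrm{pow}$ conjoined with a torsion-freeness clause, legitimate for finitely generated powerful groups), and that the dimension tuple of a semi-uniform group is fixed by $\partial_{d_{1},\dots,d_{k}}$. We then put
\[
\sigma_{G}(\overline{x}):=\psi(\overline{x})\wedge\mathrm{s}_{\vartriangleleft}(\mu_{f,q})\wedge\mathrm{res}(\mu_{f,q},\mathrm{semiunif})\wedge\mathrm{res}(\mu_{f,q},\partial_{d_{1},\dots,d_{k}})\wedge\mathrm{ind}^{\ast}(\mu_{f,q};n),
\]
and observe that $G\models\sigma_{G}(\overline{a})$.

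For the converse, suppose $H\in\mathcal{C}_{\pi}$ and $H\models\sigma_{G}(\overline{h})$. From $\psi$ we obtain an epimorphism $\theta\colon G\to H$ with $\overline{a}\mapsto\overline{h}$; in particular $H$ is a finitely generated quotient of $G$, so $K:=\mu_{f,q}(H)=H^{q}=\theta(G^{q})$. The remaining clauses of $\sigma_{G}$ now assert precisely that $K$ is semi-uniform, that $\dim K=(d_{1},\dots,d_{k})=\dim G^{q}$, and that $|H:K|=n=|G:G^{q}|$. Hence $\theta|_{G^{q}}\colon G^{q}\to K$ is an epimorphism of semi-uniform $\mathcal{C}_{\pi}$ groups of equal dimension, so it is an isomorphism by Lemma~\ref{epi}; thus $\ker\theta\cap G^{q}=1$. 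Also $\theta$ induces an epimorphism $G/G^{q}\to H/K$ of finite groups of the same order $n$, hence an isomorphism, so any $g\in\ker\theta$ must lie in $G^{q}$ and therefore equals $1$. Thus $\theta$ is an isomorphism carrying $\overline{a}$ to $\overline{h}$, and it is a topological isomorphism by Theorem~\ref{NSSC}(iii). Since the reverse implication is trivial, $(G,\overline{a})$ is FA (wrt $L$) in $\mathcal{C}_{\pi}$, and Theorem~\ref{frank} is the special case $L=L_{\mathrm{gp}}$.

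The one genuinely delicate ingredient is the choice of the ``test subgroup'': it must at once be (a)~definable uniformly across $\mathcal{C}_{\pi}$ by a fixed power-word formula, (b)~open in $G$, and (c)~semi-uniform, so that Lemma~\ref{epi} converts ``same dimension'' into ``isomorphism on the subgroup''. The power subgroup $G^{q(\pi)^{m}}$ for large $m$ achieves all three, by Proposition~\ref{powpi} and the structure theory of powerful pro-$p$ groups. Everything else is bookkeeping: that verbal width does not grow under quotients (so $\mu_{f,q}$ really computes $H^{q}$), that semi-uniformity and the dimension tuple are first-order for finitely generated $\mathcal{C}_{\pi}$ groups, and the trivial remark that an epimorphism inducing isomorphisms on $G^{q}$ and on $G/G^{q}$ is an isomorphism.
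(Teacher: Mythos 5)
Your proposal is correct and follows essentially the same route as the paper: use the $L$-presentation to get an epimorphism $\theta\colon G\to H$, pin down the definable power subgroup $\mu_{f,q}$ as an open semi-uniform subgroup of fixed dimension and index via $\mathrm{res}(\mu_{f,q},\mathrm{pow}\wedge\mathrm{tf})$, $\partial_{d_1,\dots,d_k}$ and $\mathrm{ind}^{\ast}$, and then combine Lemma~\ref{epi} on $G^{q}$ with the equality of the finite indices to conclude $\ker\theta=1$. The only cosmetic difference is that the paper includes the width sentence $\mathrm{m}_{f,q}$ in $\sigma_G$ to guarantee $\mu_{f,q}(H)=H^{q}$, whereas you derive this from $H$ being a quotient of $G$; both are valid.
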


\noindent Note that when $L=L_{\pi}$, the existence of an $L$ presentation is
guaranteed by Proposition \ref{L-pres}(ii).\medskip

\begin{proof}
We have $G=G_{1}\times\cdots\times G_{k}$ where each $G_{t}$ is a pro-$p_{t}$
group of finite rank and dimension $d_{t}$. There is a formula $\psi$ of $L$
such that (i) $G\models\psi(\overline{a})$ and (ii) if $h_{1},\ldots,h_{r}\in
H\in\mathcal{C}_{\pi}$ and $H\models\psi(\overline{h})$ then there is an
epimorphism $\theta:G\rightarrow H$ sending $\overline{a}$ to $\overline{h}$.

It follows from Theorem \ref{pow} that $G$ has an open normal subgroup $W_{0}
$ such that every open normal subgroup of $G$ contained in $W_{0}$ is
semi-uniform. Then $W_{0}\geq G^{q}$ for some $\pi$-number $q$. Now
Proposition \ref{powpi} shows that for some $f$, $G$ satisfies $\mathrm{m}%
_{f,q}$ and%
\[
G^{q}=\mu_{f,q}(G)
\]
is open in $G$, hence semi-uniform. Set%
\[
\mathrm{tf}\equiv\forall x(x^{q(\pi)}=1\rightarrow x=1).
\]
As $G^{q}$ is semi-powerful and torsion-free, $G$ satisfies%
\[
\mathrm{res}(\mu_{f,q},\mathrm{pow}\wedge\mathrm{tf})
\]
(see \S \ref{powsec}). Say $\left\vert G:G^{q}\right\vert =m$ and $\dim
(G^{q})=(d_{1},\ldots,d_{k}).$ Then $G$ also satisfies%
\[
\mathrm{ind}^{\ast}(\mu_{f,q};m)\wedge\mathrm{res}(\mu_{f,q},\partial
_{d_{1},\ldots,d_{k}}).
\]

We have established that $G$ satisfies%
\[
\sigma_{G}(\overline{a})\equiv\psi(\overline{a})\wedge\mathrm{m}_{f,q}%
\wedge\mathrm{res}(\mu_{f,q},\mathrm{pow}\wedge\mathrm{tf})\wedge
\mathrm{ind}^{\ast}(\mu_{f,q};m)\wedge\mathrm{res}(\mu_{f,q},\partial
_{d_{1},\ldots,d_{k}}).
\]

Now suppose $H\in\mathcal{C}_{\pi}$ satisfies $\sigma_{G}(\overline{h})$. Let
$\theta:G\rightarrow H$ be the epimorphism specified above. To complete the
proof it will suffice to show that $\ker\theta=1.$

$\sigma_{G}(\overline{h})$ implies that $H^{q}=\mu_{f,q}(H)$ is semi-uniform,
that $\left\vert H:H^{q}\right\vert =m=\left\vert G:G^{q}\right\vert $, and
that $\mathrm{\dim}(H^{q})=\mathrm{\dim}(G^{q})$. Applying Lemma \ref{epi} to
$\theta_{\left\vert G^{q}\right.  }$ we infer that $\ker\theta\cap G^{q}=1$.
As $\left\vert H:H^{q}\right\vert =\left\vert G:G^{q}\right\vert $ is finite,
it follows that $\theta$ induces an isomorphism $G/G^{q}\rightarrow H/H^{q}$,
whence $\ker\theta\leq G^{q}$. Thus $\ker\theta=1$ as required.
\end{proof}

\subsection{Finite axiomatizability in profinite groups\label{fapg}}

\label{FApg}

The first case of Theorem \ref{nilp} to be established was for the specific
group $G=\mathrm{UT}_{3}(\mathbb{Z}_{p})$ (\cite{NB}, \S 10). The key point of
the proof is to recover the ring structure of $\mathbb{Z}_{p}$ from the group
structure of $G$: specifically, the commutator map in the group carries enough
information to reconstruct multiplication in the ring. With Theorem
\ref{newT1} at our disposal, we shall see that it will suffice merely to
identify the prime $p$, and this in turn is quite easy provided there are
`enough' commutators. The appropriate condition was identified by Oger and
Sabbagh in \cite{OS}, in the context of abstract nilpotent groups; fortunately
for us it transfers perfectly to the profinite context.

We will say that a group $G$ satisfies the \emph{O-S condition} if
$\mathrm{Z}(G)/(G^{\prime}\cap\mathrm{Z}(G))$ is periodic. Theorem \ref{nilp}
is included in

\begin{theorem}
\label{newT2}Let $L$ be either $L_{\mathrm{gp}}$ or $L_{\pi}$. Suppose that
$G\in\mathcal{C}_{\pi}$ is nilpotent, and that $G$ has an $L$ presentation as
a $\mathcal{C}_{\pi}$ group on the generating tuple $(a_{1},\ldots,a_{r})$.
Then the following are equivalent:

\emph{(a)} $(G,\overline{a})$ is FA (wrt $L$) in the class of all profinite groups;

\emph{(b)} $G$ is FA (wrt $L$) in the class of all f.g.\ nilpotent virtually
pro-$\pi$ groups;

\emph{(c) }$G$ satisfies the O-S condition.
\end{theorem}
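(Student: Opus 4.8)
I would prove the three conditions equivalent by the cycle $(a)\Rightarrow(b)\Rightarrow(c)\Rightarrow(a)$. The implication $(a)\Rightarrow(b)$ is immediate: the finitely generated nilpotent virtually pro-$\pi$ groups form a subclass of the profinite groups and contain $G$, so any $L$-sentence witnessing (a) also witnesses (b).

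For $(b)\Rightarrow(c)$ I would argue by contraposition. Suppose (c) fails, i.e. $\mathrm{Z}(G)/(G^{\prime}\cap\mathrm{Z}(G))$ is not periodic, equivalently $\mathrm{Z}(G)\nsubseteq\Delta(G)$. Let $\phi$ be any $L$-sentence with $G\models\phi$. By Theorem \ref{neg1} (and, for $L=L_{\pi}$, its routine variant) there are infinitely many primes $q$ with $G\times C_{q}\models\phi$; fix one with $q\notin\pi$. Then $G\times C_{q}$ is finitely generated and nilpotent and contains the open pro-$\pi$ subgroup $G\times 1$, so it lies in the class of (b); but it has a nontrivial Sylow pro-$q$ direct factor with $q\notin\pi$, hence does not lie in $\mathcal{C}_{\pi}$, and so — $\mathcal{C}_{\pi}$ being closed under isomorphism — cannot be isomorphic to $G\in\mathcal{C}_{\pi}$. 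Thus $\phi$ fails to single $G$ out inside the class of (b), so (b) fails.

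The core of the theorem is $(c)\Rightarrow(a)$. Having an $L$-presentation forces $G$ to be finitely generated, and as a product of finitely many nilpotent pro-$p_{i}$ groups it has finite rank; so Theorem \ref{newT1} supplies an $L$-formula $\sigma_{G}(\overline{x})$ with $(G,\overline{a})\cong(H,\overline{h})\Leftrightarrow H\models\sigma_{G}(\overline{h})$ for all $H\in\mathcal{C}_{\pi}$. The plan is to strengthen $\sigma_{G}$ to an $L$-formula $\Sigma_{G}$, still true of $(G,\overline{a})$, that characterises $(G,\overline{a})$ among \emph{all} profinite groups. First I would conjoin the sentence asserting nilpotency of class at most $c$, where $c$ is the class of $G$; any model of it is pronilpotent, hence a direct product of its Sylow subgroups. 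It then remains to force the Sylow pro-$\ell$ subgroup to be trivial for every $\ell\notin\pi$, after which the model lies in $\mathcal{C}_{\pi}$ and $\sigma_{G}$ finishes the job. This is exactly where the O-S condition enters — it supplies the `enough commutators': since $\mathrm{Z}(G)$ is periodic modulo $G^{\prime}$ and $G$ has finite rank, the image of $\mathrm{Z}(G)$ in the abelianization $G/\overline{G^{\prime}}$ is \emph{finite}, and similarly for the terms of the lower central series. I would encode these finitely many facts (valid in $G$) in a formula $\chi(\overline{x})$, exploiting that under $\sigma_{G}$ the closed derived subgroup $\overline{H'}$ is definable with the parameters $\overline{h}$ — indeed the subformula $\alpha$ occurring in the $L$-presentation $\psi$ already forces $\overline{H'}=[h_{1},H]\cdots[h_{r},H]$, a definable closed subgroup — and then check that a pronilpotent model of $\sigma_{G}\wedge\chi$ can carry no nontrivial Sylow pro-$\ell$ subgroup with $\ell\notin\pi$: an infinite such factor would introduce an extra (non-$\pi$) contribution to a suitable central section, incompatible with the finiteness fixed by $\chi$, while a finite one is excluded by the index and torsion counts already built into $\sigma_{G}$. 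This mirrors the original treatment of $\mathrm{UT}_{3}(\mathbb{Z}_{p})$ in \cite{NB}, with the recovery of the ring $\mathbb{Z}_{p}$ there reduced here to the much easier task of identifying the prime(s).

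The delicate point — which I expect to be the main obstacle — is precisely this last step: choosing $\chi$ correctly and verifying that $\sigma_{G}\wedge\chi$ still controls \emph{arbitrary} pronilpotent groups, where a priori neither finite generation nor finite rank is available, so that every surviving Sylow pro-$\ell$ subgroup with $\ell\notin\pi$ is actually detected. By contrast, the two outer implications and the bookkeeping that turns the O-S datum into a first-order formula are routine once Theorem \ref{newT1} is in hand.
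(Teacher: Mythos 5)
Your outer implications are exactly the paper's: (a)$\Rightarrow$(b) because the class in (b) is a subclass of the profinite groups, and (b)$\Rightarrow$(c) by contraposition via Theorem \ref{neg1} applied to the groups $G\times C_{q}$. The setup of (c)$\Rightarrow$(a) also matches the paper: invoke Theorem \ref{newT1} for a formula $\sigma_{G}$ valid within $\mathcal{C}_{\pi}$, conjoin the class-$c$ nilpotency sentence $\Gamma_{c}$, and use the O-S condition together with finite rank to see that $\mathrm{Z}(G)G'/G'$ is finite of some $\pi$-exponent $q$, which is first-order expressible (the paper's $\theta_{q}$, using Lemma \ref{dg}). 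But the step you yourself flag as the main obstacle is a genuine gap, and your proposed $\chi$ does not close it. Conditions asserting finiteness or bounded exponent of central sections such as $\mathrm{Z}(H)H'/H'$ cannot detect a \emph{non-abelian} Sylow pro-$\ell$ direct factor $Q$ with $\ell\notin\pi$: for instance a Heisenberg-type group $Q$ has $\mathrm{Z}(Q)=Q'$, so $Q$ contributes nothing to $\mathrm{Z}(H)H'/H'$, and no term of the lower central series of $G$ gives a constraint that such a $Q$ violates. Your fallback that finite non-$\pi$ factors are "excluded by the index and torsion counts already built into $\sigma_{G}$" is also unjustified: the conjuncts of $\sigma_{G}$ ($\mathrm{ind}^{\ast}(\mu_{f,q};m)$, $\mathrm{tf}$, etc.) only carry their intended meaning for groups already known to lie in $\mathcal{C}_{\pi}$, and a factor $C_{\ell}$ with $\ell\nmid q(\pi)$ is invisible to all of them (it is $q$-divisible and $q(\pi)$-torsion-free).

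The missing idea is the paper's Lemma \ref{prop}. One writes $S_{p}(x)$ for $[\mathrm{Z}_{2}(H),x]\subseteq[\mathrm{Z}_{2}(H),x^{p}]$ and conjoins the sentence $\psi_{\pi}$ asserting that $\bigwedge_{p\in\pi}S_{p}(x)$ implies $[\mathrm{Z}_{2}(H),x]=1$. This holds in $G$ because $[x,\mathrm{Z}_{2}(G)]$ is a closed subgroup of the abelian pro-$\pi$ group $\mathrm{Z}(G)$ and so cannot equal its own $p$-th power unless trivial; conversely, in any nilpotent model $H$ every element $x$ of a Sylow pro-$\ell$ subgroup with $\ell\notin\pi$ satisfies $x=(x^{\lambda})^{p}$ for each $p\in\pi$, hence satisfies every $S_{p}$, so $\psi_{\pi}$ forces $[\mathrm{Z}_{2}(H),x]=1$ and (by nilpotency) $H/\mathrm{Z}(H)$ is pro-$\pi$. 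Combining this with $\theta_{q}$ makes $H/H'$ pro-$\pi$, and nilpotency then kills every non-$\pi$ Sylow subgroup outright, finite or infinite, abelian or not. Without some device of this kind that detects non-$\pi$ elements by their $\pi$-divisibility, your $\Sigma_{G}$ does not control arbitrary nilpotent profinite models.
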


For the proof, we write%
\begin{align*}
Z(x)  &  \equiv\forall u.(xu=ux)\\
Z_{2}(x)  &  \equiv\forall v.Z([x,v]);
\end{align*}
these define the centre and second centre in a group. Let $S_{p}(x)$ be a
formula asserting, for $x\in G$, that%
\[
\lbrack Z_{2}(G),x]\subseteq\lbrack Z_{2}(G),x^{p}].
\]
Let $\psi_{\pi}$ be a sentence asserting that $\bigwedge_{p\in\pi}S_{p}(x)
\to[Z_{2}(G),x]=1$ for each $x$.

\begin{lemma}
\label{prop}Let $G$ be a nilpotent profinite group.

\emph{(i)} If $\mathrm{Z}(G)$ is pro-$\pi$ then $G\models\psi_{\pi}.$

\emph{(ii)} If $G\models\psi_{\pi}$ then $G/\mathrm{Z}(G)$ is a pro-$\pi$ group.
\end{lemma}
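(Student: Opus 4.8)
The plan is to fix an element $x\in G$ and to understand the two hypotheses through the closed subset $A_x:=[\mathrm{Z}_2(G),x]$. First I would note that for $z\in\mathrm{Z}_2(G)$ the commutator $[z,x]$ lies in $\mathrm{Z}(G)$, so the identity $[z_1z_2,x]=[z_1,x]^{z_2}[z_2,x]$ collapses to $[z_1z_2,x]=[z_1,x][z_2,x]$; hence $z\mapsto[z,x]$ is a continuous homomorphism $\mathrm{Z}_2(G)\to\mathrm{Z}(G)$, and $A_x$ is a closed (continuous image of the compact group $\mathrm{Z}_2(G)$) abelian subgroup of $\mathrm{Z}(G)$. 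The same centrality gives $[z,x^p]=[z,x]^p$ for every $z\in\mathrm{Z}_2(G)$, so $[\mathrm{Z}_2(G),x^p]=\{a^p\mid a\in A_x\}$, which is a subgroup since $A_x$ is abelian. Thus $S_p(x)$ holds precisely when $A_x=A_x^{\,p}$, and for a profinite abelian group this says that $A_x$ has no continuous quotient $C_p$, i.e.\ its Sylow pro-$p$ subgroup is trivial. The conceptual core of the lemma is then the observation that $\bigwedge_{p\in\pi}S_p(x)$ holds if and only if $A_x$ is a pro-$\pi'$ group, where $\pi'$ is the set of primes not in $\pi$.

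For (i) I would then argue directly: assume $\mathrm{Z}(G)$ is pro-$\pi$ and let $x\in G$ satisfy $\bigwedge_{p\in\pi}S_p(x)$. Then $A_x\le\mathrm{Z}(G)$ is pro-$\pi$, while by the previous paragraph it is also pro-$\pi'$; a profinite group that is both pro-$\pi$ and pro-$\pi'$ has no non-trivial finite quotient, hence is trivial. So $[\mathrm{Z}_2(G),x]=1$, and $G\models\psi_\pi$.

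For (ii) I would use the standard fact that a nilpotent profinite group is the direct product of its Sylow pro-$q$ subgroups, $G=\prod_q G_q$, whence $\mathrm{Z}_2(G)=\prod_q\mathrm{Z}_2(G_q)$ and $G/\mathrm{Z}(G)=\prod_q G_q/\mathrm{Z}(G_q)$; it therefore suffices to show that $G_q$ is abelian for every $q\notin\pi$. Suppose some such $G_q$ is non-abelian. Then $G_q/\mathrm{Z}(G_q)$ is a non-trivial nilpotent group, hence has non-trivial centre, so $\mathrm{Z}_2(G_q)\supsetneq\mathrm{Z}(G_q)$ and $[\mathrm{Z}_2(G_q),x]\neq1$ for some $x\in G_q$. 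Regarding $x$ as an element of $G$ (trivial in the other factors), $A_x=[\mathrm{Z}_2(G_q),x]\neq1$ is a pro-$q$ group with $q\notin\pi$, so its Sylow pro-$p$ subgroup is trivial for every $p\in\pi$ and therefore $S_p(x)$ holds for all $p\in\pi$; then $\psi_\pi$ forces $A_x=1$, a contradiction. Hence every $G_q$ with $q\notin\pi$ is abelian and $G/\mathrm{Z}(G)$ is pro-$\pi$.

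I do not expect a genuine obstacle: once $S_p(x)$ is translated into ``the Sylow pro-$p$ part of $[\mathrm{Z}_2(G),x]$ is trivial'', both parts are essentially immediate. The only points requiring a line of checking are the commutator identities showing $z\mapsto[z,x]$ is a homomorphism on $\mathrm{Z}_2(G)$ (so that $A_x$ is a closed abelian subgroup and $[\mathrm{Z}_2(G),x^p]=A_x^{\,p}$), the elementary fact that a profinite abelian group $A$ satisfies $A=A^p$ iff its pro-$p$ component vanishes, and the decomposition of a nilpotent profinite group into Sylow subgroups (together with the fact that a non-trivial nilpotent group has non-trivial centre, used to produce the required $x$).
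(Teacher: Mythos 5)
Your proof is correct and follows essentially the same route as the paper's: both rest on the observation that $z\mapsto[z,x]$ is a continuous homomorphism $\mathrm{Z}_2(G)\to\mathrm{Z}(G)$, so that $\bigwedge_{p\in\pi}S_p(x)$ amounts to $\pi$-divisibility (equivalently, pro-$\pi'$-ness) of the closed subgroup $[\mathrm{Z}_2(G),x]\leq\mathrm{Z}(G)$, and both prove (ii) by reducing to the Sylow pro-$q$ components for $q\notin\pi$ and invoking nilpotency via $\mathrm{Z}_2=\mathrm{Z}$. The only cosmetic difference is that the paper verifies $S_p(x)$ for $x$ in a Sylow pro-$q$ subgroup by writing $x=(x^{\lambda})^{p}$ with $\lambda p=1$ in $\mathbb{Z}_q$, whereas you note directly that $[\mathrm{Z}_2(G),x]$ is then pro-$q$ and hence $p$-divisible.
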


\begin{proof}
Write $Z=\mathrm{Z}(G)$ and $Z_{2}=\mathrm{Z}_{2}(G)$. We use the facts that
for each $x\in G$ the map $y\longmapsto\lbrack x,y]$ is a continuous
homomorphism from $Z_{2}$ to $Z$, with kernel containing $Z$, and that
$(x,y)\longmapsto\lbrack x,y]$ induces a bilinear map from $G^{\mathrm{ab}%
}\times Z_{2}/Z$ into $Z$.

(i) Suppose that $Z$ is a pro-$\pi$ group. Let $x\in G$. If $S_{p}(x)$ holds
for each $p\in\pi$ then
\[
\lbrack x,Z_{2}]=[x^{p},Z_{2}]=[x,Z_{2}]^{p}~~\text{\ }%
\]
for each $p\in\pi.$ But $[x,Z_{2}]$ is a closed subgroup of the abelian
pro-$\pi$ group\textrm{\ }$Z$, and so $[x,Z_{2}]=1$. Thus $G\models\psi_{\pi}$.

(ii) Let $q\notin\pi$ be a prime, and let $Q$ be a Sylow pro-$q$ subgroup of
$G$. Let $x\in Q$ and $p\in\pi.$ Then $x=x^{\lambda p}$ where $\lambda
\in\mathbb{Z}_{q}$ satisfies $\lambda p=1$. Then for any $u\in Z_{2}$ we have%
\[
\lbrack x,u]=[x^{p},u^{\lambda}].
\]
Thus $S_{p}(x)$ holds.

Now suppose that $G\models\psi_{p}.$ Then $[x,Z_{2}]=1$ holds for each $x\in
Q$, so $Q\cap Z_{2}\leq\mathrm{Z}(Q).$ Now $Q\cap Z_{2}=\mathrm{Z}_{2}(Q)$, so
$\mathrm{Z}_{2}(Q)=\mathrm{Z}(Q)$. As $Q$ is nilpotent this forces
$Q=\mathrm{Z}(Q)\leq Z$.

It follows that $G/Z$ is a pro-$\pi$ group.
\end{proof}

\bigskip

To prove Theorem \ref{newT2}, we have to show that the following are equivalent:

\begin{description}
\item[a] $(G,\overline{a})$ is FA w.r.t. $L$ in the class of all profinite groups;

\item[b] $G$ is FA w.r.t. $L$ in the class of all f.g.\ nilpotent virtually
pro-$\pi$ groups;

\item[c] $G$ satisfies the O-S condition.
\end{description}

Note to begin with that $G$ has finite rank (a familiar property of f.g.
nilpotent groups).

Proposition \ref{neg1} shows that if $G$ does not satisfy the O-S condition
then $G$ cannot be f.a. in the class of groups $\{G\times C_{q},$ $q$
prime$\}$. Thus \textbf{(a)} $\Longrightarrow$ \textbf{(b)} $\Longrightarrow$
\textbf{(c)}, since (\textbf{b}) is formally weaker than (\textbf{a}).

Now suppose that $G$ does satisfy the O-S condition. Given the hypotheses,
Theorem \ref{newT1} gives us a formula $\sigma_{G}$ of $L$ such that for
$b_{1},\ldots,b_{r}\in H\in\mathcal{C}_{\pi},$ we have%
\[
H\models\sigma_{G}(\overline{b})~~\Longleftrightarrow~~(H,\overline{b}%
)\cong(G,\overline{a}).
\]
Also $\mathrm{Z}(G)/(G^{\prime}\cap\mathrm{Z}(G))$ is a periodic pro-$\pi$
group of finite rank, so it is finite, of exponent $q$ say; here $q$ is a
$\pi$-number. Recall (Lemma \ref{dg}) that every element of $G^{\prime}$ is a
product of $d$ commutators, where $d=\mathrm{d}(G)$. Therefore $G$ satisfies%
\begin{equation}
\theta_{q}\equiv\forall y.\left(  Z(y)\longrightarrow\exists u_{1}%
,v_{1},\ldots,u_{d},v_{d}.\left(  y^{q}=\prod_{i=1}^{d}[u_{i},v_{i}]\right)
\right)  . \label{theta}%
\end{equation}

Say $G$ is nilpotent of class $c$. This is expressed by a sentence $\Gamma
_{c}$ (all simple commutators of weight $c+1$ are equal to $1$). Now define%
\[
\Sigma_{G}\equiv\psi_{\pi}\wedge\sigma_{G}\wedge\theta_{q}\wedge\Gamma_{c}.
\]
Then $G$ satisfies $\Sigma_{G}(\overline{a}).$ Suppose $H$ is a profinite
group and that $H\models\Sigma_{G}(\overline{b})$ for some $\overline{b}\in
H^{(r)}$. Then $H$ is nilpotent, so by Proposition \ref{prop}(ii)
$H/\mathrm{Z}(H)$ is a pro-$\pi$ group. Also $\mathrm{Z}(H)H^{\prime
}/H^{\prime}$ has exponent dividing $q$, so $H/H^{\prime}$ is a pro-$\pi$
group. As $H$ is nilpotent this implies that $H$ is a pro-$\pi$ group (to see
this, note that each finite continuous quotient $\widetilde{H}$ of $H$ is the
direct product of its Sylow subgroups, and its abelianization is the direct
product of their respective abelianizations. So if $\widetilde{H}%
/\widetilde{H}^{\prime} $ is a $\pi$-group then the Sylow $q$-subgroups of
$\widetilde{H}$ for $q\notin\pi$ have trivial abelianization, and as they are
nilpotent this means that they are trivial. \ Therefore $\widetilde{H}$ is a
$\pi$-group). Thus $H\in\mathcal{C}_{\pi}$.

As $H\models\sigma_{G}(\overline{b})$ it follows that $(H,\overline{b}%
)\cong(G,\overline{a})$. Thus \textbf{(c)} $\Longrightarrow$ \textbf{(a)}.

\subsection{Finite axiomatizability in pronilpotent groups\label{pronilpsec}}

The nilpotency hypothesis in Theorem \ref{newT2} is very restrictive. Without
it, we can prove a weaker result, giving finite axiomatizability in the class
of all \emph{pronilpotent groups}; this is strictly intermediate between
$\mathcal{C}_{\pi}$ and the class of all profinite groups, so the following
results `interpolate' the two preceding theorems:

\begin{theorem}
\label{T3}Let $G\in\mathcal{C}_{\pi}$ have finite rank, and assume that $G$
has an $L$-presentation in $\mathcal{C}_{\pi}$. Then the following are equivalent:

\emph{(a)} $G$ is FA (wrt $L$) in the class of all pronilpotent groups;

\emph{(b)} $G$ is FA (wrt $L$) in the class of all pronilpotent virtually
pro-$\pi$ groups of finite rank;

\emph{(c) }$G$ satisfies the O-S condition.
\end{theorem}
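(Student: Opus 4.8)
The plan is to prove Theorem~\ref{T3} by mirroring the structure of the proof of Theorem~\ref{newT2}, adapting each step from the nilpotent setting to the pronilpotent one. The implications \textbf{(a)}~$\Longrightarrow$~\textbf{(b)}~$\Longrightarrow$~\textbf{(c)} are the easy direction. That \textbf{(b)} follows formally from \textbf{(a)} is immediate once one checks that the class in \textbf{(b)} is contained in the class in \textbf{(a)} and contains $G$ itself (here one uses that $G\in\mathcal{C}_\pi$ has finite rank, hence is a pronilpotent virtually pro-$\pi$ group of finite rank). For \textbf{(b)}~$\Longrightarrow$~\textbf{(c)}: if $G$ fails the O-S condition, then $\mathrm{Z}(G)\nsubseteq\Delta(G)$, so Theorem~\ref{neg1} (the Oger--Sabbagh result) says that for almost all primes $q$ the group $G\times C_q$ satisfies every sentence that $G$ does; since $G\times C_q$ is again a pronilpotent virtually pro-$\pi$ group of finite rank (for $q\notin\pi$ it is virtually pro-$(\pi\cup\{q\})$, but still virtually pro-$\pi'$ for a finite set $\pi'$, and one should state \textbf{(b)} with that flexibility or observe $G\times C_q$ lies in the relevant class), and $G\times C_q\not\cong G$, no sentence can axiomatize $G$ in that class.

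The substantive direction is \textbf{(c)}~$\Longrightarrow$~\textbf{(a)}. Here I would reuse the machinery already assembled: Theorem~\ref{newT1} gives a formula $\sigma_G(\overline{a})$ of $L$ that pins down $(G,\overline{a})$ up to isomorphism \emph{within} $\mathcal{C}_\pi$. The task is therefore to append extra first-order conditions that force an arbitrary pronilpotent model $H$ with $H\models\sigma_G(\overline{h})\wedge(\text{extra})$ to actually lie in $\mathcal{C}_\pi$ — i.e. to be pro-$\pi$. The key tool is Lemma~\ref{prop}: since $G$ satisfies the O-S condition and has finite rank, $\mathrm{Z}(G)/(G'\cap\mathrm{Z}(G))$ is a finite pro-$\pi$ group of exponent some $\pi$-number $q$, so $\mathrm{Z}(G)$ is pro-$\pi$ and $G\models\psi_\pi$ by Lemma~\ref{prop}(i); also, exactly as in \eqref{theta}, $G$ satisfies the sentence $\theta_q$ asserting that every central $q$-th power is a product of $d=\mathrm{d}(G)$ commutators (using Lemma~\ref{dg}). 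I would then set
\[
\Sigma_G\equiv\psi_\pi\wedge\sigma_G\wedge\theta_q,
\]
dropping the nilpotency clause $\Gamma_c$ that appeared in Theorem~\ref{newT2}, and verify $G\models\Sigma_G(\overline{a})$.

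The main obstacle is the final deduction: given a pronilpotent $H$ with $H\models\Sigma_G(\overline{h})$, I must conclude $H\in\mathcal{C}_\pi$, and here the argument genuinely differs from the nilpotent case, where one could pass to finite quotients and use that a finite nilpotent group is the direct product of its Sylow subgroups. For pronilpotent $H$ the same structural fact holds: $H=\prod_q H_q$ over primes $q$, with $H_q$ the (unique) Sylow pro-$q$ subgroup, each $H_q$ normal. From $H\models\psi_\pi$, Lemma~\ref{prop}(ii) gives that $H/\mathrm{Z}(H)$ is pro-$\pi$; so for $q\notin\pi$ the Sylow pro-$q$ subgroup $H_q$ lies in $\mathrm{Z}(H)$, whence $H_q\le\mathrm{Z}(H)$ is central. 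Then $\theta_q$ forces each element of $H_q$ — being a central element whose $q$-th power (itself, up to a $\pi$-unit exponent, since $q$ is invertible on $H_q$) is a product of $d$ commutators — to lie in $H'$; but $H'\cap H_q$ has trivial abelianization as a central $q$-group inside $H'$... more carefully, $\theta_q$ says $y^q\in H'$ for central $y$; applying this with $y$ ranging over $H_q$ and using that $q$ is invertible on the pro-$q$ group $H_q$ yields $H_q\subseteq H'$. Since $H_q$ is central, $H_q\subseteq \mathrm{Z}(H)\cap H'$, and iterating (or using that $\mathrm{Z}(H)\cap H'$ is, for this $q$-part, the $q$-part of $H'$ which is perfect-by-nilpotent-trivial as a central factor) forces $H_q=1$. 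Hence $H$ is pro-$\pi$, i.e. $H\in\mathcal{C}_\pi$, and then $H\models\sigma_G(\overline{h})$ gives $(H,\overline{h})\cong(G,\overline{a})$ by Theorem~\ref{newT1}. I expect the delicate point to be making the step ``$H_q$ central and $\theta_q$-constrained $\Rightarrow H_q=1$'' airtight — one wants to observe that for a central pro-$q$ subgroup $A=H_q$ one has $A=A^q$ (as $q$ is a unit in $\mathbb{Z}_q$), so $\theta_q$ gives $A=A^q\subseteq H'$, and then since $A$ is central, $A\subseteq\mathrm{Z}(H)\cap H'$; but the image of $\mathrm{Z}(H)\cap H'$ in $\mathrm{Z}(H)/(\,\mathrm{Z}(H)\cap H'\,)\cong$ (a quotient controlled by the O-S-type data transported via $\sigma_G$) — actually the cleanest route is to note $\mathrm{Z}(H)/(\mathrm{Z}(H)\cap H')$ injects into $H/H'$, which by $\sigma_G$ has the same finite pro-$\pi$ torsion quotient structure as $G/G'$, forcing the pro-$q$ part of $\mathrm{Z}(H)$ to lie in $H'$ to be trivial; so $A=A^q\subseteq H'\cap\mathrm{Z}(H)$ has trivial pro-$q$ part, i.e. $A=1$.
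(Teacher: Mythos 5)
Your treatment of \textbf{(a)}~$\Rightarrow$~\textbf{(b)}~$\Rightarrow$~\textbf{(c)} matches the paper's, and your overall strategy for \textbf{(c)}~$\Rightarrow$~\textbf{(a)} (append to $\sigma_G$ first-order conditions forcing a pronilpotent model into $\mathcal{C}_\pi$) is the right one. But there is a genuine gap at the pivotal step: you invoke Lemma~\ref{prop}(ii) for the pronilpotent group $H$ to conclude that $H/\mathrm{Z}(H)$ is pro-$\pi$ and hence that the Sylow pro-$q$ subgroups $H_q$ with $q\notin\pi$ are central. Lemma~\ref{prop} is stated, and is only true, for \emph{nilpotent} groups. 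Its proof ends with: $\psi_\pi$ forces $\mathrm{Z}_2(Q)=\mathrm{Z}(Q)$ for the Sylow pro-$q$ subgroup $Q$, and ``as $Q$ is nilpotent this forces $Q=\mathrm{Z}(Q)$.'' For a merely pronilpotent $H$ the Sylow subgroup $H_q$ is an arbitrary pro-$q$ group, and $\mathrm{Z}_2(H_q)=\mathrm{Z}(H_q)$ tells you nothing (e.g.\ a nonabelian free pro-$q$ group has $\mathrm{Z}_2=\mathrm{Z}=1$). All that $\psi_\pi$ actually yields for $x\in H_q$ is $[\mathrm{Z}_2(H),x]=1$, i.e.\ $\mathrm{Z}_2(H_q)\le\mathrm{Z}(H_q)$; it does not put $H_q$ inside $\mathrm{Z}(H)$. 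Since your entire endgame ($H_q$ central, then $\theta_q$ forces $H_q\le H'$, then $H_q=1$) rests on that centrality, the proof does not close. Dropping the nilpotency clause $\Gamma_c$ from $\Sigma_G$ without replacing it by something else is exactly where the difficulty lies.

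The paper's proof circumvents this with extra machinery that your sketch omits. Using finite rank and Corollary~\ref{dims} it first proves a descending-chain lemma (Lemma~\ref{dcc}) giving an $s\ge 2$ with $\gamma_s(G)/\gamma_{s+1}(G)$ finite, hence $\gamma_s(G)^{q}\le\gamma_{s+1}(G)$ for a $\pi$-number $q$; this is encoded first-order by a sentence $\eta$ via the definable sets $\Gamma_{s,f}$. It then applies Lemma~\ref{prop} not to $H$ but to the \emph{nilpotent} quotient $H/\gamma_s(H)$ (via $\mathrm{lift}(\Gamma_{s,f},\psi_\pi)$), obtaining that $H/Z_s$ is pro-$\pi$ where $Z_s$ is the preimage of $\mathrm{Z}(H/\gamma_s(H))$. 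A separate new lemma (Lemma~\ref{pronilp}), whose hypothesis $\gamma_s(H)^q\le\gamma_{s+1}(H)$ is supplied by $\eta$, shows $Z_s/\mathrm{Z}(H)$ is pro-$\pi$; finally $\theta_{q'}$ gives $\mathrm{Z}(H)^{q'}\le H'$, so $H/H'$ is pro-$\pi$ and hence $H$ is pro-$\pi$. If you want to repair your proof you need some analogue of this descent from $Z_s$ to $\mathrm{Z}(H)$; there is no way to get it from $\psi_\pi$ and $\theta_q$ alone.
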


\begin{theorem}
\label{extrathm}Let $G\in\mathcal{C}_{\pi}$ have finite rank, and assume that
$G$ has an $L$-presentation in $\mathcal{C}_{\pi}$. If $G/\gamma_{m}(G)$
satisfies the O-S condition for some $m\geq2$ then $G$ is FA (wrt $L$) in the
class of all pronilpotent groups.
\end{theorem}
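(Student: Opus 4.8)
The plan is to deduce the theorem from Theorem~\ref{newT1} together with Theorem~\ref{newT2} applied to the nilpotent quotient $\overline{G}:=G/\gamma_{m}(G)$, the bridge between the two being the elementary observation: \emph{for a pronilpotent group $H$ and $m\geq 2$, $H\in\mathcal{C}_{\pi}$ if and only if $H/\gamma_{m}(H)\in\mathcal{C}_{\pi}$.} Indeed, writing $H=\prod_{q}H_{q}$ as the product of its Sylow pro-$q$ subgroups, one has $H/\gamma_{m}(H)=\prod_{q}\bigl(H_{q}/\overline{\gamma_{m}(H_{q})}\bigr)$, and for $q\notin\pi$ a nontrivial $H_{q}$ contributes a nontrivial pro-$q$ factor, since $\overline{\gamma_{m}(H_{q})}\subseteq\overline{H_{q}'}\subseteq\Phi(H_{q})\subsetneq H_{q}$ (this is where $m\geq 2$ enters).

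First I would assemble the ingredients. Since $G$ has finite rank it is finitely generated, so by Theorem~\ref{NSSC}(ii) the subgroup $\gamma_{m}(G)$ is definable without parameters by a formula $\nu(x)$ of the shape $\kappa_{w,f}$ from the proof of that theorem, with $w$ the lower-central word of weight $m$; recall such a $\nu$ defines in \emph{every} profinite group $M$ a closed subset with $\nu(M)\subseteq\gamma_{m}(M)$, and $\nu(G)=\gamma_{m}(G)$. Next, $\overline{G}$ is a nilpotent $\mathcal{C}_{\pi}$-group of finite rank; it inherits an $L$-presentation from $G$ (for $L=L_{\pi}$ one may quote Proposition~\ref{L-pres}(ii); for $L=L_{\mathrm{gp}}$, adjoin to an $L_{\mathrm{gp}}$-presentation of $G$ on $\overline{a}$ the finitely many relations asserting that all left-normed weight-$m$ commutators in the generators and their inverses equal $1$), and by hypothesis $\overline{G}$ satisfies the O-S condition. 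Hence Theorem~\ref{newT2} supplies an $L$-formula $\Sigma_{\overline{G}}(\overline{x})$ such that for every profinite group $K$ and tuple $\overline{k}$ one has $K\models\Sigma_{\overline{G}}(\overline{k})$ iff $(K,\overline{k})\cong(\overline{G},\overline{b})$, where $\overline{b}$ is the image of $\overline{a}$; in particular such a $K$ is isomorphic to $\overline{G}$, hence lies in $\mathcal{C}_{\pi}$. Finally, Theorem~\ref{newT1} gives an $L$-formula $\sigma_{G}(\overline{x})$ characterizing $(G,\overline{a})$ within $\mathcal{C}_{\pi}$.

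I would then set
\[
\Sigma_{G}(\overline{x})\ \equiv\ \sigma_{G}(\overline{x})\wedge \mathrm{s}_{\vartriangleleft}(\nu)\wedge \mathrm{lift}(\nu,\Gamma_{m-1})\wedge \mathrm{lift}(\nu,\Sigma_{\overline{G}})(\overline{x}),
\]
where $\Gamma_{m-1}$ expresses nilpotency of class $\leq m-1$. Plainly $G\models\Sigma_{G}(\overline{a})$, using $\nu(G)=\gamma_{m}(G)$ and $G/\gamma_{m}(G)=\overline{G}\models\Sigma_{\overline{G}}(\overline{b})$. Conversely, let $H$ be a pronilpotent group with $H\models\Sigma_{G}(\overline{h})$. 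The conjunct $\mathrm{s}_{\vartriangleleft}(\nu)$ makes $N:=\nu(H)$ a (closed) normal subgroup; $\mathrm{lift}(\nu,\Gamma_{m-1})$ then says $H/N$ has class $\leq m-1$, so $\gamma_{m}(H)\subseteq N$, and since also $N=\nu(H)\subseteq\gamma_{m}(H)$, we get $N=\gamma_{m}(H)$ — so $\nu$ genuinely computes $\gamma_{m}(H)$ here. Therefore $\mathrm{lift}(\nu,\Sigma_{\overline{G}})$ gives $H/\gamma_{m}(H)\models\Sigma_{\overline{G}}$, whence $H/\gamma_{m}(H)\cong\overline{G}\in\mathcal{C}_{\pi}$, and by the opening observation $H\in\mathcal{C}_{\pi}$. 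Now $H\models\sigma_{G}(\overline{h})$ together with Theorem~\ref{newT1} yields $(H,\overline{h})\cong(G,\overline{a})$, in particular $H\cong G$. Thus $\exists\overline{x}\,\Sigma_{G}(\overline{x})$ axiomatizes $G$ (wrt $L$) within the pronilpotent groups.

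The only delicate point is the treatment of $\gamma_{m}(H)$ for pronilpotent $H$ outside the finitely generated case, where Theorem~\ref{NSSC} does not apply: the formula $\nu$ only guarantees $\nu(H)\subseteq\gamma_{m}(H)$ in general, and the device of additionally demanding that $H/\nu(H)$ be nilpotent of class $<m$ — which forces $\nu(H)=\gamma_{m}(H)$ exactly — is what makes the scheme go through. The verification that $\overline{G}$ inherits an $L$-presentation, and the Frattini/Sylow argument behind the opening observation, are routine.
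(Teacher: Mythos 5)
Your proof is correct and follows essentially the same route as the paper's: the paper's sentence $\xi\equiv\mathrm{s}(\Gamma_{m,f})\wedge\mathrm{lift}(\Gamma_{m,f},\Sigma)\wedge\sigma_{G}$ is your $\Sigma_{G}$, the only variation being how one forces the weight-$m$ commutator formula to define exactly $\gamma_{m}(H)$ in a possibly non-finitely-generated pronilpotent $H$ --- the paper demands that $\Gamma_{m,f}(H)$ be a subgroup (which, containing all weight-$m$ commutators and being contained in $\gamma_{m}(H)$, must then equal it), while you demand normality of $\nu(H)$ together with nilpotency of class $<m$ for the quotient; both devices are valid. Your explicit verification that $G/\gamma_{m}(G)$ inherits an $L$-presentation and your Sylow--Frattini argument for the implication $H/\gamma_{m}(H)\in\mathcal{C}_{\pi}\Rightarrow H\in\mathcal{C}_{\pi}$ supply details the paper leaves implicit (it simply cites the corresponding step in the proof of Theorem~\ref{newT2}).
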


Here, $\gamma_{m}(G)$ denotes the $m$th term of the lower central series (a
closed normal subgroup when $G$ is a f.g.\ profinite group). Note that when
$L$ is $L_{\pi}$, Proposition \ref{L-pres}(ii) makes the assumption of an
$L$-presentation redundant.

We will use the fact that for each $d$ and $m$ there exists $f=f(d,m)$ such
that in any $d$-generator pronilpotent group $G$ we have%
\[
\gamma_{m}(G)=X^{\ast f}:=\{x_{1}\ldots x_{f}\mid x_{1},\ldots,x_{f}\in X\}
\]
where
\[
X=\{[y_{1},\ldots,y_{m}]\mid y_{1},\ldots,y_{m}\in G\}
\]
(\cite{SW}, Lemma 4.3.1). Define a formula%
\[
\Gamma_{m,f}(x)\equiv\exists y_{11},\ldots,y_{fm}.\left(  x=\prod_{i=1}%
^{f}[y_{i1},\ldots,y_{im}]\right)  ;
\]
this asserts that $x$ is a product of $f$ simple left-normed commutators of
weight $m$. For any group $G,$ $\Gamma_{m,f}(G)$ is a subset of $\gamma
_{m}(G)$; and $\Gamma_{m,f}(G)=\gamma_{m}(G)$ if and only if $\Gamma_{m,f}(G)$
is a subgroup, that is, iff%
\[
G\models\mathrm{s}(\Gamma_{m,f})
\]
(see \S \ref{defsub}). In particular, this holds if $G$ is pronilpotent and
$f\geq f(\mathrm{d}(G),m).$

\begin{lemma}
\label{pronilp}Let $H$ be a f.g.\ pronilpotent group. Put $H_{n}=\gamma
_{n}(H)$ and $Z_{n}/H_{n}=\mathrm{Z}(H/H_{n})$ for each $n$. Suppose that
$H_{s}^{q}\leq H_{s+1}$ for some $s\geq1$, where $q$ is a $\pi$-number. Then
$Z_{s}/\mathrm{Z}(H)$ is a pro-$\pi$ group.
\end{lemma}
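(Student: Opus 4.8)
The plan is to pass to the primary decomposition $H=\prod_{p}H_{p}$ of the pronilpotent group into its Sylow pro-$p$ subgroups and to reduce the whole assertion to a statement about the Sylow pro-$\ell$ subgroups for primes $\ell\notin\pi$. Since forming $\gamma_{n}$, forming centres, and passing to quotients and preimages all respect the (topological) direct product $\prod_{p}H_{p}$, one has $H_{n}=\prod_{p}\gamma_{n}(H_{p})$, $\mathrm{Z}(H)=\prod_{p}\mathrm{Z}(H_{p})$, and hence $Z_{n}=\prod_{p}(Z_{n})_{p}$, where $(Z_{n})_{p}=Z_{n}\cap H_{p}$ is the preimage in $H_{p}$ of $\mathrm{Z}(H_{p}/\gamma_{n}(H_{p}))$. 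Consequently $Z_{s}/\mathrm{Z}(H)=\prod_{p}(Z_{s})_{p}/\mathrm{Z}(H_{p})$, and it suffices to prove that $(Z_{s})_{\ell}=\mathrm{Z}(H_{\ell})$ for every prime $\ell\notin\pi$: the remaining factors, those with $p\in\pi$, then exhibit $Z_{s}/\mathrm{Z}(H)$ as a product of pro-$p$ groups with $p\in\pi$, i.e. as a pro-$\pi$ group.

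Fix $\ell\notin\pi$. Projecting the hypothesis $H_{s}^{q}\leq H_{s+1}$ onto the direct factor $H_{\ell}$ gives $\gamma_{s}(H_{\ell})^{q}\leq\gamma_{s+1}(H_{\ell})$. Now $q$ is a $\pi$-number and $\ell\notin\pi$, so $q$ is prime to $\ell$; in any finite $\ell$-group the map $x\mapsto x^{q}$ is a bijection, so on the pro-$\ell$ group $\gamma_{s}(H_{\ell})$ (a closed subgroup of $H_{\ell}$) the continuous $q$-th power map has closed image surjecting onto every finite quotient, hence is surjective. Therefore $\gamma_{s}(H_{\ell})^{q}=\gamma_{s}(H_{\ell})$, and combining with $\gamma_{s+1}(H_{\ell})\leq\gamma_{s}(H_{\ell})$ yields $\gamma_{s}(H_{\ell})=\gamma_{s+1}(H_{\ell})$.

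Next I would argue that this forces $\gamma_{s}(H_{\ell})=1$: every continuous finite quotient of $H_{\ell}$ is a finite $\ell$-group, hence nilpotent, and in a nilpotent group $\gamma_{s}=\gamma_{s+1}$ implies $\gamma_{s}=1$ (the lower central series is strictly decreasing until it reaches $1$). Thus $\gamma_{s}(H_{\ell})$ lies in every open normal subgroup of $H_{\ell}$ and so is trivial. Then $\mathrm{Z}(H_{\ell}/\gamma_{s}(H_{\ell}))=\mathrm{Z}(H_{\ell})$, and $(Z_{s})_{\ell}$, being the preimage of this, equals $\mathrm{Z}(H_{\ell})$, as required; feeding this back into the first paragraph completes the proof.

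I do not expect a genuine obstacle here. The step requiring the most care is the routine bookkeeping in the first paragraph --- that $\gamma_{n}$, centres, quotients and preimages all distribute over the possibly infinite product $\prod_{p}H_{p}$ --- which is standard for pronilpotent groups. Finite generation of $H$ enters only to guarantee that the subgroups $\gamma_{n}(H)$, and hence the $Z_{n}$, are closed; it plays no further role, the argument above needing merely that each $H_{\ell}$ is a pro-$\ell$ group.
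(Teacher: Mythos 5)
Your proof is correct, but it takes a genuinely different route from the paper's. You localize: decomposing $H=\prod_{p}H_{p}$ into its Sylow subgroups, you show that for each prime $\ell\notin\pi$ the hypothesis forces $\gamma_{s}(H_{\ell})^{q}=\gamma_{s}(H_{\ell})\leq\gamma_{s+1}(H_{\ell})$, hence $\gamma_{s}(H_{\ell})=\gamma_{s+1}(H_{\ell})$ and so $\gamma_{s}(H_{\ell})=1$; the $\ell$-component of $Z_{s}/\mathrm{Z}(H)$ then vanishes and what remains is a finite product of pro-$p$ groups with $p\in\pi$. The paper instead argues globally with the chain $(Z_{n})_{n\geq s}$: it propagates the hypothesis to $H_{n}^{q}\leq H_{n+1}$ and then $Z_{n}^{q}\leq Z_{n+1}$ for all $n\geq s$, notes that $\bigcap_{n\geq s}Z_{n}=\mathrm{Z}(H)$ (since $\bigcap_{n}H_{n}=1$ for a pronilpotent group), and deduces that each $Z_{s}/Z_{n}$ is a finite $\pi$-group, being a f.g.\ nilpotent profinite group of exponent dividing $q^{n-s}$; thus $Z_{s}/\mathrm{Z}(H)$ is an inverse limit of finite $\pi$-groups. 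Your version is arguably more transparent and yields the stronger conclusion that the Sylow pro-$\ell$ subgroups of $H$ for $\ell\notin\pi$ are nilpotent of class less than $s$; its cost is the bookkeeping that $\gamma_{n}$, centres, quotients and preimages distribute over the (possibly infinite) product, which ultimately rests on the closedness of $\gamma_{n}(H)$ and $\gamma_{n}(H_{p})$ from Theorem \ref{NSSC}(ii) — the same input the paper uses, deployed differently. The paper's filtration argument avoids the primary decomposition entirely and stays closer to the quantities $Z_{n}$ that are actually used in the proof of Theorem \ref{T3}. Both arguments are sound.
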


\begin{proof}
We begin with some properties of the series $(H_{n})$.

\emph{(i) }$H_{n}^{q}\leq H_{n+1}$ for each $n\geq s.$ Proof by induction on
$n$. Let $n\geq s$ and suppose that $H_{n}^{q}\leq H_{n+1}$. Now $H_{n+1}$ is
generated by elements $[x,h]$ with $x\in H_{n}$ and $h\in H$. These satisfy%
\[
\lbrack x,h]^{q}\equiv\lbrack x^{q},h]\equiv1~(\operatorname{mod}H_{n+2}).
\]
As $H_{n+1}/H_{n+2}$ is abelian it follows that $H_{n+1}^{q}\leq H_{n+2}$.

\emph{(ii) }$Z_{n}^{q}\leq Z_{n+1}$ for each $n\geq s.$ To see this, let $z\in
Z_{n}$ and $h\in H$. Then%
\[
\lbrack z^{q},h]\equiv\lbrack z,h]^{q}\equiv1~(\operatorname{mod}H_{n+1})
\]
by (i) so $z^{q}\in Z_{n+1}$.

\emph{(iii)} $\bigcap_{n=s}^{\infty}Z_{n}=\mathrm{Z}(H).$ \ This is immediate
from the fact that $\bigcap_{n=s}^{\infty}H_{n}=1$, which holds because $H$ is pronilpotent.

To conclude the proof, observe that the subgroups $H_{n},$ and therefore also
$Z_{n},$ are closed in $H$. Let $n>s$. Then $H/Z_{n}$ is a f.g.\ nilpotent
profinite group and $Z_{s}/Z_{n}$ has exponent dividing $q^{n-s}$, so
$Z_{s}/Z_{n}$ is a finite $\pi$-group, and $Z_{n}$ is open in $Z_{s}$. The
claim now follows from (iii).
\end{proof}

\begin{lemma}
\label{dcc}Let $G\in\mathcal{C}_{\pi}$ have finite rank, and let
$(G_{n})_{n\in\mathbb{N}}$ be a descending chain of closed normal subgroups of
$G$. Then there exists $s$ such that $G_{n}/G_{n+1}$ is finite for each $n\geq
s$.
\end{lemma}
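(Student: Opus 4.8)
The plan is to exploit the dimension function $\mathrm{Dim}$ on $\mathcal{C}_{\pi}$ groups of finite rank and its additivity (Corollary \ref{dims}). First I would record the closure properties we need. Each $G_{n}$, being a closed subgroup of $G$, is pronilpotent and pro-$\pi$, hence lies in $\mathcal{C}_{\pi}$, and $\mathrm{rk}(G_{n})\leq\mathrm{rk}(G)$; similarly each quotient $G_{n}/G_{n+1}$ is a $\mathcal{C}_{\pi}$ group of finite rank, and $G_{n+1}\vartriangleleft_{c}G_{n}$. Applying Corollary \ref{dims} inside each Sylow component of $G_{n}$ (so to the pro-$p_{t}$ group $(G_{n})_{t}$ with its closed normal subgroup $(G_{n+1})_{t}$) and summing over $t$ gives, for every $n$,
\[
\mathrm{Dim}(G_{n})=\mathrm{Dim}(G_{n+1})+\mathrm{Dim}(G_{n}/G_{n+1}).
\]

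Next, since $\mathrm{Dim}(G_{n}/G_{n+1})\geq0$, the displayed identity shows that $\bigl(\mathrm{Dim}(G_{n})\bigr)_{n}$ is a non-increasing sequence of non-negative integers, hence eventually constant: there is an $s$ with $\mathrm{Dim}(G_{n})=\mathrm{Dim}(G_{s})$ for all $n\geq s$. For such $n$ the identity forces $\mathrm{Dim}(G_{n}/G_{n+1})=0$.

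Finally I would use the fact that a $\mathcal{C}_{\pi}$ group $K$ of finite rank with $\mathrm{Dim}(K)=0$ is finite: each Sylow pro-$p$ component of $K$ then has dimension $0$, so by Theorem \ref{pow} it has an open (uniform) subgroup of rank $0$, i.e.\ the trivial subgroup, and is therefore finite. Applying this to $K=G_{n}/G_{n+1}$ for each $n\geq s$ completes the argument, with the same $s$.

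I do not expect a serious obstacle here; the only points that need a word of care are that all the relevant subgroups and subquotients remain inside $\mathcal{C}_{\pi}$ with finite rank, so that $\mathrm{Dim}$ and the additivity of Corollary \ref{dims} are legitimately available, and that $\mathrm{Dim}=0$ characterises finiteness within this class — both being routine consequences of the structure theory for $p$-adic analytic groups recalled above. (Alternatively, one could run the same computation through Corollary \ref{finnorm}: if infinitely many $G_{n}/G_{n+1}$ were infinite then $\mathrm{Dim}$ would strictly decrease infinitely often, which is impossible.)
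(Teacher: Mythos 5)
Your proof is correct and follows essentially the same route as the paper's: the paper observes that $\mathrm{Dim}(G/G_{n})$ is non-decreasing and bounded by $\mathrm{Dim}(G)$, hence eventually stationary, and then invokes the additivity of dimension (Corollary \ref{dims}, via Corollary \ref{finnorm}) to conclude finiteness of the relevant subquotients. Your dual formulation with $\mathrm{Dim}(G_{n})$ non-increasing, together with the observation that $\mathrm{Dim}=0$ forces finiteness, is the same argument.
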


\begin{proof}
The sequence $\mathrm{Dim}(G/G_{n})$ is non-decreasing and bounded by
$\mathrm{Dim}(G)$, so it becomes stationary at some point $n=s$. Then
$G_{s}/G_{n}$ is finite for all $n\geq s,$ by Corollary \ref{dims}.
\end{proof}

\medskip

Now let $\pi$ be a finite set of primes, let $G\in\mathcal{C}_{\pi}$ have
finite rank, and assume that $G$ has an $L$-presentation in $\mathcal{C}_{\pi
}$. For Theorem \ref{T3} we have to establish the equivalence of

\begin{description}
\item[a] $G$ is FA (wrt $L$) in the class of all pronilpotent groups;

\item[b] $G$ is FA (wrt $L$) in the class of all pronilpotent virtually
pro-$\pi$ groups of finite rank;

\item[c] $G$ satisfies the O-S condition.
\end{description}

Theorem \ref{extrathm} asserts that these follow from

\begin{description}
\item[d] $G/\gamma_{m}(G)$ satisfies the O-S condition for some $m\geq2.$
\end{description}

The proof that \textbf{(a)} $\Longrightarrow$ \textbf{(b)} $\Longrightarrow$
\textbf{(c)} is the same as in the preceding subsection (proof of Theorem
\ref{newT2}).

Fix $d=\mathrm{d}(G).$ By Theorem \ref{newT1}, there is a sentence $\sigma
_{G}$ such that for any $H\in\mathcal{C}_{\pi}$, $H\models\sigma_{G}$ iff
$H\cong G$.

Suppose that \textbf{(d)} holds. We have $\gamma_{m}(G)=\Gamma_{m,f}(G)$ where
$f=f(d,m)$. By Theorem \ref{newT2}, there is a sentence $\Sigma$ such that a
profinite group $L$ satisfies $\Sigma$ iff $L\cong G/\gamma_{m}(G)$.

Let
\[
\xi\equiv\mathrm{s}(\Gamma_{m,f})\wedge\mathrm{lift}(\Gamma_{m,f}%
,\Sigma)\wedge\sigma_{G}%
\]
(note that in any group $H$ satisfying $\mathrm{s}(\Gamma_{m,f})$, the formula
$\Gamma_{m,f}$ defines the normal subgroup $\gamma_{m}(H)$, so $\mathrm{lift}%
(\Gamma_{m,f},\Sigma)$ makes sense). Now suppose that $H$ is a pronilpotent
group and that $H\models\xi$. Then $\Gamma_{m,f}(H)=\gamma_{m}(H)$ is a closed
normal subgroup of $H,$ and $H/\gamma_{m}(H)\cong G/\gamma_{m}(G)$ is a
pro-$\pi$ group. As $m\geq2$ this now implies that $H$ is a pro-$\pi$ group
(as in the proof of Theorem \ref{newT2}, above). Thus $H\in\mathcal{C}_{\pi}$
and so $H\cong G$. Thus $\xi$ determines $G$ among all pronilpotent groups,
and \textbf{(a)} holds.\medskip

Suppose now that \textbf{(c)} holds. According to Lemma \ref{dcc}, there
exists $s\geq2$ such that $\gamma_{s}(G)/\gamma_{s+1}(G)$ is finite. Then
$\gamma_{s}(G)^{q}\leq\gamma_{s+1}(G)$ for some $\pi$-number $q$. Putting
$f=f(d,s)$ and $f^{\prime}=f(d,s+1),$ we see that $G$ satisfies%
\[
\eta\equiv\forall x.\left(  \Gamma_{s,f}(x)\rightarrow\Gamma_{s+1,f^{\prime}%
}(x^{q})\right)  .
\]
Condition \textbf{(c)} implies that $G$ satisfies $\theta_{q^{\prime}},$
defined in (\ref{theta}), for some $\pi$-number $q^{\prime}$. Let $\psi_{\pi}$
be as in Lemma \ref{prop}. Then $G/\gamma_{s}(G)\models\psi_{\pi}$ so
$G\models\mathrm{lift}(\Gamma_{s,f},\psi_{\pi})$.

Now put%
\[
\Sigma\equiv\mathrm{s}(\Gamma_{s,f})\wedge\eta\wedge\theta_{q^{\prime}}%
\wedge\mathrm{lift}(\Gamma_{s,f},\psi_{\pi})\wedge\sigma_{G}.
\]
Let $H$ be a pronilpotent group and define $Z_{n}\geq H_{n}=\gamma_{n}(H)$ as
in Lemma \ref{pronilp}. Suppose that $H$ satisfies $\Sigma$. Then
$H_{s}=\Gamma_{s,f}(H)$ and $H/H_{s}\models\psi_{\pi}$. It follows by Lemma
\ref{prop} that $H/Z_{s}$ is a pro-$\pi$ group.

As $H\models\eta$ we have $H_{s}^{q}\subseteq\Gamma_{s+1,f^{\prime}%
}(H)\subseteq H_{s+1}$, so $Z_{s}/\mathrm{Z}(H)$ is a pro-$\pi$ group, by
Lemma \ref{pronilp}. As $H\models\theta_{q^{\prime}}$, we have $\mathrm{Z}%
(H)^{q^{\prime}}\leq H^{\prime}$. It follows that $H/H^{\prime}$ is a
pro-$\pi$ group, and hence (as before) that $H$ is pro-$\pi$. As
$H\models\sigma_{G}$ it follows that $H\cong G$.

Thus $\Sigma$ determines $G$, and so \textbf{(c)} implies \textbf{(a)}.

\section{Special linear groups\label{slnsec}}

Here we show how Theorem \ref{slsl} may be deduced, quite neatly, from
Theorems \ref{newT2} and \ref{newT1}. The result for $\mathrm{SL}%
_{n}(\mathbb{Z}_{p})$ with $n\geq3$ (and all $p$) was subsequently established
in \cite{ST} using bi-interpretability; the present relatively simple
group-theoretic proof may be adaptable to other algebraic groups, where
bi-interpretability is not known.

For the proof of Theorem \ref{slsl}, we start with the congruence subgroup%
\[
G=\mathrm{SL}_{n}^{1}(\mathbb{Z}_{p})=\ker\left(  \mathrm{SL}_{n}%
(\mathbb{Z}_{p})\rightarrow\mathrm{SL}_{n}(\mathbb{F}_{p})\right)  .
\]
Until further notice we assume that $n\geq3.$ We also assume for convenience
that $p\nmid2n.$

Note that $G$ is a uniform pro-$p$ group; this is easy to see directly, or
from \cite{DDMS}\textbf{, }Theorem 5.2 and Lemma \ref{add-dim} above .

Set $u_{ij}=1+pe_{ij},$ $v_{ij}=1-pe_{ji}$ for the elementary upper and lower
triangular matrices ($1\leq i<j\leq n$). The $u_{ij}$ (respectively $v_{ij}$)
form a basis for the full upper (respectively lower) unitriangular group $U$
(resp. $V$) in $G$, each of which is a nilpotent uniform pro-$p$ group that
satisfies the O-S condition (Subsection \ref{fapg}).

Put $T_{p}=1+p\mathbb{Z}_{p}$. Let $\eta\in T_{p}\setminus T_{p}^{p}$ satisfy
$\eta^{2p}=(1+p^{2})^{-1}$, and $\zeta_{i}\in T_{p}$ satisfy $\zeta^{n}%
\eta^{n-2i}=1.$ Set%
\[
h_{i}=\zeta_{i}1_{n}\cdot\mathrm{diag}(\eta^{-1},\ldots,\eta^{-1},\eta
,\ldots,\eta)~\text{for}~i=1,\ldots,n-1,
\]
where the last $\eta^{-1}$ occurs in the $i$th place and the first $\eta$ in
the $(i+1)$th place. Then $h_{1},\ldots,h_{n-1}$ form a basis for the diagonal
group $H$ in $G$, which is a free abelian pro-$p$ group. Note that%
\begin{equation}
G=V\cdot H\cdot U. \label{gvhu}%
\end{equation}

We take $\overline{u}=(u_{ij}),~\overline{v}=(v_{ij})$ and $\overline
{h}=(h_{i})$ as parameters.

\begin{theorem}
\label{sln_1thm copy(1)}$(G,(\overline{u},\overline{v},\overline{h}))$ is FA
in the class of all profinite groups.
\end{theorem}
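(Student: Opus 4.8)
The plan is to bootstrap from Theorems \ref{newT1} and \ref{newT2}, using the (unique) Gauss decomposition \eqref{gvhu} of $G$. First I would apply Theorem \ref{newT2} to the nilpotent uniform pro-$p$ groups $U$ and $V$: each satisfies the O-S condition (Subsection \ref{fapg}), and each is the pro-$p$ completion of a finitely generated nilpotent group, hence has an $L_{\mathrm{gp}}$-presentation; so Theorem \ref{newT2} supplies formulas $\sigma_U(\overline{x})$ and $\sigma_V(\overline{x})$ of $L_{\mathrm{gp}}$ with $M\models\sigma_U(\overline{y})\iff(M,\overline{y})\cong(U,\overline{u})$ (and likewise for $V$) for every profinite group $M$ and tuple $\overline{y}$. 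Next I would apply Theorem \ref{newT1} to $G$ itself: it has finite rank, being uniform, and for $n\geq3$ it is strictly finitely presented as a pro-$p$ group, so it has an $L_{\mathrm{gp}}$-presentation, which may be taken on the generating tuple $\overline{a}=(\overline{u},\overline{v},\overline{h})$ (the $h_i$ being recovered, where needed, by $L_{\mathrm{gp}}$-formulas over $(\overline{u},\overline{v})$). This yields a formula $\tau_G(\overline{x})$ with $(H,\overline{x})\cong(G,\overline{a})$ whenever $H\in\mathcal{C}_{\{p\}}$ and $H\models\tau_G(\overline{x})$.

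It remains to build a sentence that, applied to an arbitrary profinite group $H$, forces $H\in\mathcal{C}_{\{p\}}$; conjoining it with $\tau_G$ then gives the witness $\sigma_G$. I would impose the pro-$p$ condition by locating copies of $U$, $V$ and the diagonal torus inside $H$. Much as for $\mathrm{SL}_2$ in the proof of Theorem \ref{sl2_biint copy(1)}, I would first manufacture — via centralizers, conjugacy classes and the torus action, reduced to closed subgroups by Lemma \ref{subgp} — formulas $\rho_U,\rho_V,\rho_T$ with parameters among $\overline{a}$ that define closed subgroups in \emph{every} profinite group and that in $G$ cut out $U$, $V$ and $T$; here $p\nmid n$ is used, since it makes $\mathrm{Z}(G)=1$, so $T$ acts faithfully on $U$ and these centralizer descriptions are exact. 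The pro-$p$-forcing sentence then asserts: $\mathrm{res}(\rho_U(\overline{x}),\sigma_U(\overline{x}_u))$ and $\mathrm{res}(\rho_V(\overline{x}),\sigma_V(\overline{x}_v))$, so that $U':=\rho_U(H;\overline{x})$ and $V':=\rho_V(H;\overline{x})$ are isomorphic to $U$ and $V$ and in particular pro-$p$; that $T':=\rho_T(H;\overline{x})$ is abelian, normalizes $U'$, acts on $U'$ with no nontrivial fixed points, and acts trivially modulo $\Phi(U')$ — confining $T'$ to the pro-$p$ congruence subgroup of $\mathrm{Aut}(U')$, hence making $T'$ pro-$p$; and that $H=V'\cdot T'\cdot U'$ with the three factors pairwise intersecting trivially. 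From the pro-$p$-ness of $V'$, $T'$, $U'$ and the (essentially unique) triangular decomposition of $H$ one deduces $H\in\mathcal{C}_{\{p\}}$. One checks $G\models\sigma_G(\overline{a})$ directly; and if $H\models\sigma_G(\overline{x})$, then $H$ is pro-$p$, so $\tau_G$ yields $(H,\overline{x})\cong(G,\overline{a})$.

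I expect the main obstacle to be the uniform definable location of $U$, $V$ and $T$ inside $G$ — describing these subgroups by group-theoretic formulas that still cut out closed subgroups in an unknown profinite group, the analogue of the delicate part of the $\mathrm{SL}_2$ argument, now with $\binom{n}{2}$ root generators and $n-1$ commuting torus generators to coordinate. A closely related difficulty is that the torus $T\cong\mathbb{Z}_p^{\,n-1}$ is \emph{not} FA as an abstract profinite group (it fails the O-S condition), so its pro-$p$-ness cannot be imposed by an FA-style clause but must be extracted from its action on the located copy of $U$; and one must then verify with care that a profinite group admitting such a triangular decomposition into pro-$p$ factors is itself pro-$p$, which uses the rigidity of the decomposition (its compatibility with the principal congruence filtration of $G$, hence with finite continuous quotients). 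Once the pro-$p$ conclusion is secured, Theorems \ref{newT1} and \ref{newT2} have done the essential work.
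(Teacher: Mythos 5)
Your architecture coincides with the paper's: definable closed copies of $U$, $V$ and the torus cut out by centralizer formulas; $\sigma_U,\sigma_V$ supplied by Theorem \ref{newT2}; the torus forced to be pro-$p$ by making it abelian, $p$-torsion-free, and acting faithfully on the located copy of $U$ and trivially modulo $\Phi(U)$; the triangular product decomposition then forcing the ambient group to be pro-$p$; and a final appeal to Theorem \ref{newT1}. All of that matches the paper's proof almost clause for clause.

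There is, however, one genuine gap: the input you feed to Theorem \ref{newT1}. You assert that for $n\geq3$ the group $G=\mathrm{SL}_n^1(\mathbb{Z}_p)$ is strictly finitely presented as a pro-$p$ group, hence has an $L_{\mathrm{gp}}$-presentation, and that this presentation ``may be taken on'' the tuple $(\overline{u},\overline{v},\overline{h})$. Neither half is justified, and this is precisely the step the paper flags as ``not quite immediate'' (its Claim 4). The canonical pro-$p$ presentation of the uniform group $G$ (\cite{DDMS}, Prop.\ 4.32) has relators expressing $[v_{ij},u_{ij}]$, $[u_{ij},h_k]$, $[v_{ij},h_k]$ as products of \emph{non-integral} $p$-adic powers of basis elements --- e.g.\ $u_{ij}^{h_k}=u_{ij}^{\eta^{\pm2}}$ with $\eta\in T_p\setminus T_p^p$ --- so these relators are $\pi$-words, not group words; moreover the $h_i$ are not finite words in $\overline{u},\overline{v}$, so even a strictly finite presentation on some other generating tuple (which would itself seem to require the congruence subgroup property, not a citable triviality) would not transfer to the tuple the theorem is about. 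The paper's resolution is to replace the offending relators by integral ones, such as $[v_{ij},u_{ij}]=h_{i-1}^{p}h_i^{-p}v_{ij}^{-p^2}u_{ij}^{p^2}h_{j-1}^{-p}h_j^{p}$ and $u_{ij}^{h_k^{-p}}=u_{ij}^{1+p^2}$, and to prove that \emph{in the presence of} $\Phi\wedge\Psi$ --- which make $\widetilde{H}$ torsion-free abelian and faithful on $\widetilde{U}$, so that the action of $h$ is determined by that of $h^p$ --- these are equivalent to the canonical relations; only the conjunction $\Delta\wedge\Phi\wedge\Psi$ is an $L_{\mathrm{gp}}$-presentation. Without some such device your appeal to Theorem \ref{newT1} is unsupported. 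The remainder of your argument is sound.
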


\begin{proof}
For brevity, we will say `formula' to mean `formula of $L_{\mathrm{gp}}$ with
parameters $\overline{u},\overline{v},\overline{h}$', except where parameters
are explicitly mentioned. We make the following \emph{claims}.

\begin{enumerate}
\item[(\textbf{1})] $H$ is definable, in fact $H=\chi(\overline{h};G)$ for
some formula $\chi$, such that $\chi$ always defines a closed subgroup in any
profinite group. Indeed, $H=\bigcap_{1}^{n-1}\mathrm{C}_{G}(h_{i})$.

\item[(\textbf{2})] $U$ and $V$ are definable, in fact $U=\varphi
_{1}(\overline{u},\overline{h};G),~V= \varphi_{2}(\overline{v},\overline
{h};G)$ for some formulae $\varphi_{1},~\varphi_{2}$, that always define
closed subsets in any profinite group. This follows from the fact that $U$ is
the product (in a suitable order) of the `elementary' subgroups $U_{ij}%
=\overline{\left\langle u_{ij}\right\rangle }=\mathrm{Z}(\mathrm{C}_{G}%
(u_{ij})),$ and similarly for $V$.

\item[(\textbf{3})] $U$ has an $L_{\mathrm{gp}}$ presentation on $\overline
{u}$ as a pro-$p$ group; $V$ has an $L_{\mathrm{gp}}$ presentation on
$\overline{v}$ as a pro-$p$ group. This follows from \textbf{\cite{DDMS}},
Proposition 4.32, and the usual commutator relations.

\item[(\textbf{4})] $G$ has an $L_{\mathrm{gp}}$ presentation on
$(\overline{u},\overline{v},\overline{h})$ as a pro-$p$ group. This is not
quite immediate. The usual commutator relations provide a pro-$p$ presentation
for the uniform pro-$p$ group $G$. However, some of them cannot be expressed
in $L_{\mathrm{gp}}$ as they involve non-integral powers, and a roundabout
argument is required; this is indicated below.
\end{enumerate}

Given these claims, the proof is concluded as follows.

By Claims \textbf{1} and \textbf{2}, we can construct a formula $\Phi
(\overline{x},\overline{y},\overline{z})$ such that $\Phi(\overline
{u},\overline{v},\overline{h})$ expresses the conjunction of the facts (which
are true in $G$):

\emph{(a)} multiplication maps $V\times H\times U$ bijectively to $G$,

\emph{(b) }$[V,H]\leq V^{p}$ and $\mathrm{C}_{H}(V)=1$,

\emph{(c) }$[U,H]\leq U^{p}$ and $\mathrm{C}_{H}(U)=1,$

\emph{(d)} $H$ is abelian and has no $p$-torsion.

As $U$ and $V$ are nilpotent pro-$p$ groups satisfying the O-S condition, and
given Claim \textbf{3}, Theorem \ref{newT2} provides formulae $\sigma
_{U}(\overline{x})$, $\sigma_{V}(\overline{x})$ that determine $(U,\overline
{u})$ and $(V,\overline{v})$ among all profinite groups. Let $\Psi
(\overline{x},\overline{y},\overline{z})$ express the conjunction of the following:

\emph{(e) \ }$\mathrm{s}(\varphi_{1}(\overline{x},\overline{z}))\wedge
\mathrm{s}(\varphi_{2}(\overline{y},\overline{z}))$, i.e. $\varphi
_{1}(\overline{x},\overline{z})$ and $\varphi_{2}(\overline{x},\overline{z})$
define subgroups,

\emph{(f) \ }$\mathrm{res}(\varphi_{1}(\overline{x},\overline{z}),\sigma
_{U}(\overline{x})),$ i.e. the subgroup defined by $\varphi_{1}(\overline
{x},\overline{z})$ satisfies $\sigma_{U}(\overline{x}),$

\emph{(g) \ }$\mathrm{res}(\varphi_{2}(\overline{y},\overline{z}),\sigma
_{V}(\overline{y})),$ i.e. the subgroup defined by $\varphi_{2}(\overline
{y},\overline{z})$ satisfies $\sigma_{V}(\overline{y}).$

\medskip Now suppose that $\widetilde{G}$ is a profinite group and
$\overline{u}^{\thicksim},\overline{v}^{\thicksim},\overline{h}^{\thicksim}$
are tuples in $\widetilde{G}$ of the appropriate lengths such that
\[
\widetilde{G}\models\Phi(\overline{u}^{\thicksim},\overline{v}^{\thicksim
},\overline{h}^{\thicksim})\wedge\Psi(\overline{u}^{\thicksim},\overline
{v}^{\thicksim},\overline{h}^{\thicksim}).
\]
Let $\widetilde{U},~\widetilde{V},~\widetilde{H}$ be the subsets of
$\widetilde{G}$ defined by $\varphi_{1}(\overline{u}^{\thicksim},\overline
{h}^{\thicksim}),~\varphi_{2}(\overline{v}^{\thicksim},\overline{h}%
^{\thicksim}),~\chi(\overline{h}^{\thicksim})$. Then $\Psi$ ensures that
$\widetilde{U}\cong U$ and $\widetilde{V}\cong V$ are pro-$p$ groups,
generated respectively by $\overline{u}^{\thicksim},~\overline{v}^{\thicksim}%
$. Also $\Phi$ ensures that $\widetilde{H}$ is closed, normalizes
$\widetilde{V}$, acting faithfully by conjugation, and that $[\widetilde{V}%
,\widetilde{H}]\subseteq\widetilde{V}^{p}$; this now implies that
$\widetilde{H}$ is a pro-$p$ group, and hence that $\widetilde{V}%
\cdot\widetilde{H}$ is a pro-$p$ group. $\Phi$ also ensures that
$\widetilde{G}=\widetilde{V}\cdot\widetilde{H}\cdot\widetilde{U}$. Since a
product of two pro-$p$ subgroups is again pro-$p$, \ it now follows
that $\widetilde{G}$ is a pro-$p$ group.

Finally, Claim \textbf{4 }with Theorem \ref{newT1} provides a formula
$\sigma_{G}(\overline{x},\overline{y},\overline{z})$ that determines
$(G,\overline{u},\overline{v},\overline{h})$ among pro-$p$ groups. It follows
that
\[
\Phi(\overline{x},\overline{y},\overline{z})\wedge\Psi(\overline{x}%
,\overline{y},\overline{z})\wedge\sigma_{G}(\overline{x},\overline
{y},\overline{z})
\]
determines $(G,\overline{u},\overline{v},\overline{h})$ among all profinite groups.

\medskip

\emph{Proof of Claim}\textbf{ 4.} \textbf{\cite{DDMS}}, Proposition 4.32 show
that $G$ has a presentation as a pro-$p$ group with relations that express the
commutators of basis elements $v_{ij},~u_{ij},~h_{k}$ in canonical form, as
products of $p$-adic powers in a fixed order (see the proof of Lemma
\ref{sfpres}). \ In most cases, this canonical form is either $1$ or $x^{\pm
p}$ where $x$ is a basis element; let us call the corresponding set of
relations $\mathcal{R}_{1}$. The exceptions are those which express
$[v_{ij},u_{ij}],$ $[u_{ij},h_{k}],$ $[v_{ij},h_{k}],$ where the canonical
form involves non-integral powers of basis elements; let us call this set of
relations $\mathcal{R}_{2}$.

Relations in $\mathcal{R}_{2}$ of the first kind can be re-written as
\begin{equation}
\lbrack v_{ij},u_{ij}]=h_{i-1}^{p}h_{i}^{-p}v_{ij}^{-p^{2}}u_{ij}^{p^{2}%
}h_{j-1}^{-p}h_{j}^{p}. \label{niceone}%
\end{equation}

For the others, one verifies that%
\begin{align}
u_{ij}^{h_{k}^{-p}}  &  =u_{ij}^{1+p^{2}}~~(i\leq k<j)\label{nice(uvh)}\\
v_{ij}^{h_{k}^{p}}  &  =v_{ij}^{1+p^{2}}~~(i\leq k<j). \label{niceuvh2}%
\end{align}

Now we have shown above that given Claims 1, 2 and 3, there are formulae
$\Phi$ and $\Psi$ such that for any profinite group $\widetilde{G}$, if
$\widetilde{G}\models\Phi(\overline{u}^{\thicksim},\overline{v}^{\thicksim
},\overline{h}^{\thicksim})\wedge\Psi(\overline{u}^{\thicksim},\overline
{v}^{\thicksim},\overline{h}^{\thicksim})$ then $\widetilde{H}$ is abelian,
has no $p$-torsion, and acts faithfully on $\widetilde{U}$. In this situation,
the action of $h\in\widetilde{H}$ on $\widetilde{U}$ is determined by the
action of $h^{p}$, and similarly for the action on $\widetilde{V}$. This now
implies that (\ref{nice(uvh)}), (\ref{niceuvh2}) are equivalent to the
`canonical' relations specifying $[u_{ij},h_{k}]$ and $[v_{ij},h_{k}]$.

Let $\Delta(\overline{u},\overline{v},\overline{h})$ be a formula that
expresses the relations $\mathcal{R}_{1}$ together with (\ref{niceone}),
(\ref{nice(uvh)}) and (\ref{niceuvh2}). The preceding argument shows that
$\Delta\wedge\Phi\wedge\Psi$ is equivalent to the conjunction of $\Phi
\wedge\Psi$ with the original set of relations $\mathcal{R}_{1}\cup
\mathcal{R}_{2}$. As the latter give a pro-$p$ presentation of $G$, it follows
that $\Delta\wedge\Phi\wedge\Psi$ is an $L_{\mathrm{gp}}$ presentation of $G$
as a pro-$p$ group.
\end{proof}

\bigskip

\textbf{The case} $n=2$. \ In the above argument, the hypothesis $n\geq3$ is
only essential to ensure that $U$ and $V$ satisfy the O-S condition, which in
turn is only used to establish that $\widetilde{U}\cong U$ and $\widetilde{V}%
\cong V$ are pro-$p$ groups. If $n=2$, this has to be established by a
different route. The idea is to show that the ring $\mathbb{Z}_{p}$ can be
interpreted in $G$ by the definable subgroup $U$, and then use the fact that
$\mathbb{Z}_{p}$ is FA in the class of rings whose additive group is profinite
(see \S \ref{profringsec} and the proof of Theorem \ref{sl2_biint copy(1)}).
Together, these allow us to express the fact that $U\cong\mathbb{Z}_{p}$ by a
suitable formula with parameters $u,h.$ The same applies with $V,v$ in place
of $U,u$. With these formulae in place of $\mathrm{res}(\varphi_{1}%
(\overline{x},\overline{z}),\sigma_{U}(\overline{x}))$ and $\mathrm{res}%
(\varphi_{2}(\overline{y},\overline{z}),\sigma_{V}(\overline{y}))$ in the
definition of $\Psi(\overline{x},\overline{y},\overline{z})$, one finds that
$\widetilde{U}\cong\widetilde{V}\cong\mathbb{Z}_{p}$, and the argument then
proceeds as before.

\medskip

Finally, we go from $G$ to the full linear group; here $n$ may be any integer
$\geq2$.

Set $u_{0}=1+e_{12},$ and let $w=e_{12}+\cdots+e_{n-1,n}\pm e_{n,1}$ be the
permutation matrix for the $n$-cycle $(12\ldots n),$ adjusted to have
determinant equal to $1$. We need two easy lemmas:

\begin{lemma}
Let $m\in\mathbb{Z}$ with $m\equiv1~(\operatorname{mod}p)$. There is a formula
$\chi_{m}(\overline{y},\overline{z},x)$ such that%
\[
G\models\chi_{m}(\overline{v},\overline{h},x)\Longleftrightarrow
x=h(m^{-1}):=\mathrm{diag}(m,m^{-1},1,\ldots,1).
\]

\end{lemma}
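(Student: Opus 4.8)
The plan is to pin down $h(m^{-1})$ as the unique element of the diagonal subgroup $H$ whose conjugation action on the elementary lower-unitriangular subgroups $\overline{\langle v_{ij}\rangle}\cong\mathbb{Z}_{p}$ has prescribed ``eigenvalue ratios''. The one feature that makes this possible inside $L_{\mathrm{gp}}$ is that $m$ is an ordinary integer: the maps $t\mapsto t^{m}$ and $t\mapsto t^{m^{2}}$ are genuine group words, so wherever the conjugation action of a diagonal matrix carries a root element $v_{ij}$ to a \emph{non-integral} $p$-adic power, we can still express the relation after clearing the denominator.

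First I would recall from Claim~\textbf{1} in the proof of Theorem~\ref{sln_1thm copy(1)} that $H=\bigcap_{i=1}^{n-1}\mathrm{C}_{G}(h_{i})$, so that ``$x\in H$'' is captured by $\bigwedge_{i=1}^{n-1}[x,h_{i}]=1$, and that every $x\in H$ has the form $x=\mathrm{diag}(t_{1},\ldots,t_{n})$ with $t_{1},\ldots,t_{n}\in T_{p}=1+p\mathbb{Z}_{p}$ and $t_{1}\cdots t_{n}=1$; I would also record the conjugation identity $x^{-1}v_{ij}x=v_{ij}^{\,t_{i}/t_{j}}$, the exponent being the profinite power in $\overline{\langle v_{ij}\rangle}\cong\mathbb{Z}_{p}$, on which $\mu\mapsto v_{ij}^{\mu}$ is injective. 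I then let $\chi_{m}(\overline{y},\overline{z},x)$ be the conjunction of the following: ``$x$ commutes with every $z_{i}$''; ``$x^{-1}y_{ij}x=y_{ij}$'' for all $3\leq i<j\leq n$; ``$(x^{-1}y_{2j}x)^{m}=y_{2j}$'' for all $3\leq j\leq n$; and ``$x^{-1}y_{12}x=y_{12}^{m^{2}}$'', with conjunctions over empty index sets (as happens when $n=2$ or $n=3$) read as vacuous. Every power occurring here is a group word because $m\in\mathbb{Z}$, so $\chi_{m}$ is an $L_{\mathrm{gp}}$-formula in the free variables $\overline{y},\overline{z},x$.

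That $G\models\chi_{m}(\overline{v},\overline{h},h(m^{-1}))$ is then a direct matrix check: for $x=\mathrm{diag}(m,m^{-1},1,\ldots,1)$ one has $t_{i}/t_{j}=1$ when $3\leq i<j$, $t_{2}/t_{j}=m^{-1}$ when $3\leq j\leq n$, and $t_{1}/t_{2}=m^{2}$, so the relevant clauses become $v_{2j}^{\,m^{-1}\cdot m}=v_{2j}$ and $v_{12}^{\,m^{2}}=v_{12}^{\,m^{2}}$. Conversely, suppose $G\models\chi_{m}(\overline{v},\overline{h},x)$; then $x=\mathrm{diag}(t_{1},\ldots,t_{n})\in H$, and reading the three families of conjugation clauses through $x^{-1}v_{ij}x=v_{ij}^{\,t_{i}/t_{j}}$ together with injectivity of $\mu\mapsto v_{ij}^{\mu}$ gives $t_{i}=t_{j}$ for $3\leq i<j\leq n$ (so $t_{3}=\cdots=t_{n}=:s$ when $n\geq3$), $mt_{2}=t_{j}$ for $3\leq j\leq n$, and $t_{1}=m^{2}t_{2}$. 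Substituting into $t_{1}\cdots t_{n}=1$ yields $s^{n}=1$ when $n\geq3$, and $(mt_{2})^{2}=1$ when $n=2$; since $1+p\mathbb{Z}_{p}$ is torsion-free ($p$ being odd, as $p\nmid 2n$), we conclude $s=1$ (resp.\ $mt_{2}=1$), whence $t_{1}=m$, $t_{2}=m^{-1}$ and $t_{3}=\cdots=t_{n}=1$, i.e.\ $x=h(m^{-1})$.

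The only genuine subtlety is the point flagged at the outset: the action of $h(m^{-1})$ on $v_{2j}$ ($j\geq3$) is by the $p$-adic unit $m^{-1}$, not by an integer, so the naive relation $x^{-1}v_{2j}x=v_{2j}^{m^{-1}}$ is not expressible in $L_{\mathrm{gp}}$; writing it instead as $(x^{-1}v_{2j}x)^{m}=v_{2j}$ is precisely the manoeuvre that keeps the whole definition inside $L_{\mathrm{gp}}$ (no passage to $L_{\{p\}}$ is needed, exactly because $m$ is a rational integer). The remaining point is mere bookkeeping: one checks that the empty conjunctions for $n=2$ and $n=3$ do no harm, and that for $n=2$ the single clause $x^{-1}v_{12}x=v_{12}^{m^{2}}$ together with $\det=1$ already forces $x=h(m^{-1})$.
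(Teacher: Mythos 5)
Your argument is correct and complete: you pin down $h(m^{-1})$ inside $H=\bigcap_i\mathrm{C}_G(h_i)$ by its conjugation action on the root subgroups $\overline{\langle v_{ij}\rangle}\cong\mathbb{Z}_p$, and the device of writing $(x^{-1}v_{2j}x)^m=v_{2j}$ in place of the inexpressible $x^{-1}v_{2j}x=v_{2j}^{m^{-1}}$ is exactly what makes $\chi_m$ an $L_{\mathrm{gp}}$-formula. The paper states this as an ``easy lemma'' and omits the proof entirely, so there is nothing to compare against, but your verification (including the determinant-plus-torsion-freeness step that forces $t_3=\cdots=t_n=1$, and the separate check for $n=2$) is evidently the intended argument and has no gaps.
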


\begin{lemma}
Put $m=1+p$. Then%
\[
u_{0}^{-1}v_{12}u_{0}=v_{12}^{m^{-1}}h(m^{-1})u_{12}^{m^{-1}},~~\ (u_{0}%
^{-1}h_{1}u_{0})^{p}=u_{12}^{p}h_{1}^{p}.
\]

\end{lemma}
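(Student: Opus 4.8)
The plan is a direct matrix computation, the statement being a pair of identities in $\mathrm{SL}_n(\mathbb{Z}_p)$. First I would note that every matrix appearing in the two identities is the identity matrix outside the upper-left $2\times 2$ block, apart from $h_1$, which on the coordinates $3,\dots,n$ is the scalar $\zeta_1\eta$. Since $u_0$, $u_{12}$, $v_{12}$ fix those coordinates, both sides of each identity visibly agree on the diagonal block indexed by $\{3,\dots,n\}$, so the whole verification reduces to a computation with $2\times 2$ matrices. Throughout I write $a=\zeta_1\eta^{-1}$ and $b=\zeta_1\eta$ for the first two diagonal entries of $h_1$, so that $a/b=\eta^{-2}$, and I would use repeatedly that $(1-p\,e_{21})^{\lambda}=1-\lambda p\,e_{21}$ and $(1+p\,e_{12})^{\lambda}=1+\lambda p\,e_{12}$ for $\lambda\in\mathbb{Z}_p$, since $e_{12}$ and $e_{21}$ square to zero in the $2\times2$ block.

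For the first identity, in the $2\times 2$ block $u_0=\begin{pmatrix}1&1\\0&1\end{pmatrix}$ and $v_{12}=\begin{pmatrix}1&0\\-p&1\end{pmatrix}$, and multiplying out gives $u_0^{-1}v_{12}u_0=\begin{pmatrix}1+p&p\\-p&1-p\end{pmatrix}$. On the right-hand side $v_{12}^{m^{-1}}=\begin{pmatrix}1&0\\-m^{-1}p&1\end{pmatrix}$, $h(m^{-1})=\begin{pmatrix}m&0\\0&m^{-1}\end{pmatrix}$, $u_{12}^{m^{-1}}=\begin{pmatrix}1&m^{-1}p\\0&1\end{pmatrix}$, whose product is $\begin{pmatrix}m&p\\-p&m^{-1}(1-p^2)\end{pmatrix}$; substituting $m=1+p$ and $m^{-1}(1-p^2)=(1+p)^{-1}(1-p)(1+p)=1-p$ shows the two sides coincide.

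For the second identity, conjugating $h_1=\mathrm{diag}(a,b)$ by $u_0$ gives the upper-triangular matrix $u_0^{-1}h_1u_0=\begin{pmatrix}a&a-b\\0&b\end{pmatrix}$ in the block. Using $\begin{pmatrix}a&c\\0&b\end{pmatrix}^p=\begin{pmatrix}a^p&c\sum_{j=0}^{p-1}a^{p-1-j}b^{j}\\0&b^p\end{pmatrix}$ with $c=a-b$, the inner sum telescopes to $a^p-b^p$, so $(u_0^{-1}h_1u_0)^p=\begin{pmatrix}a^p&a^p-b^p\\0&b^p\end{pmatrix}$. On the other side $u_{12}^p=1+p^2e_{12}$ and $h_1^p=\mathrm{diag}(a^p,b^p)$, so $u_{12}^ph_1^p=\begin{pmatrix}a^p&p^2b^p\\0&b^p\end{pmatrix}$. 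It therefore remains only to check $a^p-b^p=p^2b^p$, equivalently $(a/b)^p=1+p^2$; but $(a/b)^p=\eta^{-2p}$, and $\eta$ was chosen so that $\eta^{2p}=(1+p^2)^{-1}$, which is exactly this.

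There is no genuine obstacle here: the lemma is a mechanical verification, and the only point requiring care is tracking the $p$-adic units $\eta,\zeta_1$ (the factor $\zeta_1$ drops out of the ratio $a/b$ and occurs symmetrically on both sides of the second identity) and invoking the one defining relation $\eta^{2p}=(1+p^2)^{-1}$. In effect, these two identities are the $L_{\mathrm{gp}}$-expressible rewritings, in the presence of the conjugator $u_0$, of the "canonical" pro-$p$ relations of the type (\ref{niceone}), (\ref{nice(uvh)}), (\ref{niceuvh2}); it is in that role that they later feed, via Theorem \ref{FinExt}, into the extension of the preceding results from $\mathrm{SL}_n^1(\mathbb{Z}_p)$ to $\mathrm{SL}_n(\mathbb{Z}_p)$.
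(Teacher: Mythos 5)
Your computation is correct, and the paper offers no proof at all here -- the statement is introduced as one of ``two easy lemmas'' and left to the reader -- so your direct $2\times2$ matrix verification (reducing to the upper-left block, using $(1+xe_{12})^{\lambda}=1+\lambda xe_{12}$, telescoping $(a-b)\sum a^{p-1-j}b^{j}=a^{p}-b^{p}$, and invoking the defining relation $\eta^{2p}=(1+p^{2})^{-1}$) is exactly the intended argument. All the individual entries check out, including the key identification $(a/b)^{p}=\eta^{-2p}=1+p^{2}$ and the cancellation $m^{-1}(1-p^{2})=1-p$ for $m=1+p$.
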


Now we can deduce

\begin{theorem}
The groups $\mathrm{SL}_{n}(\mathbb{Z}_{p})$ and $\mathrm{PSL}_{n}%
(\mathbb{Z}_{p})$ are FA in the class of all profinite groups.
\end{theorem}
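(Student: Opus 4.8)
The plan is to deduce this from Theorem~\ref{sln_1thm copy(1)} by applying the finite-extension machinery of Theorem~\ref{FinExt}. Write $G=\mathrm{SL}_{n}^{1}(\mathbb{Z}_{p})$ and $\Gamma=\mathrm{SL}_{n}(\mathbb{Z}_{p})$. Then $G\vartriangleleft_{o}\Gamma$ with $\Gamma/G\cong\mathrm{SL}_{n}(\mathbb{F}_{p})$, a finite group, so $\Gamma$ is a finite extension of $G$. Since $\Gamma$ is finitely generated as a profinite group and $\mathrm{char}(\mathbb{F}_{p})$ does not divide $2n$, we are in exactly the situation covered by Theorem~\ref{FinExt}, provided we can verify one of its two hypotheses and provided we have a suitable generating tuple for $\Gamma$ that sits correctly over the tuple $(\overline{u},\overline{v},\overline{h})$ for $G$ from Theorem~\ref{sln_1thm copy(1)}. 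The centre of $G$ is not trivial in general (it can contain scalar matrices), so hypothesis (b) of Theorem~\ref{FinExt} is not directly available; instead I would aim for hypothesis (a), or work modulo the centre. Concretely, first I would choose an explicit generating set for $\Gamma$: the parameters $\overline{u},\overline{v},\overline{h}$ for $G$ together with a finite set of matrices lifting generators of $\mathrm{SL}_{n}(\mathbb{F}_{p})$ --- the element $u_{0}=1+e_{12}$ and the $n$-cycle permutation matrix $w$ introduced just before the statement will do, since these two reduce mod $p$ to generators of $\mathrm{SL}_{n}(\mathbb{F}_{p})$. Call the enlarged tuple $\overline{g}=(\overline{u},\overline{v},\overline{h},u_{0},w)$.

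Next I would check that the abstract subgroup $\langle\overline{g}\rangle$ meets $G$ in exactly $\langle\overline{u},\overline{v},\overline{h}\rangle$, i.e. hypothesis~(a): no new elements of $G$ are manufactured by multiplying the coset representatives $u_{0},w$ together with elements of $\langle\overline{u},\overline{v},\overline{h}\rangle$ beyond what already lies in that subgroup. This is where the two ``easy lemmas'' stated at the end of the excerpt come in: they exhibit the conjugation action of $u_{0}$ (and hence of the generators of $\Gamma/G$) on the generators of $G$ in terms of $\overline{h}$-definable data --- formulas $\chi_{m}$ and the relations $u_{0}^{-1}v_{12}u_{0}=v_{12}^{m^{-1}}h(m^{-1})u_{12}^{m^{-1}}$ and $(u_{0}^{-1}h_{1}u_{0})^{p}=u_{12}^{p}h_{1}^{p}$ --- which let one write each $h_{i}^{g_{j}}$, $u_{ij}^{g_{j}}$, $v_{ij}^{g_{j}}$ as a word (or a definable element) in $\overline{u},\overline{v},\overline{h}$. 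Once the conjugation action is pinned down and the $2$-cocycle for the extension of $G$ by $\mathrm{SL}_{n}(\mathbb{F}_{p})$ is seen to take values in $\langle\overline{u},\overline{v},\overline{h}\rangle$, Theorem~\ref{FinExt}(a) applies directly and gives that $(\Gamma,\overline{g})$ is FA in the class of all profinite groups; in particular $\mathrm{SL}_{n}(\mathbb{Z}_{p})$ is FA.

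For $\mathrm{PSL}_{n}(\mathbb{Z}_{p})=\mathrm{SL}_{n}(\mathbb{Z}_{p})/Z$ where $Z$ is the (finite, central) group of scalar matrices of determinant $1$, I would observe that $Z$ is definable in $\Gamma$ (it is the intersection of the centre with the finitely many cosets it occupies, or simply the set of $p^{?}$-th roots of $1$ among central elements --- a finite central subgroup of a finitely generated profinite group is definable with parameters by Theorem~\ref{NSSC}/Lemma~\ref{findef}). Then one lifts the FA sentence for $(\Gamma,\overline{g})$ through the definable quotient by $Z$ using the $\mathrm{lift}$ operation of Section~\ref{defsub}: a profinite group $H$ satisfies the lifted sentence iff $H\cong\Gamma/Z=\mathrm{PSL}_{n}(\mathbb{Z}_{p})$. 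Alternatively, and perhaps more cleanly, one notes that $\mathrm{PSL}_{n}(\mathbb{Z}_{p})$ is again a finite extension of the image $\overline{G}$ of $G$, that $\overline{G}$ is FA (being a quotient of the FA group $G$ by its definable centre, via the same lifting argument applied inside Theorem~\ref{sln_1thm copy(1)}), and that $\mathrm{Z}(\overline{G})=1$ in the relevant range of $n,p$, so that hypothesis~(b) of Theorem~\ref{FinExt} becomes available; the $\overline{h}$-definability of the conjugates of the generators is again supplied by the two lemmas.

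The main obstacle I anticipate is the bookkeeping in the $n=2$ case and, more seriously, the verification that hypothesis~(a) of Theorem~\ref{FinExt} genuinely holds --- that is, controlling the $2$-cocycle of the extension $1\to G\to\Gamma\to\mathrm{SL}_{n}(\mathbb{F}_{p})\to 1$ and making sure it is valued in the \emph{abstract} subgroup generated by $\overline{u},\overline{v},\overline{h}$ rather than merely in its closure. This requires choosing the coset representatives $t_{i}(\overline{g})$ to be honest words in $u_{0},w$ and in the $u_{ij},v_{ij},h_{k}$, and checking that products $t_{i}t_{j}$ land back in $\langle\overline{u},\overline{v},\overline{h}\rangle t_{s(i,j)}$ --- a computation with explicit matrices that is routine but must be done with care. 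If hypothesis~(a) turns out to be awkward (for instance because the natural coset representatives do not multiply up nicely), the fallback is to pass to $\mathrm{PSL}$ first, where the centre of the normal subgroup is trivial and hypothesis~(b) applies, and then recover $\mathrm{SL}_{n}(\mathbb{Z}_{p})$ as a central extension of $\mathrm{PSL}_{n}(\mathbb{Z}_{p})$ by a finite cyclic group --- again a finite extension, now with the roles arranged so that the relevant definability and rigidity inputs (Theorem~\ref{NSSC}, the strong completeness of $\mathrm{SL}_{n}(\mathbb{Z}_{p})$ from Proposition~\ref{linsc} and its Remark~(i)) are all in hand.
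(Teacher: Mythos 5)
Your high-level strategy (realize $\Gamma=\mathrm{SL}_{n}(\mathbb{Z}_{p})$ as a finite extension of the congruence subgroup $G=\mathrm{SL}_{n}^{1}(\mathbb{Z}_{p})$, use the two lemmas to pin down the conjugation action of $u_{0}$, and invoke Theorem~\ref{FinExt}) is the paper's strategy, but you branch at exactly the wrong point. You assert that $\mathrm{Z}(G)$ ``is not trivial in general (it can contain scalar matrices)'' and therefore abandon hypothesis~(b) in favour of hypothesis~(a). This is a miscalculation: a scalar matrix $\zeta 1_{n}$ lying in the principal congruence subgroup has $\zeta\in 1+p\mathbb{Z}_{p}$ and $\zeta^{n}=1$, and since $p$ is odd the group $1+p\mathbb{Z}_{p}$ is torsion-free, so $\zeta=1$ and $\mathrm{Z}(G)=1$. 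The paper uses precisely this to apply case~(b) of Theorem~\ref{FinExt}: with $\mathrm{Z}(N)=1$ the extension is determined by $N$, $G/N$ and the map to $\mathrm{Out}(N)$, so one never needs to control the $2$-cocycle; all that is required is that each $h_{i}^{g_{j}}$ be $\overline{h}$-definable in $N$, which is exactly what the two lemmas (together with the elementary commutator relations and the uniqueness of expression in $G=VHU$, and unique $p$-th roots in the uniform group $G$) supply. Your route through hypothesis~(a) leaves a genuine gap: you would have to verify that the \emph{abstract} subgroup $\left\langle u_{0},w,\overline{h}\right\rangle$ meets $G$ in exactly $\left\langle \overline{u},\overline{v},\overline{h}\right\rangle$ and that the cocycle is valued there, and you correctly identify this as the hard part --- but there is no reason for it to hold for these generators, and no argument is offered. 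The fix is simply to use case~(b), which was available all along.

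Your treatment of $\mathrm{PSL}_{n}(\mathbb{Z}_{p})$ also has a gap. The $\mathrm{lift}$ operation expresses properties of a definable quotient \emph{inside the ambient group}; it does not convert an axiomatization of $\Gamma$ into an axiomatization of $\Gamma/Z$ among arbitrary profinite groups $H$ (an $H$ satisfying the lifted sentence need not arise as $\Gamma'/Z'$ for anything). The paper's route is cleaner and again rests on $\mathrm{Z}(G)=1$: since the quotient map $\Gamma\rightarrow\mathrm{PSL}_{n}(\mathbb{Z}_{p})$ is injective on $G$ (the centre of $\Gamma$ meets $G$ trivially, by the same torsion-freeness), $\mathrm{PSL}_{n}(\mathbb{Z}_{p})$ is itself a finite extension of an isomorphic copy of $G$, and the identical Theorem~\ref{FinExt}(b) argument applies with tildes throughout. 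There is no need to quotient $G$ by its centre, to define the finite central subgroup $Z$, or to reconstruct $\mathrm{SL}_{n}$ as a central extension of $\mathrm{PSL}_{n}$.
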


\begin{proof}
Write $\symbol{126}:\Gamma=\mathrm{SL}_{n}(\mathbb{Z}_{p})\rightarrow
\widetilde{\Gamma}=\mathrm{PSL}_{n}(\mathbb{Z}_{p})$ for the quotient map. As
$\symbol{126}$ restricts to an injective map on $G$, we may consider both
$\Gamma$ and $\widetilde{\Gamma}$ as finite extensions of $G$. As
$\mathrm{Z}(G)=1$ we may apply Theorem \textbf{\ref{FinExt}}(ii).\textbf{\ }%
The following argument deals with $\Gamma$; the same argument with
$\symbol{126}$ applied to everything will give the result for
$\widetilde{\Gamma}$.

For convenience, we shall allow $\overline{u},\overline{v},\overline{h}$ to
denote the \emph{sets} $\{u_{ij}\ldots\}$ etc., as well ordered tuples. By
conjugating with $w$ and forming commutators and inverses we can obtain every
elementary matrix from $u_{0}$. It follows that
\begin{align*}
G=\overline{\left\langle \overline{u},\overline{v},\overline{h}\right\rangle
}\vartriangleleft &  \Gamma=\overline{\left\langle u_{0},w,\overline
{h}\right\rangle },\\
\left\langle \overline{u},\overline{v},\overline{h}\right\rangle  &
\subseteq\left\langle u_{0},w,\overline{h}\right\rangle .
\end{align*}

Thus it will suffice to verify that $x^{y}$ is $(\overline{u},\overline
{v},\overline{h})$-definable in $G$ for each $x\in\overline{u}\cup\overline
{v}\cup\overline{h}$ and $y\in\{u_{0},w,\overline{h}\}$. This is obvious for
$y\in\overline{h};$ for $y=w$ it follows from the relations $h_{i}^{w}%
=h_{i}^{-1}h_{i+1}h_{i-1}^{w}.$

Now $u_{0}$ commutes with every $u_{ij}$ and every $h_{k}$ for $k\geq2$, and
conjugates each $v_{ij}$ with $(i,j)\neq(1,2)$ into the group $\left\langle
\overline{u},\overline{v}\right\rangle $. So it remains to deal with
$v_{12}^{u_{0}}$ and $h_{1}^{u_{0}}$.

We use the two preceding lemmas, and keep their notation. Set $m=1+p,$%
\begin{align*}
\alpha(\overline{u},\overline{v},\overline{h},x,y,z)  &  :=(x^{m}%
=v_{12})\wedge\chi_{m}(\overline{v},\overline{h},y)\wedge(z^{m}=u_{12}),\\
\varphi(\overline{u},\overline{v},\overline{h},t)  &  :=\exists x,y,z.\left(
t=xyz\wedge\alpha(\overline{u},\overline{v},\overline{h},x,y,z)\right)  .
\end{align*}
Then $G\models\varphi(\overline{u},\overline{v},\overline{h},v_{12}^{u_{0}})$,
and this defines $v_{12}^{u_{0}}$ in $G,$ using a `uniqueness of expression'
property in the identity (\ref{gvhu}).

Put%
\[
\psi(\overline{u},\overline{h},t):=(t^{p}=u_{12}^{p}h_{1}^{p}).
\]
Then $\psi(\overline{u},\overline{h},t)$ defines $h_{1}^{u_{0}}$ in $G,$
because extraction of $p$th roots is unique in the uniform pro-$p$ group $G$.

The result follows from Theorem \ref{FinExt}(ii).
\end{proof}

\section{Some negative results \label{negsec}}

\subsection{Infinitely many primes}

The Feferman-Vaught theorem (\cite{CK}, Proposition 6.3.2) implies the
following: for each sentence $\phi$ of $L_{\mathrm{gp}}$ there exist finitely
many sentences $\psi_{1},\ldots,\psi_{n}$ of $L_{\mathrm{gp}}$ and a formula
$\theta(x_{1},\ldots,x_{n})$ in the language of Boolean algebras such that for
any family of groups $\{G_{i}\mid i\in I\},$ setting $X_{j}=\{i\in I\mid
G_{i}\models\psi_{j}\}$ we have%
\[
G(I):=\prod_{i\in I}G_{i}\models\phi\Longleftrightarrow\mathcal{P}%
(I)\models\theta(X_{1},\ldots,X_{n})
\]
($\mathcal{P}(I)$ denotes the power set of $I$). To deduce Proposition
\ref{products}, note that by the pigeonhole principle, provided $\left\vert
I\right\vert >2^{n}$ there exist $r\neq q\in I$ such that the characteristic
functions of $X_{1},\ldots,X_{n}$ each take the same values on $r$ and $q$.
Then for each $j$ we have $G_{r}\models\psi_{j}\Longleftrightarrow
G_{q}\models\psi_{j},$ so setting $H_{i}=G_{i}$ ($i\neq q$) and $H_{q}=G_{r}$
we see that $H(I)\models\phi\Longleftrightarrow G(I)\models\phi$. The
proposition follows.

Now we may rephrase Proposition \ref{negprod} as follows:

\begin{proposition}
Let $\pi$ be an infinite set of primes.

\emph{(i)} If $G_{p}\neq1$ is a pro-$p$ group for each $p\in\pi$ then
$\prod_{p\in\pi}G_{p}$ is not FA in the class of pronilpotent groups.

\emph{(ii)} Let $\mathfrak{G}$ be a linear algebraic group of positive
dimension defined over $\mathbb{Q}$.
Then $\mathfrak{G}(\mathbb{Z}_{\pi})$ is not FA in the class of profinite groups.
\end{proposition}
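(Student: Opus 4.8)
The plan is to deduce both parts from Proposition~\ref{products} together with a suitable isomorphism invariant, arguing by contradiction in each case. If a sentence $\phi$ characterised the group $\prod_{p\in\pi}M_p$ within the relevant class, then $\prod_{p\in\pi}M_p\models\phi$, so by Proposition~\ref{products} there would exist $q\neq r$ in $\pi$ with $H:=M_r\times\prod_{p\in\pi\setminus\{q\}}M_p\models\phi$; as $H$ lies in the same class, we would get $H\cong\prod_{p\in\pi}M_p$. It then suffices to produce an invariant separating these two groups --- morally, the prime $q$ no longer occurs in $H$.

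For (i), each $M_p=G_p$ is a nontrivial pro-$p$ group, so $\prod_{p\in\pi}G_p$ and $H$ are both pronilpotent. In a pronilpotent group the Sylow pro-$\ell$ subgroup is unique for every prime $\ell$, so the set of primes for which it is nontrivial is preserved by topological isomorphism; it equals $\pi$ for $\prod_{p\in\pi}G_p$ but only $\pi\setminus\{q\}$ for $H$, since no direct factor of $H$ is pro-$q$. As $\pi$ is infinite these sets differ, so $H\not\cong\prod_{p\in\pi}G_p$, the required contradiction.

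For (ii), put $\Gamma_p=\mathfrak{G}(\mathbb{Z}_p)$, so that $\mathfrak{G}(\mathbb{Z}_\pi)=\prod_{p\in\pi}\Gamma_p$. Fixing an embedding $\mathfrak{G}\hookrightarrow\mathrm{GL}_n$, each $\Gamma_p$ is a closed subgroup of $\mathrm{GL}_n(\mathbb{Z}_p)$, hence a compact $p$-adic analytic group (virtually pro-$p$ of finite rank), and it is infinite since it is open in the positive-dimensional $p$-adic Lie group $\mathfrak{G}(\mathbb{Q}_p)$. The $q$-th coordinate projection exhibits the infinite $q$-adic analytic group $\Gamma_q$ as a continuous quotient of $\mathfrak{G}(\mathbb{Z}_\pi)$. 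The crux is the claim that $H=\Gamma_r\times\prod_{p\in\pi\setminus\{q\}}\Gamma_p$ has \emph{no} infinite $q$-adic analytic continuous quotient; granting this, $H\not\cong\mathfrak{G}(\mathbb{Z}_\pi)$ and we are done.

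To prove the claim, let $\theta\colon H\to Q$ be a continuous epimorphism onto a $q$-adic analytic group $Q$. Every direct factor $M$ of $H$ is virtually pro-$p$ for some prime $p\neq q$, so $\theta(M)$ is a normal subgroup of $Q$ that is simultaneously virtually pro-$p$ and virtually pro-$q$, hence finite. Choose an open normal torsion-free pro-$q$ subgroup $U$ of $Q$ (an open pro-$q$ subgroup of $Q$ has an open uniform, hence torsion-free, subgroup by Theorem~\ref{pow}, and we intersect its finitely many $Q$-conjugates). For any finite set $\mathcal{S}$ of direct factors of $H$, the subgroup $F_{\mathcal{S}}=\langle\theta(M):M\in\mathcal{S}\rangle$ is a finite normal subgroup of $Q$ with $F_{\mathcal{S}}\cap U=1$, so $|F_{\mathcal{S}}|\le[Q:U]$. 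These subgroups are directed by inclusion and of bounded order, hence stabilise at some $F_{\mathcal{S}_0}$; since the direct factors topologically generate $H$, this forces $Q=\overline{F_{\mathcal{S}_0}}=F_{\mathcal{S}_0}$, which is finite. So $Q$ is finite, proving the claim. The only non-routine point is this last finiteness argument; everything else is bookkeeping around Proposition~\ref{products}.
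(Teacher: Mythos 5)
Your proposal is correct and follows essentially the same route as the paper: both parts reduce to Proposition \ref{products} and then distinguish $H=G_{r}\times\prod_{p\in\pi\smallsetminus\{q\}}G_{p}$ from the original product by an isomorphism invariant, and in (ii) both arguments show that the continuous projection of $H$ onto $\mathfrak{G}(\mathbb{Z}_{q})$ would force that group to be finite, contradicting positive dimension. The only (immaterial) differences are that in (i) you use the set of primes with nontrivial Sylow subgroup where the paper observes $H=H^{q}$ while $G\neq G^{q}$, and in (ii) you bound the finite normal images $\theta(M)$ by the index of a torsion-free open normal subgroup where the paper collects them inside the maximal finite normal subgroup of $\mathfrak{G}(\mathbb{Z}_{q})$.
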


\begin{proof}
(i) Suppose $\phi$ is a sentence satisfied by $G:=\prod_{p\in\pi}G_{p}$. For
some $r\neq q\in\pi$ we have%
\[
H:=G_{r}\times\prod_{p\in\pi\smallsetminus\{q\}}G_{p}\models\phi.
\]
But $H=H^{q}$ and $G\neq G^{q},$ so $H\ncong G$ and so $\phi$ does not
determine $G$.

(ii) Similarly, if $\phi$ determines $\mathfrak{G}(\mathbb{Z}_{\pi})=$
$\prod_{p\in\pi}\mathfrak{G}(\mathbb{Z}_{p})$ among profinite groups it
follows that for some $r\neq q\in\pi$,%
\begin{equation}
H:=\mathfrak{G}(\mathbb{Z}_{r})\times\mathfrak{G}(\mathbb{Z}_{\pi
\smallsetminus\{q\}})\cong\mathfrak{G}(\mathbb{Z}_{\pi})\label{isomo}%
\end{equation}
as profinite groups. The projection $\mathfrak{G}(\mathbb{Z}_{\pi}%
)\rightarrow\mathfrak{G}(\mathbb{Z}_{q})$ then induces a surjective continuous
homomorphism $\mu:H\rightarrow\mathfrak{G}(\mathbb{Z}_{q}).$

Write $\mathfrak{G}^{1}(\mathbb{Z}_{p})$ for the congruence subgroup modulo
$p$ (or modulo $4$ if $p=2$). This is a pro-$p$ group, it is torsion-free and
has finite index in $\mathfrak{G}(\mathbb{Z}_{p})$ (see \cite{DDMS}, \S 5.1).
Hence for $p\in\pi\smallsetminus\{q\}$ we have $\mathfrak{G}^{1}%
(\mathbb{Z}_{p})\mu\cap\mathfrak{G}^{1}(\mathbb{Z}_{q})=1$, and it follows
that $\mathfrak{G}(\mathbb{Z}_{p})\mu$ is finite, as well as normal in
$\mathfrak{G}(\mathbb{Z}_{q})$ (here $\mathfrak{G}(\mathbb{Z}_{r})$ is
appearing twice, as it does in $H$).

Thus writing $K$ for the maximal finite normal subgroup of $\mathfrak{G}%
(\mathbb{Z}_{q})$ we deduce:%
\[
\mathfrak{G}(\mathbb{Z}_{p})\mu\leq K \text{ for all }p\in\pi\smallsetminus
\{q\}
\]
(this applies in particular to both copies of $\mathfrak{G}(\mathbb{Z}_{r})$
in $H$). As the subgroups $\mathfrak{G}(\mathbb{Z}_{p})$ generate $H$
topologically this implies that $\mathfrak{G}(\mathbb{Z}_{q})=K$ is finite.
This is not the case, because $\mathfrak{G}(\mathbb{Z}_{q})$ is open in
$\mathfrak{G}(\mathbb{Q}_{q})$, a $q$-adic manifold of positive dimension
$\mathrm{dim}(\mathfrak{G})$ (see \cite{PR}, \S 3.1).

\end{proof}

\medskip\textbf{Remark.} If the reductive part of $\mathfrak{G}$ is
semisimple, (\ref{isomo}) cannot even be an abstract group isomorphism; this
requires a little more argument. When $\mathfrak{G}$ is connected, semisimple
and simply connected, the profinite group $H$ is finitely generated
(\cite{PR2}, Thm. 2), so $\mu$ is continuous by Theorem \ref{NSSC}. In
general, one can show that the image in $H$ of the simply-connected cover of
the connected component of $\mathfrak{G}$ contains $H^{m}$ for some $m$; the
argument then goes as before, noting that $\mathfrak{G}(\mathbb{Z}_{q})^{m}$
has finite index in $\mathfrak{G}(\mathbb{Z}_{q})$. We are grateful to Andrei
Rapinchuk for help with this question.

\subsection{Uncountably many pro-$p$ groups\label{manysec}}

For $\lambda\in\mathbb{Z}_{p}$ let $T(\lambda)$ be the class-$2$ nilpotent
pro-$p$ group with pro-$p$ presentation on generators $x_{1},\ldots
,x_{4},y_{1},\ldots,y_{4},e,f$ and relations%
\begin{align*}
\lbrack x_{i},x_{j}]  &  =[y_{i},y_{j}]=1\text{ \ (all }i,j\text{)}\\
\lbrack x_{i},y_{j}]  &  =1\text{ \ \ \ \ \ \ \ \ \ \ \ \ (all }i\neq
j\text{)}\\
\lbrack x_{1},y_{1}]  &  =e,~~~~[x_{2},y_{2}]=ef^{-1}\\
\lbrack x_{3},y_{3}]  &  =f,~~~~[x_{4},y_{4}]=ef^{-\lambda}\\
&  e,~f\text{ central}%
\end{align*}
This is clearly a pro-$p$ group of rank $10$ (with centre $\mathbb{Z}_{p}^{2}
$ and central quotient $\mathbb{Z}_{p}^{8}$) and so contains an open normal
uniform subgroup $T^{\ast}(\lambda)$ (for example the subgroup generated by
$e,~f$ and the $p$th powers of the $x_{i}$ and $y_{j}$). It is proved in
\cite{GS} \S 6, page 153, that the groups $T(\lambda)$ are pairwise
non-commensurable. It follows that the groups $T^{\ast}(\lambda)$ are pairwise non-isomorphic.

Note that $T(\lambda)$ is strictly f.p. when $\lambda$ is a rational $p$-adic
integer $a/b$ ($a,b\in\mathbb{Z}$, $p\nmid b$): the relation involving
$\lambda$ is equivalent to
\[
\lbrack y_{4},x_{4}]^{b}e^{b}=f^{a},~~\ \ [x_{4},y_{4}]~\text{central}%
\]
(because we have unique extraction of $b$th roots).

\section{List of formulae}

\label{s:listform}

\noindent$\mathrm{s}(\kappa),~~\mathrm{s}_{\vartriangleleft}(\kappa
),~\ \mathrm{res}(\kappa,\varphi),~~\mathrm{lift}(\kappa,\varphi
),~\ \mathrm{ind}(\kappa;n),~\ \mathrm{ind}^{\ast}(\kappa;n),~~\mathrm{com}%
(x,y):\qquad$\S \ref{defsub} \medskip\newline$\delta(\overline{a}%
,x),~\ \beta_{d}(a_{1},\ldots,a_{d}),~\ \widetilde{\beta}_{d},~~\beta
_{d}^{\ast}:\qquad$\S \ref{prelimsec} \medskip\newline$\mathrm{pow},$
\ $\mu_{f,q}(x),~~\mathrm{m}_{f,q},~~\partial_{d_{1},\ldots,d_{k}}:\qquad
$\S \ref{powsec} \medskip\newline$\mathrm{tf}:$\qquad\S \ref{proofT1}
\medskip\newline$Z(x),~~Z_{2}(x),~~S_{p}(x),~~\psi_{\pi},~~\Gamma_{c}:$%
\qquad\S \ref{FApg} \medskip\newline$\Gamma_{m,f}:$\qquad\S \ref{pronilpsec}

\bigskip

\emph{Authors' addresses} \medskip

Prof. Andr\'{e} Nies \ \texttt{$<$andre@cs.auckland.ac.nz$>$}

School of Computer Science

University of Auckland

Private Bag 92019

Auckland 1142

New Zealand

\bigskip

Katrin Tent \ \texttt{$<$tent@wwu.de$>$}

Mathematisches Institut

Universit\"{a}t M\"{u}nster

Einsteinstrasse 62

48149 M\"{u}nster

Germany

\bigskip

Dan Segal \ \texttt{$<$dan.segal@all-souls.ox.ac.uk$>$}

All Souls College

Oxford OX1 4AL

GB


\begin{thebibliography}{9999}                                                                                             %


\bibitem[AKS]{AKS}M. Aschenbrenner, A. Kh\'{e}lif, E. Naziazeno and T.
Scanlon, The Logical Complexity of Finitely Generated Commutative Rings.
\emph{Int. Math. Research Notices}, \textbf{\ rny023}, https://doi.org/10.1093/imrn/rny023.

\bibitem[CK]{CK}C. C. Chang and H. J. Keisler, \emph{Model Theory}, 3d edn.,
Studies in Logic \textbf{73}, North-Holland, 1990.

\bibitem[CW]{CW}Y. Cornulier and J. S. Wilson, First-order recognizability in
finite and pseudofinite groups. \emph{J. Symbolic Logic} \textbf{85}(2)
(2020), 852-867.

\bibitem[DDMS]{DDMS}J. D. Dixon, M.~Du~Sautoy, A. Mann, and D. Segal.
\emph{Analytic pro}-$p$\emph{\ groups}, 2d edn. Cambridge Studies in Advanced
Math. \textbf{61}, Cambridge Univ. Press, Cambridge, 2003.

\bibitem[E]{E}D. Eisenbud, \emph{Commutative Algebra with a view toward
algebraic geometry}, Graduate Texts in Math. \textbf{150}, Springer-Verlag,
New York, 1995.

\bibitem[G]{G}K. W. Gruenberg, \emph{Cohomological topics in group theory},
Springer LNM \textbf{143}, 1970.

\bibitem[GS]{GS}F. J. Grunewald and D. Segal, Reflections on the
classification of torsion-free nilpotent groups, pp. 159--206 in \emph{Group
theory: Essays for Philip Hall}, Academic Press, London etc., 1984.

\bibitem[Ha]{Ha}P. Hall, Finiteness conditions for soluble groups, \emph{Proc.
London Math. Soc} (3) \textbf{4} (1954), 419-436.

\bibitem[H]{H}P. Helbig, On small profinite groups. \emph{Journal of Group
Theory} \textbf{20.5} (2017): 987-997. \texttt{arXiv}:1511.08760v2 [math.GR].

\bibitem[HMT]{HMT}W. Hodges, \emph{Model Theory}, Cambridge Univ. Press,
Cambridge, 1993.

\bibitem[ISC]{ISC}I. S. Cohen, On the structure and ideal theory of complete
local rings, \emph{Trans. Amer. Math. Soc.} \textbf{59} (1946), 54-106.

\bibitem[JL]{JL}M. Jarden and A. Lubotzky, Elementary equivalence of profinite
groups, \emph{Bull. London Math. Soc}. \textbf{40} (2008), 887-896.

\bibitem[K1]{K1}J. Kiehlman, Classifications of countably-based abelian
profinite groups, \emph{J. Group Theory} \textbf{16} (2013), 141--157.

\bibitem[K]{K}W. Klingenberg, Lineare Gruppen \"{u}ber lokalen Ringen,
\emph{American J. Math.} \textbf{83} (1961), 137-153.

\bibitem[KM]{KM}O. Kharlampovich and A. Myasnikov, Elementary theory of free
non-abelian groups, Journal of Algebra \textbf{302} (2) (2006), 451--552

\bibitem[Kh]{Kh}A. Khelif, Bi-interpr\'{e}tabilit\'{e} et structures QFA :
\'{e}tude de groupes r\'{e}solubles et des anneaux commutatifs, \emph{C. R.
Acad. Sci. Paris}, Ser. I \textbf{345} (2007), 59-61.

\bibitem[KRT]{KRT}L. Kramer, G. R\"{o}hrle and K. Tent, Defining $k$ in
$G(k)$, J. Algebra \textbf{216} (1999), 77 - 85.

\bibitem[L]{L}C. Lasserre, Polycyclic-by-finite groups and first-order
sentences, \emph{J. Algebra }\textbf{396} (2013), 18-38.

\bibitem[LS]{LS}A. Lubotzky and A. Shalev, On some $\Lambda$-analytic
pro-\emph{p} groups, \emph{Israel J. Math}. \textbf{85} (1994), 307-337.

\bibitem[Lz]{Lz}M. Lazard, Groupes analytiques $p$-adiques, \emph{IHES Pub.
Math.} \textbf{26} (1965), 389-603.

\bibitem[NSG]{NSG}A. Nies, Separating classes of groups by first-order
formulas, \emph{Intern. J. Algebra Computation} \textbf{13} (2003), 287--302.

\bibitem[NDG]{NDG}A. Nies, Describing groups, \emph{Bull. Symbolic Logic}
\textbf{13} (2007), 305-339.

\bibitem[NB]{NB}A. Nies (editor), Logic Blog 2017, available at arxiv.org/abs/1804.05331.

\bibitem[NS1]{NS1}N. Nikolov and D. Segal, On finitely generated profinite
groups, I, \emph{Annals Math. }\textbf{165} (2007), 171-238.

\bibitem[NS2]{NS2}N. Nikolov and D. Segal, Powers in finite groups,
\emph{Groups Geom. Dyn.} \textbf{5 }(2011), 501-507.

\bibitem[NS]{NS}N. Nikolov and D. Segal, Generators and commutators in finite
groups; abstract quotients of compact groups, \emph{Invent. math.
}\textbf{190} (2012), 513-602.



\bibitem[OS]{OS}F. Oger and G. Sabbagh, Quasi-finitely axiomatizable nilpotent
groups. \emph{J. Group Theory }\textbf{9} (2006), 95--106.

\bibitem[O]{O}F. Oger, QFA groups and groups which are prime models, \emph{J.
Group Theory} \textbf{9} (2006), 107-116.

\bibitem[P]{P}A. Pillay, \emph{Geometric stability theory,} Oxford Logic
Guides \textbf{32}, Clarendon press, Oxford, 1996.

\bibitem[PG]{PG}D. Segal, \emph{Polycyclic groups,} Cambridge Univ. Press,
Cambridge, 1983.

\bibitem[PR]{PR}V. Platonov and A. Rapinchuk, \emph{Algebraic groups and
number theory}, Academic Press, Boston etc., 1994.

\bibitem[PR2]{PR2}V. P. Platonov and A. S. Rapinchuk, Abstract properties of
$S$-arithmetic groups and the congruence problem, \emph{Russian Acad. Sci.
Izv. Math.} \textbf{40 }No.3 (1993), 456-475.

\bibitem[RZ]{RZ}L. Ribes and P. A. Zalesskii, \emph{Profinite groups},
Springer, Berlin-Heidelberg-New York, 2000.

\bibitem[S]{S}Z. Sela, Diophantine geometry over groups. VI. The elementary
theory of a free group, \emph{Geometric and Functional Analysis} \textbf{16}
(3) (2006): 707--730.

\bibitem[SW]{SW}D. Segal, \emph{Words: notes on verbal width in groups,
}London Math. Soc. Lecture Notes Series \textbf{361}, Cambridge Univ. Press,
Cambridge, 2009.

\bibitem[ST]{ST}D. Segal and K. Tent, Defining $R$ and $G(R)$, \emph{J.
European Math. Soc}, to appear.

\bibitem[TZ]{TZ}K. Tent, M. Ziegler, \emph{A Course in Model Theory,} Lecture
Notes in Logic, Cambridge University Press, 2012.

\bibitem[WFO]{JSW}J. S. Wilson, First-order group theory, pp.\emph{\ }301-314
in \emph{Infinite groups 1994,}, W. de Gruyter, Berlin-New York, 1996.

\bibitem[WP]{WP}J. S. Wilson, \emph{Profinite groups, }Clarendon Press,
Oxford, 1998.

\bibitem[WFS]{WFS}J. S. Wilson, Finite axiomatization of finite soluble
groups, \emph{J. London Math. Soc. (2)} \textbf{74} (2006), 566-582.

\bibitem[WS]{WS}M. G. Smith and J. S. Wilson, On subgroups of finite index in
compact Hausdorff groups, \emph{Arch. Math. }\textbf{80} (2003), 123-129.

\bibitem[Z1]{Z}E. I. Zelmanov, The solution of the restricted Burnside problem
for groups of odd exponent, \emph{Math. USSR Izv. }\textbf{36} (1991), 41--60.

\bibitem[Z2]{Z2}E. I. Zelmanov, The solution of the restricted Burnside
problem for $2$-groups, \emph{Mat. Sb.} \textbf{182} (1991), 568--592.
\end{thebibliography}
\end{document}